\newtheorem{theorem}{Theorem}
\newtheorem{proposition}{Proposition}
\theoremstyle{definition}
\newtheorem{definition}{Definition}
\theoremstyle{remark}
\newtheorem{remark}{Remark}
\newtheorem{example}{Example}
\algrenewcommand\algorithmicrequire{\textbf{Input:}}
\newcommand{\mpcommentout}[1]{}
\newcommand{\mc}{\mathcal}
\newcommand{\kscomment}[1]{{\color{red} #1}}
\newcommand{\mpcomment}[1]{{\color{black} #1}}
\newcommand{\mpmargin}[1]{\marginpar{\color{blue}\footnotesize [MP]:
#1}}
\newcommand{\ra}{\rightarrow}
\newcommand{\eps}{\varepsilon}
\newcommand{\R}{\mathbb{R}}
\newcommand{\Z}{\mathbb{Z}}
\newcommand{\N}{\mathbb{N}}
\newcommand{\E}[1]{\mathbb{E}\left[ #1 \right]}
\DeclareMathOperator*{\argmax}{arg\,max}
\DeclareMathOperator*{\argmin}{arg\,min}
\newcommand{\vehiclelength}{L}
\newcommand{\timeheadway}{h}
\newcommand{\standstilldist}{S_0}
\newcommand{\speedlim}{V_f}
\newcommand{\maxaccel}{a_{\max}}
\newcommand{\minaccel}{a_{\min}}
\newcommand{\maxjerk}{J_{\max}}
\newcommand{\Speedmode}{\text{speed tracking}}
\newcommand{\Safetymode}{\text{safety}}
\newcommand{\stabconst}[1]{c_{#1}}
\newcommand{\StateofDySys}{X}
\newcommand{\stabexpconst}[1]{\alpha_{#1}}
\newcommand{\Perimeter}{P}
\newcommand{\numvehicles}{n}
\newcommand{\numramps}{m}
\newcommand{\rampindic}{\mathcal{M}}
\newcommand{\numcells}{n_c}
\newcommand{\numaccslots}[1]{n_{#1}}
\newcommand{\addlslot}[1]{n_{#1}}
\newcommand{\timestep}{\tau}
\newcommand{\routingmatrix}{R}
\newcommand{\Arrivalrate}[1]{\lambda_{#1}}
\newcommand{\Numarrivals}[1]{A_{#1}}
\newcommand{\Cumularrivals}[2]{\bar{A}_{#1}(#2)}
\newcommand{\Numdepartures}[1]{D_{#1}}
\newcommand{\Cumuldepartures}[1]{D_{#1}}
\newcommand{\Queuelength}[1]{Q_{#1}}
\newcommand{\Nodedegree}[1]{N_{#1}}
\newcommand{\NewNodedegree}[1]{\Tilde{N}_{#1}}
\newcommand{\Avgload}[1]{\rho_{#1}}
\newcommand{\Nodedest}[1]{M_{#1}}
\newcommand{\Lyap}[1]{V(#1)}
\newcommand{\Cyclelength}{T_{\text{cyc}}}
\newcommand{\StateofMC}{Y}
\newcommand{\StateofMCExp}{Z}
\newcommand{\LyapValue}{l}
\newcommand{\StateSpace}{\mathcal{Y}}
\newcommand{\MaxNumArrival}[3]{\tilde{A}_{#1}^{#2}(#3)}
\newcommand{\Releasedist}{K}
\newcommand{\Period}{\Delta}
\newcommand{\DesDecreaseconst}{\gamma_{1}}
\newcommand{\ReleasetimeInc}[1]{\theta_{#1}}
\newcommand{\ReleasetimeDec}[1]{\alpha_{#1}}
\newcommand{\ReleasetimeDecConst}[1]{\gamma_{#1}}
\newcommand{\Releasetime}[1]{g_{#1}}
\newcommand{\ReleasetimeAdj}[1]{\beta_{#1}}
\newcommand{\Interarrivals}[1]{\psi_{#1}}
\newcommand{\Tempty}{T_{\text{free}}}
\newcommand{\updateperiod}{T_{\text{per}}}
\newcommand{\Tmax}{T_{\text{max}}}
\newcommand{\GenRM}{\text{DRR}}
\newcommand{\ConRM}{\text{DSG}}
\newcommand{\FCQRM}{\text{FCQ}}
\newcommand{\DisDRRRM}{\text{DisDRR}}
\begin{document}
\let\WriteBookmarks\relax
\let\printorcid\relax
\def\floatpagepagefraction{1}
\def\textpagefraction{.001}
\shorttitle{}
\shortauthors{M.Pooladsanj et~al.}

\title[mode = title]{Ramp Metering to Maximize Freeway Throughput under Vehicle Safety Constraints}                      

\author[1]{Milad Pooladsanj}
\cortext[cor1]{Corresponding author}
\cormark[1]
\ead{pooladsa@usc.edu}
\address[1]{Department of Electrical Engineering, University of Southern California, Los Angeles, CA 90007 USA}

\author[2]{Ketan Savla}

\ead{ksavla@usc.edu}
\address[2]{Department of Civil and Environmental Engineering, University of Southern California, Los Angeles, CA 90089 USA}

\author[1]{Petros A. Ioannou}
\ead{ioannou@usc.edu}

\nonumnote{K. Savla has financial interest in Xtelligent, Inc. This work was supported in part by NSF CMMI 1636377 and METRANS 19-17.}

\begin{abstract}
Ramp Metering (RM) is one of the most effective control techniques to alleviate freeway congestion. We consider RM at the microscopic level subject to vehicle following safety constraints for a freeway with arbitrary number of on- and off-ramps. The arrival times of vehicles to the on-ramps, as well as their destinations are modeled by exogenous stochastic processes. Once a vehicle is released from an on-ramp, it accelerates towards the free flow speed if it is not obstructed by another vehicle; once it gets close to another vehicle, it adopts a safe gap vehicle following behavior. The vehicle exits the freeway once it reaches its destination off-ramp. We design traffic-responsive RM policies that maximize the \emph{throughput}. For a given routing matrix, the throughput of a RM policy is characterized by the set of on-ramp arrival rates for which the queue sizes at all the on-ramps remain bounded in expectation. The proposed RM policies operate under vehicle following safety constraints, where new vehicles are released only if \mpcomment{there is sufficient gap between vehicles on the mainline at the moment of release}. Furthermore, the proposed policies work in synchronous \emph{cycles} during which an on-ramp does not release more vehicles than the number of vehicles waiting in its queue at the start of the cycle. All the proposed policies are \emph{reactive}, meaning that they only require real-time traffic measurements without the need for demand prediction. \mpcomment{However, they differ in how they use the traffic measurements. In particular, we provide three different mechanisms under which each on-ramp either}: (i) pauses release for a time interval before the next cycle begins, or (ii) adjusts the release rate during a cycle, or (iii) adopts a conservative safe gap criterion for release during a cycle. The throughput of the proposed policies is characterized by studying stochastic stability of the induced Markov chains, and is proven to be maximized when the merging speed at all the on-ramps equals the free flow speed. Simulations are provided to illustrate the performance of our policies and compare with a well-known RM policy from the literature \mpcomment{which relies on local traffic measurements}.
\end{abstract}


\begin{keywords}
Traffic control \sep
Ramp metering \sep
Traffic responsive \sep
Connected vehicles \sep
Throughput \sep
Bounded queues \sep
Markov chains \sep
\end{keywords}

\maketitle

\section{Introduction}\label{Section: Introduction}
\mpcommentout{
Freeway merging and poor human drivers' response to various disturbance on the road are two major causes of freeway congestion (\citet{rios2016automated}). Ramp metering and autonomous vehicles are promising tools to tackle these issues and reduce traffic congestion (\citet{Malikopoulos.2017,papageorgiou2002freeway}). Although autonomous vehicles are able to compensate for inaccuracies in human driving, their interaction with an ineffective ramp metering policy may lead to degradation in the vehicle safety and/or the overall performance of a freeway network. Hence, a good ramp metering strategy must take into account the constraints posed by the autonomous vehicles, such as safety and speed, and by the infrastructure such the limited supply of road space. Furthermore, it needs to perform reasonably well on a freeway-network scale and be resilient against traffic demand uncertainties.  
\par
Ramp metering strategies are generally classified into fixed-time and traffic-responsive strategies (\citet{papageorgiou2002freeway}). Fixed-time ramp metering strategies are fine-tuned offline and operate based on historical data, whereas in traffic-responsive ramp metering strategies, the on-ramps use the online sensor measurements installed on the freeway and the on-ramps to operate (\citet{papamichail2010coordinated}). A class of traffic-responsive strategies are the local ramp metering policies in which the on-ramps make use of the sensor measurements from their vicinity (\citet{papageorgiou2002freeway}). 
The design and analysis of the performance of ramp metering strategies are often performed by considering traffic dynamics at the fluid level, which lacks the necessary resolution to capture
the impact of new technologies at the vehicular level.

The objective of this paper is to investigate the network-wide performance of a set of local ramp metering policies under uncertain traffic demand. We use a queue-theoretic approach to analyze and compare these strategies in terms of throughput and average travel time performance.  
}
\par
Freeway congestion is caused by high demand competing to use the limited supply of the freeway system.
\mpcommentout{
Merging on freeways is known as one of the main sources of initiating (or further intensifying) the freeway congestion (\citet{rios2016automated}).
}
One of the most effective tools to combat this congestion is Ramp Metering (RM), which involves controlling the inflow of traffic to the freeway in order to balance the supply and demand and ultimately improve some measure of performance (\citet{papageorgiou2002freeway, rios2016automated, lee2006quantifying}). 
\mpcommentout{
The main objective of an RM strategy is to improve the mainstream flow (possibly at the scale of a freeway network) while preventing the ramp queues from building up.}
\mpcomment{Typically, macroscopic traffic flow models are used to design RM policies. However, these models lack the necessary resolution to ensure the existence of safe merging gaps for vehicles entering from on-ramps.} An alternative is the microscopic-level approach which is limited to heuristics or simulations beyond an isolated on-ramp (\citet{rios2016automated,sun2006set}). \mpcomment{The objective of this paper is to systematically design RM policies at the microscopic level and analyze their system-level performance.}
\par
There is an overwhelming body of work on designing RM policies using macroscopic traffic flow models. We review them here only briefly; interested readers are referred to \citet{papageorgiou2002freeway} for a comprehensive review. RM policies can be generally classified as fixed-time or traffic-responsive (\citet{papageorgiou2002freeway}). Fixed-time policies such as \citet{wattleworth1965peak} are fine-tuned offline and operate based on historical traffic data. Due to the uncertainty in the traffic demand and the absence of real-time measurements, these policies would either lead to congestion or under-utilization of the capacity of the freeway (\citet{papamichail2010coordinated}). Traffic-responsive policies, on the other hand, use real-time measurements. These policies can be further sub-classified as local or coordinated depending on whether the on-ramps make use of the measurements obtained from their vicinity (local) or other regions of the freeway (coordinated) (\citet{papageorgiou2002freeway, papageorgiou2003review}). A well-known example of a local policy is ALINEA (\citet{papageorgiou1991alinea}) which has been shown, both analytically and in practice, to yield a (locally) good performance. A caveat in employing local policies is that there is no guarantee that they can improve the overall performance of the freeway while providing a fair access to the freeway from different on-ramps (\citet{papamichail2010coordinated}). This motivates the study of coordinated policies such as the ones considered in \citet{papageorgiou1990modelling, papamichail2010coordinated, gomes2006optimal}.
\par
Despite the wide use of macroscopic traffic flow models in RM design, this approach has some limitations. \mpcomment{One of its major limitations is that it does not capture the specific safety requirements for vehicles entering the freeway from the on-ramps. As a result, it does not guarantee the existence of sufficient gaps between vehicles on the mainline to safely accommodate the merging vehicles. Therefore, the merging vehicles may sometimes be forced to merge even if the gap is not safe, or they may not be able to merge at all.} To address this limitation, an alternative is the microscopic-level approach which allows to include the safety requirements in the design of RM to optimize the freeway performance (\citet{rios2016automated}). In addition, it allows to naturally incorporate vehicle automation, which can compensate for human errors, or Vehicle-to-Vehicle (V2V) and Vehicle-to-Infrastructure (V2I) communication, which can provide accurate traffic measurements, in the RM design (\citet{Sugiyama2008TrafficJW, stern2018dissipation, zheng2018smoothing, pooladsanj2020vehicle, pooladsanj2021vehicle, rios2016automated, li2013survey, lioris2017platoons}). \mpcommentout{Indeed, another approach is to design RM at the macroscopic-level and use vehicle cooperation to guarantee vehicle following safety during merging. However, such decoupling can degrade the freeway performance.}
\par
The microscopic-level RM problem involves determining the order in which vehicles should cross the merging point of the mainline and the on-ramp, i.e., determining their so-called \emph{merging sequence}. Previous works have mostly focused on isolated on-ramps to determine the merging sequence, employing various techniques such as minimizing the average time taken by the vehicles to cross the merging point (\citet{raravi2007merge}) or simple heuristic rules such as the first-in first-out scheme (\citet{rios2016automated}). However, there has been limited analysis of the broader impact of these designs on the overall freeway performance. For example, do they optimize certain system-level performance? Indeed it is possible that a \emph{greedy} policy, i.e., a policy where each on-ramp acts independently, limits entry from the downstream on-ramps and thereby creating long queues or congestion. 
\par
Motivated by the aforementioned gaps and increasing vehicle connectivity and automation, the primary objective of this paper is to systematically design traffic-responsive RM policies at the microscopic level and analyze their system-level performance. The key performance metric to evaluate a RM policy in this paper is its \emph{throughput}. For a given routing matrix, the throughput of a RM policy is characterized by the set of on-ramp arrival rates for which the queue sizes at all the on-ramps remain bounded in expectation. Roughly, the throughput determines the highest traffic demand that a RM policy can handle without creating long queues at the on-ramps. For example, in case of an isolated on-ramp, the throughput of any policy cannot exceed the on-ramp's \emph{flow capacity}. The throughput is tightly related to, but different than, the notion of total travel time, which is a popular metric in the literature (\citet{gomes2006optimal, hegyi2005model}). In particular, if the demand exceeds a policy's throughput, then the expected total travel time of an arriving vehicle, from the moment it joins an on-ramp queue until it leaves the freeway, will increase over time, i.e., the later the vehicle arrives, the more total travel time it will experience on average. On the other hand, if the demand does not exceed the throughput, then the expected total travel time is stabilized. In other words, the throughput of a policy determines the demand threshold at which the expected total travel time transitions from being stabilized to being increasing over time. Therefore, it is desirable to design a policy that achieves the \emph{maximum} possible throughput among all policies; that is, the set of demands for which the policy can stabilize the expected total travel time is at least as large as any other policy.
\par
\mpcommentout{
We consider a freeway with arbitrary number of on- and off-ramps. The freeway geometry can be modeled either as a \emph{ring} road, in which case every entry point is controlled, or a \emph{straight} road, in which case the upstream entry point is not controlled. Previous studies on a ring road with no on/off-ramps have suggested that this setup has some theoretical advantages over the straight road network (\citet{Sugiyama2008TrafficJW, stern2018dissipation, zheng2018smoothing, pooladsanj2020vehicle, pooladsanj2021vehicle}). For example, the creation and dissipation of stop-and-go waves can be captured using a ring road (\citet{Sugiyama2008TrafficJW, stern2018dissipation}). For the sake of completeness, we consider both geometries in this paper. However, our results are not specific to the choice of road geometry. Vehicles in the network are assumed to have the same length, same acceleration and braking capabilities, and are equipped with V2V and V2I communication systems. Each vehicle follows the standard rules for safety and speed: it accelerates and maintains the free flow speed when it is sufficiently far away from the leading vehicle, or maintains a safe gap if it gets close. We do not specify the exact transient behavior for maintaining a safe gap, nor require vehicles to adopt the same behavior during the transient. However, at the steady state free flow speed, we assume that each vehicle keeps a safe constant time headway plus an additional constant gap from its leading vehicle (\citet{Ioannou.Chien.1993}).
The entry points to the freeway are the on-ramps and, in the case of the straight road geometry, the upstream (uncontrolled) entry point. Vehicle arrivals to each on-ramp is modeled by a Bernoulli process that is independent across different on-ramps. The destination of each vehicle is one of the off-ramps, and, in the case of the straight road geometry, also the downstream exit point. The destination of each vehicle is sampled independently from a routing matrix. It should be emphasized that our main results do not depend on this specific demand model. Once a vehicle enters the freeway, it follows the aforementioned safety and speed rules until it reaches its destination, at which point it exits the network. 
}
\par
In this paper, we design traffic-responsive RM policies that can achieve the maximum possible throughput subject to vehicle following safety constraints. \mpcomment{The main idea is to design policies in which each on-ramp keeps a balance in using the safe merging gaps on the mainline without under-utilizing or over-utilizing them.} More specifically, all the proposed RM policies operate under vehicle following safety constraints, where the on-ramps release new vehicles only if \mpcomment{there is sufficient gap between vehicles on the mainline at the moment of release}. Furthermore, the proposed policies work in synchronous \emph{cycles} during which an on-ramp does not release more vehicles than the number of vehicles waiting in its queue, i.e., its queue size, at the start of the cycle. The proposed policies use \mpcomment{real-time traffic measurements obtained by V2I communication, but do not require prior knowledge of the on-ramp arrival rates or the routing matrix, i.e., they are reactive. However, they differ in how they use the traffic measurements. In particular, we provide three different mechanisms under which each on-ramp either:} (i) pauses release for a time-interval before the next cycle starts, or (ii) adjusts the time between successive releases during a cycle, or (iii) adopts a conservative safe gap criterion for release during a cycle. The throughput of these policies is characterized by studying stochastic stability of the induced Markov chains, and is proven to be maximized when the merging speed at all the on-ramps equals the free flow speed. When the merging speeds are less than the free flow speed, the safety considerations may lead to a drop in the traffic flow. However, this effect, which is similar to the capacity drop phenomenon (\citet{hall1991freeway}), is shown to be less significant compared to a well-known macroscopic-level RM policy \mpcomment{that relies on local traffic measurements obtained by roadside sensors. The purpose of this comparison is to show how much V2I communication can improve performance compared with
a state-of-the-art RM policy which relies only on roadside sensors.}
\par
In summary, the three main contributions of this paper are as follows: 
\begin{itemize}
\item Designing traffic-responsive RM policies at the microscopic level to understand the interplay of safety, connectivity, automation protocols, and the throughput. The proposed policies are reactive, meaning that they only require real-time traffic measurements without requiring any prior knowledge of the demand (thus, they can adapt to time-varying demand). They obtain the traffic measurements by V2I communication.

\item \mpcomment{Designing RM policies that operate under vehicle following safety constraints, where the on-ramps release new vehicles only if there is sufficient gap between vehicles on the mainline. Each on-ramp keeps a balance in using the safe merging gaps without under-utilizing or over-utilizing them. By systematically including safety in the RM design, the risk of collisions at merging bottlenecks is reduced without compromising the throughput.} 
  
\item \mpcomment{Providing (i) an outer-estimate for the throughput of any RM policy for freeways with arbitrary number of on- and off-ramps, and (ii) an inner-estimate for the throughput of the proposed microscopic-level RM policies. The outer-estimate serves as the network equivalent of the flow capacity of an isolated on-ramp. In other words, it establishes an upper-limit on the achievable throughput of any RM policy (microscopic or macroscopic). Comparing the outer- and inner-estimates shows that the proposed policies are able to maximize throughput. That is, their throughput is at least as good as any other policy, including those with prior knowledge of the demand.} 
\mpcommentout{Throughput measures the highest traffic demand that a RM policy can handle without creating long on-ramp queues. Equivalently, it determines the demand threshold at which the expected total travel time transitions from being stabilized to being increasing over time.}

\end{itemize}

The rest of the paper is organized as follows: in Section~\ref{sec:notations}, we gather all the necessary notations used in the paper. In Section~\ref{Section: Problem Formulation}, we state the problem formulation, vehicle-level rules, demand model, and a summary of the RM policies studied in this paper. The design and performance analysis of the proposed RM policies takes place in Section~\ref{sec:large-merge-speed}. We simulate the proposed policies in Section \ref{Section: Simulation} and conclude the paper in Section \ref{Section: Conclusion}. 

\section{Notations}\label{sec:notations}
For convenience, we gather the main notations used in the paper in Table~\ref{table:notation}.
\begin{longtable}{p{.3\textwidth}  p{.65\textwidth}}
 \hline
Notation & Description \\
 \hline
 \rule{0pt}{3ex}
 \emph{Mathematical notations} & \\
 $\N$, $\N_{0}$, $\R$ & Set of positive integers ($\N$), non-negative integers ($\N_{0}$), and real numbers ($\R$) \\
 $[k]$ ($k \in \N$) & The set $\{1, \ldots, k\}$ \\
& \\
 \emph{Network} & \\
 $\Perimeter$ & Length of the freeway \\
 $\numramps$ & Number of on- and off-ramps \\
 $n$ & Number of vehicles on the mainline and acceleration lanes \\
 $\Queuelength{i}$, $\Queuelength{}$
 & Number of vehicles at on-ramp $i$, i.e., the queue size, ($\Queuelength{i}$), and vector of queue sizes ($\Queuelength{}$)
 \\
& \\
 \emph{Vehicles} & \\
$\vehiclelength$, $\timeheadway$, $\standstilldist$, $\speedlim$ & Vehicle length ($\vehiclelength$), safe time headway constant ($\timeheadway$), standstill gap ($\standstilldist$), and free flow speed ($\speedlim$) \\
$\minaccel$
& Minimum deceleration 
\\
$\timestep$ & Minimum safe time headway between the front bumpers of two vehicles at the free flow speed  \\
$\timestep_i$ & Minimum time headway between the front bumpers of two vehicles at the free flow speed such that a vehicle from on-ramp $i$ can safely merge between them \\
 $p_e$, $v_e$, $a_e$, $y_e$, $S_e$  & Ego vehicle's location ($p_e$), speed ($v_e$), acceleration ($a_e$), distance to the leading vehicle ($y_e$), and safety distance required to avoid collision ($S_e$) \\ 
 $\hat{v}_e$, $\hat{y}_e$, $\hat{S}_e$ & Ego vehicle's predicted speed ($\hat{v}_e$), predicted gap with respect to its virtual leading vehicle ($\hat{y}_e$), and predicted safety distance between the two vehicles ($\hat{S}_e$)\\
 $I_e$ & A binary variable which is equal to one if the ego vehicle is in the $\Safetymode$ mode, and zero otherwise \\
 $x_e$, $\StateofDySys$ & State of the ego vehicle ($x_e$), and state of all the vehicles ($\StateofDySys$) \\
 $t_m|t$ & Predicted time of crossing the merging point calculated at time $t$ \\
$v_l$, $\hat{v}_{\hat{l}}$ & Speed of the leading vehicle ($v_l$) and predicted speed of the virtual leading vehicle ($\hat{v}_{\hat{l}}$) 
\\
  & \\
  \emph{Demand} & \\
$\Arrivalrate{i}$, $\Arrivalrate{}$ & Arrival rate at on-ramp $i$ ($\Arrivalrate{i}$) and vector of arrival rates ($\Arrivalrate{}$) \\
$\routingmatrix_{ij}$, $\routingmatrix$ & fraction of arrivals at on-ramp $i$ that want to exit from off-ramp $j$ ($\routingmatrix_{ij}$) and routing matrix ($\routingmatrix$) \\
$\Tilde{\routingmatrix}_{ij}$, $\Tilde{\routingmatrix}$ & Fraction of arrivals at on-ramp $i$ that need to cross link $j$ in order to reach their destination ($\Tilde{\routingmatrix}_{ij}$) and cumulative routing matrix ($\Tilde{\routingmatrix}$)\\
$\Avgload{j}$, $\Avgload{}$ & Total load induced on link $j$ by all the on-ramps ($\Avgload{j}$) and maximum load ($\Avgload{}$) \\
  & \\
\emph{Ramp metering} & \\
$U_{\pi, \routingmatrix}$ & Under-saturation region of a RM policy $\pi$ given the routing matrix $\routingmatrix$ \\
$\Cyclelength$ & Constant cycle length coefficient \\
$\Cumuldepartures{\pi,p}(t)$ & Cumulative number of vehicles that has crossed point $p$ on the mainline up to time $t$ under the RM policy $\pi$ \\
 \hline
 
 \caption{\sf The main notations used in the paper.}\label{table:notation}
\end{longtable}

\section{Problem Formulation}\label{Section: Problem Formulation}
In this paper, we consider a single freeway lane of length $\Perimeter$ with multiple single-lane on- and off- ramps, where we assume that \mpcomment{there is a ramp meter at every on-ramp}. We distinguish between two settings: the first setting is a ring road configuration such as the one in Figure~\ref{fig: Motivating Example}. The main reason for choosing a ring road is that it can capture the creation and dissipation of stop-and-go waves (\citet{Sugiyama2008TrafficJW, stern2018dissipation}). The second setting is the more natural straight road configuration such as the one in Figure~\ref{Fig: line network}. Note that in Figure~\ref{Fig: line network}, the traffic light at the upstream entry point indicates that we have control over any previous on-ramp. We describe the problem formulation and the main results for the ring road configuration. The extension of our results to the straight road configuration is discussed in Section~\ref{Section: Straight Line}.
\par
The number of on- and off-ramps is $\numramps$; they are placed alternately and are numbered in an increasing order along the direction of travel, such that, for all $i \in [\numramps]$, off-ramp $i$ comes after on-ramp $i$ \footnote{By this numbering scheme, we do not mean to imply that the location of an on-ramp must be close to the next off-ramp as this is not usually the case in practice; see Figure~\ref{Fig: model}. In fact, our setup is quite flexible and can also deal with cases where the number of on- and off-ramps are not the same. For simplicity of notation, we have used the same number of on- and off-ramps in this paper.}. The section of the mainline between the $i$-th on- and off- ramps is referred to as \emph{link} $i$. Vehicles arrive at the on-ramps from outside the network and join the on-ramp queues. We assume a point queue model for vehicles waiting at the on-ramps, with the queue on an on-ramp co-located with its ramp meter. The on-ramp vehicles are released into the mainline by the ramp meters installed at each on-ramp. Upon release from the ramp, each vehicle follows the standard speed and safety rules until it reaches its destination off-ramp at which point it exits the network without creating any obstruction for the upstream vehicles. Each vehicle is equipped with a V2I communication system, which is used to transmit its state, e.g., speed, to the on-ramp control units, and receive information about the state of the nearby vehicles in merging areas. The exact information communicated will be specified in later sections.
\par
\mpcomment{The objective is to design RM policies that operate under vehicle following safety constraints and analyze their performance.} The performance of a RM policy is evaluated in terms of its \emph{throughput} defined as follows: let $\Arrivalrate{i}$ be the \emph{arrival rate} to on-ramp $i$, $i \in [\numramps]$, and $\Arrivalrate{} := [\Arrivalrate{i}]$ be the vector of arrival rates. Let $\routingmatrix = [\routingmatrix_{ij}]$ be the \textit{routing matrix}, where $\routingmatrix_{ij}$ specifies the fraction of arrivals to on-ramp $i$ that want to exit from off-ramp $j$. For a given routing matrix $\routingmatrix$, the \emph{under-saturation region} of a RM policy is defined as the set of $\Arrivalrate{}$'s for which the queue sizes at all the on-ramps remain bounded in expectation \footnote{Note that $\Arrivalrate{}$ and $\routingmatrix$ are macroscopic quantities. In order to specify vehicle arrivals and their destination at the microscopic level, a more detailed (probabilistic) demand model is required; see Section~\ref{sec:demand}. The expected queue size is defined with respect to the probabilistic demand model.}. The boundary of the under-saturation region is called the throughput. We are interested in finding RM policies that ``maximize" the throughput for any given $\routingmatrix$. We will formalize this in Section~\ref{sec:demand}.
\par
The remainder of this section is organized as follows: in Section~\ref{sec:vehicle-control}, we discuss the \mpcomment{vehicle following safety constraints}. We specify the demand model and formalize the notion of throughput in Section~\ref{sec:demand}. We summarize the RM policies considered in this paper in Section~\ref{Section: RM rules}.
\mpcommentout{
Vehicles arrive to the network according to exogenous processes and form queues at the on-ramps. The rates at which vehicles are released from the queues are determined by ramp meters installed at the on-ramps. Once a vehicle is released, the vehicle controller takes charge of the vehicle trajectory until it exits from an off-ramp. The objective is to design suitable ramp metering controllers.
}
\begin{figure}[htb!]
\centering
    \begin{subfigure}{.35\textwidth}
        \centering
        \includegraphics[width=\textwidth]{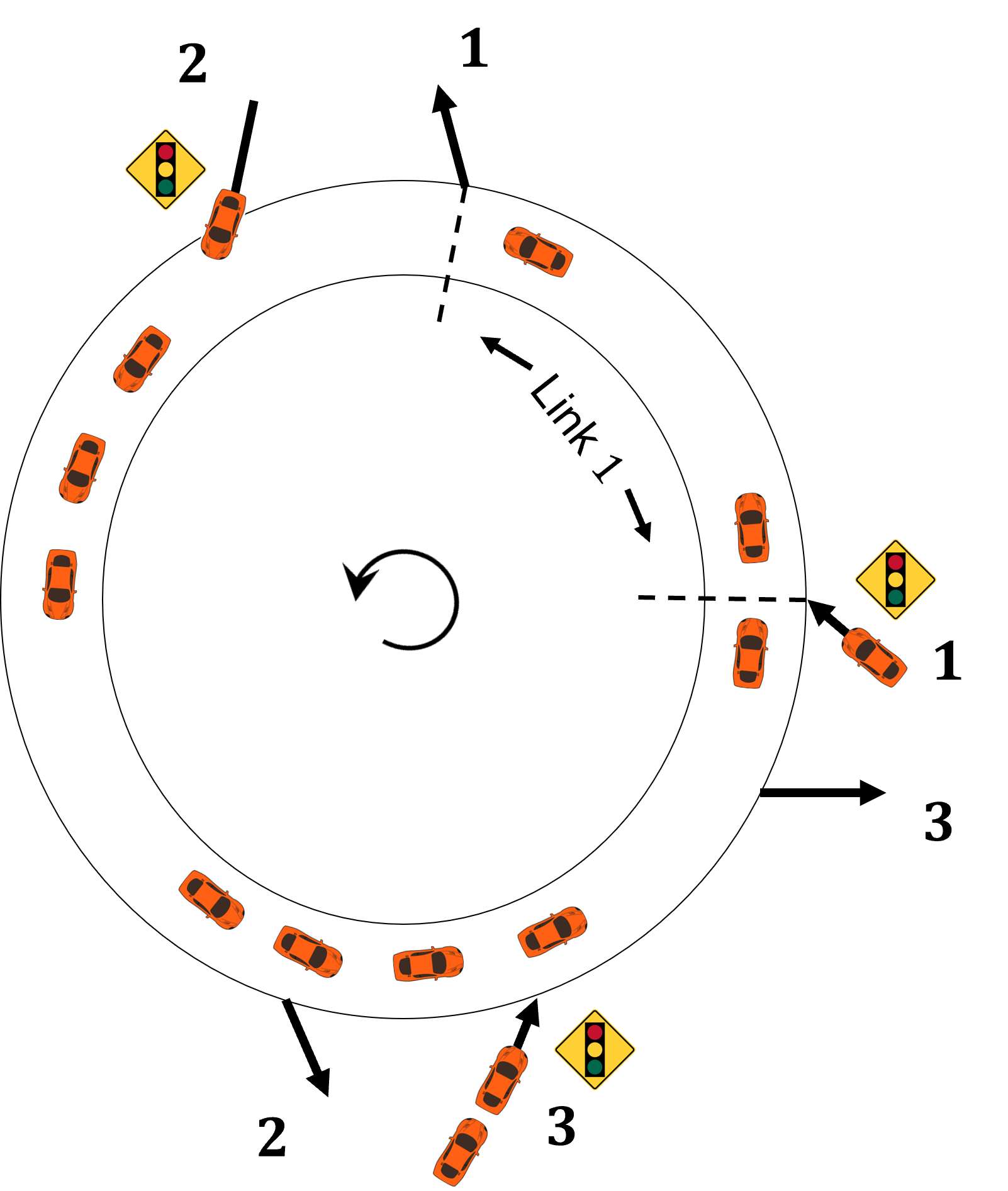}
        \caption{} \label{fig: Motivating Example}
    \end{subfigure}
    \begin{subfigure}{0.9\textwidth}
        \centering
        \includegraphics[width=\textwidth]{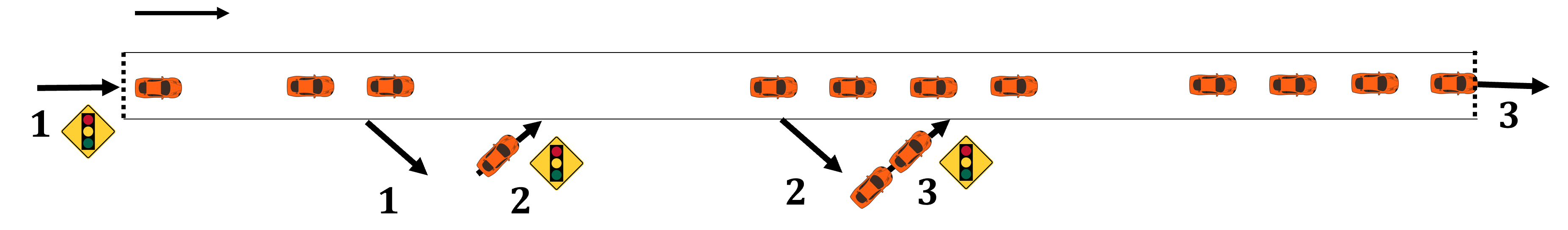}
        \caption{} \label{Fig: line network}
    \end{subfigure}
    \vspace{0.2 cm}
    \caption{\centering\sf Example of a (a) ring road configuration, (b) straight road configuration where the traffic light at the upstream entry point indicates that we have control over any previous on-ramp.}
    \label{Fig: model}
\end{figure}
\mpcommentout{
Consider a single-lane ring road with multiple on-ramps and off-ramps. Vehicles in this network are homogeneous, obey the safety, speed, and comfort rules, and are admitted to the ring road by the ramp meters installed at the on-ramps. The objective is to design safe Ramp Metering (RM) policies that keep the network under-saturated. The ``under-saturated" term means that the long-run average queue sizes are bounded for all on-ramps. This measure of performance ensures that the queues would not grow indefinitely and it is tightly related to the average time spent by a driver in the network. 
\par
We seek to answer the following questions about this network: first, is it always possible to keep the network under-saturated for all demand profiles by means of, e.g., coordination between ramps? If not, what can be said about the network's ``saturation limit", i.e., the region beyond which the network cannot be kept under-saturated? And finally, if the demand is within the saturation limit, what type of RM policies, e.g., local or coordinated, are able to keep the network under-saturated? We formalize and study these questions in Section \ref{Section: Ramp metering design}. In the remainder of this section, we discuss the vehicle-level properties (Section \ref{Section: Vehicle Model and Controller}), demand model (Section \ref{Section: Arrival-Destination processes}), and control objectives (Section \ref{Section: Control Objectives}).
}
\subsection{\mpcomment{Vehicle Following Safety Constraints}}\label{sec:vehicle-control}
We consider vehicles of length $\vehiclelength$ that have the same acceleration and braking capabilities, and are equipped with V2I communication systems. We use the term \emph{ego vehicle} to refer to a specific vehicle under consideration, and denote it by $e$. Consider a vehicle following scenario and let $v_e$ (resp. $v_l$) be the speed of the ego vehicle (resp. its leading vehicle), and $S_e$ be the \emph{safety distance} between the two vehicles required to avoid collision. We assume that $S_e$ satisfies
\begin{equation}\label{eq:safety-distance}
    S_e = \timeheadway v_e + \standstilldist + \frac{v^2_{e} - v^2_l}{2|\minaccel|},
\end{equation}
which is calculated based on an emergency stopping scenario with details given in \citet{Ioannou.Chien.1993}. Here, $\timeheadway > 0$ is known as the safe time headway constant, $\standstilldist > 0$ is an additional constant distance, and $\minaccel < 0$ is the minimum possible deceleration of the leading vehicle. For simplicity, we assume a third-order vehicle dynamics throughout the paper. We consider two general modes of operation for each vehicle: the \emph{$\Speedmode$} mode and the \emph{$\Safetymode$} mode. The main objective in the $\Speedmode$ mode is to adjust the speed to the \emph{free flow speed} $\speedlim$ when the ego vehicle is far from any leading vehicle; see Appendix~\ref{Section: (Appx) Cruise control dynamics}. The main objective in the $\Safetymode$ mode is to avoid collision when the ego vehicle gets close to a leading vehicle by maintaining \mpcomment{the safety distance} given by \eqref{eq:safety-distance}. We let $x_e = (p_e, v_e, a_e, I_e)$ be the state of the ego vehicle, where $p_e$ is its location, $v_e$ is the speed, $a_e$ is the acceleration, and $I_e$ is a binary variable which is equal to one if the ego vehicle is in the $\Safetymode$ mode, and zero otherwise. 
\par
\mpcomment{Consider an ego vehicle that starts from rest at a ramp meter and remains in the $\Speedmode$ mode. We define the \emph{acceleration lane} of an on-ramp as the section starting from its ramp meter up to the point on the mainline where the ego vehicle reaches the speed $\speedlim$.} Note that the acceleration lane may overlap with the mainline if the length of the on-ramp, from the ramp meter to the merging point, is not sufficiently long; see Figure~\ref{fig: Merge scenario}. However, if the on-ramp is sufficiently long, then the acceleration lane ends at the merging point where the ego vehicle enters the freeway. We assume that all the vehicles entering the freeway from an on-ramp merge at a fixed merging point. We define the \emph{merging speed} of an on-ramp as the speed of the ego vehicle at the merging point, assuming that it remains in the $\Speedmode$ mode between release and reaching the merging point. Note that the merging speed at an on-ramp is at most $\speedlim$.
\mpcommentout{
\begin{itemize}
\item[(S2)] The ego vehicle merges into the mainline only if, at the moment of merging, its distance to the leading vehicle and the following vehicle, denoted $y_p$ and $y_f$ respectively, are no less than the safety distance. That is, only if $y_e \geq S_e$ and $y_f \geq S_f$.

\end{itemize}
}

\par
\mpcommentout{
satisfy the following rule at the moment of releasing the ego vehicle:
\mpmargin{(M) is consistent with (M2) in ks version.}
\begin{itemize}
    \item [(M)] . Specifically, $\hat{d}_p(0, t_m) > \hat{S}_p(0, t_m)$ and $\hat{d}_f(0, t_m) > \hat{S}_f(0, t_m)$.
\end{itemize}
}
\par
\begin{figure}[t]
    \centering
    \includegraphics[width=0.8\textwidth]{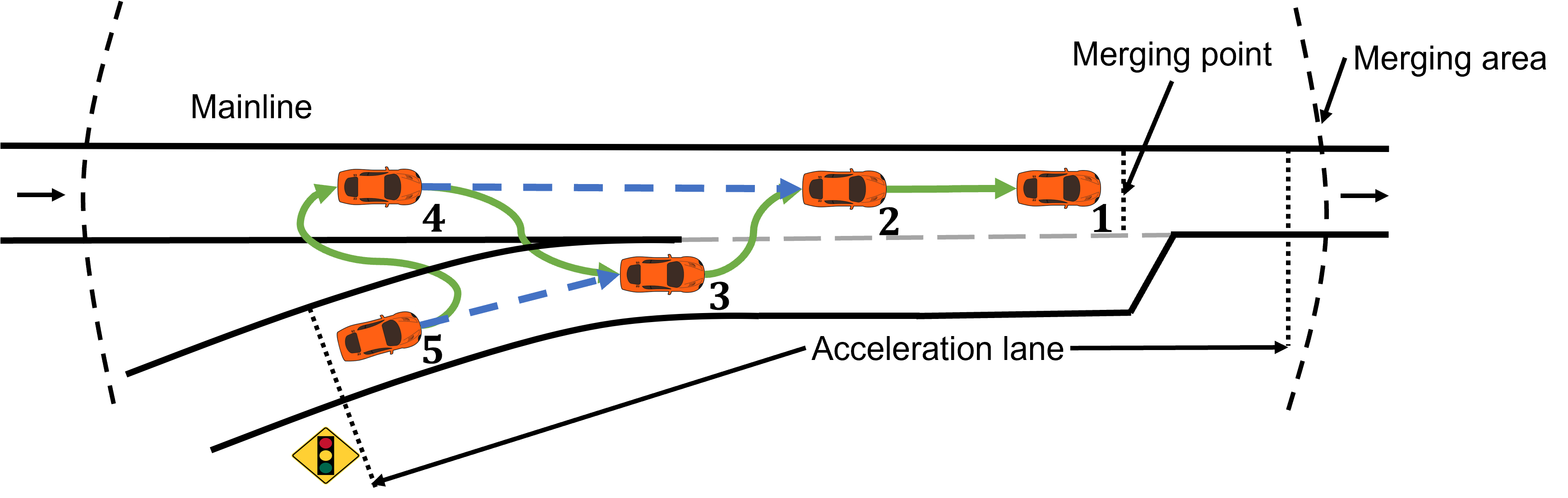}
    
    \vspace{0.2 cm}
    
    \caption{\centering\sf A merging scenario. For each vehicle, the dotted arrow indicates its leading vehicle while the solid arrow indicates its virtual leading vehicle. Note that vehicle $1$ is both the leading and the virtual leading vehicle of vehicle $2$.}
    \label{fig: Merge scenario}
\end{figure}
\begin{figure}[t]
    \centering
    \includegraphics[width=0.75\textwidth]{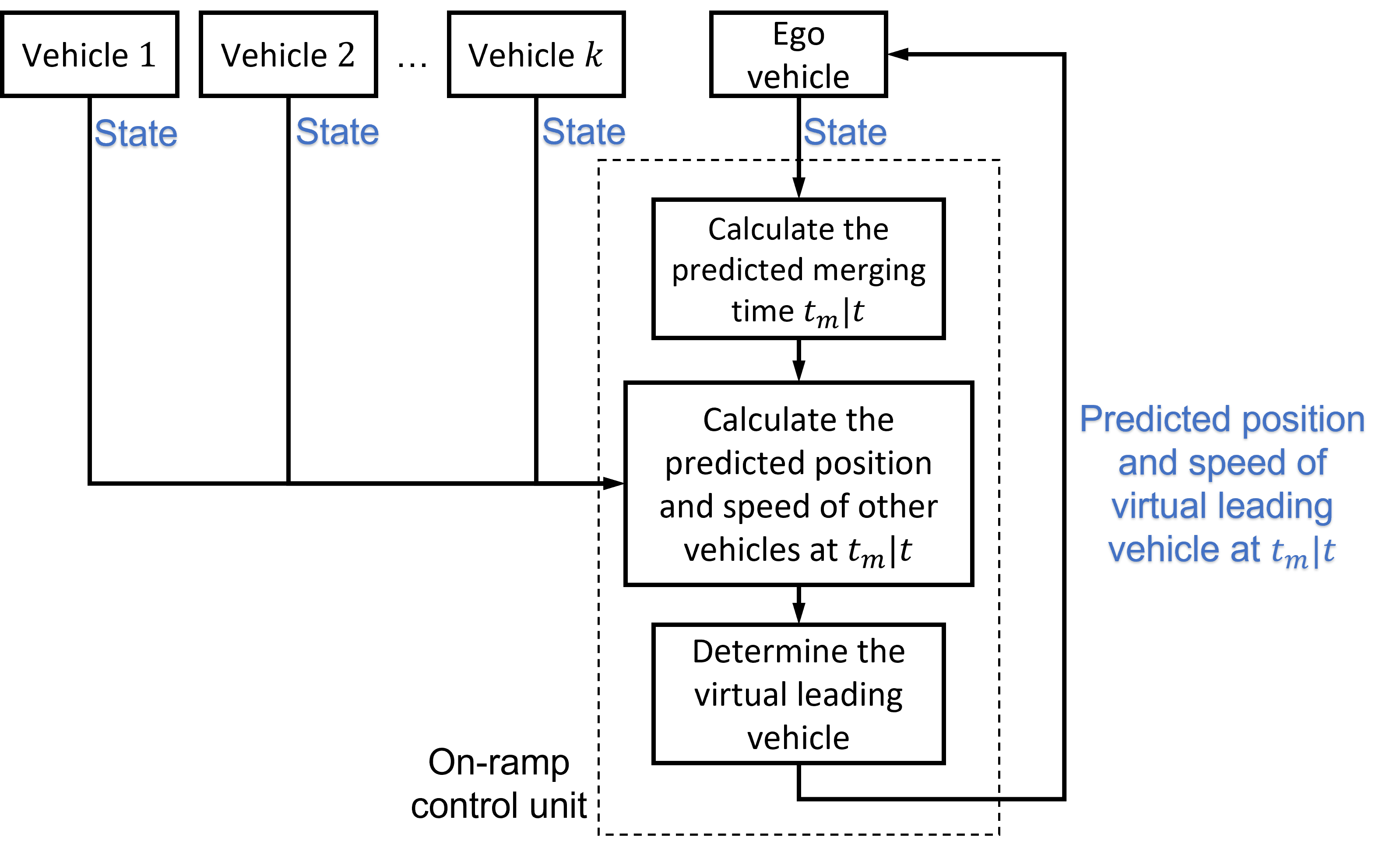}
    
    \vspace{0.2 cm}
    
    \caption{\centering\sf Schematic of the protocol for assigning the ego vehicle's virtual leading vehicle. Here, $k$ is the number of vehicles, other than the ego vehicle, in the merging area.}
    \label{fig:virtual-protocol}
\end{figure}
Consider a merging scenario as shown in Figure~\ref{fig: Merge scenario}. An ego vehicle entering the merging area is assigned a \emph{virtual} leading vehicle. The virtual leading vehicle, which is the vehicle that is predicted to be in front of the ego vehicle once the ego vehicle has crossed the merging point, is determined (and frequently updated) using a protocol similar to \citet{ntousakis2016optimal}. We will now describe this protocol which is shown in Figure~\ref{fig:virtual-protocol}. At time $t$, the ego vehicle communicates its state $x_e$ to the on-ramp control unit via V2I communication. The on-ramp control unit predicts the time $t_m|t$ that the ego vehicle will cross the merging point according to the following prediction rule: if in the $\Speedmode$ mode, it assumes that the ego vehicle will remain in this mode in the future; if in the $\Safetymode$ mode, it assumes that the ego vehicle will follow a constant speed trajectory in the future. Other vehicles in the merging area also communicate their state to the on-ramp control unit. By using the same prediction rule, the on-ramp control unit predicts the position of each of these vehicles at time $t_m|t$, which will determine the ego vehicle's virtual leading vehicle. Once the virtual leading vehicle has been determined, the on-ramp control unit communicates its predicted position and speed at time $t_m|t$ back to the ego vehicle. This information is frequently updated until the ego vehicle has crossed the merging point.
\mpcommentout{
It then communicates $t_m|t$ and its predicted speed at time $t_m|t$ to all the other vehicles in the merging area via V2V communication. Other vehicles use the same prediction rule to predict their own position and speed at time $t_m|t$ and communicate them back to the ego vehicle. This information is used to determine the ego vehicle's virtual leading vehicle, and is frequently updated until the ego vehicle has crossed the merging point.
}
\par
The above protocol describes how a virtual leading vehicle is assigned. The next set of assumptions prescribe the safety rules all the vehicles follow:
\begin{itemize}
\item[(VC1)] the ego vehicle maintains the constant speed $\speedlim$ at time $t$ if both of the following conditions are satisfied:
\begin{itemize}
    \item[(a)] \mpcomment{\textbf{merging scenario:}} its predicted distance with respect to its virtual leading vehicle at the moment of merging is safe, i.e.,  
    \begin{equation*}
        \hat{y}_e(t_m|t) \geq \hat{S}_e(t_m|t) := \timeheadway \hat{v}_e(t_m|t) + \standstilldist + \frac{\hat{v}_e^2(t_m|t) - \hat{v}_{\hat{l}}^2(t_m|t)}{2|\minaccel|},
    \end{equation*}
    where $\hat{y}_e(t_m|t)$ is the predicted distance between the two vehicles at the moment of merging based on the information available at time $t \leq t_m$. Similarly, $\hat{v}_e$ and $\hat{v}_{\hat{l}}$ are the predicted speeds of the ego vehicle and its virtual leading vehicle, respectively. These predictions are obtained by the protocol explained above.
    \item[(b)] \mpcomment{\textbf{vehicle following scenario:}} it is at a safe distance with respect to its leading vehicle, i.e., $y_e(t) \geq S_e(t)$, where $y_e$ is the distance between the two vehicles. Note that if the leading vehicle is also at the constant speed $\speedlim$, then $y_e \geq S_e = \timeheadway \speedlim + \standstilldist$. This distance is equivalent to a time headway of at least $\timestep := \timeheadway + (\standstilldist + \vehiclelength)/\speedlim$ between the front bumpers of the two vehicles. This rule is widely adopted by human drivers as well as standard adaptive cruise control systems (\citet{Ioannou.Chien.1993}).
\end{itemize}

\item[(VC2)] the ego vehicle is initialized to be in the $\Speedmode$ mode upon release from the on-ramp. It changes mode at time $t$ if $\hat{y}_e(t_m|t) < \hat{S}_e(t_m|t)$ in a merging scenario or $y_e(t) < S_e(t)$ in a vehicle following scenario. Moreover, if the ego vehicle is released from the on-ramp at least $\timestep$ seconds after its leading vehicle, then it will not change mode because of its leading vehicle while the leading vehicle is in the $\Speedmode$ mode. Note that the ego vehicle may still change mode because of a virtual leading vehicle.
\end{itemize}
\par
Note that in order to keep the results fairly general, we intentionally have not specified the total number of submodes within the $\Safetymode$ mode, the exact control logic within each submode, or the exact logic for switching back to the $\Speedmode$ mode. Such details will be introduced only if and when needed for performance analysis of RM policies in the paper. Also, note that the control logic in the $\Safetymode$ mode is allowed to be different for different vehicles when they are not moving at the speed $\speedlim$. However, the following assumption rules out any impractical control logic such as unnecessary braking:
\begin{itemize}
\item[(VC3)] there exists $\Tempty$ such that for any initial condition, vehicles reach the \emph{free flow state} after at most $\Tempty$ seconds if no other vehicle is released from the on-ramps. The free flow state refers to a state where: (i) if a vehicle is in the $\Safetymode$ mode, then it moves at the constant speed $\speedlim$ and will maintain this speed until it exits the network, and (ii) if a vehicle is in the $\Speedmode$ mode, then it will remain in this mode until it exits the network.
\end{itemize}
\begin{example}\label{ex:prediction-operation}
We present a numerical example to better understand some of the previous assumptions. Let $\timeheadway = 1.5 ~[\text{sec}]$, $\standstilldist = 4~ [\text{m}]$, $\vehiclelength = 4.5 ~[\text{m}]$, and $\speedlim = 15 ~[\text{m/sec}]$. Suppose that all the vehicles in Figure~\ref{fig: Merge scenario} are moving at the constant speed $\speedlim$ at time $t$, and recall that the safety distance at the free flow speed is $\timeheadway\speedlim+\standstilldist=26.5~[\text{m}]$. Let vehicles $1$-$5$ be $5$, $36$, $70$, $101$, and $132$ meters upstream of the merging point, respectively. Vehicle $3$ (the ego vehicle)'s predicted time of crossing the merging point is $t_m|t=70/15=4.67~[\text{sec}]$. The predicted positions of vehicles $1$ and $2$ at time $t_m|t$ are $65$ and $34$ meters downstream, and the predicted positions of vehicles $4$ and $5$ at time $t_m|t$ are $31$ and $61$ meters upstream of the merging point, respectively. Thus, the ego vehicle's virtual leading vehicle at time $t_m|t$ is determined to be vehicle $2$. Moreover, since $\hat{y}_e(t_m|t) = 34-\vehiclelength = 29.5 > 26.5 =\hat{S}_e(t_m|t)=\timeheadway\hat{v}_e(t_m|t)+\standstilldist$, the ego vehicle maintains the constant speed $\speedlim$ by (VC1)(a). Similar calculations show that all the other vehicles also maintain their speeds at $\speedlim$.
\end{example}



\subsection{Demand Model and Throughput}\label{sec:demand}
It will be convenient for performance analysis later on to adopt a discrete time setting. Let the duration of each time step be $\timestep = \timeheadway + (\standstilldist + \vehiclelength)/\speedlim$, representing the minimum time headway between the front bumpers of two vehicles that are moving at the speed $\speedlim$ \mpcomment{and are at a safe distance}; see (VC1)(b) in Section~\ref{sec:vehicle-control}. We assume that vehicles arrive to on-ramp $i \in [\numramps]$ according to an i.i.d. Bernoulli process with parameter $\Arrivalrate{i} \in [0,1]$ independent of the other on-ramps. That is, in any given time step, the probability that a vehicle arrives at the $i$-th on-ramp is $\Arrivalrate{i}$ independent of everything else. Note that $\Arrivalrate{i}$ specifies the arrival rate to on-ramp $i$ in terms of the number of vehicles per $\timestep$ seconds. Let $\Arrivalrate{} := [\Arrivalrate{i}]$ be the vector of arrival rates. The destination off-ramp for individual arriving vehicles is assumed to be i.i.d. and given by the routing matrix $\routingmatrix = [\routingmatrix_{ij}]$, where $0 \leq \routingmatrix_{ij} \leq 1$ is the probability that a vehicle arriving to on-ramp $i$ wants to exit from off-ramp $j$. Note that $\routingmatrix_{ij}$ specifies the long-run fraction of arrivals at on-ramp $i$ that want to exit from off-ramp $j$. Naturally, for every on-ramp $i$ we have $\sum_{j}\routingmatrix_{ij} = 1$.  Finally, we let $\tilde{\routingmatrix} = [\tilde{\routingmatrix}_{ij}]$ be the \emph{cumulative} routing matrix, where $\tilde{\routingmatrix}_{ij}$ is the fraction of arrivals at on-ramp $i$ that need to use link $j$ in order to reach their destination. Therefore, $\Arrivalrate{i}\tilde{\routingmatrix}_{ij}$ is the rate of arrivals at on-ramp $i$ that need to use link $j$ in order to reach their destination, i.e., the \emph{load} induced on link $j$ by on-ramp $i$. Let $\Avgload{j} := \sum_{i}\Arrivalrate{i}\tilde{\routingmatrix}_{ij}$ be the total load induced on link $j$ by all the on-ramps, and let $\Avgload{} := \max_{j \in [\numramps]}\Avgload{j}$ be the \textit{maximum load}.
\mpcommentout{
\begin{remark}
Since the vehicles were assumed to be equipped with V2V and V2I communication systems, they may communicate their destination to other vehicles or to the on-ramps. However, we do not require them to do so in our RM policies.
\end{remark}
}
\begin{remark}\label{remark:demand-generalized}
The current demand model, i.e., Bernoulli arrivals and Bernoulli routing, is chosen to simplify the technical details in the proofs. We believe that our results are far more general and hold for more practical demand models used in the literature, e.g., see \cite{jin2009departure} for an example of arrival models. 
\end{remark}
\begin{example}\label{Ex: Cumulative routing matrix}
Let the number of on- and off-ramps be $\numramps = 3$, and let the routing matrix be
\begin{equation*}
    \routingmatrix = \begin{pmatrix}
    \routingmatrix_{11} & \routingmatrix_{12} & \routingmatrix_{13} \\
    \routingmatrix_{21} & \routingmatrix_{22} & \routingmatrix_{23} \\
    \routingmatrix_{31} & \routingmatrix_{32} & \routingmatrix_{33}
    \end{pmatrix}.
\end{equation*}
\par 
Then, the cumulative routing matrix is
\begin{equation*}
    \tilde{\routingmatrix} = \begin{pmatrix}
    1 & 1 - \routingmatrix_{11} & 1 - (\routingmatrix_{11} + \routingmatrix_{12}) \\
    1 - (\routingmatrix_{22} + \routingmatrix_{23}) & 1 & 1 - \routingmatrix_{22} \\
    1 - \routingmatrix_{33} & 1- (\routingmatrix_{31} + \routingmatrix_{33}) & 1
    \end{pmatrix} = \begin{pmatrix}
    1 &  \routingmatrix_{12} + \routingmatrix_{13} & \routingmatrix_{13} \\
    \routingmatrix_{21} & 1 & \routingmatrix_{21} + \routingmatrix_{23} \\
    \routingmatrix_{31} + \routingmatrix_{32} & \routingmatrix_{32} & 1
    \end{pmatrix}.
\end{equation*}
\end{example}
\par
We now formalize the notion of ``throughput" which is the key performance metric in this paper. Let $\Queuelength{i}(t)$ be the queue size at on-ramp $i \in [\numramps]$, and $\Queuelength{}(t) = [\Queuelength{i}(t)]$ be the vector of queue sizes at time $t$. 
\mpcommentout{The freeway is said to be under-saturated under a given demand $(\Arrivalrate{},\routingmatrix)$ and a RM policy if $\limsup_{t \to \infty} \E{\Queuelength{i}(t)} < \infty$ for all $i \in [\numramps]$; otherwise, it is called saturated. The boundary of the under-saturation region is called the throughput of the RM policy.
}
For a given routing matrix $\routingmatrix$, the under-saturation region of a RM policy $\pi$ is defined as follows:
\begin{equation*}
    U_{\pi,\routingmatrix} = \{\Arrivalrate{}: \limsup_{t \to \infty} \E{\Queuelength{i}(t)} < \infty~~\forall i \in [\numramps]~ \text{under policy}~\pi\}.
\end{equation*}
\par
This is the set of $\Arrivalrate{}$'s for which the queue sizes at all the on-ramps remain bounded in expectation. The boundary of this set is called the throughput of the policy $\pi$. We are interested in finding a RM policy $\pi$ such that for every $\routingmatrix$, $U_{\pi',\routingmatrix} \subseteq U_{\pi,\routingmatrix}$ for all policies $\pi'$, including those that have \mpcomment{prior knowledge} of $\Arrivalrate{}$ and $\routingmatrix$. In other words, for every $\routingmatrix$, if the freeway remains under-saturated under some policy $\pi'$, then it also remains under-saturated under the policy $\pi$. In that case, we say that policy $\pi$ maximizes the throughput. \mpcomment{One of our main contributions is designing policies that are reactive, i.e., they do not require $\Arrivalrate{}$ and $\routingmatrix$, but maximize throughput for all practical purposes.}
\begin{remark}
A rigorous definition of the throughput should also include its dependence on the initial condition of the vehicles and the initial queue sizes. We have removed this dependence for simplicity since the throughput of our policies do not depend on the initial condition. 
\end{remark}
\begin{figure}[t!]
        \centering
        \includegraphics[width=0.4\textwidth]{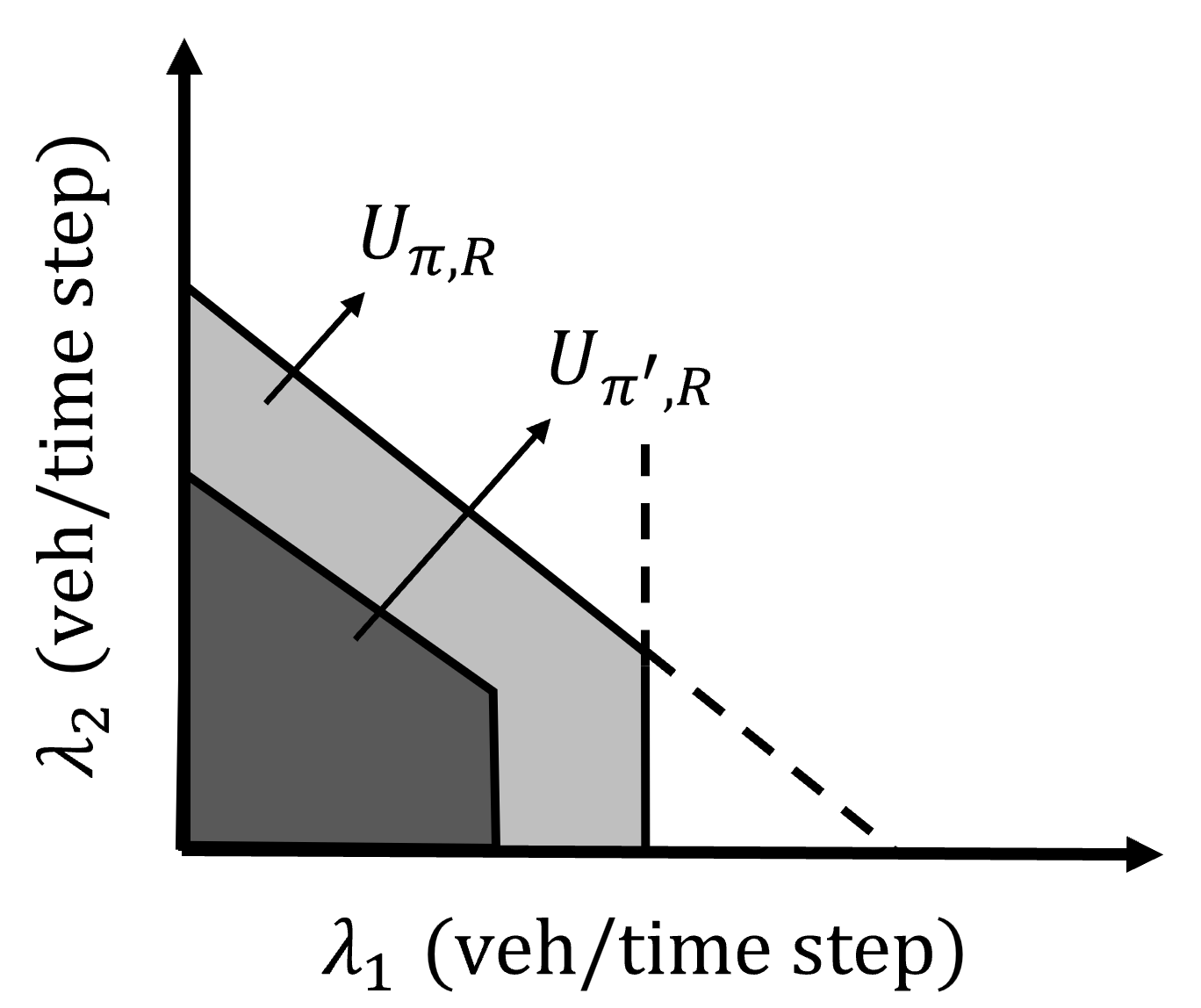}
        \vspace{0.2 cm}
    \caption{\centering\sf An illustration of the under-saturation region of some policy $\pi'$ (dark grey area) and a policy $\pi$ that maximizes the throughput (dark + light grey areas).}\label{Fig: Illustrative example for throughput}
\end{figure}

\begin{example}\label{ex:throughput}
Let $\numramps=3$ and consider a given $\routingmatrix$. Suppose that a policy $\pi$ is able to maximize the throughput; that is, for any other policy $\pi'$, we have $U_{\pi', \routingmatrix} \subseteq U_{\pi, \routingmatrix}$. An illustration of $U_{\pi, \routingmatrix}$ and $U_{\pi', \routingmatrix}$ is shown in Figure~\ref{Fig: Illustrative example for throughput} for a fixed $\Arrivalrate{3}$. From the figure, one can see that if $(\Arrivalrate{1},\Arrivalrate{2}) \in U_{\pi', \routingmatrix}$, then $(\Arrivalrate{1},\Arrivalrate{2}) \in U_{\pi, \routingmatrix}$, i.e., if the freeway remains under-saturated under the policy $\pi'$, then it also remains under-saturated under the policy $\pi$. We will provide a numerical characterization of $U_{\pi, \routingmatrix}$ in Example~\ref{ex:throughput-numerical}.
\end{example}

\mpcommentout{
In this section, we formalize the network's demand model. We discretize the time, i.e., $t = 0, 1, 2, \cdots$, with the time step $\timestep$ obtained as follows:
\begin{equation}\label{Eq: Time step in discretization}
    \timestep = \frac{\timeheadway \speedlim + \standstilldist + \vehiclelength}{\speedlim} = \timeheadway + \frac{\standstilldist + \vehiclelength}{\speedlim}.
\end{equation}
\par
Each time step measures the minimum safe time headway between two consecutive vehicles that are moving at the constant free flow speed. We let vehicles arrive to on-ramp $i$ according to an i.i.d Bernoulli process with parameter $\Arrivalrate{i} \in [0,1]$, independently across all on-ramps. That is, in any given time step, the probability that a vehicle arrives at the $i^{th}$ on-ramp is $\Arrivalrate{i}$ independent of everything else. We refer to the parameter $\Arrivalrate{i}$ as the \textit{arrival rate} to on-ramp $i$ and we let $\Arrivalrate{} := [\Arrivalrate{i}]$ be the vector of arrival rates to the network. The destination off-ramp for individual arriving vehicles is i.i.d and is given by a \textit{routing matrix} $\routingmatrix = [\routingmatrix_{ij}]$, where $0 \leq \routingmatrix_{ij} \leq 1$ is the probability that an arrival to on-ramp $i$ wants to exit from off-ramp $j$. We assume that each arriving vehicle exits through one of the off-ramps. In other words, for every on-ramp $i$ we have $\sum_{j}\routingmatrix_{ij} = 1$. Finally, we let $\tilde{\routingmatrix} = [\tilde{\routingmatrix}_{i,j}]$ be the \emph{cumulative} routing matrix, where $\tilde{\routingmatrix}_{i,j}$ is the fraction of arrivals at on-ramp $i$ that needs to cross off-ramp $j$ in order to reach their destination (this includes the vehicles that need to exit from off-ramp $j$). Note that $\sum_{j}\tilde{\routingmatrix}_{i,j}$ indicates the fraction of arrivals at all on-ramps that needs to cross off-ramp $j$ in order to reach their destination.
\begin{example}\label{Ex: Cumulative routing matrix}
Let the routing matrix for a $3$-ramp network (see, for example, Figure \ref{fig: Motivating Example}) be given by
\begin{equation*}
    \routingmatrix = \begin{pmatrix}
    \routingmatrix_{11} & \routingmatrix_{12} & \routingmatrix_{13} \\
    \routingmatrix_{21} & \routingmatrix_{22} & \routingmatrix_{23} \\
    \routingmatrix_{31} & \routingmatrix_{32} & \routingmatrix_{33}
    \end{pmatrix}.
\end{equation*}
\par 
Then, the cumulative routing matrix is calculated as follows:
\begin{equation*}
    \tilde{\routingmatrix} = \begin{pmatrix}
    1 - (\routingmatrix_{12} + \routingmatrix_{13}) & 1 & 1 - \routingmatrix_{12} \\
    1 - \routingmatrix_{23} & 1 - (\routingmatrix_{23} + \routingmatrix_{21}) & 1 \\
    1 & 1- \routingmatrix_{31} & 1 - (\routingmatrix_{31} + \routingmatrix_{32})
    \end{pmatrix} = \begin{pmatrix}
    \routingmatrix_{11} & 1 & \routingmatrix_{11} + \routingmatrix_{13} \\
    \routingmatrix_{21} + \routingmatrix_{22} & \routingmatrix_{22} & 1 \\
    1 & \routingmatrix_{32} + \routingmatrix_{33} & \routingmatrix_{33}
    \end{pmatrix}.
\end{equation*}
\end{example}

\begin{remark}
Note that we have not made any restriction on the actual time that a new vehicle arrives during a given time step. In other words, a new vehicle may arrive at any time during a given time step. However, the number of vehicles that can arrive during a given time step cannot be more than one. We provide justification for imposing this constraint in the next example.
\end{remark}

\begin{example}\label{Ex: Physical intuition behind Bernoulli arrival}
In a real freeway system, vehicles join an on-ramp from the upstream urban streets. The time headway between consecutive arrivals to a (single-lane) on-ramp is thus constrained by the urban street speed limit and safety distance. Suppose that the street speed limit is $V_o < \speedlim$ and that the vehicles obey the safety rule explained in Section \ref{Section: Vehicle Model and Controller}. Then, under the constant speed conditions, the time headway $\tau_o$ between consecutive arrivals to the on-ramp is bounded by
\begin{equation*}
    \tau_o \geq \frac{\timeheadway V_o + \standstilldist + \vehiclelength}{V_o} > \timeheadway + \frac{\standstilldist + \vehiclelength}{\speedlim} = \timestep.
\end{equation*}
\par
In other words, the inter-arrival times of vehicles cannot be less than $\timestep$. Hence, the number of arrivals during a given time step $\timestep$ cannot be more than one.
\end{example}
}

\mpcommentout{
\begin{remark}
The aforementioned demand model (Bernoulli arrivals, multi-dimensional Bernoulli destinations) are chosen to illustrate the main idea behind the proofs. However, our results hold for the following more general setup: suppose that the network is populated with $K$ classes of vehicles. Class $k \in [K]$ vehicles arrive to on-ramp $s(k) \in [\numramps]$ and are routed according to a (Bernoulli) routing vector $\routingmatrix(k)$. Let $\Interarrivals{k}(n)$ be the inter-arrival time between the $(n-1)^{st}$ and $n^{th}$ class $k$ vehicle. We assume that $\Interarrivals{1}, \cdots, \Interarrivals{K}$ are i.i.d and mutually independent.
Moreover, for each $k \in [K]$ we assume that there exists constants $b_k > a_k \geq 0$, $M_k \in \N$ such that for all $t \geq 0$ we have with probability one,
\begin{equation*}
    \Numarrivals{k}(t + a_k, t + b_k) \leq M_k.
\end{equation*}
\par
In other words, the number of class $k$ arrivals in any time interval of length $b_k - a_k$ is bounded by $M_k$ with probability one. We can define a Markov process by letting the state $\StateofMC(t)$ at time $t$ be 
\begin{equation*}
    \StateofMC(t) := (\Queuelength{}(t),\Nodedest{}(t), U(t)),
\end{equation*}
where $U(t) = (U_k(t)) \in \R^{K}_{+}$, and $U_k(t)$ is the remaining time until the next class $k$ vehicle arrives. 
the process is thus a Markov chain with the state space $\StateSpace := \Z_{\rampindic}^{\infty} \times \rampindic \cup \{0\}$.
\end{remark}
}

\subsection{\mpcomment{Slot}}\label{Section: RM rules}
\begin{figure}[t]
    \centering
    \includegraphics[width=0.8\textwidth]{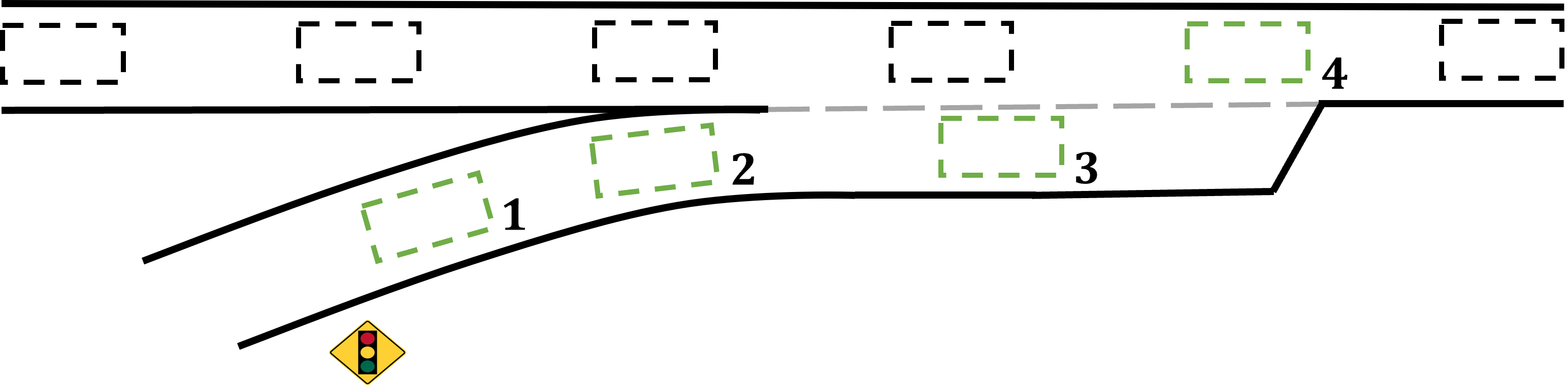}
    
    \vspace{0.2 cm}
    
    \caption{\centering\sf \mpcomment{A configuration of slots for the mainline and acceleration lane of an on-ramp. There are four slots for the acceleration lane (colored in green) with slot $4$ coinciding with a mainline slot.}}
    \label{fig:slots}
\end{figure}
To conveniently track vehicle locations in discrete time, we introduce the notion of \emph{slot}. A slot corresponds to a specific point on either the mainline or acceleration lanes at a given time. We first define the mainline slots. Let $\numcells$ denote the maximum number of distinct points that can be placed on the mainline, such that the distance between adjacent points is $\timeheadway \speedlim + \standstilldist + L$. This distance is governed by the safety distance $S_e$, as explained in Section~\ref{sec:vehicle-control}. In other words, $\numcells$ is the maximum number of \mpcomment{vehicles that can move at the speed $\speedlim$ on the mainline without violating the vehicle following safety constraints}. Consider a configuration of these $\numcells$ points at time $t = 0$. Each point represents a slot on the mainline that moves at the free flow speed. Without loss of generality, we assume that the length of the mainline $\Perimeter$ is such that each slot replaces the next slot at the end of each time step. 
\par
Next, we define the acceleration lane slots. \mpcomment{Consider the $i$-th on-ramp and suppose that an ego vehicle starts from rest at its ramp meter and remains in the $\Speedmode$ mode. At the end of each time step, the ego vehicle's location represents a slot for the acceleration lane of on-ramp $i$. These slots continue until the ego vehicle exits the acceleration lane.} For example, if the ego vehicle exits the acceleration after $3.5\timestep$ seconds, there are four slots corresponding to its location at times $\timestep$, $2\timestep$, $3\timestep$, and $4\timestep$; see Figure~\ref{fig:slots}. Let $\numaccslots{i}$ be the number of slots for the acceleration lane of on-ramp $i$, and $\numaccslots{a} = \sum_{i}\numaccslots{i}$. Note that by definition, the last slot of every acceleration lane is on the mainline. Without loss of generality, we consider a configuration of slots at $t=0$, such that the last slot of every acceleration lane coincides with a mainline slot. The details to justify the no loss in generality are given as follows: for a given configuration of mainline slots at $t = 0$, there exists $t_i \in [0,\timestep)$ such that the last acceleration lane slot of on-ramp $i$ coincides with a mainline slot at time $t_i$. Thereafter, the last acceleration lane slot coincides with a mainline slot after every $\timestep$ seconds, i.e., at times $k\timestep + t_i$ for all $k \in \N_{0}$. The times $k\timestep + t_i$, $k \in \N_{0}$, are the release times of on-ramp $i$ in the proposed RM policies. Therefore, the assumption that all the last acceleration lane slots initially coincide with a mainline slot (which corresponds to $t_i = 0$ for all $i \in [\numramps]$) only means a shifted sequence of release times, which justifies the no loss in generality.
\par
Consider an initial condition of the vehicles, where the vehicles are in the free flow state and the location of each vehicle coincides with a slot for all times in the future. For this initial condition and under the proposed RM policies, the following sequence of events occurs during each time step: (i) the mainline slots rotate one position in the direction of travel and replace the next slot. Similarly, the acceleration lane slots of each on-ramp replace the next slot, with the last slot replacing the first slot. The numbering of the slots is reset, with the new first mainline slot after on-ramp $1$ numbered $1$, and the rest of the mainline slots numbered in an increasing order; the acceleration lane slots are numbered similarly; (ii) vehicles that reach their destination off-ramp exit the network without creating any obstruction for the upstream vehicles; (iii) if permitted by the RM policy, a new vehicle is released. For the given initial condition and under the proposed RM policies, the location of the newly released vehicle will coincide with a slot for all times in the future \footnote{Without loss of generality, we let the first vehicle in the queue of each on-ramp be at rest before being released. This assumption naturally applies when a queue has formed at the on-ramp. If, on the other hand, there is no queue, we can modify the proposed policies so that if the location an arriving vehicle does not coincide with a slot, then it will not be released.}.
\par

\mpcommentout{
Consider an isolated on-ramp as shown in Figure \ref{fig: Merge scenario}. Let $V_m \in (0, \speedlim]$ be the merging speed at the on-ramp, and $t_m$ the time it takes to reach the mainline at that speed. The merging speed depends on the distance from the ramp meter to the merging point with the mainline (see Appendix \ref{Section: (Appx) Road specifications} for details on calculating $V_m$ and $t_m$). We define the \emph{acceleration lane} of the on-ramp as the section of the network immediately downstream of the ramp meter such that, if the ego vehicle is in the cruise control mode throughout the acceleration lane, it can achieve the speed $\speedlim$ at the end of it. Note that the acceleration lane may or may not overlap with the mainline, depending on the distance from the ramp meter to the merging point with the mainline.
\par
The ramp metering controllers must obey the safety rule, meaning that they release a vehicle only if there is sufficient gap on the mainline at the moment of release. We formalize this rule in what follows. In brief, the nearby vehicles communicate an estimate of their position at the end of the time interval $[0,t_m]$ to the on-ramp. The on-ramp then uses these estimates to determine whether it is safe to release a vehicle at time $t = 0$. Given $t_0 \geq 0$ and $t \geq t_0$, let $\hat{d}_e(t_0, t)$ and $\hat{v}_e(t_0,t)$ be estimates of the distance of the ego vehicle relative to the merging point and its speed, respectively, at the end of the time interval $[t_0, t]$. Without loss of generality, let $\hat{d}_e(t_0, t) > 0$ if vehicle $e$ is estimated to cross the merging point at the end of the time interval $[t_0, t]$, and $\hat{d}_e(t_0, t) \leq 0$ otherwise. Let $\hat{S}_i(t_0, t) := \timeheadway \hat{v}_i(t_0,t) + \standstilldist + \frac{V_m^2 - \hat{v}_i^2(t_0,t)}{2|\minaccel|}$. We assume that the on-ramp and vehicles use the following simple speed estimation rule: if vehicle $e$ is in the cruise control mode, then $\hat{v}_e(t_0,t)$ is calculated assuming that it stays in the cruise control mode for all $t \geq t_0$. If vehicle $e$ is in the vehicle following mode, then $\hat{v}_e(t_0,t) = v_e(t_0)$ for all $t \geq t_0$. Let $M$ be the set of vehicles on the mainline, and $L$ be the set of vehicles that are at the on-ramp's acceleration lane but have not yet merged into the mainline. Let $M^{+} = \{i \in M:~ \hat{d}_i(0, t_m) > 0\}$, and $M^{-}$ defined similarly. For an ego vehicle that is about to be released, its putative preceding and following vehicles are determined as follows:
\begin{equation*}
\begin{aligned}
     p &= \argmin_{i \in M^{+}\cup L} \hat{d}_i(t_0, t), \\  
     f &= \argmax_{i \in M^{-}} \hat{d}_i(t_0, t).
\end{aligned}
\end{equation*}
\par
We assume that the ramp metering controllers satisfy the following rule at the moment of releasing the ego vehicle:
\mpmargin{(M) is consistent with (M2) in ks version.}
\begin{itemize}
    \item [(M)] The estimate distance of the ego vehicle at the moment of merging with respect to its putative preceding and following vehicles must be greater than the safety distance. Specifically, $\hat{d}_p(0, t_m) > \hat{S}_p(0, t_m)$ and $\hat{d}_f(0, t_m) > \hat{S}_f(0, t_m)$.
\end{itemize}

\begin{figure}[t]
    \centering
    \includegraphics[width=0.6\textwidth]{Figures/MergeScenario.png}
    
    \vspace{0.2 cm}
    
    \caption{\sf A merging scenario}
    \label{fig: Merge scenario}
\end{figure}
}

\mpcommentout{
as shown in Figure \ref{fig: Ramp metering controller} which takes, as the input, a combination of the on-ramp queue size and vehicle states, and gives, as the output to the lower level controller, permission to join the mainline for the on-ramp vehicles and ``future leader" assignments to the vehicles in the cooperation area. The lower level controller is a vehicle controller which takes, as the input, its state and the state of its current and future leaders, and gives, as the output, the throttle/brake commands. For an ego vehicle in the cooperation area, its current leader is its preceding vehicle and its future leader is its future preceding vehicle once it is outside the cooperation area.
\begin{figure}[t]
    \centering
    \includegraphics[width=0.35\textwidth]{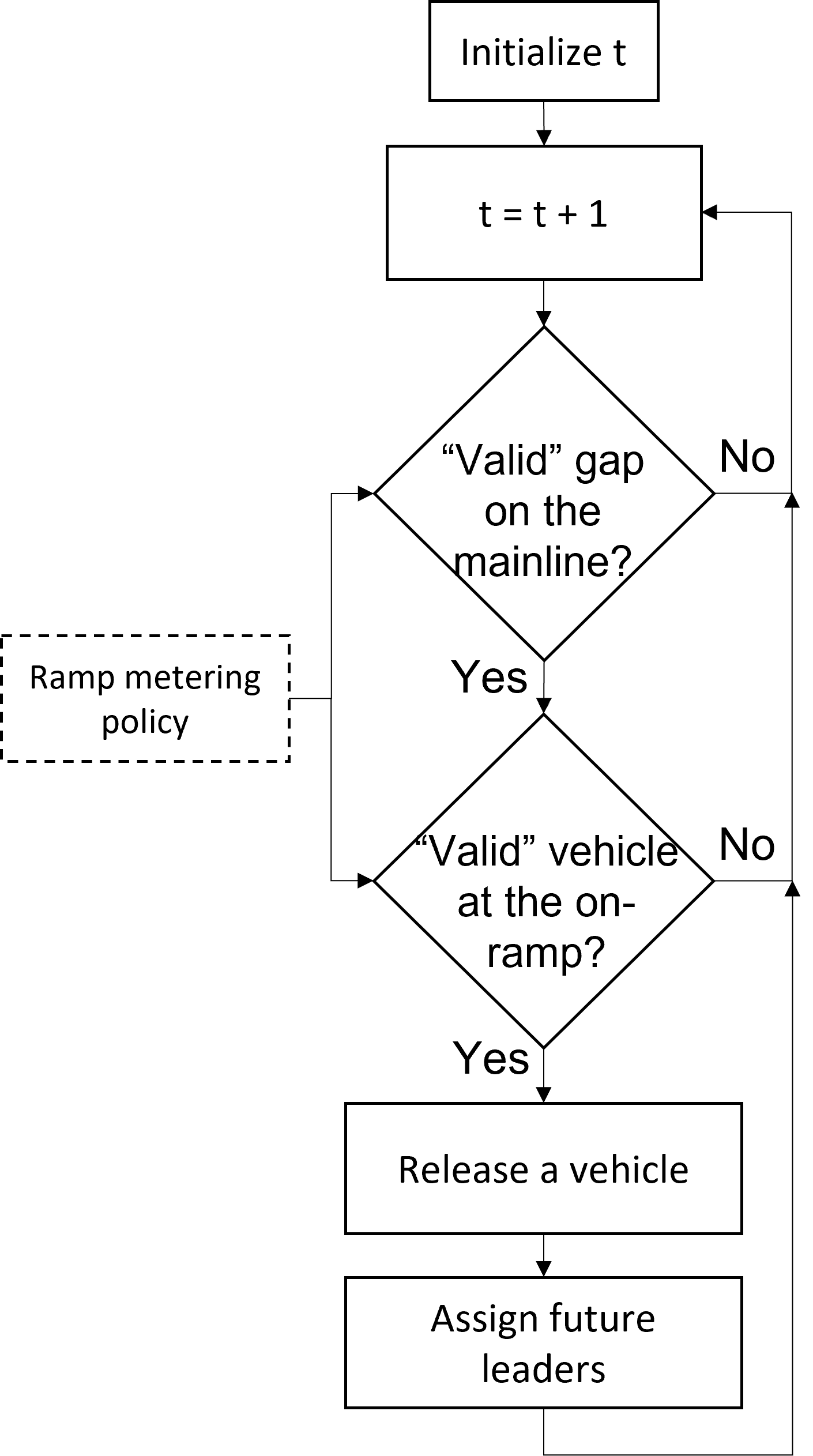}
    
    \vspace{0.2 cm}
    
    \caption{\sf Ramp metering controller}
    \label{fig: Ramp metering controller}
\end{figure}

\begin{figure}[t]
    \centering
    \includegraphics[width=0.6\textwidth]{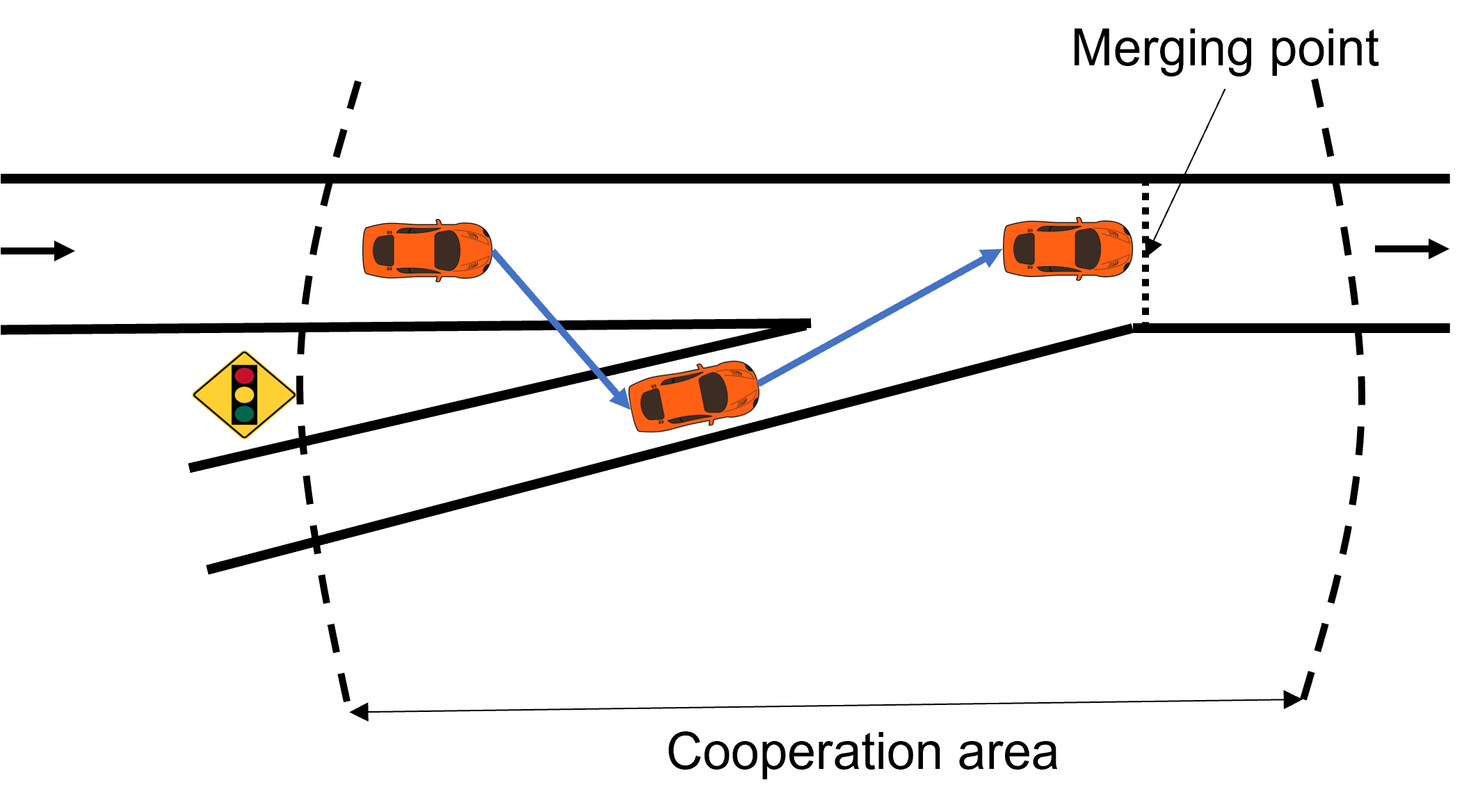}
    
    \vspace{0.2 cm}
    
    \caption{\sf Cooperation area; the blue arrow indicates the future leaders assigned by the upper-level controller}
    \label{fig: Coop area}
\end{figure}
}

\mpcommentout{
\begin{figure}[ht]
    \centering
    \includegraphics[width=0.4\textwidth]{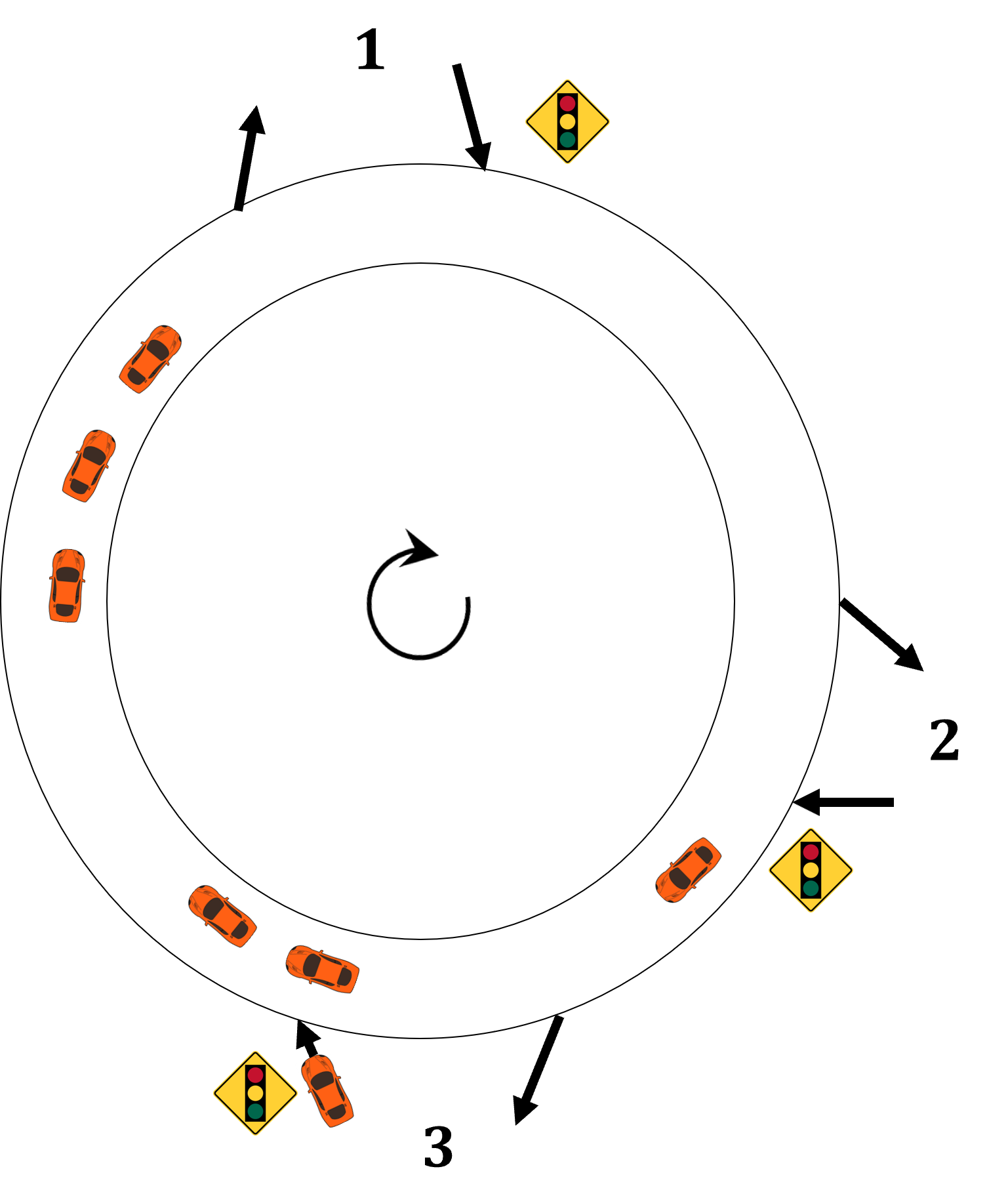}
    
    \vspace{0.2 cm}
    
    \caption{\sf Illustration of the problem setup with three ramps}
    \label{fig: problem setup}
\end{figure}
}

\section{Ramp Metering and Performance Analysis}
\label{sec:large-merge-speed}
In this section, we present traffic-responsive RM policies that operate under vehicle following safety constraints and analyze their performance. An inner-estimate to the under-saturation region of each policy is provided in Sections~\ref{sec:renewal}-\ref{sec:greedy}, and is then compared to an outer-estimate in Section~\ref{sec:necessary}. In Section~\ref{Section: Straight Line}, we discuss the extension of our results to the straight road configuration.
\par
For easier navigation, we briefly review the proposed policies here. The policies in Sections~\ref{sec:renewal}, \ref{sec:release-rate}, and \ref{Section: Impact of Autonomy} are coordinated, while the one in Section~\ref{sec:distributed} is a distributed version of the coordinated policy in Section~\ref{sec:release-rate}, and the one in Section~\ref{sec:greedy} is a local policy. All of the proposed policies operate under vehicle following safety constraints, where the on-ramps release new vehicles only if \mpcomment{there is sufficient gap between vehicles on the mainline at the moment of release}. Moreover, All of the policies are reactive, meaning that they only require real-time traffic measurements without requiring any prior knowledge of the arrival rate $\Arrivalrate{}$ or the routing matrix $\routingmatrix$. \mpcomment{They obtain the real-time traffic measurements by V2I communication.}
\par
The proposed RM policies work in synchronous \emph{cycles} during which an on-ramp does not release more vehicles than the number of vehicles waiting in its queue, i.e., its queue size, at the start of the cycle. The synchronization of cycles is done in real time in Section~\ref{sec:renewal}, whereas it is done once offline in Sections~\ref{sec:release-rate}-\ref{sec:greedy}. The policies differ in how they use the traffic measurements: (i) the policy in Section~\ref{sec:renewal} pauses release until a new cycle starts, (ii) the policies in Sections~\ref{sec:release-rate}-\ref{sec:distributed} adjust the time between successive releases during a cycle, and (iii) the policy in Section~\ref{Section: Impact of Autonomy} adopts a conservative dynamic safe gap criterion for release during a cycle. However, the actions of the three policies in Sections~\ref{sec:release-rate}-\ref{Section: Impact of Autonomy} at the free flow state are equivalent to that of the local policy in Section~\ref{sec:greedy}.
\par
The traffic measurements required by the proposed policies will be a combination of $\Queuelength{}$ and the state (or part of the state) of all the vehicles $\StateofDySys := (x_e)_{e \in [n]}$. Here, $n$ is the number of vehicles on the mainline and acceleration lanes, and $x_e$ is the state of the ego vehicle as defined in Section~\ref{sec:vehicle-control}. These measurements are communicated to the on-ramps by V2I communication. Table \ref{table:ramp-metering-summary} provides a summary of the communication cost of the RM policies considered in this paper. The notion of communication cost is explained in Appendix~\ref{Section: (Appx) comm cost}. In Table \ref{table:ramp-metering-summary}, $\numaccslots{m}$ is the total number of slots in all the merging areas, which is at most $\numcells + \numaccslots{a}$, $\updateperiod$ is a design update period, and $c_{i}$, $i = 1, 2$, is the contribution of the queue size to the communication cost. 
\begin{table}[htb!]
\centering
\begin{tabularx}{\textwidth}{l l l l}
  \hline
  
  Section & RM Policy & \hspace{2cm} & Worst-case Communication Cost \\ 
   & & \hspace{2cm} & (number of transmissions per $\timestep$ seconds) \\
  \hline
 \ref{sec:renewal} & Renewal & & $\numcells + \numaccslots{a} + c_{1}$\\
 \ref{sec:release-rate} & Dynamic Release Rate ($\GenRM$) & & $\numramps(\numcells + \numaccslots{a})/\updateperiod + \numaccslots{m} + c_{2}$ \\ 
 \ref{sec:distributed} & Distributed DRR ($\DisDRRRM$) &  & $(\numcells + \numaccslots{a})/\updateperiod + \numaccslots{m} + c_{2}$ \\
 \ref{Section: Impact of Autonomy} & Dynamic Space Gap ($\ConRM$) &  & $(\numcells + \numaccslots{a})\numramps + c_{2}$\\
 \ref{sec:greedy} & Greedy & & $\numaccslots{m}$\\
 \hline
\end{tabularx}
\caption{\sf Summary of the RM policies studied in this paper.}\label{table:ramp-metering-summary}
\end{table}

\subsection{Renewal Policy}\label{sec:renewal}
The first policy, called the \emph{Renewal} policy, is inspired by the queuing theory literature in the context of communication networks, e.g., see \citet{georgiadis1995scheduling, armony2003queueing}. \mpcomment{In this policy, an on-ramp pauses vehicle release once it has released all the vehicles waiting at the start of a cycle until all other on-ramps have done the same.} 
\mpcommentout{
, and these vehicles exit the freeway, i.e., until the mainline and acceleration lanes are empty}

\begin{definition}\label{Def: Q-RM policy}
\textbf{(Renewal ramp metering policy)}
\mpcommentout{No vehicle is released until all the initial vehicles exit the freeway, i.e., until the mainline and acceleration lanes are empty, say at time $t_1$.}
No vehicle is released until all the initial vehicles reach the free flow state, as defined in (VC3), at time $t_1$. Thereafter, the policy works in cycles of \emph{variable} length. At the start of the $k$-th cycle at time $t_k$, each on-ramp allocates itself a ``quota" equal to the number of vehicles at that on-ramp at $t_k$. At time $t$ during the cycle, an on-ramp releases the ego vehicle if the following conditions are satisfied: 
\begin{itemize}
\item[(M1)] $t = k\timestep$ for some $k \in \N_{0}$. 
\item [(M2)] the on-ramp has not reached its quota. 
\item[(M3)] $y_e(t) \geq S_e(t)$, i.e., \mpcomment{the ego is at a safe distance with respect to its leading vehicle (cf. (VC1)(b))}.
\item[(M4)] it predicts that the ego vehicle will be at a safe distance with respect to its virtual leading and following vehicles between merging and exiting the acceleration lane (cf. (VC1)(a)).
\end{itemize}
\par
Once an on-ramp reaches its quota, \mpcomment{it pauses vehicle release until all other on-ramps reach their quotas, at which point the next cycle starts.}
\end{definition}
\begin{remark}
A simpler form of this policy, called the \emph{Quota} policy, is analyzed in \citet{georgiadis1995scheduling}. \mpcomment{ In order to apply the Quota policy directly to the current transportation setting, additional analysis is required that considers the vehicle dynamics. This analysis is done in the next theorem.}
\end{remark}
We introduce an additional notation for the next result. Consider a situation where on-ramp $i$ releases the ego vehicle under (M1)-(M4), and its virtual leading and following vehicles are moving at the speed $\speedlim$, both occupying mainline slots. We let $\timestep_i$ be the minimum time headway between the front bumpers of the virtual leading and following vehicles, ensuring that they maintain a safe distance from the ego vehicle after merging. Note that \mpcomment{since both the virtual leading and following vehicles are assumed to occupy mainline slots}, $\timestep_i$ is an exact multiple of $\timestep$ and $\timestep_i \geq 2\timestep$, where the equality holds if and only if the merging speed at on-ramp $i$ is $\speedlim$.
\mpcommentout{
We let $\timestep_i$ be the minimum time headway between the front bumpers of the two vehicles at the moment of merging, such that if the ego vehicle remains in the $\Speedmode$ mode, its projected following vehicle does not violate the safety distance in the future.  
}
\begin{theorem}\label{Prop: stability of QRM policy for low merging speed}
For any initial condition, the Renewal policy keeps the freeway under-saturated if
 $(\frac{\timestep_i}{\timestep}-1) \Avgload{i} - (\frac{\timestep_i}{\timestep}-2)\Arrivalrate{i} < 1$ for all $i \in [\numramps]$.
\end{theorem}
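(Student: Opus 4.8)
The plan is to analyze the Markov chain embedded at the start of each cycle. Let $t_k$ denote the start of the $k$-th cycle and write $W_{i,k} := \Queuelength{i}(t_k)$ for the quota of on-ramp $i$. Because an on-ramp releases exactly its quota during a cycle, the queue recursion collapses to $\Queuelength{i}(t_{k+1}) = A_i^{(k)}$, where $A_i^{(k)}$ is the number of Bernoulli($\Arrivalrate{i}$) arrivals to on-ramp $i$ during cycle $k$. Since the vehicle dynamics are deterministic and the quotas are fixed at $t_k$ (new arrivals merely wait and do not alter the current cycle's service), the cycle length $\Cyclelength^{(k)}$ is a deterministic function of the state at $t_k$, and the arrivals during the cycle are independent of it; hence $\E{A_i^{(k)} \mid \mathcal{F}_k} = \Arrivalrate{i}\,\Cyclelength^{(k)}$ in units of $\timestep$. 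The initial wait until the free flow state (VC3) costs at most $\Tempty$ and only affects additive constants. The whole problem thus reduces to bounding the expected cycle length in terms of the quotas and then running a Foster--Lyapunov drift argument.

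The crux --- and the step I expect to be the main obstacle --- is an upper bound on the time $F_i$ for on-ramp $i$ to clear its quota $W_{i,k}$. Writing $\kappa_i := \timestep_i/\timestep \in \N$ (so $\kappa_i \ge 2$, with equality iff the merging speed equals $\speedlim$), I claim
\[
F_i \;\le\; (\kappa_i - 1)\,n^{(i)}_T \;+\; W_{i,k} \;+\; C_i,
\]
where $n^{(i)}_T$ is the number of through vehicles (released by on-ramps $j\ne i$) crossing the merging point of on-ramp $i$ during the cycle, and $C_i$ is a constant. This bound is where the slot/gap geometry of Section~\ref{Section: RM rules} and the safety rules (VC1)--(VC3) enter: each merge from on-ramp $i$ requires a gap of $\timestep_i$, i.e.\ $\kappa_i$ mainline slots, relative to the surrounding through vehicles, whereas consecutive vehicles released from the same on-ramp $\timestep$ apart stay safe by (VC2) and occupy adjacent slots once at $\speedlim$. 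Consequently the worst case for on-ramp $i$ is a stream of through vehicles spaced exactly $\kappa_i-1$ slots apart, which blocks every merge while consuming $(\kappa_i-1)n^{(i)}_T$ slots; after the stream the remaining $W_{i,k}$ vehicles are released one per slot. I would formalize this as a worst-case counting (equivalently, $\kappa_i-1$ slots ``reserved'' per through vehicle plus one slot per merge) and absorb boundary effects (residual mainline vehicles, acceleration-lane transients) into $C_i$.

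Next I would convert $n^{(i)}_T$ into quota terms by conservation: over a cycle each on-ramp $j$ releases $W_{j,k}$ vehicles, a fraction $\tilde{\routingmatrix}_{ji}$ of which cross link $i$, so $n^{(i)}_T \le \sum_{j\ne i}\tilde{\routingmatrix}_{ji}W_{j,k} + \numcells$, the additive constant bounding vehicles left on the mainline from the previous cycle. Using $\tilde{\routingmatrix}_{ii}=1$ and $\Cyclelength^{(k)} = \max_i F_i$ then gives
\[
\Cyclelength^{(k)} \;\le\; \max_{i\in[\numramps]}\Big\{(\kappa_i-1)\!\!\sum_{j\ne i}\!\tilde{\routingmatrix}_{ji}W_{j,k} + W_{i,k}\Big\} + C .
\]

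Finally I would close the loop with a drift computation for the cycle-indexed chain. Applying the last display at index $k+1$, taking $\E{\cdot\mid\mathcal{F}_k}$, and using $\E{W_{j,k+1}\mid\mathcal{F}_k} = \Arrivalrate{j}\Cyclelength^{(k)}$, the $i$-th term inside the maximum has conditional mean $\big[(\kappa_i-1)\sum_{j\ne i}\tilde{\routingmatrix}_{ji}\Arrivalrate{j} + \Arrivalrate{i}\big]\Cyclelength^{(k)} = \big[(\kappa_i-1)\Avgload{i} - (\kappa_i-2)\Arrivalrate{i}\big]\Cyclelength^{(k)}$, where I used $\sum_{j\ne i}\tilde{\routingmatrix}_{ji}\Arrivalrate{j} = \Avgload{i}-\Arrivalrate{i}$. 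The hypothesis makes every bracket at most $1-\eps$ for some $\eps>0$, so up to the $O(\sqrt{\Cyclelength^{(k)}})$ fluctuation of the $W_{j,k+1}$ around their means (finitely many terms, controlled by standard concentration and $\max_i$), one obtains $\E{\Cyclelength^{(k+1)}\mid\mathcal{F}_k} \le (1-\eps)\Cyclelength^{(k)} + O(\sqrt{\Cyclelength^{(k)}}) + C'$. This geometric-type drift yields $\limsup_k \E{\Cyclelength^{(k)}} < \infty$ (e.g.\ via Foster's criterion), whence $\limsup_k \E{\Queuelength{i}(t_{k+1})} = \Arrivalrate{i}\limsup_k\E{\Cyclelength^{(k)}} < \infty$. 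A final routine step bounds the queue at arbitrary times by its value at the enclosing cycle boundary plus the within-cycle arrivals, both of bounded expectation, giving $\limsup_{t}\E{\Queuelength{i}(t)}<\infty$ for all $i\in[\numramps]$.
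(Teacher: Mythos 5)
Your proposal follows essentially the same route as the paper's proof: the cycle-indexed Markov chain, the clearing-time bound $F_i \leq (\kappa_i-1)\,n^{(i)}_T + W_{i,k} + C_i$ --- which is exactly the paper's key inequality $s_i - t_{k+1} \leq (\frac{\timestep_i}{\timestep}-1)\Nodedegree{i}(t_{k+1}) - (\frac{\timestep_i}{\timestep}-2)\Queuelength{i}(t_{k+1}) + \numcells + \numaccslots{a}$ rewritten via $(\kappa_i-1)\Nodedegree{i} - (\kappa_i-2)\Queuelength{i} = (\kappa_i-1)(\Nodedegree{i}-\Queuelength{i}) + \Queuelength{i}$, with $\Nodedegree{i}-\Queuelength{i}$ being your through traffic --- followed by the same conversion to loads and a Foster--Lyapunov drift at cycle boundaries. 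The only divergence is technical: the paper takes $\Cyclelength^2(k)$ as the Lyapunov function and closes the drift via the strong law of large numbers (following Lemma 1 of Georgiadis et al.), whereas you run a first-moment drift and control $\E{\max_i(\cdot)}$ by concentration; both are valid ways to finish the identical core argument.
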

\begin{proof}
See Appendix \ref{Section: (Appx) Proof of Q-RM prop}.
\end{proof}
\par\textbf{V2I communication requirements}: the Renewal policy requires the vector queue sizes $\Queuelength{}=[\Queuelength{i}(t)]$ and the state of all the vehicles $\StateofDySys$. Its worst-case communication cost is calculated as follows: at each time step during a cycle, any vehicle that is on the mainline or an acceleration lane must communicate its state to all on-ramps. This information is used both for \mpcomment{safety distance} evaluation (cf. (M3)-(M4)) and verifying that the vehicles are in the free flow state. After a finite time, the number of vehicles that communicate their state is no more than $\numcells + \numaccslots{a}$. 
\mpcommentout{
at each time step during a cycle, any vehicle that is on the mainline or an acceleration lane must communicate its state to all on-ramps. After a finite time, the number of these vehicles is no more than $\numcells + \numaccslots{a}$.}
Furthermore, at the start of every cycle, all the vehicles in each on-ramp queue must communicate their presence in the queue to that on-ramp. The contribution of the on-ramp queue to the communication cost is
\begin{equation*}
    c_{1} := \limsup_{K \ra \infty}\frac{1}{K}\sum_{t_k \leq K\timestep}\sum_{i\in [\numramps]}\Queuelength{i}(t_k),
\end{equation*}
where $t_k$ is the start of the $k$-th cycle in the Renewal policy. Hence, the communication cost $C$ is upper-bounded by $\numcells + \numaccslots{a} + c_{1}$ transmissions per $\timestep$ seconds.
\begin{figure}[t!]
        \centering
        \includegraphics[width=0.4\textwidth]{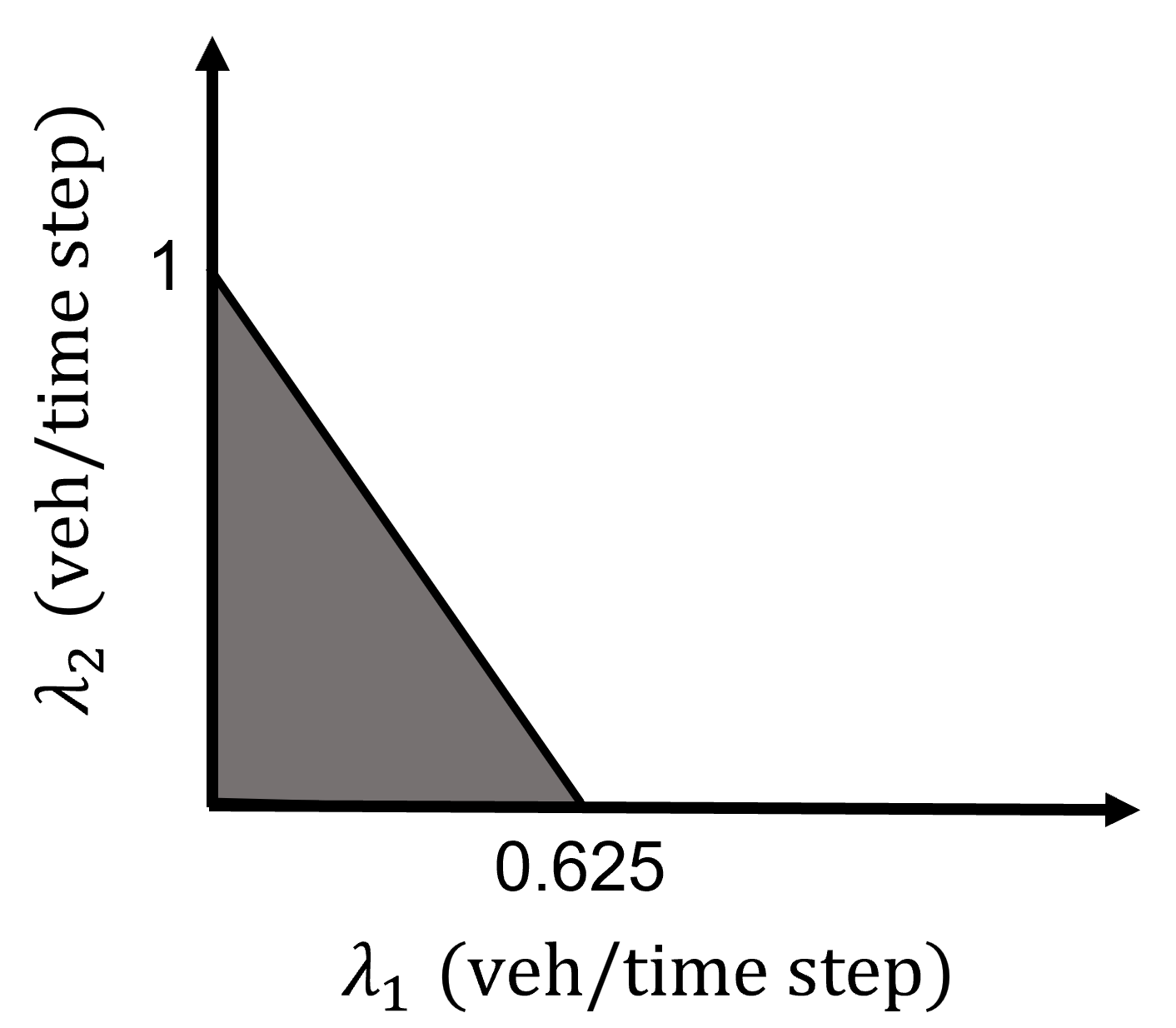}
        \vspace{0.2 cm}
    \caption{\centering\sf An inner-estimate of the under-saturation region of the Renewal policy (grey area) from Example~\ref{Ex: RRM example}.}\label{Fig: Numerical example for RRM}
\end{figure}
\begin{remark}
Under the constant time headway safety distance rule in Section~\ref{sec:vehicle-control}, the flow capacity of the mainline is $1$ vehicle per $\timestep$ seconds. Theorem~\ref{Prop: stability of QRM policy for low merging speed} provides an inner-estimate of the under-saturation region in terms of the induced loads $\Avgload{i}$, arrival rates $\Arrivalrate{i}$, and the mainline flow capacity. In particular, if for every $i \in [\numramps]$, $(\timestep_i/\timestep-1)\Avgload{i} - (\timestep_i/\timestep-2)\Arrivalrate{i}$ is less than the flow capacity, then the Renewal policy keeps the freeway under-saturated.
\end{remark}
\begin{example}\label{Ex: RRM example}
Let $\numramps=3$ and suppose that $\timestep_1=\timestep_3=2\timestep$, $\timestep_2=3\timestep$, i.e., the merging speed at on-ramps $1$ and $3$ are $\speedlim$ and is lower at on-ramp $2$. Let
\begin{equation*}
    \routingmatrix = \begin{pmatrix}
    0.2 & 0.7 & 0.1 \\
    0 & 0.8 & 0.2 \\
    0.5 & 0 & 0.5
    \end{pmatrix},~ \Arrivalrate{3}=0.5~[\text{veh}/\text{time step}].
\end{equation*}
\par
Then, the inner-estimate of the under-saturation region given by Theorem~\ref{Prop: stability of QRM policy for low merging speed} is
\begin{equation*}
\begin{aligned}
    \{(\Arrivalrate{1},\Arrivalrate{2}): \Avgload{1} < 1,~ 2\Avgload{2}-\Arrivalrate{2}<1,~ \Avgload{3}<1\} &= \{(\Arrivalrate{1},\Arrivalrate{2}): \Arrivalrate{1}+0.5\Arrivalrate{2} < 1,~ 1.6\Arrivalrate{1}+\Arrivalrate{2}<1,~ 0.1\Arrivalrate{1}+0.2\Arrivalrate{2}<1\} \\
    & =\{(\Arrivalrate{1},\Arrivalrate{2}):  1.6\Arrivalrate{1}+\Arrivalrate{2}<1\},
    \end{aligned}
\end{equation*}
which is illustrated in Figure~\ref{Fig: Numerical example for RRM}.
\end{example}
\mpcomment{Recall that one of our contributions is understanding the interplay of safety and throughput.} This interplay is captured using the parameter $\timestep_i$, $i \in [\numramps]$. In particular, as the merging speed at an on-ramp decreases, the required safety distance $S_e$ in \eqref{eq:safety-distance} increases, \mpcomment{which would increase $\timestep_i$.} This puts a limit on the rate at which the on-ramp can release new vehicles under the vehicle following safety constraint, which in turn decreases the throughput. 

\begin{remark}
The Renewal policy has a variable cycle length that depends on queue sizes at the start of the cycle, with larger queue sizes generally leading to longer cycles. The impact of a variable cycle length on performance can be both positive and negative. On one hand, it can increase the total travel time because, any arrival during a cycle is delayed until the next cycle starts. To avoid this issue, one may enforce a \emph{fixed} cycle length (independent of the queue sizes). However, this could have a negative impact on throughput. In particular, a variable cycle length increases the chance of vehicles being released in platoons, rather than individually. Such a platoon release can be more efficient in using the space on the freeway since it can increase the release rate of the on-ramps, thus improving the throughput. For example, let the merging speed at on-ramp $i$ be less than $\speedlim$ such that $\tau_i=3\timestep$ seconds, and suppose that two vehicles are waiting in on-ramp $i$'s queue. Each vehicle requires a time headway of at least $\tau_i$ seconds between the mainline vehicles to safely merge between them. Therefore, the individual release requires a time headway of $2\tau_i = 6\timestep$ seconds in total. However, the platoon release only requires a time headway of $\tau_i+\timestep = 4\timestep$ seconds. As we will show, this phenomenon can result in a better throughput under the Renewal policy compared to the other policies in the paper which use a fixed cycle length.
\end{remark}

\mpcommentout{
\begin{remark}
One of the features of the Renewal policy is that a new cycle starts when all the on-ramps reach their quotas, i.e., when all the vehicles from the previous cycle are released. This may lead to scenarios where a single on-ramp with large enough quotas prevent all the other on-ramps from releasing new vehicles until the next cycle starts, thus leaving some parts of the freeway empty which is not practical. In the following sections, we introduce policies that use a \emph{fixed} cycle length in order avoid these situations. 
\end{remark}
}

\subsection{Dynamic Release Rate Policy}\label{sec:release-rate}
This policy imposes dynamic minimum time gap criterion, in addition to (M1), between release of successive vehicles from the same on-ramp. Changing the time gap between release of successive vehicles by an on-ramp is similar to changing its release rate, and hence the name of the policy. 

\begin{definition}\label{Def: Centralized Adaptive Fixed-Cycle Quota RM policy}
\textbf{(Dynamic Release Rate ($\GenRM$) ramp metering policy)} 
The policy works in cycles of \emph{fixed} length $\Cyclelength \timestep$, where $\Cyclelength \in \N$. At the start of the $k$-th cycle at $t_k = (k-1)\Cyclelength \timestep$, each on-ramp allocates itself a ``quota" equal to the number of vehicles at that on-ramp at $t_k$. At time $t \in [t_k, t_{k+1}]$ during the $k$-th cycle, on-ramp $i$ releases the ego vehicle if (M1)-(M4) and the following condition are satisfied:
\begin{itemize}
\item[(M5)] at least $\Releasetime{}(t)$ seconds has passed since the release of the last vehicle from on-ramp $i$, where $g(t)$ is a piecewise constant minimum time gap, updated periodically at $t = \updateperiod, 2\updateperiod, \ldots$, as described in Algorithm~\ref{Alg: Gen-aFCQ-RM policy}.
\end{itemize}
\par
Once an on-ramp reaches its quota, it pauses release during the rest of the cycle.
\par
\mpcommentout{
For the ego vehicle, let
\begin{equation*}
x_e^G(t):= 
\begin{cases}
\left(\min\{0, y_e(t) - S_e(t), \inf_{s \in [t_m, t_f]}\hat{y}_e(s) - \hat{S}_e(s)\}, v_e(t) - \speedlim, a_e(t)\right) & \text{if } I_e(t)=0 \\
\left(\min\{0, y_e(t) - S_e(t), \inf_{s \in [t_m, t_f]}\hat{y}_e(s) - \hat{S}_e(s)\}, 0, 0\right) & \text{otherwise}
\end{cases},
\end{equation*}
where the term $\inf_{s \in [t_m, t_f]}\hat{y}_e(s) - \hat{S}_e(s)$ is set to zero whenever the ego vehicle is not in a merging scenario. Note that $\min\{0, y_e(t) - S_e(t), \inf_{s \in [t_m, t_f]}\hat{y}_e(s) - \hat{S}_e(s)\} = 0$ implies: (i) $y_e(t) \geq S_e(t)$; so the ego vehicle is at a safe distance with respect to its leading vehicle at time $t$, and (ii) $\hat{y}_e(s) \geq \hat{S}_e(s)$ for all $s \in [t_m, t_f]$; so the ego vehicle predicts to be at a safe distance with respect to its virtual leading vehicle in $[t_m, t_f]$. The state $\StateofDySys^G$ is the collection of $x_e^G$ for all the vehicles on the mainline and acceleration lanes.
}
\begin{figure}[t!]
        \centering
        \includegraphics[width=0.40\textwidth]{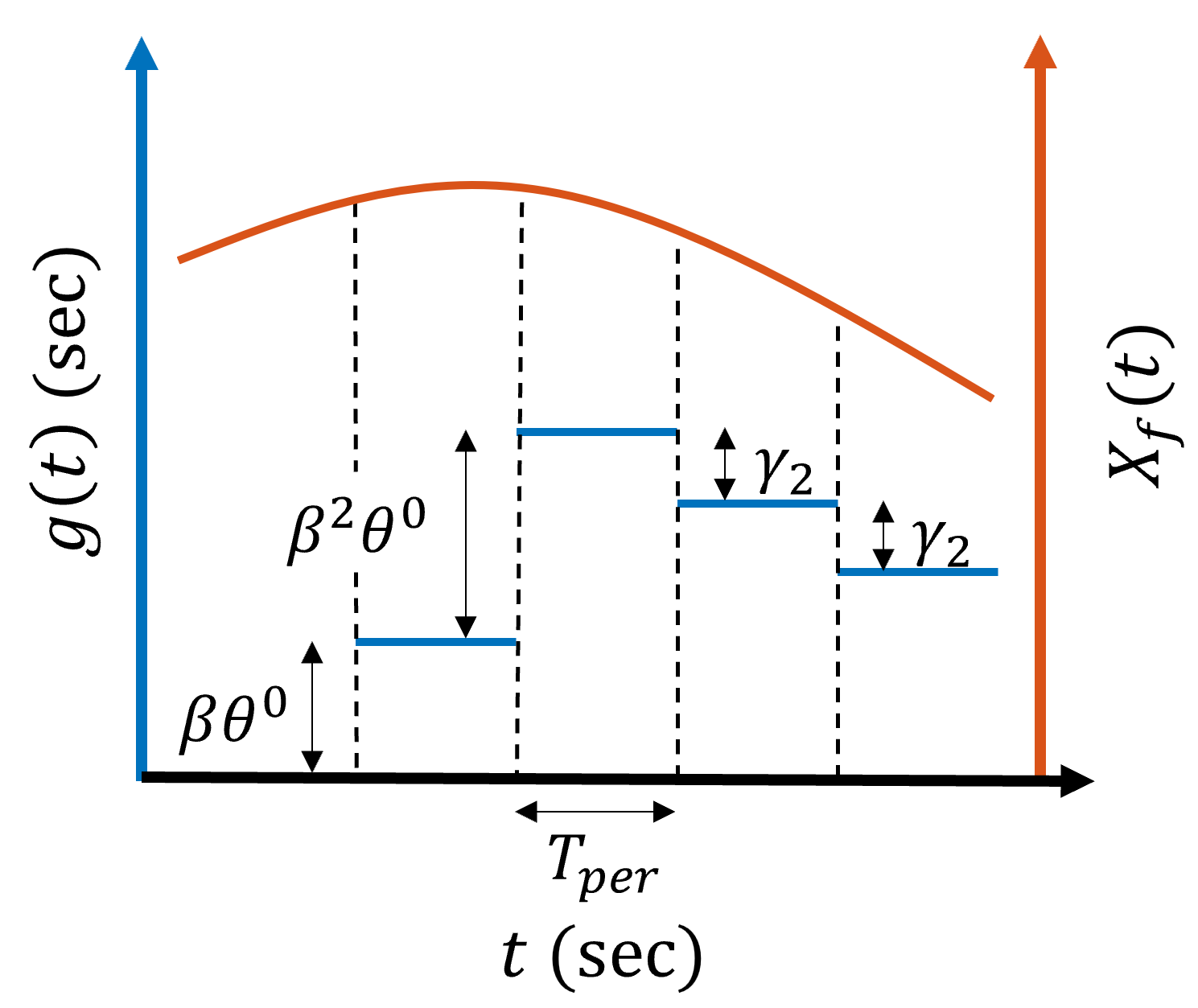}
        \vspace{0.2 cm}
    \caption{\centering\sf An illustration of the update rule for the minimum time gap $\Releasetime{}$ in Algorithm~\ref{Alg: Gen-aFCQ-RM policy}.}\label{fig:g-diagram}
\end{figure}
\begin{algorithm}[H]
\caption{Update rule for the minimum time gap between release of vehicles under the $\GenRM$ policy}\label{Alg: Gen-aFCQ-RM policy}
\begin{algorithmic}
\Require \textbf{design constants:}~$\updateperiod>0, \DesDecreaseconst > 0$, $\ReleasetimeDecConst{2} > 0$, $\ReleasetimeInc{}^{\circ} > 0$, $\ReleasetimeAdj{} > 1$
\par
~\textbf{initial condition:}~
$\Releasetime{}(0) = 0,~ \ReleasetimeInc{}(0) = \ReleasetimeInc{}^{\circ}$,~$\StateofDySys_f(0) = \StateofDySys_{f_1}(0)$

\hspace{-0.475in} \textbf{for} $t = \updateperiod, 2\updateperiod, \cdots$ \textbf{do} 

\If{$\StateofDySys_f(t) \leq \max\{\StateofDySys_f(t-\updateperiod) - \DesDecreaseconst{}, 0\}$ } 
\State $\ReleasetimeInc{}(t) \gets \ReleasetimeInc{}(t-\updateperiod)$
\State $\Releasetime{}(t) \gets \max\{\Releasetime{}(t-\updateperiod) - \ReleasetimeDecConst{2}, 0\}$
\Else{}
\State $\ReleasetimeInc{}(t) \gets \ReleasetimeAdj{} \ReleasetimeInc{}(t-\updateperiod)$
\State $\Releasetime{}(t) \gets \Releasetime{}(t-\updateperiod) + \ReleasetimeInc{}(t)$

\EndIf

\hspace{-0.475in} \textbf{end for}
\end{algorithmic}
\end{algorithm}
In Algorithm \ref{Alg: Gen-aFCQ-RM policy}, $\StateofDySys_f(t) := \StateofDySys_{f_1}(t) + \StateofDySys_{f_2}(t)$, where $\StateofDySys_{f_1}$ and $\StateofDySys_{f_2}$ are defined as follows: $\StateofDySys_{f_1}(t) := \sum_{e \in [n]}(w_1|v_e(t)-\speedlim| + w_2|a_e(t)|)I_e(t)$, where $n$ is the number of vehicles on the mainline and acceleration lanes, and $w_1, w_2$ are normalization factors. Furthermore, for $t \geq \updateperiod$, $\StateofDySys_{f_2}(t) := \sum w_3(\delta_e(t) + \hat{\delta}_e(t))$, where $w_3$ is a normalization factor and the sum is over all the vehicles that either have: (i) violated the safety distance $S_e$ at some time in  $[t-\updateperiod,t]$, or (ii) predicted at some time in  $[t-\updateperiod,t]$ that they would violate the safety distance once they reach a merging point. The terms $\delta_e$ and $\hat{\delta}_e$ are, respectively, the maximum error and maximum predicted error in the relative spacing, and are given by 
\begin{equation*}
\begin{aligned}
    \delta_e(t) &= \max_{t' \in [t-\updateperiod, t]}|y_e(t')-S_e(t')|\mathbbm{1}_{\{y_e(t') < S_e(t')\}}, \\
    \hat{\delta}_e(t) &= \max_{t' \in [t-\updateperiod, t]}|\hat{y}_e(t_m|t')-\hat{S}_e(t_m|t')|\mathbbm{1}_{\{\hat{y}_e(t_m|t') < \hat{S}_e(t_m|t')\}},
\end{aligned}
\end{equation*}
where $\mathbbm{1}$ is the indicator function, and $t_m|t'$ is the predicted time of crossing a merging point based on the information available at time $t' \leq t_m$. If $\delta_e + \hat{\delta}_e$ is non-zero, then the ego vehicle communicates it either right before leaving the network, or at the update times $\updateperiod, 2\updateperiod, \ldots$, whichever comes earlier. Otherwise, it is not communicated by the ego vehicle.
\end{definition}
\begin{remark}\label{remark:X_f-bound}
Recall the safety distance $S_e$ in \eqref{eq:safety-distance}. Since the speeds of the vehicles are bounded, $S_e$ is also bounded. Hence, $\delta_e$ and $\hat{\delta}_e$ in $\StateofDySys_{f_2}$ are bounded. We use this in the proof of the next result.
\end{remark}
\begin{theorem}\label{Prop: stability of general aFCQ-RM}
For any initial condition, $\Cyclelength \in \N$, and design constants in Algorithm~\ref{Alg: Gen-aFCQ-RM policy}, the $\GenRM$ policy keeps the freeway under-saturated if 
   $(\frac{\timestep_i}{\timestep}-1) \Avgload{i} < 1$ for all $i \in [\numramps]$.
\end{theorem}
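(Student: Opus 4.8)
The plan is to prove positive recurrence of the Markov chain obtained by sampling the system at the cycle boundaries $t_k = (k-1)\Cyclelength\timestep$, via a Foster--Lyapunov drift argument with the quadratic Lyapunov function $\Lyap{\Queuelength{}} = \sum_{i \in [\numramps]} \Queuelength{i}^2$. Writing $\Queuelength{i}(t_{k+1}) = \Queuelength{i}(t_k) + A_i^{(k)} - D_i^{(k)}$, where $A_i^{(k)}$ is the number of arrivals to on-ramp $i$ during cycle $k$ (so $\E{A_i^{(k)}} = \Arrivalrate{i}\Cyclelength$) and $D_i^{(k)}$ is the number of releases, the one-cycle drift is
\begin{equation*}
\E{\Lyap{\Queuelength{}(t_{k+1})} - \Lyap{\Queuelength{}(t_k)} \mid \Queuelength{}(t_k)} = \sum_{i \in [\numramps]} \left( 2\Queuelength{i}(t_k)\,\E{A_i^{(k)} - D_i^{(k)} \mid \Queuelength{}(t_k)} + \E{(A_i^{(k)} - D_i^{(k)})^2 \mid \Queuelength{}(t_k)} \right).
\end{equation*}
Since there are at most $\Cyclelength$ arrivals and at most $\Cyclelength$ releases per ramp per cycle, the second-moment terms are uniformly bounded, so it suffices to show that whenever on-ramp $i$ is sufficiently backlogged one has $\E{D_i^{(k)} \mid \Queuelength{}(t_k)} \geq \Arrivalrate{i}\Cyclelength + \eps$ for some $\eps > 0$. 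This gives negative drift outside a bounded set, hence $\limsup_k \E{\Lyap{\Queuelength{}(t_k)}} < \infty$, and a uniform bound on the excursions between cycle boundaries transfers boundedness to all $t$.

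The first technical ingredient is a reduction to the free-flow state. I would show that the adaptive rule of Algorithm~\ref{Alg: Gen-aFCQ-RM policy} keeps the minimum time gap $\Releasetime{}(t)$ bounded and eventually large enough that every released vehicle merges into an admissible slot and reaches the free-flow state of (VC3). Because the per-vehicle spacing errors are uniformly bounded (Remark~\ref{remark:X_f-bound}), $\StateofDySys_f$ is bounded; whenever safety is violated the rule raises $\Releasetime{}$ by a geometrically growing increment $\ReleasetimeInc{}$, so after finitely many updates $\Releasetime{}$ exceeds $\max_i \timestep_i$, forcing $\StateofDySys_f$ below the threshold $\DesDecreaseconst$, after which $\Releasetime{}$ can only decrease in steps of $\ReleasetimeDecConst{2}$. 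This confines $\Releasetime{}$ to a bounded interval, and once the transient of length $O(\Tempty)$ has elapsed the microscopic dynamics reduce to bookkeeping of slot occupancy, leaving randomness only in arrivals and routing.

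Conditioned on this reduction, the per-cycle service lower bound follows from the moving-slot accounting already established for the Renewal policy. I would reuse that accounting on link $i$: a vehicle merging at on-ramp $i$ at a merging speed below $\speedlim$ needs a window of $\timestep_i/\timestep$ consecutive mainline slots, so accommodating the link-$i$ load under the worst-case merge consumes capacity at the effective rate $(\timestep_i/\timestep - 1)\Avgload{i}$ per $\timestep$. The only difference from Theorem~\ref{Prop: stability of QRM policy for low merging speed} is that the fixed cycle together with the minimum-gap rule (M5) forces vehicles to be released individually rather than in platoons, so the platooning credit $(\timestep_i/\timestep - 2)\Arrivalrate{i}$ present in the Renewal bound is unavailable; the binding per-link constraint is therefore exactly $(\timestep_i/\timestep - 1)\Avgload{i} < 1$. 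When it holds there is a strictly positive residual slot rate on link $i$, so a persistently backlogged on-ramp $i$ is served strictly faster than $\Arrivalrate{i}$, which yields the required $\E{D_i^{(k)} \mid \Queuelength{}(t_k)} \geq \Arrivalrate{i}\Cyclelength + \eps$ once the quota exceeds the amount needed to saturate that rate.

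The step I expect to be the main obstacle is carrying out the free-flow reduction and the service bound together, since the adaptive gap $\Releasetime{}$ is driven by the global safety signal $\StateofDySys_f$, which itself depends on the release pattern across all on-ramps. One must rule out a pathological feedback in which $\Releasetime{}$ grows without settling—throttling releases below $\Arrivalrate{i}$ and thereby looking ``safe''—and must also establish, for the circularly coupled links, that when all per-link conditions hold simultaneously there is a feasible schedule realizing the individual service rates. Proving that $\Releasetime{}$ is almost surely bounded for every choice of design constants, and that its stabilized value still permits the slot-level service rate above, is the crux; the quadratic drift and the Foster--Lyapunov conclusion are then routine.
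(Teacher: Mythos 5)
Your two-phase outline (free-flow reduction, then a sampled-chain drift argument) matches the paper's overall structure, but the drift step as you set it up contains a genuine gap. With the Lyapunov function $\sum_i \Queuelength{i}^2$, your argument rests on the per-ramp service bound $\E{D_i^{(k)} \mid \Queuelength{}(t_k)} \geq \Arrivalrate{i}\Cyclelength + \eps$ whenever on-ramp $i$ is sufficiently backlogged. That bound is not just unproven in your sketch; it is false during transients. A backlogged on-ramp $i$ can be starved for arbitrarily many consecutive cycles while upstream on-ramps drain \emph{their} backlogs through its merging point: during such periods the flow crossing link $i$ exceeds the long-run load $\Avgload{i}$ (backlog drain is not constrained by arrival rates), so the hypothesis $(\frac{\timestep_i}{\timestep}-1)\Avgload{i} < 1$ yields no cycle-by-cycle residual service rate for ramp $i$, and the drift of $\Queuelength{i}^2$ can stay positive for as long as the upstream drain lasts. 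You flag this circular coupling as the crux but offer no mechanism to break it, and this is exactly where the paper's proof departs from yours: it abandons queue lengths as the drift variable and works with the on-ramp degrees $\Nodedegree{i}$ (all vehicles anywhere in the system, queued or already on the freeway, that still must cross merging point $i$), using the Lyapunov function $\Nodedegree{}^2(\cdot) = \max_i \Nodedegree{i}^2(\cdot)$ sampled every $\triangle$ steps with $\triangle$ a large multiple of $\Cyclelength$. Degrees are immune to starvation, since an upstream vehicle crossing point $i$ decreases $\Nodedegree{i}$ just as a release from ramp $i$ does. The heart of the paper's argument is then a propagation lemma with no counterpart in your proposal: if $\Nodedegree{i}$ fails to decrease once per $\frac{\timestep_i}{\timestep}-1$ steps, then at the last time a slot passed ramp $i$ unused, ramp $i$'s own quota must have been exhausted, so $\Queuelength{i}$ at that moment is at most $\Cyclelength + \numaccslots{i}$ and $\Nodedegree{i}$ is bounded by the degree of the next upstream ramp plus a constant; iterating this bound around the ring necessarily terminates at a ramp $q$ with $\Queuelength{q} > \triangle^2$, whose degree provably does decrease at the required rate. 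Summing the resulting chain of inequalities produces the negative drift, with Kolmogorov's strong law controlling the arrival terms. Without this idea, or an equivalent way to break the coupling, your quadratic drift cannot be closed.

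A secondary problem is that your free-flow reduction misstates the mechanism. Having $\Releasetime{}$ exceed $\max_i \timestep_i$ does not force $\StateofDySys_f$ down: a moderately large release gap does nothing to make vehicles already on the freeway settle. What the paper shows is that if $\StateofDySys_f(k\updateperiod) \neq 0$ infinitely often, then the multiplicative updates force $\Releasetime{}$ above $\numramps\Tempty(1+\ReleasetimeDecConst{2}/\updateperiod)$; this guarantees an interval of length $\numramps\Tempty$ during which each ramp releases at most one vehicle, hence a sub-interval of length $\Tempty$ with \emph{no} releases, and it is assumption (VC3) applied to that window, combined with (M4) for vehicles released afterwards, that yields the free flow state and the contradiction. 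Consequently the right conclusion is not that $\Releasetime{}$ is ``confined to a bounded interval'' but that $\StateofDySys_f$ vanishes in finite time, after which the algorithm drives $\Releasetime{}$ all the way to $0$ and the policy coincides with the slot-based $\FCQRM$ dynamics. This matters for your service bound as well: if $\Releasetime{}$ merely stabilized at a positive value, releases could be permanently throttled and the claimed throughput would not follow.
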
 
\begin{proof}
See Appendix \ref{Section: (Appx) Proof of general aFCQ-RM prop}.
\end{proof}
\par
\textbf{V2I communication requirements}: The $\GenRM$ policy requires the vector queue sizes $\Queuelength{}=[\Queuelength{i}(t)]$ (if $\Cyclelength > 1$) and the state of all the vehicles $\StateofDySys$. Its worst-case communication cost is calculated as follows: after a finite time, $\StateofDySys$ is communicated to all on-ramps only at the end of each update period $\updateperiod$. After such finite time, the number of vehicles that constitute $\StateofDySys$ is no more than $\numcells + \numaccslots{a}$. Furthermore, at each time step during a cycle, the vehicles in the merging area of each on-ramp communicate their state to that on-ramp for \mpcomment{safety distance} evaluation (cf. (M3)-(M4)). The number of vehicles in all the merging areas is at most $\numaccslots{m}$. Finally, if $\Cyclelength > 1$, then all the vehicles in each on-ramp queue must communicate their presence in the queue to that on-ramp at the start of every cycle. The contribution of the queue size to the communication cost is
\begin{equation*}
    c_{2} := \left(\limsup_{K \ra \infty}\frac{1}{K}\sum_{k \leq K/\Cyclelength+1}\sum_{i\in [\numramps]}\Queuelength{i}((k-1)\Cyclelength \timestep)\right)\mathbbm{1}_{\{\Cyclelength > 1\}}.
\end{equation*}
\par
Hence, $C$ is upper bounded by $\numramps(\numcells + \numaccslots{a})/\updateperiod + \numaccslots{m} + c_{2}$ transmissions per $\timestep$ seconds.
\begin{figure}[t!]
        \centering
        \includegraphics[width=0.4\textwidth]{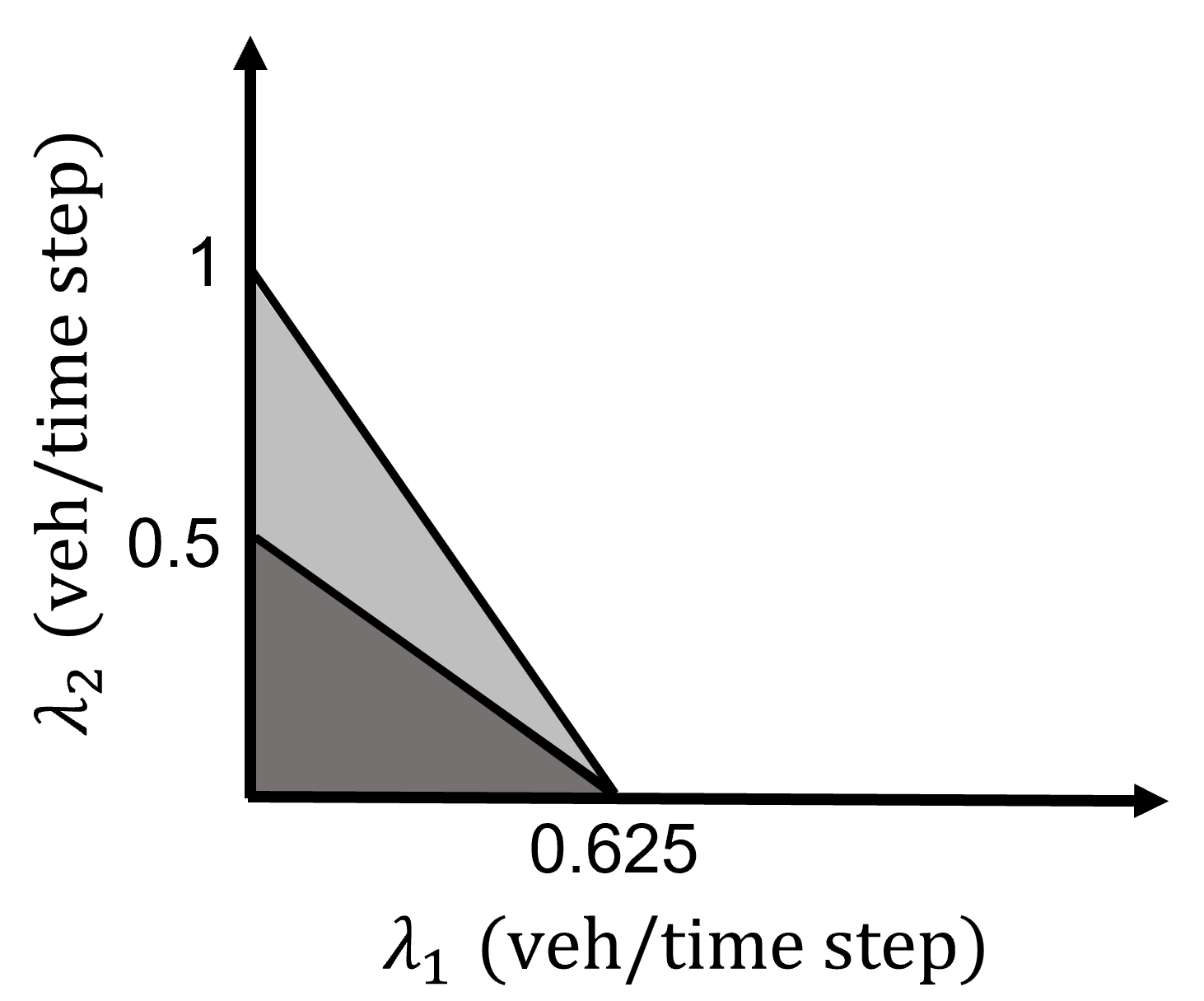}
        \vspace{0.2 cm}
    \caption{\centering\sf An inner-estimate of the under-saturation region (dark grey area) of the $\GenRM$ policy from Example~\ref{Ex: DRR-RM example}. The light grey area indicates the additional under-saturation region if we use the Renewal policy.}\label{Fig: Numerical example for DRRM}
\end{figure}
\begin{remark}
Similar to Theorem~\ref{Prop: stability of QRM policy for low merging speed}, Theorem~\ref{Prop: stability of general aFCQ-RM} provides an inner-estimate of the under-saturation region of the $\GenRM$ policy in terms of the induced loads $\Avgload{i}$ and the mainline flow capacity. The region specified by this estimate is the same for all $\Cyclelength \in \N$ and is contained in the one given for the Renewal policy in Theorem~\ref{Prop: stability of QRM policy for low merging speed}. However, this does not necessarily mean that the throughput of the $\GenRM$ policy is the same for different cycle lengths, or the Renewal policy gives a better throughput as the inner-estimates may not be exact. A simulation comparison of the throughput of the $\GenRM$ policy for different cycle lengths is provided in Section~\ref{Subsection: (Sim) Avg Q lengths}.
\end{remark}
\begin{example}\label{Ex: DRR-RM example}
Let the network parameters be as in Example~\ref{Ex: RRM example}. Then, the inner-estimate of the under-saturation region given by Theorem~\ref{Prop: stability of general aFCQ-RM} is 
\begin{equation*}
\begin{aligned}
    \{(\Arrivalrate{1},\Arrivalrate{2}): \Avgload{1} < 1,~ 2\Avgload{2}<1,~ \Avgload{3}<1\} &= \{(\Arrivalrate{1},\Arrivalrate{2}): \Arrivalrate{1}+0.5\Arrivalrate{2} < 1,~ 1.6\Arrivalrate{1}+2\Arrivalrate{2}<1,~ 0.1\Arrivalrate{1}+0.2\Arrivalrate{2}<1\} \\
    & =\{(\Arrivalrate{1},\Arrivalrate{2}):  1.6\Arrivalrate{1}+2\Arrivalrate{2}<1\},
    \end{aligned}
\end{equation*}
which is illustrated in Figure~\ref{Fig: Numerical example for DRRM} and compared with the inner-estimate of the Renewal policy in Example~\ref{Ex: RRM example}. 
\end{example}
\mpcomment{The $\GenRM$ policy is coordinated since each on-ramp requires the state of all the vehicles. The next policy is a distributed version of the $\GenRM$ policy, where each on-ramp receives information from its downstream on-ramps and the vehicles in its vicinity.}

\subsection{Distributed Dynamic Release Rate Policy}\label{sec:distributed}
This policy imposes dynamic minimum time gap criterion, just like its coordinated counterpart. However, each on-ramp only receives information from its downstream on-ramps and the vehicles in its vicinity.   

\begin{definition}\label{Def: Distributed Adaptive FCQ-RM}
\textbf{(Distributed Dynamic Release Rate ($\DisDRRRM$) ramp metering policy)} 
The policy works in cycles of fixed length $\Cyclelength\timestep$, where $\Cyclelength \in \N$. At the start of the $k$-th cycle at $t_k = (k-1)\Cyclelength \timestep$, each on-ramp allocates itself a ``quota" equal to the number of vehicles at that on-ramp at $t_k$. At time $t \in [t_k, t_{k+1}]$ during the $k$-th cycle, on-ramp $i$ releases the ego vehicle if (M1)-(M4) and the following condition are satisfied:
\begin{itemize}
\item[(M5)] at least $\Releasetime{i}(t)$ seconds has passed since the release of the last vehicle from on-ramp $i$, where $\Releasetime{i}(t)$ is a piecewise constant minimum time gap, updated periodically at $t = \updateperiod, 2\updateperiod, \ldots$ according to Algorithm \ref{Alg: Distributed aFCQ-RM policy}.
\end{itemize}
\par
Once an on-ramp reaches its quota, it pauses release during the rest of the cycle. 
\begin{algorithm}[H]
\caption{Update rule for the minimum time gap between release of vehicles under the $\DisDRRRM$ policy}\label{Alg: Distributed aFCQ-RM policy}
\begin{algorithmic}
\Require \textbf{design constants:}~$\updateperiod>0$, $\Tmax > 0$, $\DesDecreaseconst > 0$, $\ReleasetimeDecConst{2} > 0$, $\ReleasetimeInc{i}^{\circ} > 0$, $\ReleasetimeAdj{} > 1$ 
\par
~\textbf{initial condition:}~$(\Releasetime{i}(0), \Releasetime{i}(\updateperiod)) = (0, 0),~ (\ReleasetimeInc{i}(0), \ReleasetimeInc{i}(\updateperiod)) = (\ReleasetimeInc{i}^{\circ}, \ReleasetimeInc{i}^{\circ}),~ \StateofDySys_{f}^{i}(0) = \StateofDySys_{f_1}^{i}(0),~ i \in [\numramps]$

\hspace{-0.475in} \textbf{for} $t = 2\updateperiod, 3\updateperiod, \cdots$ \textbf{do} the following for each on-ramp $i \in [\numramps]$
\If{$\StateofDySys_{f}^{i}(t) \leq \max\{\StateofDySys_{f}^{i}(t-\updateperiod) - \DesDecreaseconst, 0\}$} 
\If{$\Releasetime{i+1}(t - \updateperiod) \leq \Tmax$~\textbf{or}~$\sum_{j > i}\StateofDySys_{f}^{j}(t) \leq \max\{\sum_{j > i}\StateofDySys_{f}^{j}(t-\updateperiod) - \DesDecreaseconst, 0\}$} 
\State $\ReleasetimeInc{i}(t) \gets \ReleasetimeInc{i}(t-\updateperiod)$
\State $\Releasetime{i}(t) \gets \max\{\Releasetime{i}(t-\updateperiod) - \ReleasetimeDecConst{2}, 0\}$
\Else{}
\State $\ReleasetimeInc{i}(t) \gets \ReleasetimeAdj{} \ReleasetimeInc{i}(t-\updateperiod)$
\State $\Releasetime{i}(t) \gets \Releasetime{i}(t-\updateperiod) + \ReleasetimeInc{i}(t)$
\EndIf
\Else{}
\If{$\StateofDySys_{f}^{i}(t-\updateperiod) \leq \max\{\StateofDySys_{f}^{i}(t-2\updateperiod) - \DesDecreaseconst, 0\}$}
\State $\ReleasetimeInc{i}(t) \gets \ReleasetimeAdj{} \ReleasetimeInc{i}(t-\updateperiod)$
\Else{}
\State $\ReleasetimeInc{i}(t) \gets \ReleasetimeInc{i}(t-\updateperiod)$
\EndIf
\State $\Releasetime{i}(t) \gets \Releasetime{i}(t-\updateperiod) + \ReleasetimeInc{i}(t)$
\EndIf

\hspace{-0.475in} \textbf{end for}
\end{algorithmic}
\end{algorithm}
\par
For $i \in [\numramps]$, let $\StateofDySys_{f}^i$ be the part of $\StateofDySys_f$ associated with all the vehicles located between the $i$-th and $(i+1)$-th on-ramps. Thus, $\StateofDySys_{f} = \sum_{i \in [\numramps]}\StateofDySys_{f}^i$. We assume that $\StateofDySys_{f}^i$ is available to on-ramp $i$. Furthermore, if $\Releasetime{i+1}(t) > \Tmax$ for some design constant $\Tmax$, then all the on-ramps $j$ downstream of on-ramp $i$ communicate $\StateofDySys_{f}^j$ to on-ramp $i$. In that case, $\sum_{j > i}\StateofDySys_{f}^j$ is available to on-ramp $i$, where the notation ``$j > i$" means that on-ramp $j$ is downstream of on-ramp $i$. Note that for the ring road configuration, all the on-ramps downstream of on-ramp $i$ is the same as all the on-ramps. Hence, $\sum_{j > i}\StateofDySys_{f}^j = \StateofDySys_f$ for the ring road configuration.
Informally, when $\Releasetime{i+1} \leq \Tmax$, $\Releasetime{i}$ depends only on the traffic condition in the vicinity of on-ramp $i$, whereas when $\Releasetime{i+1} > \Tmax$, it also depends on the downstream traffic condition. Naturally, higher values of $\Tmax$ make the policy more ``decentralized", but it may take longer to reach the free flow state.
\end{definition}

\begin{proposition}\label{Prop: stability of distributed aFCQ-RM}
For any initial condition, $\Cyclelength \in \N$, and design constants in Algorithm~\ref{Alg: Distributed aFCQ-RM policy}, the $\DisDRRRM$ policy keeps the freeway under-saturated if $(\frac{\timestep_i}{\timestep}-1) \Avgload{i} < 1$ for all $i \in [\numramps]$.
\end{proposition}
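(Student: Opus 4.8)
The plan is to reduce the stability analysis to the free-flow dynamics and then reuse the argument already developed for the centralized $\GenRM$ policy in Theorem~\ref{Prop: stability of general aFCQ-RM}. The key structural fact is that, at the free flow state of (VC3), the release decisions (M1)--(M5) of the $\DisDRRRM$ policy coincide with those of $\GenRM$: vehicle positions occupy slots, the safety tests (M3)--(M4) reduce to comparing slot occupancy against the thresholds $\timestep_i$, and the minimum time gap $\Releasetime{i}$ is the only policy-specific quantity. Hence, once the configuration is at free flow with the time gaps stabilized, the induced slot-occupancy process is identical to the one analyzed for $\GenRM$, and the same slot-accounting must yield the stability condition $(\frac{\timestep_i}{\timestep}-1)\Avgload{i} < 1$.

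First I would establish that the update rule in Algorithm~\ref{Alg: Distributed aFCQ-RM policy} drives the system to the free flow state and keeps every time gap $\Releasetime{i}$ bounded. The update has an accelerating-increase, additive-decrease character: when the local congestion surrogate $\StateofDySys_{f}^{i}$ fails to decrease by at least $\DesDecreaseconst$ over an update period, the increment $\ReleasetimeInc{i}$ is scaled by $\ReleasetimeAdj{}>1$ and added to $\Releasetime{i}$, so $\Releasetime{i}$ grows super-linearly and rapidly throttles on-ramp $i$; when $\StateofDySys_{f}^{i}$ does decrease, $\Releasetime{i}$ is reduced additively by $\ReleasetimeDecConst{2}$ to probe for more capacity. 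Since $\StateofDySys_{f}^{i}$ is bounded (Remark~\ref{remark:X_f-bound}) and, by (VC3), a sufficiently large $\Releasetime{i}$ forces the vehicles between on-ramps $i$ and $i+1$ into free flow, the local surrogates cannot increase indefinitely; this shows each $\Releasetime{i}$ is bounded and the whole configuration reaches free flow in the sense required.

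The distributed coordination enters through the conditional test on $\Releasetime{i+1}$ and $\sum_{j>i}\StateofDySys_{f}^{j}$. When $\Releasetime{i+1}\leq\Tmax$, the downstream on-ramps release at a benign rate and on-ramp $i$ may decrease $\Releasetime{i}$ on local information alone; when $\Releasetime{i+1}>\Tmax$, on-ramp $i$ additionally requires the aggregate downstream surrogate to be decreasing before it relaxes. I would show this rule prevents an upstream on-ramp from over-releasing while downstream links are congested, so that the set of recurrent time-gap configurations coincides with the one attainable under $\GenRM$. With the time gaps bounded and the free-flow equivalence in hand, I would form the induced Markov chain on the slot-reduced vehicle configuration together with the queue vector $\Queuelength{}$ and apply the same Foster--Lyapunov drift computation as in Theorem~\ref{Prop: stability of general aFCQ-RM}: the per-cycle slot accounting charges each vehicle merging at on-ramp $i$ exactly $\timestep_i/\timestep$ slots, and balancing this against the induced load $\Avgload{i}$ gives a negative drift precisely when $(\frac{\timestep_i}{\timestep}-1)\Avgload{i}<1$, hence $\limsup_{t\to\infty}\E{\Queuelength{i}(t)}<\infty$.

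The main obstacle is the middle step: certifying that the local-plus-conditional-downstream update attains the same asymptotic release behavior as the fully centralized rule, which observes the global $\StateofDySys_f$ at every update. For the ring road---the primary configuration---this is eased by the identity $\sum_{j>i}\StateofDySys_{f}^{j}=\StateofDySys_f$ noted in Definition~\ref{Def: Distributed Adaptive FCQ-RM}, so that whenever $\Releasetime{i+1}>\Tmax$ the downstream test becomes a global test and the update collapses onto the $\GenRM$ update. The genuine difficulty is then confined to the regime $\Releasetime{i+1}\leq\Tmax$, where one must rule out a suboptimal limit cycle of the time gaps in which some link is perpetually under-served while its neighbours act on stale local information; resolving this, together with the inter-ramp propagation delay by which an upstream release affects the downstream surrogates only after some time, is where the bulk of the technical work lies.
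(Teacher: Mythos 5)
Your high-level skeleton matches the paper's: show the system reaches the free flow state in finite time, after which the gaps $\Releasetime{i}$ vanish and the policy reduces to the analysis of Theorem~\ref{Prop: stability of general aFCQ-RM}. But the argument you offer for that first step is unsound, and the step you yourself flag as ``where the bulk of the technical work lies'' is precisely the content of the paper's proof, which you leave unresolved.

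Concretely, your claim that ``by (VC3), a sufficiently large $\Releasetime{i}$ forces the vehicles between on-ramps $i$ and $i+1$ into free flow'' misreads (VC3): that assumption only guarantees free flow after $\Tempty$ seconds when \emph{no} on-ramp releases vehicles. On the ring, vehicles released by the other on-ramps keep traversing link $i$ and can keep $\StateofDySys_{f}^{i}$ away from zero no matter how severely on-ramp $i$ throttles itself, so a single large gap neither clears its own link nor stabilizes the gap dynamics; moreover, the inference ``$\StateofDySys_{f}^{i}$ bounded $\Rightarrow$ $\Releasetime{i}$ bounded'' runs the wrong way, since the gap grows whenever the surrogate merely fails to \emph{decrease} by $\DesDecreaseconst$, so a bounded but non-vanishing surrogate produces an unbounded gap. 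The paper's proof exploits exactly this: arguing by contradiction, if some $\StateofDySys_{f}^{q}(k\updateperiod)\neq 0$ infinitely often, then the multiplicative increase fires infinitely often and $\limsup_k\Releasetime{q}(k\updateperiod)=\infty$; during the resulting arbitrarily long intervals with $\Releasetime{q}>\Tmax$, on-ramp $q-1$'s downstream test becomes the global test $\sum_{j>q-1}\StateofDySys_{f}^{j}=\StateofDySys_f$, which must fail to decrease at least once in every window of bounded length (otherwise global free flow would be reached, already a contradiction), so $\ReleasetimeInc{q-1}$ and hence $\Releasetime{q-1}$ also blow up; propagating upstream around the ring, every $\ReleasetimeInc{i}\to\infty$, and a timing argument then yields an interval of length $\numramps\Tempty$ in which each on-ramp releases at most one vehicle, hence a subinterval of length $\Tempty$ with no releases at all, so (VC3) forces free flow --- the contradiction. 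You correctly spot the key ring identity $\sum_{j>i}\StateofDySys_{f}^{j}=\StateofDySys_f$ and name the cascade mechanism, but you defer the argument rather than give it; and your stated worry about the regime $\Releasetime{i+1}\leq\Tmax$ (ruling out limit cycles there) is not where the difficulty lies, since under the contradiction hypothesis the system is automatically pushed into the $\Releasetime{}>\Tmax$ regime where the global test takes over.
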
 
\begin{proof}
See Appendix \ref{Section: (Appx) Proof of distributed aFCQ-RM prop}.
\end{proof}
\par\textbf{V2I communication requirements}: The $\DisDRRRM$ policy requires the vector queue sizes $\Queuelength{}=[\Queuelength{i}(t)]$ (if $\Cyclelength > 1$) and the state of all the vehicles $\StateofDySys$. Its worst-case communication cost is calculated similar to the $\GenRM$ policy, except that at the end of each update period $\updateperiod$, $\StateofDySys$ is not communicated to all on-ramps. Instead, for all $i \in [\numramps]$, only the part $\StateofDySys$ associated with the vehicles between the $i$-th and $(i+1)$-th on-ramps is communicated to on-ramp $i$. The contribution of the queue size to the communication cost is the same as the $\GenRM$ policy because of the same cycle mechanism. Thus, the worst-case communication cost is reduced to $(\numcells + \numaccslots{a})/\updateperiod + \numaccslots{m} + c_{2}$ transmissions per $\timestep$ seconds.

\subsection{Dynamic Space Gap Policy}\label{Section: Impact of Autonomy}
In this policy, the on-ramps require an additional space gap, in addition to the safety distances in (M3)-(M4), before releasing a vehicle. This additional space gap is updated periodically based on the state of all vehicles. Recall that the $\GenRM$ policy enforces an additional time gap between release of successive vehicles, which is updated based on the current state of the vehicles as well as their state in the past. The dynamic space gap policy only requires the current state of the vehicles.
However, it requires the following additional assumptions on the vehicle controller: consider $n$ vehicles over a time interval during which at least one vehicle is in the $\Speedmode$ mode, and no vehicle leaves the freeway. Then, during this time interval:
\begin{itemize}
\item[(VC4)] each vehicle changes mode at most once. 

\item[(VC5)] if no vehicle changes mode, then $\StateofDySys_g := \StateofDySys_{g_1} + \StateofDySys_{g_2}$ converges to zero globally exponentially, where $\StateofDySys_{g_1}$ and $\StateofDySys_{g_2}$ are defined as follows: let $\StateofDySys_{g_1}(t) := \sum_{e \in [n]}(w_1|v_e(t)-\speedlim|+w_2|a_e(t)|)J_e(t)$, where $w_1, w_2$ are normalization factors, and $J_e$ is a binary variable which is equal to zero if the ego vehicle has been in the $\Speedmode$ mode at all times since being released from an on-ramp, and one otherwise. Moreover, 
\begin{equation*}
    \StateofDySys_{g_2} := \sum_{e \in [n]}w_4\left(|y_e(t)-S_e(t)|\mathbbm{1}_{\{y_e(t) < S_e(t)\}} + \max_{s \in [t',t_f]}|\hat{y}_e(s|t)-\hat{S}_e(s|t)|\mathbbm{1}_{\{\hat{y}_e(s|t) < \hat{S}_e(s|t)\}}\right),
\end{equation*}
where $w_4$ is a normalization factor, and the second term in the sum is set to zero if the ego vehicle is not in a merging area. If the ego vehicle is in a merging area but has not yet crossed the merging point, then $t'=t_m$. Otherwise, $t'=t$. Furthermore, recall from Section~\ref{sec:vehicle-control} that the acceleration lane of an on-ramp ends either at the merging point (if the merging speed is $\speedlim$), or on the mainline (if the merging speed is less than $\speedlim$). The term $t_f|t$ is the time the ego vehicle predicts to cross the endpoint of the acceleration lane based on the information available at time $t \leq t_f$.  
\end{itemize}

\begin{definition}\label{Def: Relaxed Fixed-Cycle Quota RM policy}
\textbf{(Dynamic Space Gap ($\ConRM$) ramp metering policy)} The policy works in cycles of fixed length $\Cyclelength\timestep$, where $\Cyclelength \in \N$. At the start of the $k$-th cycle at $t_k = (k-1)\Cyclelength\timestep$, each on-ramp allocates itself a ``quota" equal to the number of vehicles at that on-ramp at $t_k$. At time $t \in [t_k, t_{k+1}]$ during the $k$-th cycle, on-ramp $i$ releases the ego vehicle if (M1)-(M2) and the following condition are satisfied:
\begin{itemize}
\item[(M6)] at time $t$, $y_e(t) \geq \Releasedist_{1}(\StateofDySys(t))$, $\hat{y}_e(t_m|t) \geq \Releasedist_{2}(\StateofDySys(t))$, and $\hat{y}_{\hat{f}}(t_m|t) \geq \Releasedist_{3}(\StateofDySys(t))$, where $\hat{y}_{\hat{f}}$ is the predicted distance between the ego vehicle and its virtual following vehicle, $\Releasedist_{1}(\cdot)$ is the safety distance $S_e$ plus an additional space gap $f_1(\StateofDySys(t))$, $\Releasedist_{2}(\cdot)$ and $\Releasedist_{3}(\cdot)$ are the distances in (M4) required to ensure safety between merging and exiting the acceleration lane, plus additional space gaps $f_2(\StateofDySys(t))$ and $f_3(\StateofDySys(t))$, respectively.
\end{itemize}
\par
Once an on-ramp reaches its quota, it pauses release during the rest of the cycle. 
The additional space gaps $f_i(\StateofDySys(t))$, $i=1,2,3$, are piecewise constant and updated periodically at each time step according to some rule which will be determined in the proof of Theorem \ref{Prop: stability of rFCQ-RM}. 
\end{definition}

\begin{theorem}\label{Prop: stability of rFCQ-RM}
There exists $f_i$, $i=1,2,3$, such that for any initial condition, and $\Cyclelength \in \N$, the $\ConRM$ policy keeps the freeway under-saturated if $(\frac{\timestep_i}{\timestep}-1) \Avgload{i} < 1$ for all $i \in [\numramps]$.
\end{theorem}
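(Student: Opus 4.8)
The plan is to follow the two-stage template used for the $\GenRM$ policy in the proof of Theorem~\ref{Prop: stability of general aFCQ-RM}: first establish a purely deterministic guarantee that, under a suitable choice of the space-gap functions $f_1,f_2,f_3$, the fleet returns to the free flow state within a uniformly bounded time; then pass to an embedded Markov chain sampled at the cycle starts and run a Foster--Lyapunov drift computation. The load condition $(\frac{\timestep_i}{\timestep}-1)\Avgload{i}<1$ is identical to the one for the $\GenRM$ policy, so once the deterministic guarantee is in place the queueing computation is essentially the same; the novelty lies entirely in replacing the time-gap feedback of Algorithm~\ref{Alg: Gen-aFCQ-RM policy} by the state-dependent spatial feedback in (M6).

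First I would construct $f_1,f_2,f_3$ as nondecreasing functions of the current-state quantity $\StateofDySys_g=\StateofDySys_{g_1}+\StateofDySys_{g_2}$ from (VC5), normalized so that $f_i(\StateofDySys)=0$ exactly at the free flow state (where $\StateofDySys_g=0$) and strictly positive otherwise. The design target is that whenever $\StateofDySys_g$ exceeds a fixed threshold, the extra clearance demanded in (M6) is so conservative that any vehicle released at that instant both enters in the $\Speedmode$ mode and leaves enough room that no vehicle already on the mainline or acceleration lane is driven into the $\Safetymode$ mode. In other words, $f_i$ is chosen to make every admissible release innocuous with respect to mode changes while the fleet is still disturbed; since the speeds, and hence the safety distances $S_e$, are bounded (cf. Remark~\ref{remark:X_f-bound}), such a finite threshold exists.

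The key deterministic lemma is then that, for this choice of $f_i$, the fleet reaches the free flow state within a time bounded uniformly over all configurations, analogous to $\Tempty$ in (VC3). Here (VC4) and (VC5) carry the argument: by (VC4) each vehicle changes mode at most once, so only finitely many disturbances can ever be injected, and once the construction of the previous paragraph prevents any further mode change, (VC5) forces $\StateofDySys_g$ to decay globally exponentially to zero, after which the demanded gaps relax back to the bare safety distances $S_e$. The delicate point---and the main obstacle of the whole proof---is to exclude an oscillatory regime in which the gaps repeatedly inflate, suppress releases, let the fleet partly settle, then relax and re-trigger a mode change. One must show that because (VC4) caps the total number of mode changes and each admitted release under the conservative gap strictly reduces the residual mode-change budget, only finitely many such oscillations can occur before $\StateofDySys_g$ reaches zero. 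This is exactly where the current-state-only feedback of $\ConRM$ is harder to control than the update-period-averaged feedback of $\GenRM$, whose $\StateofDySys_f$ retains memory of past safety violations.

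With the uniform free-flow convergence in hand, I would take the embedded chain to be the queue-size vector $\Queuelength{}$ observed at the cycle starts $t_k=(k-1)\Cyclelength\timestep$; because the cycle length is fixed and the fleet settles within each cycle, the per-cycle number of vehicles that on-ramp $i$ can release is asymptotically governed by the number of mainline slots consumed per merge, namely $\timestep_i/\timestep$, against the load $\Avgload{i}$ carried on link $i$. Taking $\Lyap{\Queuelength{}}=\sum_{i}\Queuelength{i}^2$ (or a weighted variant), the one-cycle conditional drift is negative outside a bounded set precisely when $(\frac{\timestep_i}{\timestep}-1)\Avgload{i}<1$ for every $i\in[\numramps]$, since this is the condition that the asymptotic release capacity at each on-ramp strictly dominates its induced load. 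Foster's criterion then yields positive recurrence of the chain and $\limsup_{t\to\infty}\E{\Queuelength{i}(t)}<\infty$ for all $i$, i.e., under-saturation.
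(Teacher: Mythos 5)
Your overall architecture matches the paper's: a deterministic phase showing that, for a suitable choice of $f_1,f_2,f_3$, the fleet reaches the free flow state in finite time, followed by the same Markov-chain/Foster--Lyapunov analysis used for the $\GenRM$ policy. The genuine gap is in the deterministic phase, and it sits exactly where you flag "the main obstacle." Your plan allows releases to keep triggering mode changes and then tries to exclude the resulting oscillatory regime by a counting argument: "(VC4) caps the total number of mode changes and each admitted release \ldots strictly reduces the residual mode-change budget." This does not hold. (VC4) bounds mode changes \emph{per vehicle} over an interval in which no vehicle leaves; it gives no global budget, because every newly released vehicle carries its own fresh potential mode change and the number of releases is unbounded over time. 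Moreover, your threshold-based design (gaps made conservative only when $\StateofDySys_g$ exceeds a fixed threshold) creates the very problem you then cannot close: sub-threshold releases may cause mode changes, each mode change bumps $\StateofDySys_g$ up by a bounded but nonzero amount, and nothing prevents the cycle release--bump--decay--release from repeating forever, so finite-time convergence to free flow is unsupported.

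The paper closes this hole by a different, quantitative construction that makes oscillations impossible by design rather than bounded by counting: $f_2$ is chosen \emph{proportional} to $\StateofDySys_g$, with an explicit constant built from the exponential rate $r$ and overshoot constant $c$ in (VC5), the braking term, and a factor $c^{3n(0)}$ whose exponent comes from applying (VC4) to the \emph{initial} fleet of $n(0)$ vehicles only. With this choice one proves two claims: (i) no vehicle initially present ever changes mode because of a released vehicle, and (ii) no released vehicle ever changes mode at all. Claim (ii) is established by contradiction at the first putative mode-change time $t$: between "jump" instants (departures or mode changes of initial vehicles) (VC5) gives $|v_l(\eta)-\speedlim| \leq c^{j+1}e^{-r\eta}\StateofDySys_g(0)$, at jumps the increase of $\StateofDySys_g$ is bounded, the number of jumps is bounded via (VC4), and the accumulated drift plus the braking term $\frac{\speedlim^2 - v_l^2(t)}{2|\minaccel|}$ is then absorbed by $f_2(\StateofDySys(0))$, contradicting the (VC2) switching criterion. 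Once (i)--(ii) hold, only initial vehicles can be in the $\Safetymode$ mode, (VC3) gives free flow within $\Tempty$ seconds, $\StateofDySys_g$ and hence the $f_i$ vanish, and the policy coincides with the FCQ policy so the machinery of Theorem~\ref{Prop: stability of general aFCQ-RM} applies. A secondary issue in your last step: the queue-size vector at cycle starts is not a Markov chain in this model (the state must carry destinations of queued vehicles and slot occupancies), and the paper runs the drift on $\Nodedegree{}(\cdot)=\max_{i\in[\numramps]}\Nodedegree{i}(\cdot)$ sampled every $\triangle$ steps rather than on $\sum_i \Queuelength{i}^2$; since the service available to on-ramp $i$ depends on the traffic crossing its merge point, a queue-only Lyapunov drift cannot be verified directly, though this part is repaired simply by invoking the paper's Theorem~\ref{Prop: stability of general aFCQ-RM} setup as you intend.
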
 
\begin{proof}
See Appendix~\ref{Section: (Appx) Proof of rFCQ-RM prop}.
\end{proof}
\par\textbf{V2I communication requirements}: The $\ConRM$ policy requires the vector queue sizes $\Queuelength{}=[\Queuelength{i}(t)]$ (if $\Cyclelength > 1$) and the state of all the vehicles $\StateofDySys$. At each time step, $\StateofDySys$ is communicated to every on-ramp. After a finite time, the number of vehicles that constitute $\StateofDySys$ is no more than $\numcells + \numaccslots{a}$. Moreover, the contribution of the queue size to the communication cost is the same as the $\GenRM$ policy because of the same cycle mechanism. Thus, the communication cost is upper bounded by $(\numcells + \numaccslots{a})\numramps + c_{2}$ transmissions per $\timestep$ seconds.

\begin{remark}
The choice of the additional space gaps $f_i$, $i=1,2,3$, in the $\ConRM$ policy is not limited to the expressions found in the proof of Theorem~\ref{Prop: stability of rFCQ-RM}. An alternative expression is simulated in Section~\ref{Subsection: (Sim) Average travel time}
\end{remark}


\mpcommentout{
\begin{remark}
\mpcomment{The gaps $\Releasedist_{i}(\StateofDySys(t))$, $i=1,2,3$, are determined from additional assumptions on the vehicle controller, such as the control logic in the $\Safetymode$ mode and the logic for switching back to the $\Speedmode$ mode.} Such details as well as the throughput of the $\ConRM$ policy are omitted for brevity. They can be found in the arXiv version of this paper (\cite{pooladsanj2022saturation}).
\end{remark}
}

\subsection{Local and Greedy Policies}
\label{sec:greedy}
The actions of the three policies introduced in Sections~\ref{sec:release-rate}-\ref{Section: Impact of Autonomy} can be divided into two phases. The first phase concerns the transient from the initial condition to the free flow state, and the second phase is from the free flow state onward. Since throughput is an asymptotic notion, it is natural to examine the policies specifically in the second phase. Indeed, in the second phase, the actions of all the three policies can be shown to be equivalent to the following policy: 
\begin{definition}\label{Def: Fixed-Cycle Quota RM policy}
\textbf{(Fixed-Cycle Quota ($\FCQRM$) ramp metering policy)} The policy works in cycles of fixed length $\Cyclelength\timestep$, where $\Cyclelength \in \N$. At the start of the $k$-th cycle at $t_k = (k-1)\Cyclelength\timestep$, each on-ramp allocates itself a ``quota" equal to the number of vehicles at that on-ramp at $t_k$. During a cycle, the $i$-th on-ramp releases the ego vehicle if (M1)-(M4) are satisfied. Once an on-ramp reaches its quota, it pauses release during the rest of the cycle.  
\end{definition}
\par\textbf{V2I communication requirements}: The $\FCQRM$ policy requires the vector queue sizes $\Queuelength{}=[\Queuelength{i}(t)]$ (if $\Cyclelength > 1$) and part of the state of all the vehicles $\StateofDySys$. At each time step during a cycle, the vehicles in the merging area of each on-ramp communicate their state to that on-ramp. Moreover, the contribution of the queue size to the communication cost is the same as the $\GenRM$ policy because of the same cycle mechanism. Hence, $C$ is upper bounded by $\numaccslots{m} + c_{2}$ transmissions per $\timestep$ seconds. 
\par
Note that the $\FCQRM$ policy is local \mpcomment{since each on-ramp only requires the state of the vehicles in its vicinity}. In the special case of $\Cyclelength=1$, the $\FCQRM$ policy becomes the simple \emph{Greedy} policy under which the on-ramps do not need to keep track of their quota. Therefore, the communication cost of the Greedy policy is upper bounded by $\numaccslots{m}$. One can see from the proof of the $\GenRM$ policy that, starting from the aforementioned second phase, the freeway is under-saturated under the $\FCQRM$ policy if $(\timestep_i/\timestep-1) \Avgload{i} < 1$ for all $i \in [\numramps]$. It is natural to wonder if such a result holds if we start from \emph{any} initial condition, or under \emph{any} other greedy policy (not just the slot-based). In Section~\ref{Subsection: (Sim) Relaxing V2X requirements}, we provide intuition as to when the former statement might be true.

\subsection{An Outer-Estimate}
\label{sec:necessary}
We now provide an outer-estimate of the under-saturation region of \emph{any} RM policy, \mpcomment{including macroscopic-level RM and policies with prior knowledge of the demand}. This outer-estimate can be thought of as the network analogue of the flow capacity of an isolated on-ramp. We benchmark the inner-estimates of the proposed policies against this outer-estimate, and show that the proposed policies can maximize throughput.   
\par
Let $\Cumuldepartures{\pi,p}(k\timestep)$ be the cumulative number of vehicles that has crossed point $p$ on the mainline up to time $k\timestep$, $k \in \N_{0}$, under the RM policy $\pi$. Then, the crossing rate at point $p$ is defined as $\Cumuldepartures{\pi,p}(k\timestep)/k$ and the ``long-run" crossing rate is $\limsup_{k \ra \infty}\Cumuldepartures{\pi,p}(k\timestep)/k$. Note that $\Cumuldepartures{\pi,p}((k+1)\timestep)-\Cumuldepartures{\pi,p}(k\timestep)$ represents the \emph{traffic flow} in terms of the number of vehicles per $\timestep$ seconds at point $p$. Macroscopic traffic models suggest that the traffic flow is no more than the mainline flow capacity, which is $1$ vehicle per $\timestep$ seconds. Hence, $\Cumuldepartures{\pi,p}((k+1)\timestep)-\Cumuldepartures{\pi,p}(k\timestep) \leq 1$ for all $k \in \N_{0}$ and any RM policy $\pi$. This implies that 
\begin{equation}\label{eq:crossing-rate-inequality}
    \limsup_{k \ra \infty}\Cumuldepartures{\pi,p}(k\timestep)/k \leq 1,
\end{equation}
for any RM policy $\pi$. We take \eqref{eq:crossing-rate-inequality} as an assumption to the next Theorem.

\begin{theorem}\label{Prop: Necessary condition for stability}
If a policy $\pi$ keeps the freeway under-saturated and satisfies \eqref{eq:crossing-rate-inequality} for at least one point on each link, then the demand must satisfy $\Avgload{} \leq 1$.
\end{theorem}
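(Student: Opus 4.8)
The plan is to run a flow-conservation (mass-balance) argument at a single point on each link, using the fact that under-saturation forces the long-run crossing rate at any point to equal the rate at which vehicles needing that link arrive. First I would fix a link $j \in [\numramps]$ and let $p$ be a point on link $j$ for which \eqref{eq:crossing-rate-inequality} holds. Let $\bar{A}_{j}(k\timestep)$ denote the number of vehicles arriving at any on-ramp during the first $k$ time steps whose route requires link $j$. Since, in each time step, on-ramp $i$ receives an arrival with probability $\Arrivalrate{i}$ and that arrival independently needs link $j$ with probability $\tilde{\routingmatrix}_{ij}$, the demand model gives the exact identity $\E{\bar{A}_{j}(k\timestep)} = k\sum_{i}\Arrivalrate{i}\tilde{\routingmatrix}_{ij} = k\,\Avgload{j}$.

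The crux is the observation that crossings of $p$ are in one-to-one correspondence with link-$j$ vehicles that have passed $p$: every vehicle crossing $p$ is travelling through link $j$ (hence needs it), and conversely every link-$j$ vehicle eventually crosses $p$ once released. Classifying each link-$j$ arrival as either (i) still in an on-ramp queue, (ii) released but not yet past $p$, or (iii) already past $p$, I would write the balance $\bar{A}_{j}(k\timestep) = \Cumuldepartures{\pi,p}(k\timestep) + W_{j}(k\timestep)$, where $W_{j}(k\timestep) \ge 0$ counts states (i) and (ii). The state-(i) count is at most the total queue $\sum_{i}\Queuelength{i}(k\timestep)$, while the state-(ii) count is at most the number of vehicles simultaneously on the mainline and acceleration lanes, which is bounded by a finite constant $n_{\max}$ by finite road length and the minimum spacing enforced by the safety distance. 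Hence $W_{j}(k\timestep) \le \sum_{i}\Queuelength{i}(k\timestep) + n_{\max}$.

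Taking expectations in the balance and substituting these estimates yields
\begin{equation*}
    \frac{\E{\Cumuldepartures{\pi,p}(k\timestep)}}{k} = \Avgload{j} - \frac{\E{W_{j}(k\timestep)}}{k} \ge \Avgload{j} - \frac{\sum_{i}\E{\Queuelength{i}(k\timestep)} + n_{\max}}{k}.
\end{equation*}
Under-saturation gives $\limsup_{k\to\infty}\E{\Queuelength{i}(k\timestep)} < \infty$ for each of the finitely many $i \in [\numramps]$, so the subtracted term is $o(1)$ and therefore $\limsup_{k\to\infty}\E{\Cumuldepartures{\pi,p}(k\timestep)}/k \ge \Avgload{j}$. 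For the reverse bound, the per-step capacity inequality $\Cumuldepartures{\pi,p}((k+1)\timestep) - \Cumuldepartures{\pi,p}(k\timestep) \le 1$ that underlies \eqref{eq:crossing-rate-inequality} gives $\Cumuldepartures{\pi,p}(k\timestep) \le k$ surely, hence $\E{\Cumuldepartures{\pi,p}(k\timestep)}/k \le 1$ for every $k$. Combining the two bounds forces $\Avgload{j} \le 1$, and since such a point $p$ exists for every link, maximizing over $j \in [\numramps]$ gives $\Avgload{} = \max_{j}\Avgload{j} \le 1$.

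The hard part will be justifying the mass-balance identity rigorously: I must verify the claimed bijection between crossings of $p$ and link-$j$ vehicles past $p$ so that no vehicle is double-counted (this is where the routing convention and the no-obstruction exit assumption enter), and I must argue that the state-(ii) count is bounded by a finite constant uniformly in $k$, which is exactly where the point-queue model and the safety-distance spacing are essential, since they cap how many vehicles can occupy the finite-length road at any instant. The remaining steps — the exact computation of $\E{\bar{A}_{j}(k\timestep)}$ and the vanishing of the queue correction — are routine given the Bernoulli demand model and the definition of under-saturation.
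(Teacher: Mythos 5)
Your argument is correct in substance and rests on the same conservation identity that drives the paper's proof --- cumulative arrivals needing a link equal cumulative crossings plus vehicles still in the system --- but you execute it by a genuinely different route. The paper works pathwise: it writes $\Nodedegree{i}(t) = \Nodedegree{i}(0) + \sum_{s=1}^{t}\Numarrivals{\ell,i}(s) - \Cumuldepartures{i}(t)$ (your $W_j$ is exactly this degree, up to initial vehicles), applies the strong law of large numbers to the arrival process, argues by contradiction that $\Avgload{i} > 1$ forces $\Nodedegree{i}(t)$ to grow linearly almost surely, and only then transfers to expectations via Fatou's lemma to contradict under-saturation. You instead stay in expectation throughout: $\E{\Cumularrivals{j}{k\timestep}} = k\Avgload{j}$ by linearity, the in-system term is $o(k)$ in expectation directly from the definition of under-saturation, and $\Avgload{j} \leq 1$ follows by comparing the two bounds --- no SLLN, no Fatou, no contradiction. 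This is more elementary, and it also makes explicit the role of the bounded on-road population $n_{\max}$, which the paper uses implicitly when it passes from ``degree grows'' to ``some queue grows.'' Your neglect of vehicles initially on the road is harmless: with them, the balance becomes the inequality $\Cumuldepartures{\pi,p}(k\timestep) \geq \bar{A}_{j}(k\timestep) - W_j(k\timestep)$, which is the direction you use.

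One step does need patching. Your upper bound $\E{\Cumuldepartures{\pi,p}(k\timestep)}/k \leq 1$ invokes the per-step capacity inequality $\Cumuldepartures{\pi,p}((k+1)\timestep) - \Cumuldepartures{\pi,p}(k\timestep) \leq 1$, but that inequality is only the paper's \emph{motivation} for \eqref{eq:crossing-rate-inequality}; the hypothesis of the theorem is the weaker pathwise limsup bound. To argue from the stated hypothesis, observe that at most one vehicle arrives per on-ramp per time step, so
\begin{equation*}
    \frac{\Cumuldepartures{\pi,p}(k\timestep)}{k} \;\leq\; \numramps + \frac{n(0) + \sum_{i}\Queuelength{i}(0)}{k}
\end{equation*}
is uniformly bounded in $k$; reverse Fatou (bounded convergence) then gives $\limsup_{k}\E{\Cumuldepartures{\pi,p}(k\timestep)}/k \leq \E{\limsup_{k}\Cumuldepartures{\pi,p}(k\timestep)/k} \leq 1$, and the rest of your argument goes through unchanged.
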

\begin{proof}
See Appendix \ref{Section: (Appx) Proof of necessary condition for regularity}.
\end{proof}
\begin{remark}
In all of the policies studied in previous sections, \mpcomment{vehicles reach the free flow state after a finite time.} Therefore, there exists some point $p_i$ on link $i$ for every $i \in [\numramps]$ where vehicles cross at the speed $\speedlim$. This and the constant time headway rule imply that the number of vehicles that can cross $p_i$ at each time step is no more than one. Therefore, $\limsup_{k \ra \infty}\Cumuldepartures{\pi,p_i}(k\timestep)/k \leq 1$ for all $i \in [\numramps]$, and the long-run crossing rate condition in \eqref{eq:crossing-rate-inequality} holds.
\end{remark}

\mpcomment{Recall that one of our contributions is showing that the proposed policies can maximize throughput. We end this section by discussing the conditions for achieving maximum throughput.} Recall the definition of $\tau_i$ for on-ramp $i$. If the merging speed at on-ramp $i$ is $\speedlim$, then $\tau_i=2\timestep$, i.e., $\tau_i$ is twice the safe time headway for vehicle following in the longitudinal direction. If this holds for every on-ramp, i.e., $\tau_i=2\tau$ for all $i \in [\numramps]$, then the inner-estimate of the under-saturation region given in Theorem~\ref{Prop: stability of QRM policy for low merging speed}, Theorem~\ref{Prop: stability of general aFCQ-RM}, and Proposition~\ref{Prop: stability of distributed aFCQ-RM} become $\Avgload{}<1$. Comparing with Theorem~\ref{Prop: Necessary condition for stability}, this implies that the Renewal, $\GenRM$, and $\DisDRRRM$ policies maximize throughput for all practical purposes. \mpcomment{In other words, when the merging speed at all the on-ramps equals the free flow speed, the proposed policies maximize the throughput for all practical purposes. This scenario occurs when all the on-ramps are sufficiently long.} The following numerical example illustrates what we mean by ``all practical purposes".
\begin{figure}[t!]
        \centering
        \includegraphics[width=0.4\textwidth]{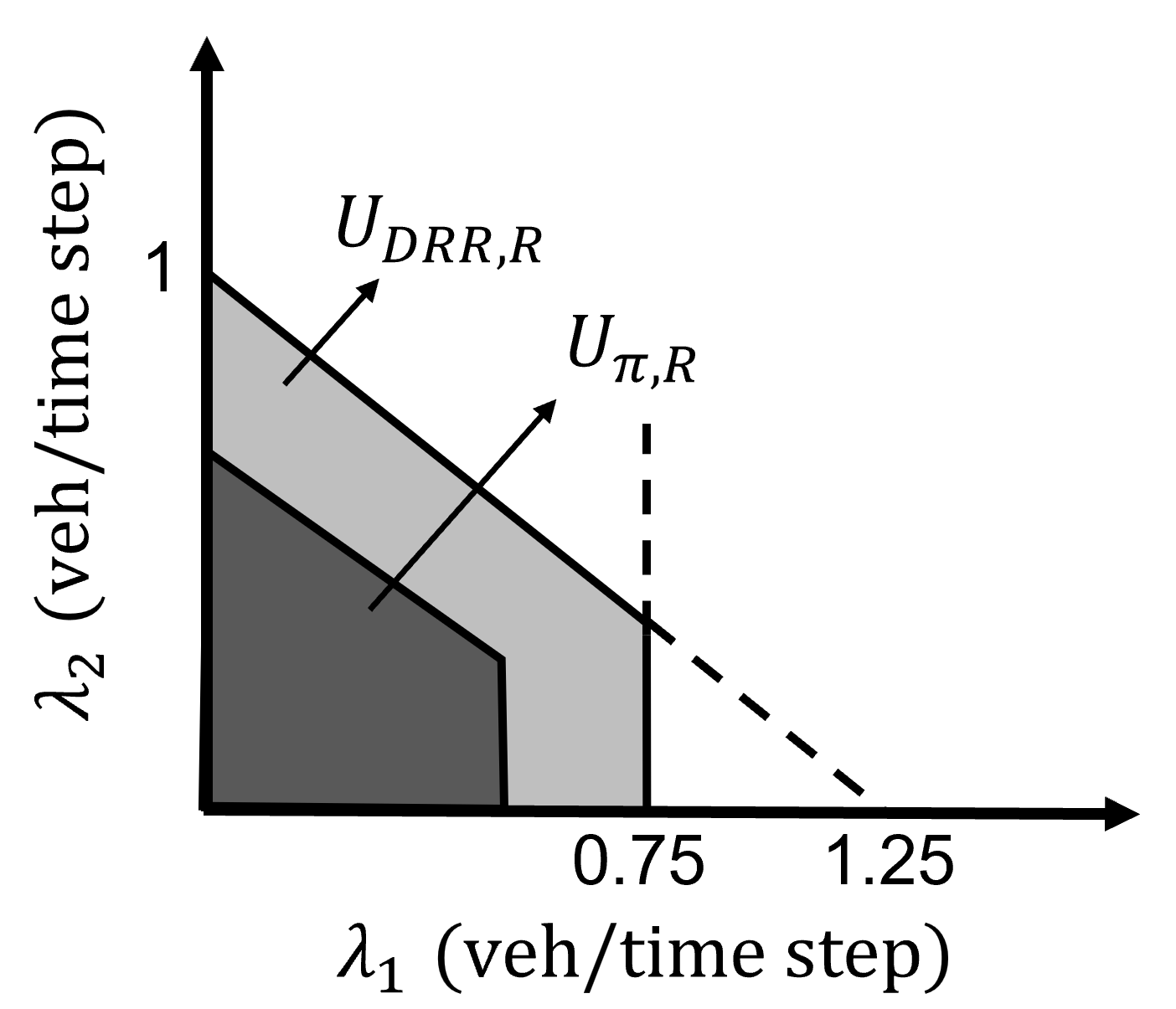}
        \vspace{0.2 cm}
    \caption{\centering\sf An illustration of the under-saturation region of some policy $\pi'$ (dark grey area) and the $\GenRM$ policy (dark + light grey areas) from Example~\ref{ex:throughput-numerical}.}\label{Fig: Numerical example for throughput}
\end{figure}
\begin{example}\label{ex:throughput-numerical}
\textbf{(Example~\ref{ex:throughput} cont'd)} Consider again the $3$-ramp network with
\begin{equation*}
    \routingmatrix = \begin{pmatrix}
    0.2 & 0.7 & 0.1 \\
    0 & 0.8 & 0.2 \\
    0.5 & 0 & 0.5
    \end{pmatrix},~ \Arrivalrate{3}=0.5~[\text{veh}/\text{time step}],
\end{equation*}
and suppose that the merging speed at all the on-ramps is $\speedlim$, i.e, $\tau_i = 2\timestep$ for $i=1,2,3$. Let us consider the $\GenRM$ policy presented in Section~\ref{sec:release-rate}. By Theorem~\ref{Prop: stability of general aFCQ-RM}, the under-saturation region of this policy is given by
\begin{equation*}
    U_{\GenRM,\routingmatrix} = \{(\Arrivalrate{1},\Arrivalrate{2}): \Avgload{} = \max_{i=1,2,3}\Avgload{i} < 1\} = \{(\Arrivalrate{1},\Arrivalrate{2}): \Arrivalrate{1} < 0.75,~0.8\Arrivalrate{1}+\Arrivalrate{2}<1\},
\end{equation*}
which is illustrated in Figure~\ref{Fig: Numerical example for throughput}. Furthermore, by Theorem~\ref{Prop: Necessary condition for stability}, for any other policy $\pi'$ we have $U_{\pi', \routingmatrix} \subseteq \{(\Arrivalrate{1},\Arrivalrate{2}): \Avgload{} \leq 1\}$, which is a subset of $U_{\GenRM,\routingmatrix}$ except, maybe, at the boundary of $U_{\GenRM,\routingmatrix}$. Since the boundary of $U_{\GenRM,\routingmatrix}$ has zero volume, the vector $(\Arrivalrate{1},\Arrivalrate{2})$ lies either inside or outside of $U_{\GenRM,\routingmatrix}$ in practice. Therefore, $U_{\pi', \routingmatrix} \subseteq U_{\GenRM,\routingmatrix}$ for all practical purposes. Note that the previous conclusion also holds for any other choice of the routing matrix, which implies that the $\GenRM$ policy maximizes the throughput for all practical purposes.
\end{example}

\subsection{Discussion of the Straight Road Configuration}\label{Section: Straight Line}
In this section, we extend our results to the straight road configuration such as the one shown in Figure~\ref{Fig: line network}. Let the number of on- and off-ramps be $\numramps-1$; suppose that they are placed alternatively, and they are numbered in an increasing order along the direction of travel. Recall that the upstream entry point in this configuration acts as a virtual on-ramp, indicating that all the previous on-ramps are metered.
\par
Roughly speaking, the straight road configuration is a special case of the ring road configuration with an upper triangular routing matrix. Therefore, it is natural to expect that all of our assumptions and results will apply to this setting as well. In fact, all of our assumptions remain unchanged, except for (VC3) in Section~\ref{sec:vehicle-control}, which is slightly modified as follows:  
\begin{itemize}
    \item [(VC3)] there exists $\Tempty$ such that for any initial condition, if no vehicle is released from the entry points $1, \ldots, j$ for some $j \in \{1,\ldots,\numramps\}$, and all the vehicles downstream of link $j$ are at the free flow state, then all the vehicles reach the free flow state after at most $\Tempty$ seconds. 
\end{itemize}
\par
\mpcommentout{
\begin{figure}[t!]
        \centering
        \includegraphics[width=0.6\textwidth]{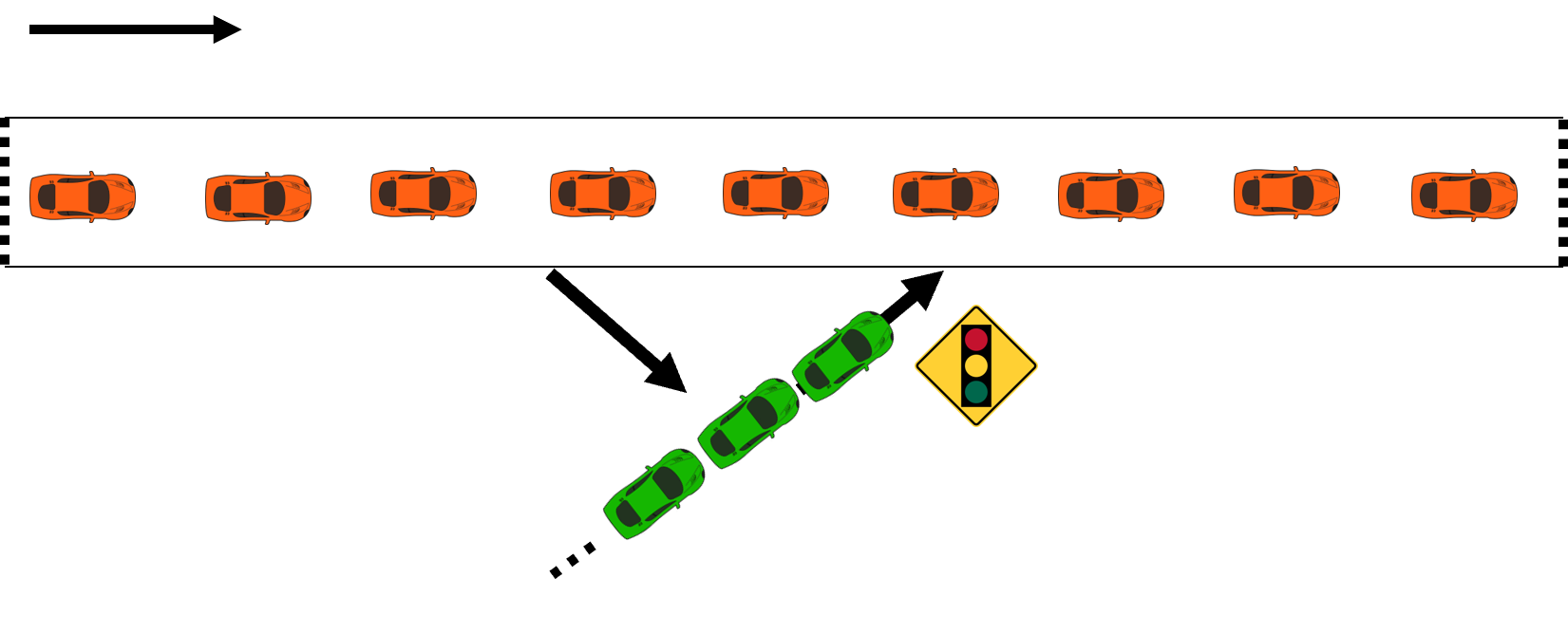}
        \vspace{0.2 cm}
    \caption{\centering\sf A scenario in the straight road configuration, where a queue appears at the on-ramp under the proposed policies for sufficiently high upstream inflow.}\label{fig:straight-road-limitation}
\end{figure}
}
We may also assume, without loss of generality, that vehicles enter from the upstream entry point at the speed $\speedlim$ in the free flow state. This is justified since the upstream inflow is not restricted by the inflow from the on-ramps in the free flow state. With these slight modifications, the description of all the proposed policies remain unchanged for the straight road configuration. Their performance will also be the same with a slight change in the proof of Theorem~\ref{Prop: stability of general aFCQ-RM}, which is stated as needed during the proof.
\par

\begin{remark}
The results can be extended even further when we have partial control over the on-ramps, e.g., when some of the on-ramps preceding the upstream entry point in the straight road configuration are not metered. However, in that case, the throughput of the metered on-ramps may become zero for sufficiently high upstream inflow under the proposed policies. This is due to the safety considerations that prevent entry from the on-ramps, resulting in a queue forming at those on-ramps. In order to deal with this issue, an alternative approach is for the vehicles to coordinate locally, allowing the on-ramp vehicles to enter the freeway. This would shift the on-ramp queue to the freeway, which is more desirable in practice. It is important to note that vehicle coordination does not offer any additional benefits when we have full control over every on-ramp. In such cases, the need for coordination is already addressed by the proposed RM policies in place. The authors are currently exploring vehicle coordination for the partial control case in a separate paper.
\end{remark}

\section{Simulations}\label{Section: Simulation}
The following setup is common to all the simulations in this section. We consider a ring road of length $\Perimeter = 1860~[\text{m}]$ and $\numramps=3$ on- and off-ramps. Let $\timeheadway = 1.5 ~[\text{sec}]$, $\standstilldist = 4~ [\text{m}]$, $\vehiclelength = 4.5 ~[\text{m}]$, and $\speedlim = 15 ~[\text{m/sec}]$. For these parameters, we obtain $n_c=60$. The on-ramps are located symmetrically at $0$, $\Perimeter/3$, and $2\Perimeter/3$; the off-ramps are also located symmetrically $155~[\text{m}]$ upstream of each on-ramp. The initial queue size of all the on-ramps is set to zero. Vehicles arrive at the on-ramps according to i.i.d Bernoulli processes with the same rate $\Arrivalrate{}$; their destinations are determined by 
\begin{equation*}
    \routingmatrix = \begin{pmatrix}
    0.2 & 0.7 & 0.1 \\
    0 & 0.8 & 0.2 \\
    0.5 & 0 & 0.5
    \end{pmatrix}. 
\end{equation*}
\par
Note that, on average, most of the vehicles want to exit from off-ramp $2$. Thus, one should expect that on-ramp $3$ finds more safe merging gaps between vehicles on the mainline than the other two on-ramps. As a result, on-ramp $3$'s queue size is expected to be less than the other two on-ramps. All the simulations were performed in MATLAB. The details of the vehicle controller is provided in Section~\ref{Subsection: (Sim) Relaxing V2X requirements}.

\subsection{Greedy Policy for Low Merging Speed}\label{Subsection: (Sim) Greedy policy}
\begin{figure}[htb!]
        \centering
        \includegraphics[width=0.5\textwidth]{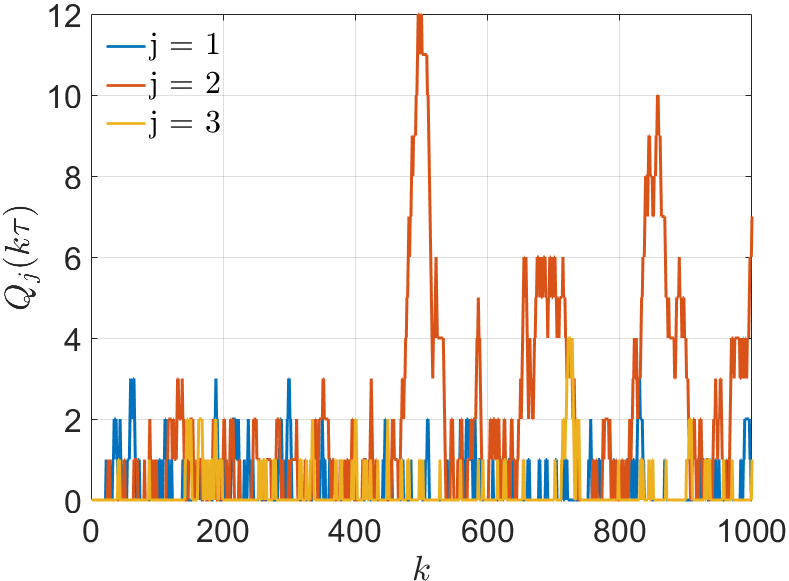}
        \vspace{0.2 cm}
        \caption{\centering\sf On-ramp queue size profiles under the $\FCQRM$ policy, when all the on-ramps are long.} \label{Fig: Qlen long ramps}
    
\end{figure}
The mainline and acceleration lanes are assumed to be initially empty in this section and Section~\ref{Subsection: (Sim) Avg Q lengths}. The on-ramps use the Greedy policy, i.e., the $\FCQRM$ policy with $\Cyclelength=1$, from Section~\ref{sec:greedy}. When the merging speed at all the on-ramps is $\speedlim$, then \mpcomment{the throughput of the $\FCQRM$ policy} is $\Arrivalrate{}= 5/9~[\text{veh}/\text{time step}]$ for the given $\routingmatrix$. Note that the throughput is a single point, \mpcomment{rather than a set of points}, since the arrival rates are assumed to be the same. The queue size profiles for $\Arrivalrate{}= 0.5~[\text{veh}/\text{time step}]$, which corresponds to $\Avgload{} = 0.9$ (heavy demand), is shown in Figure \ref{Fig: Qlen long ramps}. As expected, $\Queuelength{3}(\cdot)$ is generally less than the other two on-ramps.
\par
We next consider the case when the merging speed at on-ramps $1$ and $3$ are $\speedlim$, i.e., $\tau_1 = \tau_3 =2\timestep$, and is approximately $\speedlim/3= 5~[\text{m/sec}]$ at on-ramp $2$ which corresponds to $\tau_2=3\timestep$. In the heavy demand regime, i.e., $\Avgload{} = 0.9$, $\Queuelength{2}(\cdot)$ increases steadily which suggests that the freeway becomes saturated even though $\Avgload{}<1$. The \mpcomment{throughput of the $\FCQRM$ policy} in this case is estimated from simulations to be $\Arrivalrate{} = 0.44~[\text{veh}/\text{time step}]$, while the estimate given by Theorem~\ref{Prop: stability of general aFCQ-RM} is $\Arrivalrate{} = 0.33~[\text{veh}/\text{time step}]$. Moreover, the throughput estimate of the Renewal policy given by Theorem~\ref{Prop: stability of QRM policy for low merging speed} is $\Arrivalrate{} = 0.5~[\text{veh}/\text{time step}]$. Combining this with the simulation results, we see that the Renewal policy has a better throughput than the Greedy policy.
\par
\mpcommentout{
Now consider a ``congested" initial condition on the mainline with $\numvehicles=30 > \numcells$ vehicles, each with initial speed $(\Perimeter/\numvehicles - \standstilldist - \vehiclelength)/\timeheadway = 8.1~[m/s] < \speedlim$ and with initial inter-vehicle separations all equal to $(\Perimeter/\numvehicles - \vehiclelength)$. Let the merging speed at both the on-ramps be $\speedlim$, and let $\Arrivalrate{1} = \Arrivalrate{2} = 0.5$. Figure~\ref{fig:greedy-congested-initial-condition} shows queue size profiles for this setting. The boundedness of queue size for $\Avgload{}$ close to 1 suggests that the greedy policy may keep the freeway under-saturated for every initial condition, at least when the merging speed at all the on-ramps is $\speedlim$.
}
 
\subsection{Effect of Cycle Length on Queue Size}\label{Subsection: (Sim) Avg Q lengths}
\begin{figure}[t!]
\begin{center}
    \begin{subfigure}{.436\textwidth}
        \centering
        \includegraphics[width=\textwidth]{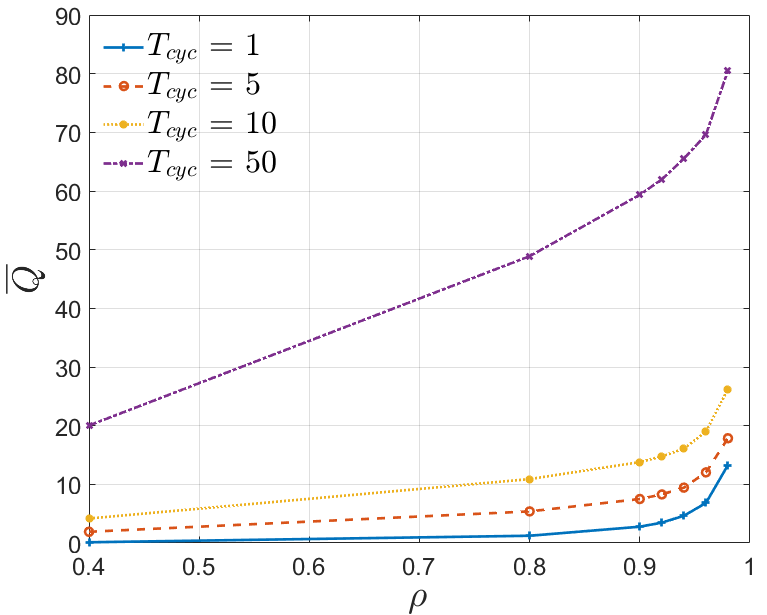}
        \caption{} \label{Fig: Avg qlen for long ramps}
    \end{subfigure}
    \begin{subfigure}{.45\textwidth}
        \centering
        \includegraphics[width=\textwidth]{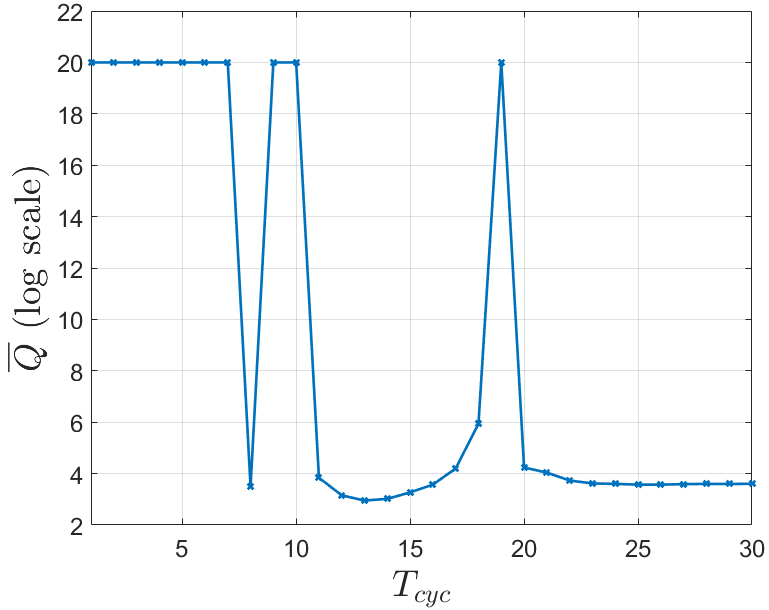}
        \caption{} \label{Fig: Avg qlen for short ramps}
    \end{subfigure}
    \end{center}
    \vspace{0.2 cm}
    \caption{\centering\sf Effect of cycle length $\Cyclelength$ on the long-run expected queue size (a) for different $\Avgload{}$ when both on-ramps are long, (b) for a fixed $\Avgload{}$ when on-ramp $2$ is short. Both plots are under the $\FCQRM$ policy. In plot (b), the logarithm of the expected queue size is set to $20$ whenever the freeway becomes saturated.}
\end{figure}
The long-run expected queue size, i.e., $\overline{Q} := \limsup_{k \ra \infty} \E{\sum_{i}\Queuelength{i}(k\timestep)}$, are compared under the $\FCQRM$ policy for different $\Cyclelength$. The expected queue size is computed using the \textit{batch means} approach, with warm-up period of $10^5$, i.e., the first $10^5$ observations are not used, and batch size of $10^5$. In each case, the simulations are run until the margin of error of the $95\%$ confidence intervals are $1\%$.

Figure~\ref{Fig: Avg qlen for long ramps} shows $\overline{Q}$ vs. $\Avgload{}$ for different $\Cyclelength$ when the merging speed at all the on-ramps is $\speedlim$. The plot suggests that for all $\Avgload{}$, $\overline{Q}$ increases monotonically with $\Cyclelength$. However, this does not hold true when the merging speeds are low. For example, Figure~\ref{Fig: Avg qlen for short ramps} shows $\overline{Q}$ in log scale vs. $\Cyclelength$ for $\Arrivalrate{}=0.455~[\text{veh}/\text{time step}]$, which corresponds to $\Avgload{}\approx 0.82$, when the merging speed at on-ramps $1$ and $3$ is $\speedlim$, and is $\speedlim/3$ at on-ramp $2$. As the plot shows, depending on the choice of $\Cyclelength$, the freeway may become saturated which causes the expected queue size to grow over time, i.e., $\overline{Q}=\infty$ (in the plot, $\log\overline{Q}$ is set to $20$ whenever this occurs). Moreover, one can see from Figure~\ref{Fig: Avg qlen for short ramps} that the dependence of $\overline{Q}$ on $\Cyclelength$ is not monotonic in the low merging speed case. In fact, the optimal choice of $\Cyclelength$ in this case is $13$.

\subsection{Relaxing the V2X Requirements}\label{Subsection: (Sim) Relaxing V2X requirements}
In this scenario, we evaluate the performance of the Greedy policy for $\Avgload{}=0.9$ (heavy demand) when the merging speed at all the on-ramps is $\speedlim$, and the mainline is initially not empty. The $\Safetymode$ mode only consists of the following submode: in a non-merging scenario, the ego vehicle follows the leading vehicle by keeping a safe time headway as in, e.g., \citet{pooladsanj2021vehicle}. For this control law, platoons of vehicles are \emph{stable} and \emph{string stable} (\citet{pooladsanj2021vehicle}). In a merging scenario, it applies the above control law with respect to both its leading and virtual leading vehicles. The most restrictive acceleration is used by the ego vehicle. 
\mpcommentout{
once the ego vehicle is released from an on-ramp, it uses the aforementioned control law for $\speedlim/\maxaccel$ seconds, after which it reaches the speed $\speedlim$ if there are no vehicles ahead. After $\speedlim/\maxaccel$ seconds, each vehicle involved in the merging scenario simultaneously applies the same longitudinal control law with respect to its leading and virtual leading vehicles.} 
\par
The initial number of vehicles on the mainline, their location, and their speed are chosen at random such that the safety distance is not violated. The initial acceleration of all the vehicles is set to zero. We conducted $10$ rounds of simulation with different random seed for each round. It is observed in all scenarios that vehicles reach the free flow state after a finite time using the Greedy policy. We conjecture that this observation holds for \emph{any} initial condition if platoons of vehicles are stable and string stable. The intuition behind this conjecture is as follows: under the Greedy policy, vehicles are released only if there is sufficient gap between vehicles on the mainline, and successive releases are at least $\timestep$ seconds apart. In between releases, vehicles on the mainline try to reach the free flow state because of platoon stability. Once a vehicle is released, it may create disturbance, e.g., in the acceleration of the upstream vehicles. However, string stability ensures that such disturbance is attenuated. Therefore, it is natural to expect that the vehicles reach the free flow state after a finite time. Thereafter, by Theorem~\ref{Prop: stability of general aFCQ-RM}, the Greedy policy keeps the freeway under-saturated if $(\timestep_i/\timestep-1) \Avgload{i} < 1$ for all $i \in [\numramps]$.
\subsection{Comparing the Total Travel Time}\label{Subsection: (Sim) Average travel time}
We evaluate the total travel time under the Renewal, $\GenRM$, $\DisDRRRM$, $\ConRM$, and Greedy policies, \mpcomment{which use accurate traffic measurements obtained by V2I communication.} We compare these results against the ALINEA RM policy (\citet{papageorgiou1991alinea}), \mpcomment{which relies on local traffic measurements obtained by roadside sensors. The purpose of this comparison is to illustrate how much V2I communication can improve performance when comparing with a well-known RM policy which relies only on roadside sensors.} The ALINEA policy controls the outflow of an on-ramp according to the following equation:
\begin{equation*}
    r(k) = r(k-1) + K_r(\hat{o}-o(k)).
\end{equation*}
\par
Here, $r(\cdot)$ is the on-ramp outflow, $K_r$ is a positive design constant, $o(\cdot)$ is the occupancy of the mainline downstream of the on-ramp, and $\hat{o}$ is the desired occupancy. For the ALINEA policy, we use time steps of size $60~[\text{sec}]$, $K_r = 70~[\text{veh/h}]$ (\citet{papageorgiou1991alinea}), and $\hat{o} = 13~\%$ corresponding to the mainline flow capacity. We also add the following vehicle following safety filter on top of ALINEA: the ego vehicle is released only if it is predicted to be at a safe distance with respect to its virtual leading and following vehicles at the moment of merging. We use the name Safe-ALINEA to refer to this policy. For the $\GenRM$ and $\DisDRRRM$ policies, we use the following parameters: $\Cyclelength = 13$, $\updateperiod=2\timestep$, $\DesDecreaseconst = 50$, $\ReleasetimeDecConst{2} = 10$, $\ReleasetimeInc{i}^{\circ} = 0.1$, $\ReleasetimeAdj{} = 1.01$, and $\Tmax = 100~[\text{sec}]$. Informally, $\DesDecreaseconst$ is set to a high value so that the release times $\Releasetime{i}(\cdot)$ increase if the traffic condition, i.e., the value of $\StateofDySys_f$, does not improve significantly in between the update periods; $\ReleasetimeDecConst{2}$ and $\ReleasetimeInc{i}^{\circ}$ are set to low values so that the jump sizes in $\Releasetime{i}(\cdot)$ are small; $\Tmax$ is set to a high value so that on-ramps update their release time in a completely decentralized way. For the $\ConRM$ policy, we set $\Cyclelength = 13$ and use the additional space gap $\kappa_1 \StateofDySys_{f_1} + \kappa_2\sum_{e \in [n]}|y_e - (\timeheadway v_e + \standstilldist)|I_e$ on top of (M3)-(M4), where $\kappa_1 = \kappa_2 = 0.01$ are chosen so that the additional space gap is relatively small ($\approx 8~[\text{m}]$ for the initial condition described next). 
\par
We let the merging speed at on-ramps $1$ and $3$ be $\speedlim$, and $\speedlim/3$ at on-ramp $2$. We let $\Arrivalrate{} = 0.455~[\text{veh}/\text{time step}]$, and consider a congested initial condition where the initial number of vehicles on the mainline is $100 > \numcells$; each vehicle is at the constant speed of $6.7~[\text{m/sec}]$, and is at the distance $\timeheadway \times 6.7 + \standstilldist \approx 14.1~[\text{m}]$ from its leading vehicle. 
\par
\begin{figure}[t!]
        \centering
        \includegraphics[width=0.9\textwidth]{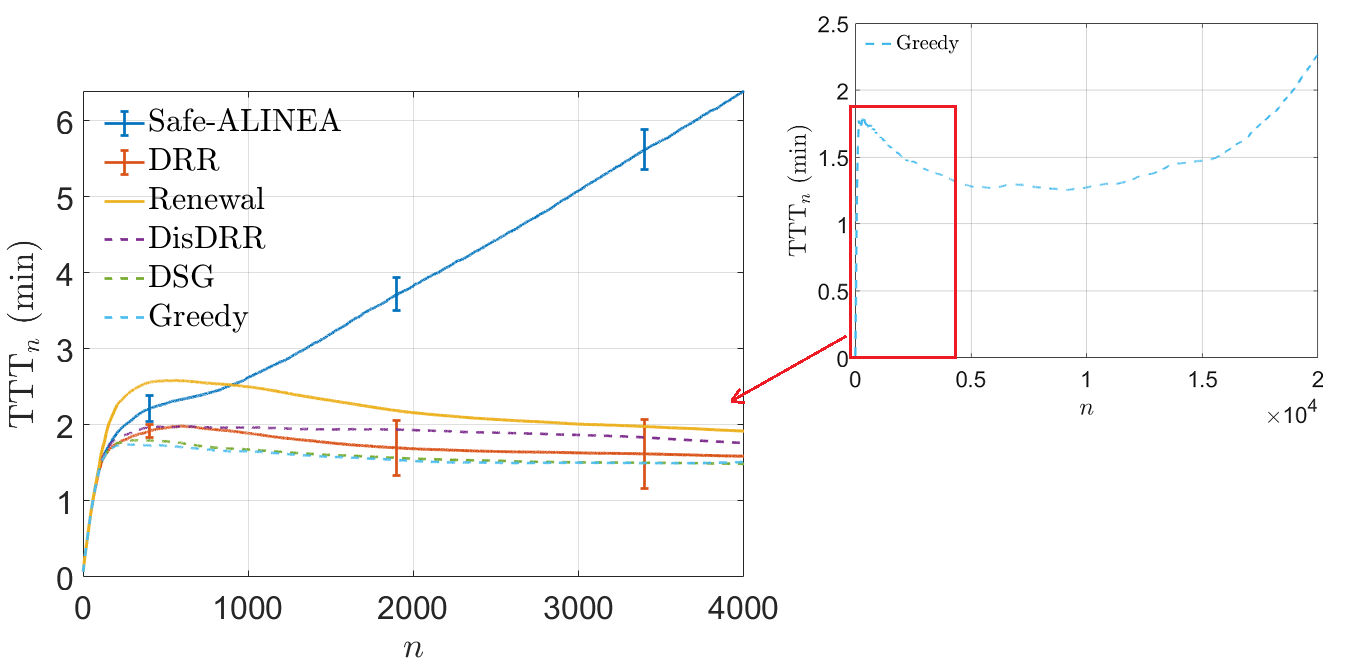}
        \vspace{0.2 cm}
    \caption{\centering\sf The average value of the total travel time of the first $n$ vehicles that complete their trips ($\text{TTT}_n$) for the fixed demand $\Avgload{} = 0.82$. The vertical lines in the Safe-ALINEA and $\GenRM$ policies represent the standard deviation. The standard deviation of the Renewal, $\DisDRRRM$, $\ConRM$, and Greedy policies were similar to the $\GenRM$ policy and are omitted for clarity. The smaller figure on the top right shows $\text{TTT}_n$ under the Greedy policy over a longer time horizon, with trajectories of the $\GenRM$, Renewal, $\DisDRRRM$, and $\ConRM$ policies removed due to their non-increasing behavior.}\label{Fig: Comaring CAD}
\end{figure}

\begin{figure}[t!]
        \centering
        \includegraphics[width=0.5\textwidth]{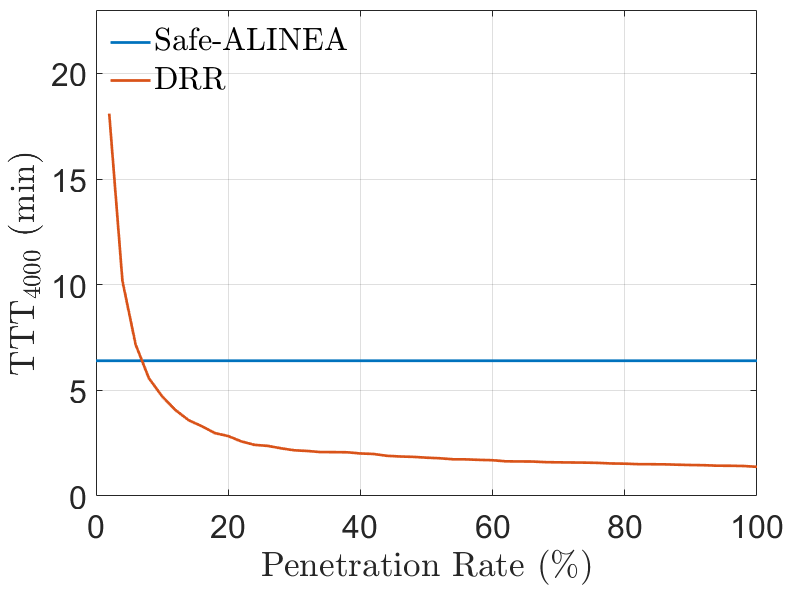}
        \vspace{0.2 cm}
    \caption{\centering\sf \mpcomment{The average travel time, i.e., $\text{TTT}_{4000}$, under the Safe-ALINEA and $\GenRM$ policies at different penetration rates of connected vehicles for the fixed demand $\Avgload{} = 0.82$.}}\label{fig:max-travel-time}
\end{figure}

\begin{figure}[t!]
        \centering
        \includegraphics[width=0.5\textwidth]{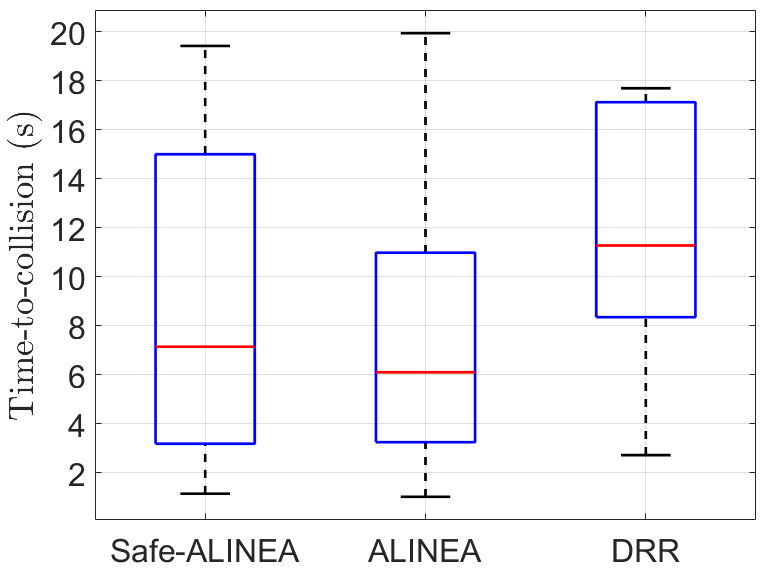}
        \vspace{0.2 cm}
    \caption{\centering\sf Time-to-collision near on-ramp $2$ under the Safe-ALINEA, ALINEA, and $\GenRM$ policies. On each box, the red line indicates the median, and the bottom and top edges of the box indicate the $25$-th and $75$-th percentiles, respectively. The whiskers extend to the most extreme data points not considered outliers.}\label{Fig: Comaring TTC}
\end{figure}
We compute the average value of Total Travel Time (TTT$_n$), which is computed by averaging the total travel time of the first $n$ vehicles that complete their trips. The total travel time of a vehicle equals its waiting time in an on-ramp queue plus the time it spends on the freeway to reach its destination. In order to show consistency, we have conducted $10$ rounds of simulations with different random seed for each round and averaged the results over the $10$ simulation rounds. \mpcommentout{In each round, we used the same seed across different policies.}
\par
For the setup described above, the Safe-ALINEA and Greedy policies fail to keep the freeway under-saturated, whereas the other policies keep it under-saturated. From Figure~\ref{Fig: Comaring CAD} and Table~\ref{table: Performance}, we can see that the $\GenRM$ policy provides significant improvement in the average travel time, i.e., TTT$_{4000}$, compared to the Safe-ALINEA policy (approximately $75\%$). The Renewal, $\DisDRRRM$, and $\ConRM$ policies show similar improvements. Figure~\ref{Fig: Comaring CAD} also shows that while the Greedy policy initially improves TTT$_n$, this improvement disappears over time since the freeway becomes saturated for $\Cyclelength = 1$ as shown in Section~\ref{Subsection: (Sim) Avg Q lengths}. On the other hand, the choice of $\Cyclelength = 13$
in the $\GenRM$, $\DisDRRRM$, and $\ConRM$ policies will show steady improvement. 
\par
\mpcomment{Next, we evaluate the performance of the $\GenRM$ policy in scenarios where some vehicles lack V2I communication capabilities. In such cases, we assume that the $\GenRM$ policy relies only on the measurements obtained from the connected vehicles and ignores the non-connected vehicles. Figure~\ref{fig:max-travel-time} shows the average travel time, i.e., TTT$_{4000}$, under the safe-ALINEA and $\GenRM$ policies at different penetration rates of connected vehicles. As can be seen, the average travel time increases as the penetration rate decreases. However, even at low penetration rates, the $\GenRM$ policy provides improvement over the Safe-ALINEA policy. 
}
\par
Finally, \mpcomment{at a 100$\%$ penetration rate of connected vehicles}, Table~\ref{table: Performance} provides the time average queue size, i.e., $\overline{\Queuelength{}}(K\timestep) =\frac{1}{\numramps K} \sum_{i,j}\Queuelength{i}(j\timestep)$, where $\Queuelength{i}$ is averaged over the $10$ simulation rounds and $K\timestep$ is the simulation time. It can be seen that the Safe-ALINEA policy has the largest queue size compared to the other policies. This is mainly because of the safety filter added on top of the ALINEA policy. Without the safety filter, ALINEA releases vehicles more frequently, which results in shorter on-ramp queues on average but degrades safety. To see the latter, we use the Time-To-Collision (TTC) metric to compare safety under the Safe-ALINEA, ALINEA, and $\GenRM$ policies. The TTC is defined as the time left until a collision occurs between two vehicles if both vehicles continue on the same course and maintain the same speed (\citet{vogel2003comparison}). Generally, scenarios with a TTC of at least $6$ seconds is considered to be safe (\citet{vogel2003comparison}). Figure~\ref{Fig: Comaring TTC} compares the TTC statistics near on-ramp $2$, where we have discarded the TTC values higher than $20$ seconds. As expected, the ALINEA policy leads to more unsafe situations compared to the other two policies. Furthermore, one can see that the safety filter in the Safe-ALINEA policy cannot remove the low TTC values in the ALINEA policy. This is due to the fact that the traffic measurements in the Safe-ALINEA policy are still local and not as accurate as the $\GenRM$ policy.
\par
In conclusion, with proper choice of design parameters, the proposed policies \mpcomment{reduce the risk of collision at the merging bottlenecks without compromising travel time}.

\begin{table}[htb!]
\begin{center}
\caption{\sf Performance of the policies.}\label{table: Performance}
\begin{tabularx}{\textwidth}{ l l l l l l l }
 \hline
  Ramp Metering Policy & Renewal & $\GenRM$ & $\DisDRRRM$ & $\ConRM$ & Greedy & Safe-ALINEA \\
 \hline
 \rule{0pt}{3ex}
  Average travel time $[\text{min}]$ & 1.9 & 1.6 & 1.8 & 1.5 & 1.5 & 6.4\\
 \hline
 \rule{0pt}{3ex}
Average queue size & 156 & 8 & 15 & 9 & 14 & 677 \\
 \hline
\end{tabularx}
\end{center}
\end{table}

\subsection{The Capacity Drop Phenomenon}\label{Subsection: (Sim) capacity drop}
It is known that the maximum achievable traffic flow downstream of a merging bottleneck may decrease when queues are formed, i.e., the capacity drop phenomenon (\citet{hall1991freeway}). In this section, we compare the capacity drop downstream of on-ramp $2$ under the $\GenRM$ and Safe-ALINEA policies. For both policies, we use the parameters of Section~\ref{Subsection: (Sim) Average travel time}. Similar to Section~\ref{Subsection: (Sim) Average travel time}, we let the merging speed at on-ramps $1$ and $3$ be $\speedlim$, and $\speedlim/3$ at on-ramp $2$, and we let $\Arrivalrate{} = 0.455~[\text{veh}/\text{time step}]$. We consider an initial condition where the density is at its critical value (at which the mainline flow is maximized), i.e., the initial number of vehicles on the mainline is $60=\numcells$; each vehicle is at the constant free flow speed, and is at the safe distance $\timeheadway \speedlim + \standstilldist = 26.5~[\text{m}]$ from its leading vehicle. 
\par
Figure~\ref{Fig: Comaring flow} shows the simulation results, where we have let the simulation run idly in the first $5$ minutes without any vehicle entering or exiting the network. As can be seen, both policies lead to a capacity drop downstream of on-ramp $2$, but the capacity drop under the Safe-ALINEA policy is on average $15\%$ worse than the $\GenRM$ policy. This is not surprising as the $\GenRM$ policy uses more accurate traffic measurements to maintain the free flow state. We should note, however, that the capacity drop in both cases occur mainly because of the vehicle following safety considerations and the low merging speed at on-ramp $2$, rather than the formation of queues near the merging point, which is the usual reason for capacity drop. In particular, because of the low merging speed at on-ramp $2$, the minimum distance between vehicles on the mainline that is required by both policies is greater than $\timeheadway \speedlim + \standstilldist$ at the free flow state. \mpcomment{This would reduce the mainline flow when the mainline vehicles are at a distance greater than $\timeheadway \speedlim + \standstilldist$ but less than the threshold required by both policies}. It is worth discussing the case where the merging speed at an on-ramp is $\speedlim$. In this case, the capacity drop under the $\GenRM$ policy with $\Cyclelength=1$ will disappear after the vehicles reach the free flow state. This is because, at the free flow state, the minimum distance between vehicles on the mainline that is required by the $\GenRM$ policy is $\timeheadway \speedlim + \standstilldist$. Since $\Cyclelength=1$, every gap that is at least $\timeheadway \speedlim + \standstilldist$ will be used greedily by the on-ramp, thus maintaining the maximum flow.

\begin{figure}[t!]
        \centering
        \includegraphics[width=0.5\textwidth]{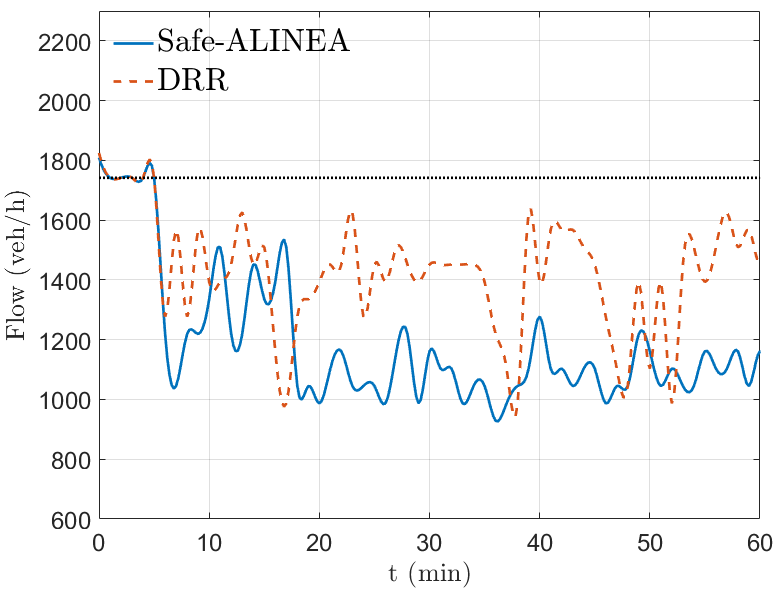}
        \vspace{0.2 cm}
    \caption{\centering\sf The traffic flow downstream of on-ramp $2$ under the Safe-ALINEA and $\GenRM$ policies. The dotted black line indicates the mainline capacity.}\label{Fig: Comaring flow}
\end{figure}
\mpcommentout{
\subsection{An Alternative Greedy Policy}\label{Subsection: (Sim) Alt greedy counter example}
\begin{figure}[htb!]
\begin{center}
    \begin{subfigure}{0.4\textwidth}
        \centering
    \includegraphics[width=1.0\textwidth]{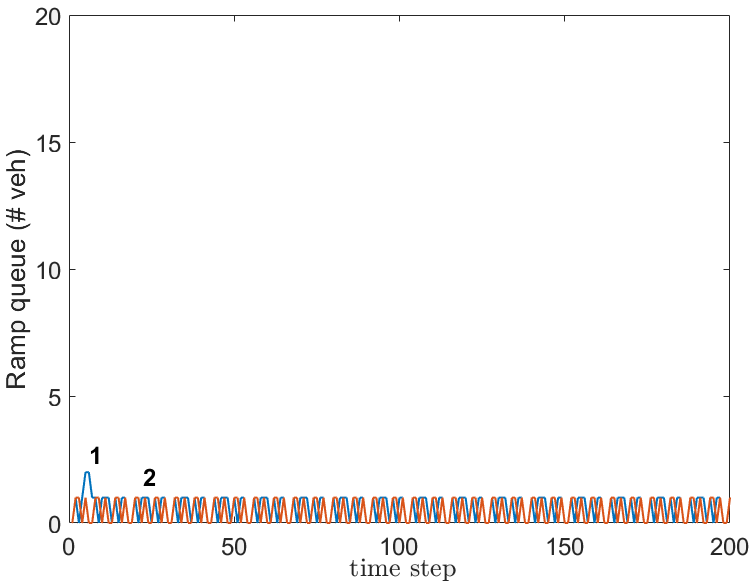}
  \caption{}
  \label{fig:greedy-congested-initial-condition}
    \end{subfigure}
    \begin{subfigure}{0.4\textwidth}
        \centering
    \includegraphics[width=1.0\textwidth]{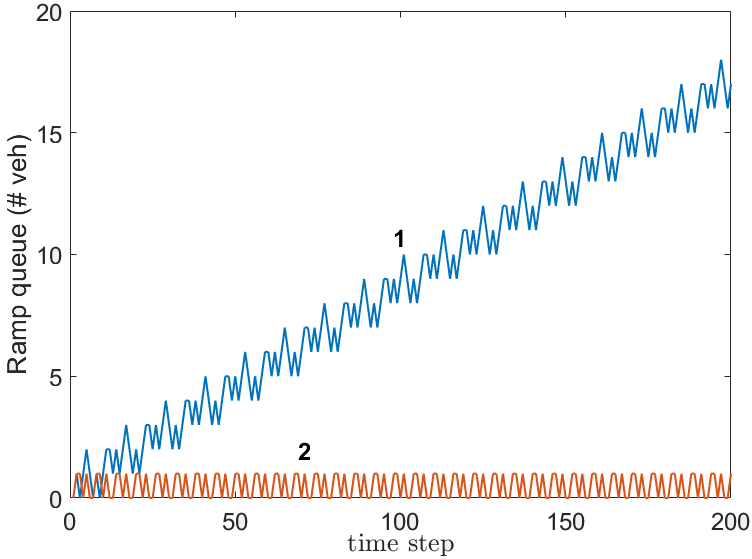}
  \caption{}
    \end{subfigure}
    \end{center}
\vspace{0.2cm}
  \caption{\sf Queue size profiles under the (a) greedy, (b) alternative greedy ramp metering policies}
  \label{Fig: Counter example of alt greedy policy}
\end{figure}
Recall the alternative greedy policy which was introduced in Example \ref{Ex: Alternative greedy that doesn't work}. We show that this policy may not keep the freeway under-saturated for $\Avgload{} < 1$. Let $P = 124$, $\numramps = 2$, and the on-ramps (resp. off-ramps) are located at $[0~~ P/2]$ (resp. $[3P/4~~ P/4]$) coordinates. The on-ramps are assumed to be empty at $t = 0$. Suppose that initially, there are three vehicles on the ring road at the coordinates $[15.5~~46.5~~93]$ moving at the free flow speed of $\speedlim = 15~[m/s]$. We let the arrival processes be deterministic such that for $k \in \Z_{+}$ we have
\begin{equation*}
\begin{aligned}
        \Numarrivals{1}(t) &= 
            \begin{cases*}
                1 & \mbox{if } $t = 6k+1, 6k+3, 6k+4,12k+1$ \\
                0 & \mbox{otherwise } 
            \end{cases*}, \\
        \Numarrivals{2}(t) &= 
            \begin{cases*}
                1 & \mbox{if } $t = 6k+1, 6k+2, 6k+4$ \\
                0 & \mbox{otherwise } 
            \end{cases*}.  
\end{aligned}
\end{equation*}
\par
We also let the routing be deterministic such that for $n \in \N$ and on-ramp $1$ (resp. on-ramp $2$), the destination of the $3n^{th}$ arrival (resp. $3n+2^{nd}$ arrival) is off-ramp $2$ (resp. off-ramp $1$) and the rest of the arrivals wish to exit from off-ramp $1$ (resp. off-ramp $2$). It follows that
\begin{equation*}
    \begin{aligned}
            \Avgload{1} &= \lim_{t \ra \infty}\frac{\sum_{s = 0}^{t-1}\Numarrivals{f, 1}(t+1)}{t} = \frac{11}{12}, \\
            \Avgload{2} &= \lim_{t \ra \infty}\frac{\sum_{s = 0}^{t-1}\Numarrivals{f, 2}(t+1)}{t} = \frac{5}{6} .
    \end{aligned}
\end{equation*}
\par
Hence, $\Avgload{} = 11/12 < 1$. The evolution of the queue sizes under the greedy and alternative greedy ramp metering policies are shown in Figure \ref{Fig: Counter example of alt greedy policy}. As can be seen, the greedy policy performs well while the alternative greedy policy cannot stabilize the queue of on-ramp $1$. The intuitive reason for this phenomenon is that the limited space of the ring road is not used ``efficiently" by the alternative greedy policy since the on-ramps are not ``synchronized" with each other in releasing new vehicles. 
}

\mpcommentout{
\begin{figure}[t]
    \begin{subfigure}{.48\textwidth}
        \centering
        \includegraphics[width=\textwidth]{Figures/Qlen_Long.png}
        \caption{} \label{Fig: Qlen long ramps}
    \end{subfigure}
    \begin{subfigure}{.48\textwidth}
        \centering
        \includegraphics[width=\textwidth]{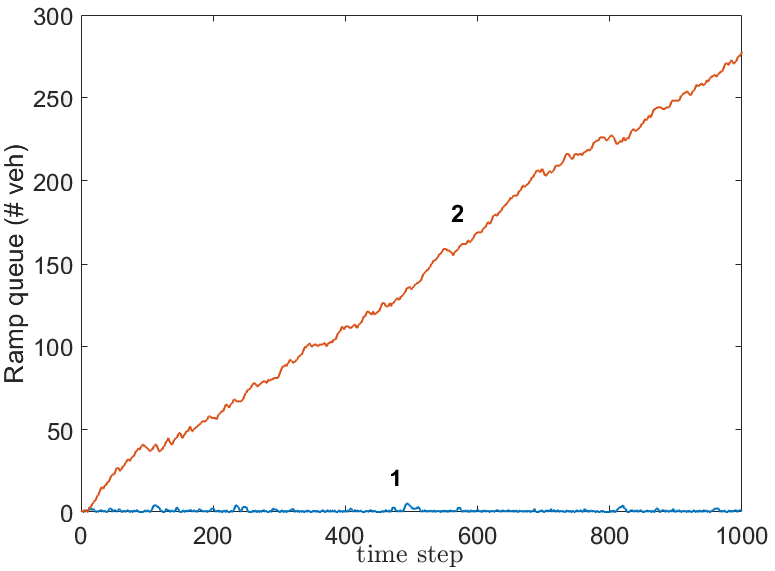}
        \caption{} \label{Fig: Qlen short ramp}
    \end{subfigure}
    \vspace{0.2 cm}
    \caption{\sf Queue size profiles when (a) both on-ramps are long, (b) on-ramp $2$ is short}\label{Fig: Qlen profiles}
\end{figure}
In this section, we provide several simulation scenarios. We use a ring road of length $\Perimeter = 620 ~[m]$ with two on/off-ramps. The on-ramps (resp. off-ramps) are located at the $[0~~\Perimeter/2]$ (resp. $[\Perimeter - 3(\timeheadway \speedlim + \standstilldist + \vehiclelength)~~\Perimeter/2 - 3(\timeheadway \speedlim + \standstilldist + \vehiclelength)]$) coordinates, and they are assumed to be empty at $t = 0$. Vehicles arrive according to i.i.d Bernoulli processes with the same rate $\Arrivalrate{}$, and are routed by
\begin{equation*}
    \routingmatrix = \begin{pmatrix}
    0.8 & 0.2 \\
    0.5 & 0.5
    \end{pmatrix}.
\end{equation*}
\par
Note that, on average, most of the vehicles exit from off-ramp $1$. Thus, under the local policies introduced in this paper, one should expect that on-ramp $1$ find more empty virtual slots than on-ramp $2$. Equivalently, the on-ramp $2$'s queue should be longer than on-ramp $1$'s. We assume that vehicles are homogeneous and autonomous. Each vehicle uses the control law proposed in \citet{pooladsanj2021vehicle} (thus, platoons of vehicles are stable and string stable). In particular,
\begin{equation*}
\begin{aligned}
    \timeheadway &= 1.5 ~[s], ~~ \standstilldist = 4~ [m], ~~ \vehiclelength = 4.5 ~[m], \\
    r &= 0.25, ~~ \speedlim = 15 ~ [\frac{m}{s}].
\end{aligned}
\end{equation*}
\par
For this choice of design constants, $\numcells = 20$. 
\subsection{The Greedy Policy}\label{Subsection: (Sim) Greedy policy}
In this scenario, we simulate the performance of the greedy policy (defined in Remark \ref{Remark: FCQ-RM with T equals 1 is the greedy}) for ``heavy" traffic. We assume that the ring road is initially empty.
\par
Suppose first that the merging speeds of both of the on-ramps is $\speedlim$. Thus, the under-saturation region is $\{\Arrivalrate{} < 5/9 \}$. The queue size profiles of both ramps for $\Arrivalrate{} = 0.5$ (so that $\Avgload{} = 0.9$) are shown in Figure \ref{Fig: Qlen long ramps} for $N = 1000$ time steps. As expected, the queue at on-ramp $2$ is generally longer than that at on-ramp $1$ due to the choice of routing matrix. Suppose next that the second on-ramp is short such that $\addlslot{2} = 3$, i.e., the time headway between any upstream vehicle and a merging vehicle must be at least $3\timestep$. The queue size profiles for the same load are shown in Figure \ref{Fig: Qlen short ramp}. As can be seen, the greedy policy cannot keep the freeway under-saturated even though $\Avgload{}$ is less than one. Using simulations, the under-saturation region in this case is roughly $S = \{\Arrivalrate{} < 0.365\}$. Note that
\begin{equation*}
    \{\Arrivalrate{} < 0.22\} = S_1 \subset S \subset S_2 = \{\Arrivalrate{} < 0.4\},
\end{equation*}
where $S_1$ and $S_2$ are the regions found in Propositions \ref{Prop: stability of general aFCQ-RM} and \ref{Prop: stability of QRM policy for low merging speed}, respectively. In particular, the greedy policy performs strictly \textit{worse} than the Q-RM policy in this case.  
\begin{figure}[t]
    \centering
    \includegraphics[width=0.6\textwidth]{Figures/AvgQLen_Long.png}
    
    \vspace{0.2 cm}
    
    \caption{\sf Average queue sizes when both on-ramps are long}
    \label{Fig: Avg qlen for long ramps}
\end{figure}
\begin{figure}[t]
    \centering
    \includegraphics[width=0.6\textwidth]{Figures/AvgQLen_Short.png}
    
    \vspace{0.2 cm}
    
    \caption{\sf Average queue sizes when on-ramp $2$ is short}
    \label{Fig: Avg qlen for short ramps}
\end{figure}
\subsection{Comparing The Average Queue Sizes}\label{Subsection: (Sim) Avg Q lengths}
In this section, we compare the long-run average queue sizes induced by the FCQ-RM policies with $\Cyclelength = 1, 5, 10, 50$ under different loads. We use the \textit{batch means} approach in order to estimate the average queue sizes. We choose the warm-up period of the batch means method to be $10^5$, i.e., the first $10^5$ observations are not used, and use batches of size $10^5$. In each case, the simulations are run until the margin of error of the $95\%$ confidence intervals are $1\%$. We assume that the ring road is initially empty. 
\par
Figure \ref{Fig: Avg qlen for long ramps} shows the simulation results for the case of long on-ramps. As can be seen, the greedy policy induces the lowest average queue sizes for all values of $\Avgload{}$. Figure \ref{Fig: Avg qlen for long ramps} also shows that the greedy policy induces very low average queue sizes even when $\Avgload{}$ is near $1$. However, this may not be the case when one of the on-ramps is short. Suppose again that on-ramp $2$ is short as in the previous scenario and let $\Arrivalrate{} = 0.4$. Figure \ref{Fig: Avg qlen for short ramps} shows the average queue sizes in the logarithmic scale for different values of $\Cyclelength$. As can be seen, the freeway is under-saturated as long as $\Cyclelength \geq 9$. Thus, the FCQ-RM policies for $\Cyclelength \geq 9$ outperform the greedy policy in this case. It should also be noted that in the under-saturated cases, the average queue size does not necessarily grow by increasing $\Cyclelength$. In fact, the lowest average queue size in this case is induced by the FCQ-RM policy with $\Cyclelength = 17$. This indicates the complex dependency of the throughput of the freeway on the adopted policy when some of the on-ramps are short. 
\subsection{Relaxing The V2X Requirements}\label{Subsection: (Sim) Relaxing V2X requirements}
So far, we have assumed that the ring road is initially empty. In this scenario, we consider a ``congested" initial condition in which there are $30 > \numcells$ vehicles on the ring road. Each vehicle is assumed to be at the equilibrium speed of $(\Perimeter/\numvehicles - \standstilldist - \vehiclelength)/\timeheadway = 8.1~[m/s] < \speedlim$ and distanced $(\Perimeter/\numvehicles - \vehiclelength)$ from its leading vehicle (see \citet{pooladsanj2021vehicle} for the equilibrium analysis). We let both of the on-ramps be long enough so that their merging speeds are $\speedlim$. We also let $\Arrivalrate{} = 0.5$. 
\par 
Figure shows the queue size profiles under the greedy policy. Note that even though the initial condition does not satisfy the requirements of Theorem \ref{Prop: Stability of FCQ-RM policy for all T}, the greedy policy keeps the freeway under-saturated. This suggests that, in practice, the V2X requirements of the aFCQ-RM policy can be relaxed considerably without diminishing the performance. We conjecture that this conclusion holds \textit{for any} initial condition and freeway size if and only if platoons of vehicles are stable and string stable. 
\begin{figure}[t]
\captionsetup[subfigure]{}
    \begin{subfigure}{1\textwidth}
        \centering
    \includegraphics[width=0.50\textwidth]{Figures/CountEx_Greedy_Heavy92.png}
  \caption{}
    \end{subfigure}
\vspace{0.1cm}
    \begin{subfigure}{1\textwidth}
        \centering
    \includegraphics[width=0.50\textwidth]{Figures/CountEx_AltGreedy_Heavy92.png}
  \caption{}
    \end{subfigure}
\vspace{0.2cm}
  \caption{\sf Queue size profiles with the (a) greedy, (b) alternative greedy ramp metering policies}\label{Fig: Counter example of alt greedy policy}
\end{figure}
\subsection{Alternative Greedy Policy}\label{Subsection: (Sim) Alt greedy counter example}
Recall the alternative greedy policy which was introduced in Example \ref{Ex: Alternative greedy that doesn't work}. We show that this policy may not keep the freeway under-saturated for $\Avgload{} < 1$. Let $P = 124$, $\numramps = 2$, and the on-ramps (resp. off-ramps) are located at $[0~~ P/2]$ (resp. $[3P/4~~ P/4]$) coordinates. The on-ramps are assumed to be empty at $t = 0$. Suppose that initially, there are three vehicles on the ring road at the coordinates $[15.5~~46.5~~93]$ moving at the free flow speed of $\speedlim = 15~[m/s]$. We let the arrival processes be deterministic such that for $k \in \Z_{+}$ we have
\begin{equation*}
\begin{aligned}
        \Numarrivals{1}(t) &= 
            \begin{cases*}
                1 & \mbox{if } $t = 6k+1, 6k+3, 6k+4,12k+1$ \\
                0 & \mbox{otherwise } 
            \end{cases*}, \\
        \Numarrivals{2}(t) &= 
            \begin{cases*}
                1 & \mbox{if } $t = 6k+1, 6k+2, 6k+4$ \\
                0 & \mbox{otherwise } 
            \end{cases*}.  
\end{aligned}
\end{equation*}
\par
We also let the routing be deterministic such that for $n \in \N$ and on-ramp $1$ (resp. on-ramp $2$), the destination of the $3n^{th}$ arrival (resp. $3n+2^{nd}$ arrival) is off-ramp $2$ (resp. off-ramp $1$) and the rest of the arrivals wish to exit from off-ramp $1$ (resp. off-ramp $2$). It follows that
\begin{equation*}
    \begin{aligned}
            \Avgload{1} &= \lim_{t \ra \infty}\frac{\sum_{s = 0}^{t-1}\Numarrivals{f, 1}(t+1)}{t} = \frac{11}{12}, \\
            \Avgload{2} &= \lim_{t \ra \infty}\frac{\sum_{s = 0}^{t-1}\Numarrivals{f, 2}(t+1)}{t} = \frac{5}{6} .
    \end{aligned}
\end{equation*}
\par
Hence, $\Avgload{} = 11/12 < 1$. The evolution of the queue sizes under the greedy and alternative greedy ramp metering policies are shown in Figure \ref{Fig: Counter example of alt greedy policy}. As can be seen, the greedy policy performs well while the alternative greedy policy cannot stabilize the queue of on-ramp $1$. The intuitive reason for this phenomenon is that the limited space of the ring road is not used ``efficiently" by the alternative greedy policy since the on-ramps are not ``synchronized" with each other in releasing new vehicles. 
}

\section{Conclusion and Future Work}\label{Section: Conclusion}
We provided a microscopic-level RM framework subject to vehicle following safety constraints. This allows to explicitly take into account vehicle following safety and V2X communication scenarios in the design of RM, and study their impact on the freeway performance. We specifically provided RM policies \mpcomment{that operate under vehicle following safety constraints}, and analyzed performance in terms of their throughput. The proposed policies work in synchronous cycles during which an on-ramp does not release more vehicles than the number of vehicles waiting in its queue at the start of the cycle. The cycle length was variable in the Renewal policy, which could increase the total travel time. To prevent this issue, we used a fixed cycle length in the other policies. A fixed cycle length, however, can reduce the throughput when the merging speeds are less than the free flow speed. When the merging speeds are all equal to the free flow speed, all the proposed policies maximize the throughput. We compared the performance of the proposed policies with a well-known macroscopic RM policy \mpcomment{that relies on local traffic measurements obtained from roadside sensors.} We observed considerable improvements in the total travel time, capacity drop, and time-to-collision performance metrics.
\par
There are several avenues for generalizing the setup and methodologies initiated in this paper. Of immediate interest would be to consider a general network structure, and to derive sharper inner-estimates on the under-saturation region for the low merging speed case. We are also interested in expanding performance analysis to include travel time, possibly by leveraging waiting time analysis from queuing theory. \mpcomment{Finally, we plan to conduct simulations using a high-fidelity traffic simulator to evaluate the performance of the proposed policies in mixed traffic scenarios, where some vehicles are manually driven.}   
\mpcommentout{
In this paper, we studied the problem of determining ramp metering policies that $(i)$ are safe, $(ii)$ ensure a constant speed of flow, and $(iii)$ keep the freeway network undersaturated. The setup we considered was a single-lane ring road with multiple on/off-ramps populated with autonomous vehicles that obey safety, speed, and comfort rules. The arrival-destination of vehicles were stochastic which accounted for the uncertainty in the traffic demand. We first formalized a notion of "throughput" which determines a demand region beyond which at least one of the control objectives cannot be satisfied, no matter what type of ramp metering policy is adopted. We next designed a class of local ramp metering policies and showed that if the demand is within the throughput and the flow of vehicles is initially moving at the constant free flow speed, then the proposed policies achieve all control objectives. In other words, no coordination at the ramp level is required. We next showed that the initial speed of flow assumption can be relaxed in the presence of autonomous vehicles. Finally, we designed a class of coordinated policies that required no assumption on the initial speed or the level of autonomy in the network. We showed that these policies achieve all control objectives as long as the demand lies within the throughput.
\par
The effect of lane changing is not considered.
\par
The queue capacity of on-ramps is not infinite. }

\appendix

\section{Network Specifications}\label{Section: (Appx) Model specifications}
\subsection{Dynamics in the Speed Tracking Mode}\label{Section: (Appx) Cruise control dynamics}
Suppose that the ego vehicle is in the $\Speedmode$ mode for all $t \geq 0$, $v_e(0) = v_0 \in [0,\speedlim]$, and $a_e(0) = a_0 \in [\minaccel, \maxaccel]$, where $a_e$ is the acceleration and $\maxaccel$ is the maximum possible acceleration. Then, for a third-order vehicle dynamics, we have 
\begin{equation*}
    \begin{aligned}
        v_e(t) &= v_0 + \int_{0}^{t}a_e(\xi)d\xi \\
        a_e(t) &= \begin{cases*}
                \maxjerk t + a_0 & \mbox{if } $0 \leq t < t_1$ \\
                \maxaccel & \mbox{if } $t_1 \leq t < t_1 + t_2$ \\
                \maxaccel - \maxjerk t & \mbox{if } $t_1 + t_2 \leq t < t_1 + t_2 + t_3 $ \\
                0 & \mbox{if } $t \geq t_1 + t_2 + t_3$
            \end{cases*}, 
    \end{aligned}
\end{equation*}
where $\maxjerk$ is the maximum possible jerk, $t_1$ is the time at which the ego vehicle reaches the maximum acceleration, $t_2$ is the additional time required to reach a desired speed before it decelerates, and $t_3$ is the time required to reach the zero acceleration in order to avoid exceeding the speed limit $\speedlim$. Hence, $v_e(t_1 + t_2 + t_3) = \speedlim$, and $a_e(t_1 + t_2 + t_3) = 0$. The dynamics for the $v_0 > \speedlim$ case is similar.



\subsection{Communication Cost of a Ramp Metering Policy}\label{Section: (Appx) comm cost}
The calculation of the communication cost for a policy is inspired by the robotics literature (\citet[Remark 3.27]{bullo2009distributed}). We let $C_{ij}(k) = 1$ if vehicle $i$ communicates with on-ramp $j$ at time $k\timestep$, and $C_{ij}(k) = 0$ otherwise. Then, the total number of transmissions at time $k\timestep$ is $C(k) = \sum_{i \in [n], j \in [\numramps]}C_{ij}(k)$, where $n$ is the total number of vehicles in the network. The communication cost of the policy, measured in number of transmissions per $\timestep$ seconds, is obtained as follows:
\begin{equation*}
    C = \limsup_{K \ra \infty}\frac{1}{K}\sum_{k = 0}^{K-1}C(k).
\end{equation*}

\section{Performance Analysis Tool}
\label{sec:performance-analysis}
Recall the initial condition from Section~\ref{Section: RM rules}, where the vehicles are in the free flow state and the location of each vehicle coincides with a slot for all times in the future. For this initial condition and under the proposed RM policies, we can cast the network as a discrete-time Markov chain. With a slight abuse of notation, we let $t = 0, 1, \ldots$ with time steps of duration $\timestep$ whenever we are talking about the underlying Markov chain. Let $M_{i}^{q}(t)$ be the vector of destination off-ramps of the vehicles that are waiting at on-ramp $i$ at $t$, arranged in the order of their arrival. Furthermore, let $M^s(t)=[M_{\ell}^{s}(t)]$ be the vector of the destination off-ramps of the occupants of the $\numcells + \numaccslots{a}$ slots: $M_{\ell}^{s}(t)=j$ if the vehicle occupying slot $\ell$ at time $t$ wants to exit from off-ramp $j$, and $M_{\ell}^{s}(t)=0$ if slot $\ell$ is empty at time $t$. 
Consider the following discrete-time Markov chain with the state 
\begin{equation*}
    \StateofMC(t) := (M_{1}^{q}(t),\ldots,M_{\numramps}^{q}(t),M^s(t)).
\end{equation*}
\par
The transition probabilities of this chain are determined by the RM policy being analyzed, but will not be specified explicitly for brevity. For all the RM policies considered in this paper, the state $\StateofMC(t) = (0, 0)$ is reachable from all other states, and $\mathbb{P}\left(\StateofMC(t+1) = (0, 0)~|~ \StateofMC(t) = (0,0)\right) > 0$. Hence, the chain $\StateofMC$ is irreducible and aperiodic. In the the proofs of Theorem~\ref{Prop: stability of QRM policy for low merging speed} and \ref{Prop: stability of general aFCQ-RM}, we construct new Markov chains (that are also irreducible and aperiodic) by ``thinning" the chain $\StateofMC$.
\par
The following is an adaptation of a well-known result, e.g., see \citet[Theorem 14.0.1]{meyn2012markov}, for the setting of our paper. 
\begin{theorem} \label{Thm: f-positivity of MC}
(\textbf{Foster-Lyapunov drift criterion}) Let $\{\StateofMCExp(t)\}_{t = 1}^{\infty}$ be an irreducible and aperiodic discrete time Markov chain evolving on a countable state space $\mc \StateofMCExp$. Suppose that there exist $V: \mc \StateofMCExp \ra [0,\infty)$, $f: \mc \StateofMCExp \ra [1,\infty)$, a finite constant $b$, and a finite set $B \in \mc \StateofMCExp$ such that, for all $z \in \mc \StateofMCExp$, 
\begin{equation}\label{Eq: Foster-Lyapunov criteria}
        \E{\Lyap{\StateofMCExp(t+1)} - \Lyap{\StateofMCExp(t)}|\: \StateofMCExp(t) = z} \leq -f(z) + b\mathbbm{1}_{B}(z),
\end{equation} 
where $\mathbbm{1}_{B}(z)$ is the indicator function of the set $B$. Then, $\lim_{t \ra \infty}\E{f(\StateofMCExp(t))}$ exists and is finite. 
\end{theorem}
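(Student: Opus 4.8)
The plan is to recognize \eqref{Eq: Foster-Lyapunov criteria} as an instance of the standard drift condition (the condition (V3) of Meyn and Tweedie) with Lyapunov function $\Lyap{\cdot}$, rate function $f \ge 1$, and the finite set $B$, and then to invoke the $f$-norm ergodic theorem. The only hypothesis of that theorem not already handed to us is a structural property of $B$. So first I would verify that, because $\StateofMCExp$ is irreducible and aperiodic on the countable space $\mc\StateofMCExp$, every finite subset is petite (indeed small): irreducibility supplies for each ordered pair of states a finite path of positive probability, aperiodicity lets one align return times, and finiteness of $B$ permits a common minorizing time and minorizing measure. Hence $B$ is petite and the hypotheses of \citet[Theorem 14.0.1]{meyn2012markov} hold verbatim.

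Before invoking that result I would record the elementary half of the argument, which both guarantees finiteness of the intermediate quantities and pins down the limiting value. Taking total expectations in \eqref{Eq: Foster-Lyapunov criteria} gives $\E{\Lyap{\StateofMCExp(t+1)}} - \E{\Lyap{\StateofMCExp(t)}} \le -\E{f(\StateofMCExp(t))} + b\,\Perm(\StateofMCExp(t) \in B)$; since the drift bound also yields $\E{\Lyap{\StateofMCExp(t+1)}} \le \E{\Lyap{\StateofMCExp(t)}} + b$, a deterministic start with $\Lyap{\StateofMCExp(0)} < \infty$ keeps every $\E{\Lyap{\StateofMCExp(t)}}$ finite. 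Telescoping over $t = 0, \ldots, N-1$ and using $\Lyap{\cdot} \ge 0$ then gives
\begin{equation*}
\sum_{t=0}^{N-1}\E{f(\StateofMCExp(t))} \le \E{\Lyap{\StateofMCExp(0)}} + b\sum_{t=0}^{N-1}\Perm(\StateofMCExp(t) \in B) \le \E{\Lyap{\StateofMCExp(0)}} + bN .
\end{equation*}
Because $f \ge 1$, \eqref{Eq: Foster-Lyapunov criteria} is also Foster's criterion, so the chain is positive recurrent with a unique invariant probability $\pi$; integrating the drift inequality against $\pi$ (justified by the comparison theorem, as $\pi(\Lyap{\cdot})$ need not be assumed finite) yields $0 \le -\pi(f) + b\,\pi(B)$, whence $\pi(f) \le b < \infty$, identifying the candidate limit.

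Finally I would apply \citet[Theorem 14.0.1]{meyn2012markov}: condition (V3) with $f \ge 1$ and a petite set forces the chain to be $f$-regular and gives $\lVert P^{t}(z,\cdot) - \pi \rVert_{f} \ra 0$ for every $z \in \mc\StateofMCExp$ (all states share one communicating class by irreducibility). Convergence in this $f$-weighted norm is precisely $\E{f(\StateofMCExp(t))} = \int f \, dP^{t}(z,\cdot) \ra \pi(f)$ for a deterministic initial state, and extends to any initial law $\mu$ with $\int \Lyap{\cdot}\, d\mu < \infty$ by dominated convergence; in the applications of interest the start is deterministic, so this is immediate. Therefore $\lim_{t \ra \infty}\E{f(\StateofMCExp(t))} = \pi(f) \le b < \infty$.

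The hard part is this last step rather than the bookkeeping. The telescoping estimate only bounds the Cesàro averages $\tfrac{1}{N}\sum_{t<N}\E{f(\StateofMCExp(t))}$ and, by itself, does not establish convergence of $\E{f(\StateofMCExp(t))}$ along the full sequence. Upgrading from the time-average bound to the genuine limit requires the $f$-ergodicity of the chain, i.e. convergence in the $f$-weighted total-variation norm, and it is exactly here that aperiodicity and the petiteness of the finite set $B$ are indispensable.
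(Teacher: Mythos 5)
Your proposal is correct and rests on exactly the same foundation as the paper, which states this theorem as an adaptation of \citet[Theorem 14.0.1]{meyn2012markov} and offers no further proof. Your additional bookkeeping --- checking that finite sets are petite for an irreducible aperiodic chain on a countable state space, deriving $\pi(f)\leq b$ via the comparison theorem, and passing from $f$-norm ergodicity to convergence of $\E{f(\StateofMCExp(t))}$ --- correctly fills in the routine verifications that the paper leaves implicit.
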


\begin{remark}\label{Remark: Implications of Foster Lyap thm}
If the conditions of Theorem~\ref{Thm: f-positivity of MC} hold true, then $V$ is referred to as a \emph{Lyapunov} function. Additionally, if $f(\StateofMCExp(t)) = \|\Queuelength{}(t)\|_{\infty}$, where $\|\Queuelength{}(t)\|_{\infty}$ denotes the sup norm of the vector of queue sizes $\Queuelength{}(t)$, then 
$\lim_{t \ra \infty}\E{\|\Queuelength{}(t)\|_{\infty}} < \infty$, and hence $\limsup_{t \ra \infty}\E{\Queuelength{i}(t)} < \infty$ for all $i \in [\numramps]$. 
\end{remark}
We end this section by introducing a notation which will be used in the proofs of Theorem~\ref{Prop: stability of QRM policy for low merging speed} and \ref{Prop: stability of general aFCQ-RM} to construct Lyapunov functions and/or prove certain results about them. Given on-ramp $i \in [\numramps]$, we let $\Nodedegree{i}(t)$ be the \emph{degree} of on-ramp $i$ at time $t$, which represents the number of vehicles in the network at time $t$ that need to cross the merging point of on-ramp $i$ in order to reach their destination. 

\section{Proofs of the Main Results}

\subsection{Proof of Theorem \ref{Prop: stability of QRM policy for low merging speed}}\label{Section: (Appx) Proof of Q-RM prop}
For the sake of readability, we present proofs of intermediate claims at the end. Under the Renewal policy, the first cycle starts when the vehicles reach the free flow state, which is after at most $\Tempty$ seconds by (VC3). The free flow state and (M4) imply that any vehicle that is released thereafter remains in the $\Speedmode$ mode in the future, which implies that the location of the released vehicles coincide with a slot for all times in the future. Without loss of generality, let the start of the first cycle $t_1$ be at time $0$, and let the location of each vehicle coincides with a slot at time $0$. We can adopt the Markov chain setting from Appendix~\ref{sec:performance-analysis} with $\{\StateofMC(t_k)\}_{k \geq 1}$ as the Markov chain, where $t_k$ is the start of the $k$-th cycle. Consider the function $V: \StateSpace \ra [0,\infty)$
\begin{equation*}
    \Lyap{\StateofMC(t_k)} = \Cyclelength^{2}(k),
\end{equation*}
where $\StateSpace$ is the range of values of $\StateofMC$, and $\Cyclelength(k) = t_{k+1} - t_{k}$ is the length of the $k$-th cycle. We let $\Lyap{\StateofMC(t_k)} \equiv \Lyap{t_k}$ for brevity. It is shown at the end of the proof that
\begin{equation}\label{Eq: (Q-RM) cycle length 2nd key inequality}
    \Cyclelength(k+1) \leq \MaxNumArrival{}{}{\Cyclelength(k)} + \numcells + \numaccslots{a},
\end{equation}
where $\MaxNumArrival{}{}{\Cyclelength(k)}$ depends on the number of arrivals in the interval $[t_k+1,t_{k+1}]$ and satisfies the following: 
\begin{equation}\label{Eq: (Q-RM) implication of SLLN}
\lim_{t \ra \infty}\E{\left(\frac{\MaxNumArrival{}{}{\Cyclelength(k)}}{\Cyclelength(k)}\right)^2 \middle|~ \Cyclelength(k) = t} = \left(\max_{i \in [\numramps]}(\frac{\timestep_i}{\timestep}-1)\Avgload{i} - (\frac{\timestep_i}{\timestep} -2)\Arrivalrate{i}\right)^2.
\end{equation}
\par
By the assumption of the Theorem, $(\timestep_i/\timestep-1)\Avgload{i} - (\timestep_i/\timestep -2)\Arrivalrate{i} < 1$ for all $i \in [\numramps]$. Combining this with \eqref{Eq: (Q-RM) cycle length 2nd key inequality} and \eqref{Eq: (Q-RM) implication of SLLN}, it follows that
\begin{equation*}
\begin{aligned}
        \limsup_{t \ra \infty}\E{\left(\frac{\Cyclelength(k+1)}{\Cyclelength(k)}\right)^2 \middle|~ \Cyclelength(k) = t} &\leq \limsup_{t \ra \infty}\E{\left(\frac{\MaxNumArrival{}{}{\Cyclelength(k)}}{\Cyclelength(k)}\right)^2 \middle|~ \Cyclelength(k) = t} \\
        &= \left(\max_{i \in [\numramps]}( \frac{\timestep_i}{\timestep}-1)\Avgload{i} - (\frac{\timestep_i}{\timestep} -2)\Arrivalrate{i}\right)^2 < 1.
\end{aligned}
\end{equation*}
\par
Therefore, there exists $\delta \in (0,1)$ and $T > 0$ such that for all $t > T$ we have
\begin{equation*}
    \E{\left(\frac{\Cyclelength(k+1)}{\Cyclelength(k)}\right)^2 \middle|~ \Cyclelength(k) = t} < 1 - \delta,
\end{equation*}
which in turn implies that
\begin{equation*}
    \E{\Cyclelength^2(k+1) - \Cyclelength^2(k) \middle|~ \Cyclelength(k) > T} < -\delta \Cyclelength^2(k).
\end{equation*}
\par
Since $\Queuelength{i}(t_k) \leq \Cyclelength(k) \leq \Cyclelength^2(k)$ for all $i \in [\numramps]$, it follows that $\E{\Cyclelength^2(k+1) - \Cyclelength^2(k) \middle|~ \Cyclelength(k) > T} < -\delta\|\Queuelength{}(t_k)\|_{\infty}$, where $\|\Queuelength{}\|_{\infty} = \max_{i}\Queuelength{i}$ as defined in Appendix~\ref{sec:performance-analysis}. 
\par
If $\Cyclelength(k) \leq T$, then $\MaxNumArrival{}{}{\Cyclelength(k)}$ is shown in \eqref{eq:boundedness-of-Atilde} to satisfy $\MaxNumArrival{}{}{\Cyclelength(k)} \leq \numramps T \max_{is \in [\numramps]} (\timestep_i/\timestep-1)$. Combining this with \eqref{Eq: (Q-RM) cycle length 2nd key inequality} gives $\Cyclelength(k+1) \leq \numramps T \max_{i \in [\numramps]} (\timestep_i/\timestep-1)  + \numcells + \numaccslots{a}$. Therefore, 
    $\E{\Cyclelength^2(k+1) \middle|~ \Cyclelength(k) \leq T} \leq
     \left(\numramps T \max_{i \in [\numramps]} (\timestep_i/\timestep-1)  + \numcells + \numaccslots{a}\right)^2 + \Cyclelength^2(k) -\delta\|\Queuelength{}(t_k)\|_{\infty}$,
where we have used $\delta\|\Queuelength{}(t_k)\|_{\infty} \leq \|\Queuelength{}(t_k)\|_{\infty} \leq \Cyclelength^2(k)$.
\par
Combining all the previous steps gives
\begin{equation*}
    \E{V(t_{k+1}) - V(t_k) \middle|~ \StateofMC(t_k)} \leq -\delta \|\Queuelength{}(t_k)\|_{\infty} + \left(\numramps T \max_{i \in [\numramps]}(\frac{\timestep_i}{\timestep}-1) + \numcells + \numaccslots{a}\right)^2\mathbbm{1}_{B},
\end{equation*}
where $B = \{\StateofMC(t_k): V(t_k) \leq T^2\}$ (a finite set). The result then follows from Theorem~\ref{Thm: f-positivity of MC}.
\subsection*{\underline{Proof of \eqref{Eq: (Q-RM) cycle length 2nd key inequality}}}
Consider the $(k+1)$-th cycle. Let $s_{i} \in [t_{k+1}, t_{k+2})$ be the time at which the queue at on-ramp $i \in [\numramps]$ becomes empty of the $(k+1)$-th cycle quota. We claim that
\begin{equation*}
    s_{i} - t_{k+1} \leq (\frac{\timestep_i}{\timestep}-1) \Nodedegree{i}(t_{k+1}) - ( \frac{\timestep_i}{\timestep} - 2)\Queuelength{i}(t_{k+1})+\numcells+\numaccslots{a},
\end{equation*}
where $\Nodedegree{i}$ is defined in Appendix~\ref{sec:performance-analysis}. Suppose not, i.e.,
\begin{equation}\label{eq:(Renewal)contradiction-assumption}
    s_{i} - t_{k+1} - \left((\frac{\timestep_i}{\timestep}-1)( \Nodedegree{i}(t_{k+1}) - \Queuelength{i}(t_{k+1}))+\numcells+\numaccslots{a}\right) > \Queuelength{i}(t_{k+1}).
\end{equation}
\par
We will use \eqref{eq:(Renewal)contradiction-assumption} to reach a contradiction. If at least one upstream vehicle crosses the merging point of on-ramp $i$ in $\timestep_i/\timestep - 1$ time steps, then the last acceleration lane slot of on-ramp $i$ that is not on the mainline must be empty because of the vehicle following safety consideration (M4). Since the total number of vehicles from other on-ramps that need to cross the merging point of on-ramp $i$ is $\Nodedegree{i}(t_{k+1}) - \Queuelength{i}(t_{k+1})$, and the number of vehicles from cycle $k$ that are still on the freeway at the start of the $(k+1)$-th cycle is at most $\numcells+\numaccslots{a}$, the aforementioned acceleration lane slot can be empty of the $(k+1)$-th cycle quota for at most $(\timestep_i/\timestep-1)(\Nodedegree{i}(t_{k+1}) - \Queuelength{i}(t_{k+1}))+\numcells+\numaccslots{a}$ time steps. Therefore, it must be occupied for at least $s_{i}-t_{k+1} - \left((\timestep_i/\timestep-1)(\Nodedegree{i}(t_{k+1}) - \Queuelength{i}(t_{k+1}))+\numcells+\numaccslots{a}\right)$ time steps, which by \eqref{eq:(Renewal)contradiction-assumption} is greater than $\Queuelength{i}(t_{k+1})$. This, however, contradicts the feature of the Renewal policy under which the number of vehicles released by an on-ramp during a cycle does not exceed the queue size at the start of the cycle.
\par
Since $\Cyclelength(k+1) \leq \max_{i \in [\numramps]}s_{i} - t_{k+1} + \numcells + \numaccslots{a}$, it follows that
\begin{equation}\label{Eq: (Q-RM) cycle length key inequality}
    \Cyclelength(k+1) \leq \max_{i \in [\numramps]} \left\{(\frac{\timestep_i}{\timestep}-1) \Nodedegree{i}(t_{k+1}) - ( \frac{\timestep_i}{\timestep} - 2)\Queuelength{i}(t_{k+1}) \right\} + \numcells + \numaccslots{a}.
\end{equation}
\par
Let $\Numarrivals{i}(s)$ be the number of arrivals to on-ramp $i$ at time $s$, and $\Numarrivals{\ell,i}(s)$ be the number of arrivals to all the on-ramps at time $s$ that need to cross link $i$. We let $\Cumularrivals{i}{t_{k+1} - t_{k}}$ \footnote{In this context, the beginning and end of each interval is specified whenever needed. Thus, the notation $\Cumularrivals{i}{t_{k+1} - t_{k}}$ should not create any confusion.} denote the cumulative number of arrivals to on-ramp $i$ during the interval $[t_{k} + 1,t_{k+1}]$, i.e.,
\begin{equation*}
    \Cumularrivals{i}{t_{k+1} - t_{k}} = \sum_{s = t_k + 1}^{t_{k+1}}\Numarrivals{i}(s).
\end{equation*}
\par
We define $\Cumularrivals{\ell,i}{t_{k+1} - t_{k}}$ similarly. Note that $\Queuelength{i}(t_{k+1})$ is precisely the cumulative number of arrivals to on-ramp $i$ in $[t_k+1, t_{k+1}]$, i.e., $\Queuelength{i}(t_{k+1}) = \Cumularrivals{i}{t_{k+1} - t_k}$. Similarly, $\Nodedegree{i}(t_{k+1}) = \Cumularrivals{\ell,i}{t_{k+1} - t_{k}}$. This and \eqref{Eq: (Q-RM) cycle length key inequality} imply \eqref{Eq: (Q-RM) cycle length 2nd key inequality} with 
\begin{equation*} 
\MaxNumArrival{}{}{\Cyclelength(k)} = \max_{i \in [\numramps]}\left\{(\frac{\timestep_i}{\timestep}-1)\Cumularrivals{\ell,i}{t_{k+1} - t_k} - (\frac{\timestep_i}{\timestep}-2)\Cumularrivals{i}{t_{k+1} - t_k} \right\}.
\end{equation*}
\par
Note that because the number of arrivals to all the on-ramps is at most $\numramps$ at each time step, we have
\begin{equation}\label{eq:boundedness-of-Atilde}
    \MaxNumArrival{}{}{\Cyclelength(k)} \leq \numramps \Cyclelength(k) \max_{i \in [\numramps]} (\timestep_i/\timestep-1).
\end{equation}
\subsection*{\underline{Proof of \eqref{Eq: (Q-RM) implication of SLLN}}}
Consider the sequences $\{\Numarrivals{i}(s)\}_{s = t_k + 1}^{\infty}$ and $\{\Numarrivals{\ell,i}(s)\}_{s = t_k + 1}^{\infty}$. Each sequence is i.i.d and $\E{\Numarrivals{i}(s)} = \Arrivalrate{i}$, $\E{\Numarrivals{\ell,i}(s)} = \Avgload{i}$. By the strong law of large numbers, with probability one,
\begin{equation*}
     \lim_{\Cyclelength(k) \ra \infty} \frac{( \timestep_i/\timestep-1) \Cumularrivals{\ell,i}{t_{k+1} - t_k} - ( \timestep_i/\timestep - 2)\Cumularrivals{i}{t_{k+1} - t_k}}{\Cyclelength(k)} = (\frac{\timestep_i}{\timestep}-1)\Avgload{i} - (\frac{\timestep_i}{\timestep}-2)\Arrivalrate{i}.
\end{equation*}
\par
If the sequence $\left\{\left(\frac{\MaxNumArrival{}{}{\Cyclelength(k)}}{\Cyclelength(k)}\right)^2\right\}_{\Cyclelength(k)=1}^{\infty}$ is upper bounded by an integrable function, \mpcomment{then similar to the proof of Lemma 1 in \citet{georgiadis1995scheduling} it follows that}
\begin{equation*}
\lim_{\Cyclelength(k) \ra \infty} \E{\left(\frac{\MaxNumArrival{}{}{\Cyclelength(k)}}{\Cyclelength(k)}\right)^2} = \left(\max_{i \in [\numramps]}( \frac{\timestep_i}{\timestep}-1)\Avgload{i} - (\frac{\timestep_i}{\timestep} -2)\Arrivalrate{i}\right)^2,
\end{equation*}
which in turn gives \eqref{Eq: (Q-RM) implication of SLLN}. 
\mpcommentout{
Therefore, with probability one, 
\begin{equation*}
\begin{aligned}
        \lim_{\Cyclelength(k) \ra \infty} \frac{\MaxNumArrival{}{}{\Cyclelength(k)}}{\Cyclelength(k)} &= \lim_{\Cyclelength(k) \ra \infty} \max_{i \in [\numramps]}\frac{1}{\Cyclelength(k)}\left(( \frac{\timestep_i}{\timestep}-1)\Cumularrivals{\ell,i}{t_{k+1} - t_k} - ( \frac{\timestep_i}{\timestep} - 2)\Cumularrivals{i}{t_{k+1} - t_k}\right) \\
        &= \max_{i \in [\numramps]} \lim_{\Cyclelength(k) \ra \infty} \frac{1}{\Cyclelength(k)}\left(( \frac{\timestep_i}{\timestep}-1) \Cumularrivals{\ell,i}{t_{k+1} - t_k} - ( \frac{\timestep_i}{\timestep} - 2)\Cumularrivals{i}{t_{k+1} - t_k} \right) \\
        &= \max_{i \in [\numramps]}( \frac{\timestep_i}{\timestep}-1)\Avgload{i} - (\frac{\timestep_i}{\timestep} -2)\Arrivalrate{i}.
\end{aligned}
\end{equation*}
\par
Moreover, since the real function $x^2$ is continuous, we obtain, with probability one, 
\begin{equation*}
      \lim_{\Cyclelength(k) \ra \infty}
       \left(\frac{\MaxNumArrival{}{}{\Cyclelength(k)}}{\Cyclelength(k)}\right)^2 = \left(\lim_{\Cyclelength(k) \ra \infty}\frac{\MaxNumArrival{}{}{\Cyclelength(k)}}{\Cyclelength(k)}\right)^2 = \left(\max_{i \in [\numramps]}( \frac{\timestep_i}{\timestep}-1)\Avgload{i} - (\frac{\timestep_i}{\timestep} -2)\Arrivalrate{i}\right)^2. 
\end{equation*}
\par
Finally, if the sequence $\left\{\left(\frac{\MaxNumArrival{}{}{\Cyclelength(k)}}{\Cyclelength(k)}\right)^2\right\}_{\Cyclelength(k)=1}^{\infty}$ is upper bounded by an integrable function, then the dominated convergence theorem implies
\begin{equation*}
\lim_{\Cyclelength(k) \ra \infty} \E{\left(\frac{\MaxNumArrival{}{}{\Cyclelength(k)}}{\Cyclelength(k)}\right)^2} = \E{\lim_{\Cyclelength(k) \ra \infty} \left(\frac{\MaxNumArrival{}{}{\Cyclelength(k)}}{\Cyclelength(k)}\right)^2} = \left(\max_{i \in [\numramps]}( \frac{\timestep_i}{\timestep}-1)\Avgload{i} - (\frac{\timestep_i}{\timestep} -2)\Arrivalrate{i}\right)^2,
\end{equation*}
which in turn gives \eqref{Eq: (Q-RM) implication of SLLN}.
}
The upper bound follows from the following fact: the number of arrivals to all the on-ramps is at most $\numramps$ at each time step. Thus,
\begin{equation*}
\begin{aligned}
    \frac{\MaxNumArrival{}{}{\Cyclelength(k)}}{\Cyclelength(k)} &= \frac{1}{\Cyclelength(k)}\max_{i \in [\numramps]}\left((\frac{\timestep_i}{\timestep}-1 )\Cumularrivals{\ell,i}{t_{k+1} - t_k} - ( \frac{\timestep_i}{\timestep} - 2)\Cumularrivals{i}{t_{k+1} - t_k}\right) \\
    &\leq \frac{1}{\Cyclelength(k)} \max_{i \in [\numramps]} (\frac{\timestep_i}{\timestep}-1 )\Cumularrivals{\ell,i}{t_{k+1} - t_k} \leq  \max_{i \in [\numramps]} ( \frac{\timestep_i}{\timestep}-1 ) \numramps,
\end{aligned}
\end{equation*}
as desired.
\mpcommentout{
\section{Proof of Proposition \ref{Prop: Stability of QRM policy}}\label{Section: (Appx) Proof of Q-RM prop}
If the Q-RM policy is regular, then by Proposition \ref{Prop: Necessary condition for stability} we must have $\Avgload{} < 1$. We only need to show that if $\Avgload{} < 1$, then the policy is regular.
\par
By the construction of the Q-RM policy and the assumption that the arrival processes are IID, it follows that $\{\Queuelength{}(t_k)\}_{k = 1}^{\infty}$ is a discrete-time Markov chain with the state space $\StateSpace := \Z_{\rampindic}^{\infty}$. Moreover, since $P\left(\Numarrivals{}(t_k + 1)) = 0\middle|\:\Queuelength{}(t_{k})\right) > 0$, it follows that $P\left(\Queuelength{}(t_{k+1}) = 0\middle|\:\Queuelength{}(t_{k})\right) > 0$ and thus the Markov chain is irreducible and aperiodic. In order to invoke Theorem \ref{Thm: f-positivity of MC}, we consider the candidate Lyapunov function $V: \StateSpace \ra (0,\infty)$ whose value at time $t_k$ is given by,
\begin{equation*}
    \Lyap{\Queuelength{}(t_k)} = 1 + \Nodedegree{}^2(t_k)
\end{equation*}
where $ \Nodedegree{}(t_k) := \max_{j \in \rampindic}\{\Nodedegree{j}(t_k)\}$ and $N_j(t_k)$ is the $j^{th}$ off-ramp degree at time $t_k$ defined in the proof of Proposition \ref{Prop: Necessary condition for stability}. We succinctly use $V(t_k)$ hereafter without explicitly mentioning its dependence on $\Queuelength{}(t_k)$. In the following, we show that for an appropriately finite set $C \subset \StateSpace$ and a constant $b < \infty$, the function $f(\Queuelength{}(t_k)) = 1 + \|\Queuelength{}(t_k)\|_{\infty}$ satisfies \eqref{Eq: Foster-Lyapunov criteria}. In other words, the Q-RM policy is regular and for every $j \in \rampindic$ we have,
\begin{equation*}
\limsup_{k \ra \infty}\E{\Queuelength{j}(t_k)} < \infty  
\end{equation*}
\par
We first show that if $\Nodedegree{}(t_k) = \LyapValue$, then the duration of the $k^{th}$ cycle is no more than $\LyapValue + 2\numcells$, i.e., $T_k := t_{k+1} - t_k \leq \LyapValue + 2\numcells$. Without loss of generality, let $t_{k+1} - (t_k + \numcells) > 0$ so that the time interval $[t_k + \numcells, t_{k+1})$ makes sense. Given an off-ramp $j$, let $s_j \in [t_k + \numcells, t_{k+1})$ be the first time at which
$\Nodedegree{j}(s_j+1) = \Nodedegree{j}(s_j)$ without considering the new arrivals in $[t_k, t_{k+1})$. Indeed, such $s_j$ always exist since, by the construction of the Q-RM policy, all on-ramp quotas reach their destination before the start of the next cycle. At such $s_j$, it must be that the preceding on-ramp (on-ramp $j-1$) has emptied all of its quota, i.e., $\Queuelength{j-1}(s_j) = 0$ without considering the new arrivals in $[t_k, t_{k+1})$. For if not, then the $j^{th}$ off-ramp slot at time $s_j$ would have became occupied when it had crossed on-ramp $j-1$ and thus $\Nodedegree{j}(s_j+1) = \Nodedegree{j}(s_j) - 1$ without considering the new arrivals. Moreover, since $\Nodedegree{j}(t_k) \leq \Nodedegree{}(t_k) = \LyapValue$, the value of $\Nodedegree{j}(t_k)$ cannot decrease more than $\LyapValue$ consecutive time steps after $t_k + \numcells$. Therefore, $s_j - (t_k + \numcells) \leq \LyapValue$.
Let $s$ be the first time after $t_k + \numcells$ at which for every off-ramp $j$ there exists some time $s_j \in [t_k + \numcells, s]$ such that,
\begin{equation*}
    N_j(s_j + 1) = N_j(s_j)
\end{equation*}
without considering the new arrivals ($s$ corresponds to $s_p$ for some off-ramp indexed $p$). It follows from the previous argument that $s - (t_k + \numcells)\leq \LyapValue$. Note that at time $s$ all quotas of all on-ramps have already entered the freeway. Therefore, all quotas reach their destination after at most $\numcells$ time steps after $s$, i.e, $t_{k+1} - s \leq \numcells$. Therefore, 
\begin{equation*}
    t_{k+1} - t_k \leq s + \numcells - t_k \leq \LyapValue + 2\numcells
\end{equation*}
\par
We next use the following result from \citet{georgiadis1995scheduling},
\begin{equation}\label{Eq: limit of node degree}
    \lim_{t \ra \infty}\E{\left(\frac{\Nodedegree{}(t_{k+1})}{T_k}\right)^2 \middle|\: T_k = t} = \max_{j \in \rampindic}\{\Avgload{j}^2\}
\end{equation}
\par
By using the assumption $\Avgload{} < 1$, it follows that there exists some positive real number $\delta$ such that $\Avgload{j}^2 < 1 - \delta$ for every $j \in \rampindic$. It then follows from \eqref{Eq: limit of node degree} that there exists $T > 0$ such that for every $t \geq T$ we have,
\begin{equation*}
   \E{\left(\frac{\Nodedegree{}(t_{k+1})}{T_k}\right)^2 \middle|\: T_k = t} < 1 - \delta 
\end{equation*}
\par
Hence, by choosing $\LyapValue$ to be greater than $\max\{T - 2\numcells, 0\}$ we have,
\begin{equation}\label{Eq: using limit in quota RM to show}
    \begin{aligned}
            \E{\Nodedegree{}^2(t_{k+1})|\: \Nodedegree{}(t_k) = \LyapValue} &\leq \E{\Nodedegree{}^2(t_{k+1})|\: T_k = \LyapValue + 2\numcells } \\
            & \leq (1 - \delta)(\LyapValue + 2\numcells)^2
    \end{aligned}
\end{equation}
\par
Therefore, for $\LyapValue$ large enough so that $-\delta \LyapValue^2 + (5 - 4\delta)\LyapValue + (1 - \delta)\numcells^2 + 1< 0$ we obtain, 
\begin{equation*}
\begin{aligned}
        \E{\Nodedegree{}^2(t_{k+1}) - \Nodedegree{}^2(t_{k})|\: \Nodedegree{}(t_k) = \LyapValue} &\leq -\delta \Nodedegree{}^2(t_{k}) + 4(1 - \delta)(\Nodedegree{}(t_{k}) + \numcells^2) \\
        & < -(1+\Nodedegree{}(t_{k})) \\
        & \leq -(1 + \|\Queuelength{}(t_k)\|_{\infty})
\end{aligned}
\end{equation*}
\par
We set $C = \{\Queuelength{}(t_k) \in \StateSpace:~ \Nodedegree{}(t_k) < L\}$ (a finite set) where $L$ is the minimum natural number that satisfies,
\begin{equation*}
\begin{aligned}
        L &> \max\{T - 2\numcells, 0\} \\
        -\delta L^2 &+ (5 - 4\delta)L + (1 - \delta)\numcells^2 < 0
\end{aligned}
\end{equation*}
\par
Since at most $\numramps$ vehicles arrive to the on-ramps at each time step, we have, $\Nodedegree{}(t_{k+1}) \leq \numramps T_k$. Thus, for the states that lie in the set $C$ we have,
\begin{equation*}
    \Nodedegree{}^2(t_{k+1}) \leq T^{2}_{k} < (L + 2\numcells)^2 = L^2 + 4\numcells(L + \numcells)
\end{equation*} 
\par
Therefore, by setting $b = 4\numcells(L + \numcells) + (1 + L)$ we arrive at,
\begin{equation*}
    \E{\Lyap{t_{k+1}} - \Lyap{t_{k}}|\: \Queuelength{}(t_k)} \leq -(1 + \|\Queuelength{}(t_k)\|_{\infty}) + b\mathbbm{1}_{C}(x)
\end{equation*}
as desired. Therefore, $\limsup_{k \ra \infty}\E{\Queuelength{j}(t_k)} < \infty$ for every $j \in \rampindic$. Given any $t$ and $j \in \rampindic$, there exists $k \geq 1$ such that $t \in [t_k,t_{k+1})$, and we have,
\begin{equation*}
    \Queuelength{j}(t) \leq \Queuelength{j}(t_k) + \Queuelength{j}(t_{k+1}) 
\end{equation*}
 \par
By taking expectations from both sides and noting that $\limsup_{k \ra \infty}\E{\Queuelength{j}(t_k) + \Queuelength{}(t_{k+1})} < \infty$, it follows that,
\begin{equation*}
    \limsup_{t \ra \infty}\E{\Queuelength{j}(t)} < \infty
\end{equation*}
\par
Thus, the Q-RM policy achieves maximum throughput.
}

\mpcommentout{
\section{Proof of Proposition \ref{Prop: stability of QRM policy for low merging speed}}\label{Section: (Appx) Proof of Q-RM prop}
Without loss of generality, we may assume that $t_1 = 0$. By the construction of the Q-RM policy, $\{\Queuelength{}(t_k)\}_{k = 1}^{\infty}$ is a Markov chain with the state space $\Z_{\rampindic}^{\infty}$. It is also irreducible and aperiodic. We consider the following candidate Lyapunov function:
\begin{equation*}
    \Lyap{\Queuelength{}(t_k)} = \Cyclelength(k)^{2},
\end{equation*}
where $\Cyclelength(k) = t_{k+1} - t_{k}$ is the length of the $k^{th}$ cycle. Let $s_{p-1}$ be the time at which on-ramp $p-1$ empties its quotas. We claim that for every $p \in [\numramps]$ we have 
\begin{equation*}
    s_{p-1} - t_k \leq \lceil \frac{\timestep_p}{\timestep} \rceil\Nodedegree{p}(t_k) - (\lceil \frac{\timestep_p}{\timestep} \rceil - 1)\Queuelength{p-1}(t_k).
\end{equation*}
\par
Suppose not; during the time interval $[t_k,s_{p-1})$, on-ramp $(p-1)$ releases a vehicle if and only if the upstream vehicles are at least $\timestep_p/\timestep$ time steps away and at least one time step has passed since the release of the last vehicle. Since the number of upstream vehicles that want to pass on-ramp $p-1$ is no more $\Nodedegree{p}(t_k) - \Queuelength{p-1}(t_k)$, on-ramp $p-1$ cannot release any vehicles for at most $\lceil \timestep_p/\timestep \rceil(\Nodedegree{p}(t_k) - \Queuelength{p-1}(t_k))$ time steps. But this means that on-ramp $p-1$ has to release one vehicle for at least $s_{p-1}-t_k - \left(\lceil \timestep_p/\timestep \rceil(\Nodedegree{p}(t_k) - \Queuelength{p-1}(t_k))\right)$ time steps, which by assumption is greater than $\Queuelength{p-1}(t_k)$; a contradiction.  
\par
Since $\Cyclelength(k) \leq \max_{p \in [\numramps]}s_{p-1} + 2\numcells$, it follows that
\begin{equation}\label{Eq: (Q-RM) cycle length key inequality}
    \Cyclelength(k) \leq \max_{p \in [\numramps]}\{\lceil \frac{\timestep_p}{\timestep} \rceil\Nodedegree{p}(t_k) - (\lceil \frac{\timestep_p}{\timestep} \rceil - 1)\Queuelength{p-1}(t_k)\} + 2\numcells.
\end{equation}
\par
Furthermore, by the strong law of large numbers, with probability one, 
\begin{equation*}
    \lim_{\Cyclelength(k) \ra \infty}\left(\max_{p \in [\numramps]}\frac{\lceil \timestep_p/\timestep \rceil\Nodedegree{p}(t_{k+1}) - (\lceil \timestep_p/\timestep \rceil - 1)\Queuelength{p-1}(t_{k+1})}{\Cyclelength(k)}\right)^2 = \max_{p \in [\numramps]}\{\left(\lceil \frac{\timestep_p}{\timestep} \rceil\Avgload{p} - (\lceil \frac{\timestep_p}{\timestep} \rceil-1)\Arrivalrate{p-1}\right)^2\}.
\end{equation*}
\par
Therefore,
\begin{equation*}
\begin{aligned}
        \limsup_{t \ra \infty}\E{\left(\frac{\Cyclelength(k+1)}{\Cyclelength(k)}\right)^2 \middle|~ \Cyclelength(k) = t} &\leq \limsup_{t \ra \infty}\E{\left(\max_{p \in [\numramps]}\frac{\lceil \timestep_p/\timestep \rceil\Nodedegree{p}(t_{k+1}) - (\lceil \timestep_p/\timestep \rceil - 1)\Queuelength{p-1}(t_{k+1})}{\Cyclelength(k)}\right)^2 \middle|~ \Cyclelength(k) = t} \\
        & = \max_{p \in [\numramps]}\{\left(\lceil \frac{\timestep_p}{\timestep} \rceil\Avgload{p} - (\lceil \frac{\timestep_p}{\timestep} \rceil-1)\Arrivalrate{p-1}\right)^2\},
\end{aligned}
\end{equation*}
where in the first inequality we have used \eqref{Eq: (Q-RM) cycle length key inequality}, and in the second inequality we have used the dominated convergence theorem (see the proof of \eqref{eq:Kolmogorov-implication} in Theorem \ref{Prop: stability of general aFCQ-RM}). The rest of the proof is similar to that of Theorem \ref{Prop: stability of general aFCQ-RM} and is omitted for brevity. 
}

\subsection{Proof of Theorem \ref{Prop: stability of general aFCQ-RM}}\label{Section: (Appx) Proof of general aFCQ-RM prop}
We first show that for any initial condition, there exists $k_0 \in \N$ such that $\StateofDySys_f(k\updateperiod) = 0$ for all $k \geq k_0$. That is, $\StateofDySys_{f_1}(k\updateperiod) = 0$ for all $k \geq k_0$ and $\StateofDySys_{f_2}(t) = 0$ for all $t \geq k_0\updateperiod$. The equality $\StateofDySys_{f_2}(t) = 0$ for all $t \geq k_0\updateperiod$ implies that each vehicle remains at a safe distance with respect to its leading vehicle, and predicts to be at a safe distance with respect to its virtual leading vehicle in merging areas. This and (VC2) imply that if a vehicle is in the $\Speedmode$ at time $k_0\updateperiod$, it remains in this mode in the future. Furthermore, if a vehicle is in the $\Safetymode$ mode at time $k_0\updateperiod$, $\StateofDySys_{f_1}(k_0\updateperiod) = 0$ implies that it moves at the constant speed $\speedlim$. Together with (VC1) and $\StateofDySys_{f_2}(t) = 0$ for all $t \geq k_0\updateperiod$, it follows that the vehicle maintains its speed in the future. All this would imply that the vehicles reach and remain in the free flow state at time $k_0\updateperiod$. Hence, after a finite time after $k_0\updateperiod$, $\Releasetime{}(\cdot)=0$, the location of all the vehicles coincide with a slot, and we can use the Markov chain setting from Appendix~\ref{sec:performance-analysis}.
\mpcommentout{
For any initial condition, we first show that $\|\StateofDySys^G(t_0)\| = 0$ after some finite time $t_0 \geq 0$. That is, at time $t_0$, $\min\{0, y_e(t_0) - S_e(t_0), \inf_{s \in [t_m, t_f]}\hat{y}_e(s) - \hat{S}_e(s)\} = 0$ and every vehicle is either moving at the constant speed $\speedlim$, and/or it has been in the $\Speedmode$ mode since being released. Combining $\min\{0, y_e(t_0) - S_e(t_0), \inf_{s \in [t_m, t_f]}\hat{y}_e(s) - \hat{S}_e(s)\} = 0$ with (VC1)-(VC2) imply that every vehicle either maintains the constant speed $\speedlim$ or remains in the $\Speedmode$ mode in the future, i.e., vehicles reach the free flow state at time $t_0$. Combining this with (M4), it follows that the vehicles being released after $t_0$ will remain in the $\Speedmode$ mode. All this would imply that $\|\StateofDySys^G(t)\| = 0$, for all $t \geq t_0$, i.e., vehicles will remain in the free flow state. Hence, after a finite time after $t_0$, $\Releasetime{}(\cdot)=0$, and the location of all vehicles coincide with a slot and we can use the Markov chain setting from Appendix~\ref{sec:performance-analysis}.
}
\par
Recall from Remark~\ref{remark:X_f-bound} that $v_e$ and $\delta_e + \hat{\delta}_e$ are bounded. Let $\Perimeter_a$ be the total length of all the acceleration lane. Since successive releases from an on-ramp are at least $\timestep$ seconds apart, the number of vehicles that communicate $\delta_e + \hat{\delta}_e$ in $[t-\updateperiod,t]$ is at most $\numramps\updateperiod/\timestep+(P + P_a)/\vehiclelength$. One can then easily show that $X_f(t)$ is upper bounded by
\begin{equation*}
    \bar{X} := \frac{\numramps\updateperiod}{\timestep}+2\frac{P + P_a}{\vehiclelength}. 
\end{equation*}
\par
To show $\StateofDySys_f(\cdot)=0$ after a finite time, we use a proof by contradiction. Suppose that $\StateofDySys_f(k\updateperiod)\neq 0$ for infinitely many $k \in \N$. Since $\StateofDySys_f(k\updateperiod) \leq \bar{X}$ for all $k \in \N$, there exists an infinite sequence $\{k_n\}_{n \geq 1}$ such that $\StateofDySys_f(k_n\updateperiod) > \StateofDySys_f((k_n - 1)\updateperiod) - \DesDecreaseconst$ for all $n \geq 1$. This implies that $\ReleasetimeInc{}(k_n \updateperiod) = \ReleasetimeAdj{} \ReleasetimeInc{}((k_n-1) \updateperiod)$ for all $n \geq 1$. Since $\ReleasetimeInc{}(\cdot)$ is non-decreasing and $\ReleasetimeAdj{} > 1$, it follows that $\lim_{t \ra \infty}\ReleasetimeInc{}(t) = \infty$, which in turn implies that $\limsup_{t \ra \infty}\Releasetime{}(t) = \infty$. Let $t_f$ be such that $\Releasetime{}(t_f) > \numramps \Tempty(1 + \ReleasetimeDecConst{2}/\updateperiod)$. Note that for all $t \in [t_f, t_f + \numramps\Tempty]$, $g(t) > \numramps\Tempty$. Thus, each on-ramp releases at most one vehicle during the interval $[t_f, t_f + \numramps\Tempty]$. Hence, there exists a time interval of length at least $\Tempty$ seconds in $[t_f, t_f + \numramps\Tempty]$ during which no on-ramp releases a vehicle. Condition (VC3) then implies that the vehicles will reach the free flow state at the end of such a time interval. This and (M4) imply that any vehicle that is released thereafter will remain in the $\Speedmode$ mode. Thus, $\StateofDySys_f(k\updateperiod) = 0$ for all $k \geq k_0$ for some $k_0 \in \N$; a contradiction to the assumption that $\StateofDySys_f(k\updateperiod)\neq 0$ for infinitely many $k$.
\par
Without loss of generality, we let $\Releasetime{}(0) = 0$, and we assume that the vehicles are initially in the free flow state, and their location coincide with a slot as in Appendix~\ref{sec:performance-analysis}. For the sake of readability, we present proofs of intermediate claims at the end. We adopt the Markov chain setting from Appendix~\ref{sec:performance-analysis} with $\{\StateofMC(t\triangle)\}_{t \geq 0}$ as the Markov chain, where $\triangle$ is an exact multiple of the cycle length $\Cyclelength$ so that $\StateofMC$ satisfies the Markov property, and is to be determined in the rest of the proof. Consider the function $V: \StateSpace \ra [0,\infty)$ given by 
\begin{equation}
\label{eq:lyap-def}
 \Lyap{t\triangle} \equiv \Lyap{\StateofMC(t\triangle)} := \Nodedegree{}^2(t\triangle),
\end{equation}
where $\StateSpace$ is the range of values of $Y$ in Appendix~\ref{sec:performance-analysis}, and $\Nodedegree{}(\cdot) = \max_{i \in [\numramps]}\Nodedegree{i}(\cdot)$, where $\Nodedegree{i}$ is the degree of on-ramp $i$ as defined in Appendix~\ref{sec:performance-analysis}. Note that \eqref{eq:lyap-def} implies $\Lyap{(t+1)\triangle} - \Lyap{t\triangle} = \Nodedegree{}^2((t+1)\triangle) - \Nodedegree{}^2(t\triangle)$. We claim that if $\Lyap{t\triangle}$ is large enough, specifically if $\Lyap{t\triangle} > L := \left(\numramps \triangle^2 + \numcells +\numaccslots{a}\right)^2$, then there exists $i \in [\numramps]$ such that $\Nodedegree{i}$ decreases by at least one in every $ \timestep_i/\timestep -1$ time steps in the interval $[t\triangle, (t+1)\triangle]$, without considering the new arrivals. Note that $\Lyap{t\triangle} > L$ implies $\Nodedegree{}(t\triangle) > \numramps \triangle^2 + \numcells + \numaccslots{a}$, 
which in turn implies that there exists at least one on-ramp, say $q \in [\numramps]$, such that $\Queuelength{q}(t\triangle) > \triangle^2$. If not, then $N(t\triangle) \leq \sum_{i} \Queuelength{i}(t\triangle) + \numcells + \numaccslots{a} \leq \numramps \triangle^2 + \numcells + \numaccslots{a}$, which is a contradiction. Let $\triangle > \triangle_1 := \max_{i \in [\numramps]}\{\timestep_i/\timestep-1\}$. If, in addition, $\triangle \geq 1 + \sqrt{3(\lceil \numaccslots{q}/\Cyclelength\rceil + 1)\Cyclelength}$, then we show at the end of the proof that for all $w \in [t\triangle, (t+1)\triangle - \timestep_q/\timestep + 1]$ we have
\begin{equation}
\label{eq:k-min}
    \Nodedegree{q}(w + \frac{\timestep_q}{\timestep}-1) \leq \Nodedegree{q}(w) - 1 + \sum_{s = w+1}^{w + \timestep_q/\timestep -1} \Numarrivals{\ell, q}(s),
\end{equation}
where $\Numarrivals{\ell, q}(s)$ is the number of arrivals to all the on-ramps at time $s$ that need to cross link $q$. By summing up \eqref{eq:k-min} over disjoint sub-intervals of length $\timestep_q/\timestep - 1$ we obtain 
\begin{equation}\label{Eq: Inequality for off-ramp j node degree that decreases at each time}
    \begin{aligned}
        \Nodedegree{q}((t+1)\triangle) &\leq \Nodedegree{q}(t\triangle) - \frac{\triangle}{\timestep_q/\timestep - 1} + 1 + \sum_{s=t\triangle+1}^{(t+1)\triangle} \Numarrivals{\ell, q}(s) \\
         &\leq \Nodedegree{}(t\triangle) - \frac{\triangle}{\timestep_q/\timestep - 1} + 1 + \sum_{s=t\triangle+1}^{(t+1)\triangle} \Numarrivals{\ell, q}(s).
    \end{aligned}
\end{equation}
\par
We show at the end of the proof that a similar form of \eqref{Eq: Inequality for off-ramp j node degree that decreases at each time} holds for all the on-ramps. Letting $\triangle_2 := \max_{i \in [\numramps]}\{1 + \sqrt{3(\lceil \numaccslots{i}/\Cyclelength\rceil + 1)\Cyclelength}\}$, we then show that for all $\triangle \geq \max\{\triangle_1, \triangle_2\}$,
\begin{equation}\label{Eq: (Prop 4) Inequality for off-ramp p node degree that resembeles that of off-ramp j}
    \begin{aligned}
    \Nodedegree{}((t+1)\triangle) \leq \Nodedegree{}(t\triangle) - \delta \triangle + C + \MaxNumArrival{}{}{\triangle},
    \end{aligned}
\end{equation}
where $\delta \in (0,1)$, $C := (\Cyclelength + \numcells + 1)\numramps + 2\numaccslots{a}$, and $\MaxNumArrival{}{}{\triangle}$ depends on the number of arrivals in $[t\triangle+1,(t+1)\triangle]$ (thus, it is independent of $\Nodedegree{}(t\triangle)$) and is shown to have the following properties \footnote{The expression for $\MaxNumArrival{}{}{\triangle}$ in this proof is different than $\MaxNumArrival{}{}{\Cyclelength(k)}$ in the proof of Theorem~\ref{Prop: stability of QRM policy for low merging speed}. However, they play similar roles in the proofs which justifies the same notation.}: there exist $\eps, \triangle_3 > 0$ such that $\eps < \delta$ and for all $\triangle \geq \triangle_3$ we have
\begin{equation}
\label{eq:Kolmogorov-implication}
\E{\MaxNumArrival{}{}{\triangle}} < \eps \triangle, \quad \E{\MaxNumArrival{}{2}{\triangle}} < 2\eps(\triangle)^2.
\end{equation}  
\par
The inequality \eqref{Eq: (Prop 4) Inequality for off-ramp p node degree that resembeles that of off-ramp j} implies that
\begin{equation}\label{Eq: (Prop 4) Key inequality for maximum node degree squared}
\begin{aligned}
        \Nodedegree{}^2((t+1)\triangle) \leq \left(\Nodedegree{}(t\triangle) - \delta \triangle + C\right)^2 + \MaxNumArrival{}{2}{\triangle} 
         + 2\left(\Nodedegree{}(t\triangle) - \delta \triangle + C\right)\MaxNumArrival{}{}{\triangle}.
\end{aligned}
\end{equation}
\par
By choosing $\triangle \geq \triangle_3$ and taking conditional expectation from both sides of \eqref{Eq: (Prop 4) Key inequality for maximum node degree squared}, we obtain
\begin{equation}
\label{eq:lyap-diff}
\begin{aligned}
        \E{\Nodedegree{}^2((t+1)\triangle) - \Nodedegree{}^2(t\triangle)\:|\: \Lyap{t\triangle} > L} \leq -2\Nodedegree{}(t\triangle)((\delta - \eps) 
        \triangle- C) + (\delta \triangle - C)^2
        + 2\eps \triangle(\triangle - \delta \triangle + C).
\end{aligned}
\end{equation}
\par
By definition, the degree of each on-ramp is at least as large as its queue length. Hence, 
$\|\Queuelength{}(t\triangle)\|_{\infty} = \max_{i}\Queuelength{i}(t\triangle) \leq \Nodedegree{}(t\triangle)$. Plugging into \eqref{eq:lyap-diff} gives for all $\triangle \geq \max\{\triangle_1 , \triangle_2, \triangle_3, \frac{C}{\delta -\eps}\}$,
\begin{equation*}
\begin{aligned}
\mathbb{E}\big[\Nodedegree{}^2((t+1)\triangle) &- \Nodedegree{}^2(t\triangle)\:|\: \Lyap{t\triangle} > L \big]\\
&\leq -2 \|\Queuelength{}(t\triangle)\|_{\infty}\left((\delta - \eps) \triangle - C\right) + (\delta \triangle - C)^2 
        + 2\eps \triangle(\triangle - \delta \triangle + C)
\\
&= - \|\Queuelength{}(t\triangle)\|_{\infty} - 2\|\Queuelength{}(t\triangle)\|_{\infty} \left((\delta - \eps) \triangle - C - \frac{1}{2}\right) + (\delta \triangle - C)^2 
        + 2\eps \triangle(\triangle - \delta \triangle + C).
\end{aligned}
\end{equation*}
\par
Moreover, since $\|\Queuelength{}(t\triangle)\|_{\infty} \geq \Queuelength{q}(t\triangle) \geq \triangle^2$, for all $\triangle \geq \max\{\triangle_1, \triangle_2 ,\triangle_3, \triangle_4:= \frac{C+1/2}{\delta-\eps}\}$ we have
\begin{equation*}
\begin{split}
\E{\Nodedegree{}^2((t+1)\triangle) - \Nodedegree{}^2(t\triangle)|\: \Lyap{t\triangle} > L} & \leq  - \|\Queuelength{}(t\triangle)\|_{\infty}\\
& - 2(\delta - \eps) \triangle^3 + \left( \delta^2 + 2\eps(1-\delta) + 2C + 1 \right) \triangle^2 - 2(\delta - \eps)C \triangle+ C^2.
\end{split}
\end{equation*}
\par
In addition to the previously stated lower bound $\triangle \geq \max\{\triangle_1, \triangle_2 ,\triangle_3, \triangle_4\}$, if we also choose $\triangle$ such that 
\begin{equation}\label{Eq: Inequalities that k must satisfy}
- 2(\delta - \eps) \triangle^3 + \left( \delta^2 + 2\eps(1-\delta) + 2C + 1 \right) \triangle^2 - 2(\delta - \eps)C \triangle+ C^2 < 0,
\end{equation}
then  
$\E{\Lyap{(t+1)\triangle} - \Lyap{t\triangle}|\: \Lyap{t\triangle} > L} \leq  - \|\Queuelength{}(t\triangle)\|_{\infty}$. 
Such a $\triangle$ always exists because the $-2(\delta-\eps) \triangle^3$ term in \eqref{Eq: Inequalities that k must satisfy} dominates for sufficiently large $\triangle$.

Finally, if $\Lyap{t\triangle} = \Nodedegree{}^2(t\triangle) \leq L$, then 
    $\|\Queuelength{}(t\triangle)\|_{\infty} \leq \sqrt{L}$
because $\|\Queuelength{}(t\triangle)\|_{\infty} \leq \Nodedegree{}(t\triangle)$. Also, $\Lyap{(t+1)\triangle} \leq (\numramps \triangle+\Nodedegree{}(t\triangle))^2 \leq (m\triangle+\sqrt{L})^2$ because the number of arrivals to all the on-ramps at each time step is at most $\numramps$. Combining this with the previously considered case of $\Lyap{t\triangle}>L$, we get
\begin{equation*}
        \E{\Lyap{(t+1)\triangle} - \Lyap{t\triangle}|\: \StateofMC(t\triangle)} \leq -\|\Queuelength{}(t\triangle)\|_{\infty} + \left((m\triangle+\sqrt{L})^2 + (1 + \sqrt{L}) \right)\mathbbm{1}_{B},
\end{equation*} 
where $B = \{\StateofMC(t\triangle): \Lyap{t\triangle} \leq L\}$ (a finite set). The result follows from Theorem~\ref{Thm: f-positivity of MC}.

\subsection*{\underline{Proof of \eqref{eq:k-min}}}
Let $w \in [t\triangle, (t+1)\triangle - \timestep_q/\timestep + 1]$. If at least one upstream vehicle crosses on-ramp $q$ in $[w, w+\timestep_q/\timestep-1)$, then \eqref{eq:k-min} obviously follows. If not, then the last acceleration lane slot of on-ramp $q$ that is not on the mainline satisfies (M1)-(M5) at some time in $[w, w+\timestep_q/\timestep-1)$. So, it must be occupied if on-ramp $q$ had non-zero quotas at the time this slot was the first acceleration lane slot. This would again give \eqref{eq:k-min}. 
\par
We now show that on-ramp $q$ indeed had non-zero quotas at the time the aforementioned slot was the first acceleration lane slot, which is at most $\numaccslots{q}$ time steps before $w$. Let $t'$ be the start of the most recent cycle at least $\numaccslots{q}$ time steps before $t\triangle$. Considering at most one arrival per time step gives $\Queuelength{q}(t') \geq \Queuelength{q}(t\triangle) - (t\triangle-t') \geq \triangle^2 - (\numaccslots{q} + \Cyclelength)$, which is at least $\triangle + 2(\lceil\numaccslots{q}/\Cyclelength \rceil
+ 1)\Cyclelength$ if 
\begin{equation*}
    \triangle \geq 1 + \sqrt{3(\lceil\frac{\numaccslots{q}}{\Cyclelength}\rceil + 1)\Cyclelength}.
\end{equation*}
\par
Now, let $t'' \geq t'$ be the start of the most recent cycle at least $\numaccslots{q}$ time steps before $w$. Since at most one vehicle is released from on-ramp $q$ per time step, it follows that $\Queuelength{q}(t'') \geq \Queuelength{q}(t') - (t'' - t') > \numaccslots{q} + \Cyclelength$, where the last inequality follows from $t'' - t' \leq \triangle + (\lceil\numaccslots{q}/\Cyclelength \rceil+ 1)\Cyclelength$. This gives us the non-zero quota property of on-ramp $q$.  

\subsection*{\underline{Proof of \eqref{Eq: (Prop 4) Inequality for off-ramp p node degree that resembeles that of off-ramp j}}}
Consider on-ramp $i \in [\numramps]$. If during every $\timestep_i/\timestep - 1$ time steps in the interval $[t\triangle, (t+1)\triangle]$ at least one vehicle crosses on-ramp $i$, then for all $w \in [t\triangle, (t+1)\triangle-\timestep_i/\timestep+1]$ we have 
\begin{equation*}
   \Nodedegree{i}(w + \timestep_i/\timestep - 1) \leq \Nodedegree{i}(w) - 1 + \sum_{s = w+1}^{w +  \timestep_i/\timestep -1} \Numarrivals{\ell, i}(s).
\end{equation*}
\par
Hence,
\begin{equation*}
\begin{aligned}
        \Nodedegree{i}((t+1)\triangle) &\leq \Nodedegree{i}(t\triangle) - \frac{\triangle }{\timestep_i/\timestep - 1} + 1 + \sum_{s=t\triangle+1}^{(t+1)\triangle} \Numarrivals{\ell, i}(s) \\
        &\leq \Nodedegree{}(t\triangle) - \frac{\triangle}{\timestep_i/\timestep - 1} + 1 + \sum_{s=t\triangle+1}^{(t+1)\triangle} \Numarrivals{\ell, i}(s). 
\end{aligned}
\end{equation*}
\par
If not, let $s_i \in [t\triangle, (t+1)\triangle-\timestep_i/\timestep+1] $ be the last time at which a vehicle does not cross on-ramp $i$ in $[s_i, s_i + \timestep_i/\timestep-1)$, i.e., $\Nodedegree{i}(s_i + \timestep_i/\timestep-1) = \Nodedegree{i}(s_i) + \sum_{s = s_i + 1}^{s_i + \timestep_i/\timestep -1} \Numarrivals{\ell, i}(s)$. Hence, for all $w \in [s_i+1, (t+1)\triangle-\timestep_i/\timestep+1]$,  $\Nodedegree{i}(w+\timestep_i/\timestep-1) \leq \Nodedegree{i}(w) - 1 + \sum_{s = w+1}^{w +  \timestep_i/\timestep -1} \Numarrivals{\ell, i}(s)$. This further gives 
\begin{equation}\label{Eq: (Prop 4) Equality for off-ramp p node degree at time t+1}
    \Nodedegree{i}((t+1)\triangle) \leq \Nodedegree{i}(s_i + 1) - \frac{(t+1)\triangle - s_i-1}{\timestep_i/\timestep-1} + 1 + \sum_{s=s_i+2}^{(t+1)\triangle} \Numarrivals{\ell, i}(s).
\end{equation}
\par
Furthermore, we claim that the queue size at on-ramp $i$ at time $s_i + 1$ cannot exceed $\Cyclelength + \numaccslots{i}$, i.e., $\Queuelength{i}(s_i + 1) \leq \Cyclelength + \numaccslots{i}$. Note that since no upstream vehicles crosses on-ramp $i$ in $[s_i, s_i + \timestep_i/\timestep-1)$, it must be that: (i) all the mainline slots upstream of the merging point that are at most $\timestep_i/\timestep-1$ time steps away are empty at time $s_i$, (ii) the last acceleration lane slot of on-ramp $i$ that is not on the mainline at time $s_i$ is empty. Moreover, (i) implies that the aforementioned last acceleration lane slot satisfies (M1)-(M5) at time $s_i$. Therefore, it must be that on-ramp $i$ had zero quotas at the time this slot was the first acceleration lane slot, which is at most $\numaccslots{i}$ time steps before $s_i$. Since the number of on-ramp arrivals is at most one per time step, it follows that $\Queuelength{i}(s_i + 1) \leq \Cyclelength + \numaccslots{i}$. Hence,
\begin{equation*}
    \begin{aligned}
        \Nodedegree{i}(s_i + 1) &\leq \Nodedegree{i-1}(s_i + 1) + \Queuelength{i}(s_i + 1) + \numcells + \numaccslots{i} \\
        &\leq \Nodedegree{i-1}(s_i + 1) + \Cyclelength + \numcells + 2\numaccslots{i}.
    \end{aligned}
\end{equation*} 
\par
Combining this with \eqref{Eq: (Prop 4) Equality for off-ramp p node degree at time t+1} gives
\begin{equation}\label{Eq: (Prop 4) Starting inequality for off-ramp p node degree in terms of off-ramp p-1}
    \Nodedegree{i}((t+1)\triangle) \leq \Nodedegree{i-1}(s_i + 1) - \frac{(t+1)\triangle - s_i-1}{\timestep_i/\timestep-1} + 1 + \sum_{s=s_i+2}^{(t+1)\triangle} \Numarrivals{\ell, i}(s) + \Cyclelength + \numcells + 2\numaccslots{i}.
\end{equation}
\par
Similarly, if during every $\timestep_{i-1}/\timestep - 1$ time steps in the interval $[t\triangle, s_i+1]$ at least one vehicle crosses on-ramp $i-1$, then 
\begin{equation*}
        \Nodedegree{i-1}(s_i+1) \leq \Nodedegree{}(t\triangle) - \frac{s_i+1-t\triangle}{\timestep_{i-1}/\timestep - 1} + 1 + \sum_{s=t\triangle+1}^{s_i+1} \Numarrivals{\ell, i-1}(s). 
\end{equation*}
\par
Otherwise, there exists $s_{i-1} \in [t\triangle, s_i - \timestep_{i-1}/\timestep + 2]$ such that
\begin{equation*} 
    \Nodedegree{i-1}(s_i + 1) \leq \Nodedegree{i-2}(s_{i-1} + 1) - \frac{s_i - s_{i-1}}{\timestep_{i-1}/\timestep-1} + 1 + \sum_{s=s_{i-1} + 2}^{s_i+1} \Numarrivals{\ell, i-1}(s) + \Cyclelength + \numcells + 2\numaccslots{i-1}, 
\end{equation*}
\par
This process can be repeated until we find an on-ramp, indexed by $i - \numramps_i$ for some $\numramps_i \in \{0\} \cup [\numramps-1]$, such that during every $\timestep_{i-\numramps_i}/\timestep - 1$ time steps in the interval $[t\triangle, s_{i-\numramps_i+1}+1]$ at least one vehicle crosses it.
Indeed, one such on-ramp is always $q$ \footnote{This is the only place in the proof where we use the ring geometry of the road. For the straight road configuration, the process is repeated until we either find such an on-ramp, or we stop at the upstream entry point at time $s_1 > t\triangle$. Then, since $s_{1} + 1 - t\triangle \leq \triangle$ and $\Nodedegree{}(t\triangle) > \numramps \triangle^2 + \numcells + \numaccslots{a}$, we will have $\Nodedegree{1}(s_1+1) \leq \Cyclelength + 2\numaccslots{1} \leq \Nodedegree{}(t\triangle) - \frac{s_{1} + 1 - t\triangle}{\timestep_{1}/\timestep-1} +1 + \sum_{s=t\triangle+1}^{s_{1}+1} \Numarrivals{\ell, 1}(s) + \Cyclelength + 2\numaccslots{1}$. The rest of the proof will be the same.}; see the argument around \eqref{eq:k-min}. Therefore, 
\begin{equation*}
    \Nodedegree{i - \numramps_i}(s_{i - \numramps_i + 1} + 1) \leq \Nodedegree{}(t\triangle) - \frac{s_{i - \numramps_i + 1} + 1 - t\triangle}{\timestep_{i-\numramps_i}/\timestep-1} +1 + \sum_{s=t\triangle+1}^{s_{i-\numramps_i+1}+1} \Numarrivals{\ell, i - \numramps_i}(s).
\end{equation*}
\par
By following all the inequalities involved in this process, starting from $\Nodedegree{i}((t+1)\triangle)$ on the left-hand side and ending in $\Nodedegree{}(t\triangle)$ on the right-hand side, we obtain
\begin{equation*}
        \Nodedegree{i}((t+1)\triangle) \leq \Nodedegree{}(t\triangle) + \sum_{p = 0}^{\numramps_{i}}\left[\Cumularrivals{\ell,i-p}{s_{i-p+1} - s_{i-p}} - \frac{s_{i-p+1} - s_{i-p}}{\timestep_{i-p}/\timestep -1} \right] + (\Cyclelength + \numcells + 1)m + 2\numaccslots{a},
\end{equation*}
where $s_{i+1} = (t+1)\triangle-1$, $s_{i - \numramps_i} = t\triangle-1$, and $\Cumularrivals{\ell,i}{s_{i-p+1} - s_{i-p}}$ is the cumulative number of arrivals in the interval $[s_{i-p}+2,s_{i-p+1}+1]$ that need to cross link $i$. By the assumption of the theorem, $(\timestep_{i}/\timestep-1)\Avgload{i} < 1$ for all $i \in [\numramps]$. So, there exists $\delta \in (0,1)$ such that for all $i \in [\numramps]$,
\begin{equation*}
    \Avgload{i} + \delta <  \frac{1}{ \timestep_{i}/\timestep-1}.
\end{equation*}
\par
Thus, for all $i \in [\numramps]$ we have
\begin{equation*}
        \sum_{p = 0}^{\numramps_{i}}\bigg[\Cumularrivals{\ell,i-p}{s_{i-p+1} - s_{i-p}}  - \frac{s_{i-p+1} - s_{i-p}}{ \timestep_{i-p}/\timestep-1} \bigg] \\
        < -\delta \triangle  + \sum_{p = 0}^{\numramps_{i}}\left[\Cumularrivals{\ell,i-p}{s_{i-p+1} - s_{i-p}} - \Avgload{i - p}(s_{i-p+1} - s_{i-p}) \right].
\end{equation*}
\par
Hence, \eqref{Eq: (Prop 4) Inequality for off-ramp p node degree that resembeles that of off-ramp j} follows with
\begin{equation}\label{eq:Af-tilde-def}
    \MaxNumArrival{}{}{\triangle} = \max_{i \in [\numramps]} \sum_{p = 0}^{\numramps_{i}}\left[\Cumularrivals{\ell,i-p}{s_{i-p+1} - s_{i-p}} - \Avgload{i - p}(s_{i-p+1} - s_{i-p}) \right].
\end{equation}

\subsection*{\underline{Proof of \eqref{eq:Kolmogorov-implication}}} 
\par
Consider the sequence $\{\Numarrivals{\ell, i_s}(s)\}_{s = t\triangle+1}^{\infty}$, where the indices $i_s \in [\numramps]$ are allowed to depend on time $s$. For a given $s \in [t\triangle+1, \infty)$, the term $\Numarrivals{\ell,i_s}(s)$ is independent of the other terms in the sequence, $\Avgload{i_s} = \E{\Numarrivals{\ell,i_s}(s)}$ is bounded, and $\sigma^2_{i_s} := \E{\Numarrivals{\ell,i_s}^2(s) - \Avgload{i_s}^2}$ is (uniformly) bounded for all $i_s \in [\numramps]$. As a result, $\lim_{\triangle \ra \infty}\sum_{s = t\triangle+1}^{(t+1)\triangle}(s - t\triangle)^{-2}\sigma^2_{i_s}$ is also bounded. From Kolmogorov's strong law of large numbers (\citet[Theorem 10.12]{folland1999real}), we have, with probability $1$,
\begin{equation*}
    \lim_{\triangle \ra \infty}\frac{1}{\triangle}\left(\sum_{s = t\triangle+1}^{(t+1)\triangle}\Numarrivals{\ell,i_s}(s) - \sum_{s = t\triangle+1}^{(t+1)\triangle}\Avgload{i_s}\right) = 0.
\end{equation*}
\par
By following similar steps to the proof of \eqref{Eq: (Q-RM) implication of SLLN} in Theorem \ref{Prop: stability of QRM policy for low merging speed}, it follows for $n = 1, 2$ that
\begin{equation*}
\lim_{\triangle \ra \infty} \E{\left(\frac{\MaxNumArrival{}{}{\triangle}}{\triangle}\right)^n} = 0,
\end{equation*}
which in turn gives \eqref{eq:Kolmogorov-implication}.
\mpcommentout{
Therefore, recalling \eqref{eq:Af-tilde-def}, with probability one,
\begin{equation*}
\begin{aligned}
        \lim_{k \ra \infty} \frac{\MaxNumArrival{f}{}{k\Cyclelength}}{k\Cyclelength} &= \lim_{k \ra \infty} \max_{i \in [\numramps]} \frac{1}{k\Cyclelength}\sum_{p = 0}^{\numramps_i}\sum_{s = s_{i-p} + 1}^{s_{i - p + 1}}\left[\Numarrivals{f,i-p}(s + 1) - \Avgload{i-p}\right] \\
        &= \max_{i \in [\numramps]} \lim_{k \ra \infty} \frac{1}{k\Cyclelength}\sum_{p = 0}^{\numramps_i}\sum_{s = s_{i-p} + 1}^{s_{i - p + 1}}\left[\Numarrivals{f,i-p}(s + 1) - \Avgload{i-p}\right] = 0
\end{aligned}
\end{equation*}
Since the real function $x^n$ is continuous for all $n \in \N$, we obtain, with probability one, 
      $\lim_{k \ra \infty}
       \left(\frac{\MaxNumArrival{f}{}{k\Cyclelength}}{k\Cyclelength}\right)^n = \left(\lim_{k \ra \infty}\frac{\MaxNumArrival{f}{}{k\Cyclelength}}{k\Cyclelength}\right)^n = 0$. 
Finally, if the sequence $\left\{\left(\frac{\MaxNumArrival{f}{}{k\Cyclelength}}{k\Cyclelength}\right)^n\right\}_{k=1}^{\infty}$ is upper bounded by an integrable function, then the dominated convergence theorem implies
\begin{equation*}
\lim_{k \ra \infty} \E{\left(\frac{\MaxNumArrival{f}{}{k\Cyclelength}}{k\Cyclelength}\right)^n} = \E{\lim_{k \ra \infty} \left(\frac{\MaxNumArrival{f}{}{k\Cyclelength}}{k\Cyclelength}\right)^n} = 0,
\end{equation*}
which in turn gives \eqref{eq:Kolmogorov-implication}. The upper bound follows from the following fact: the number of arrivals to the network is bounded by $m$ at each time step. Thus, we have $\frac{1}{k\Cyclelength}\sum_{p = 0}^{\numramps_i}\sum_{s = s_{i-p} + 1}^{s_{i - p + 1}}\Numarrivals{f,i-p}(s + 1) \leq \numramps$ for all $i \in [\numramps]$, and hence 
\begin{equation*}
\begin{aligned}
    \frac{\MaxNumArrival{f}{}{k\Cyclelength}}{k\Cyclelength} &= \frac{1}{k\Cyclelength}\max_{i \in [\numramps]} \sum_{p = 0}^{\numramps_i} \sum_{s = s_{i-p} + 1}^{s_{i - p + 1}}\left[\Numarrivals{f,i-p}(s + 1) - \Avgload{i-p}\right] \\
    &\leq \sum_{i \in [\numramps]}\frac{1}{k\Cyclelength}  \sum_{p = 0}^{\numramps_i} \sum_{s = s_{i-p} + 1}^{s_{i - p + 1}}\left[\Numarrivals{f,i-p}(s + 1) - \Avgload{i-p}\right] \leq \numramps(\numramps + \max_{i \in [\numramps]} \Avgload{i}),
\end{aligned}
\end{equation*}
as desired.
}

\mpcommentout{
The only major change is the choice of the Lyapunov function. For the sake of brevity, we only state non-trivial differences.
\par
First, note that since all vehicles are released such that they eventually becomes aligned with a virtual slot, the position of vehicles on the ring road takes on finitely many values. Thus, one can construct a Markov chain evolving on a countable state space by including the position vector of vehicles in the definition of the state.
\par
Given the $j^{th}$ off-ramp, $j \in \rampindic$, we define the \textit{modified node degree} at time $t \geq 0$ as follows,
\begin{equation*}
    \NewNodedegree{j}(t) := \addlslot{j}\Nodedegree{j}(t) - (\addlslot{j}-1)\Queuelength{j-1}(t)  
\end{equation*}
\par
Note that $\NewNodedegree{j}(t) \geq 0$. We let $\NewNodedegree{}(t) := \max_{j \in \rampindic}\NewNodedegree{j}(t)$, and construct the Lyapunov function $\Lyap{\cdot}$ as follows,
\begin{equation*}
    \Lyap{t} = \sum_{s = t - k\Cyclelength + 1}^{t}\NewNodedegree{}^2(s)
\end{equation*}
\par
Consider off-ramp $q$, $q \in \rampindic$, and its preceding on-ramp (indexed by $q-1$). We claim that if $\Queuelength{q-1}(t - k\Cyclelength + 1)$ is large enough then, 
\begin{equation}\label{Eq: (Gen aFCQ-RM) Inequality for off-ramp q node degree when it decreases at each time}
    \NewNodedegree{q}(t+1) \leq \NewNodedegree{}(t-k\Cyclelength + 1) - k\Cyclelength + \addlslot{q}\Cumularrivals{f,q}{k\Cyclelength} - (\addlslot{q} - 1)\Cumularrivals{q-1}{k\Cyclelength} + \numcells
\end{equation}
where $\Cumularrivals{q-1}{k\Cyclelength}$ is the cumulative arrivals to on-ramp $q-1$ in the time interval $[t - k\Cyclelength + 1, t+1)$. The reader should note the similarities between \eqref{Eq: (Gen aFCQ-RM) Inequality for off-ramp q node degree when it decreases at each time} and \eqref{Eq: Inequality for off-ramp j node degree that decreases at each time}. In order to prove \eqref{Eq: (Gen aFCQ-RM) Inequality for off-ramp q node degree when it decreases at each time}, we use the following observation: since $\Queuelength{q-1}(t - k\Cyclelength + 1)$ is assumed to be large enough, on-ramp $q-1$ does not release any quotas if and only if (at least) one of its $\addlslot{q}$ upstream slots is occupied. Suppose that during the time interval $[t - k\Cyclelength + 1, t+1)$, exactly $l$ vehicles from on-ramps upstream of on-ramp $q-1$ cross off-ramp $q$. Then, 
\begin{equation*}
    \Cumuldepartures{q}(k\Cyclelength) \geq l + \max\{0, k\Cyclelength - \addlslot{q}l\} - \numcells \geq k\Cyclelength - (\addlslot{q}-1)l - \numcells
\end{equation*}
\par
Therefore,
\begin{equation*}
    \begin{aligned}
        \Nodedegree{q}(t+1) &\leq \Nodedegree{q}(t - k\Cyclelength + 1) - k\Cyclelength + (\addlslot{q} - 1)l + \numcells + \Cumularrivals{f,q}{k\Cyclelength} \\
        \Nodedegree{q}(t+1) - \Queuelength{q-1}(t+1) &= \Nodedegree{q}(t - k\Cyclelength + 1) - \Queuelength{q-1}(t - k\Cyclelength + 1) +  \Cumularrivals{f,q}{k\Cyclelength} - \Cumularrivals{q-1}{k\Cyclelength} - l
    \end{aligned}
\end{equation*}
from which \eqref{Eq: (Gen aFCQ-RM) Inequality for off-ramp q node degree when it decreases at each time} follows. On the other hand, if on-ramp $q-1$ becomes empty at some $s_q \in [t - k\Cyclelength + 1, t+1)$,  Similar to the proof of Theorem \ref{Prop: Stability of FCQ-RM policy for all T}, if 
}
\mpcommentout{
\section{Proof of Proposition \ref{Prop: stability of aFCQ-RM}}\label{Section: (Appx) Proof of aFCQ-RM prop}
It is enough to show that $\|\StateofDySys(k\delta)\| = 0$ for some $k \in \Z_{\geq 0}$. For if $\|\StateofDySys(k\delta)\| = 0$, then $\Releasetime{j}(k\delta) = 0$ for all $j \in [\numramps]$. As a result, the aFCQ-RM policy becomes the same as the FCQ-RM policy with $\|\StateofDySys(0)\| = 0$ and Theorem \ref{Prop: Stability of FCQ-RM policy for all T} applies.  
\par
We first claim that if $\numvehicles(0) < \Perimeter/(\standstilldist + \vehiclelength)$, then $\numvehicles(t) < \Perimeter/(\standstilldist + \vehiclelength)$ for all $t > 0$. In other words, the number of vehicles do not reach or exceed $\Perimeter/(\standstilldist + \vehiclelength)$ which would otherwise create a standstill situation on the ring road. This follows from the safety considerations of the ramp metering policy and of vehicles: suppose that at some time $t > 0$, a new vehicle merges so that $\numvehicles(t) \geq \Perimeter/(\standstilldist + \vehiclelength)$. Since the new vehicle requires at least $\timeheadway \speedlim + \standstilldist + \vehiclelength$ of spacing to merge, and each of the existing vehicles on the ring road require at least $(\standstilldist + \vehiclelength)$ of spacing for safety, we must have
\begin{equation*}
    \Perimeter - (\timeheadway \speedlim + \standstilldist + \vehiclelength) > \numvehicles(t^{-})(\standstilldist + \vehiclelength) > \Perimeter - (\standstilldist + \vehiclelength),
\end{equation*}
which is a contradiction. Thus, $\numvehicles(t) < \Perimeter/(\standstilldist + \vehiclelength)$ for all $t \geq 0$.
\par
Note that at a given time $t = k\delta$, $k \in \N$, either $\|\StateofDySys(t)\| \leq \max\{0, \|\StateofDySys(t - \delta)\| - \DesDecreaseconst{}\}$ or the minimum time gap $\Releasetime{j}(t)$ increases by a positive constant $\ReleasetimeInc{}$ for all $j \in [\numramps]$. Moreover, if $\Releasetime{j}(t) > \numramps C$, then in the time interval $[t, t + \numramps C]$ there exists a sub-interval with the length greater than or equal to $C$ during which no on-ramp releases a vehicle into the ring road. By Assumption \ref{Assum: Vehicles accelerate when they find safe gap}, the ring road becomes empty in such a sub-interval which implies $\|\StateofDySys(\cdot)\| = 0$. 
\mpcommentout{
It is enough to show that $\|\StateofDySys(k\Period)\| = 0$ for some $k \in \Z_{\geq 0}$. For if $\|\StateofDySys(k\Period)\| = 0$, then $\|\StateofDySys^j(k\Period)\| = 0$ for all $j \in \rampindic$ and thus $\Releasetime{j}(k\Period) = 0$. As a result, the aFCQ-RM policy becomes the same as the FCQ-RM policy with $\|\StateofDySys(0)\| = 0$ and Theorem \ref{Prop: Stability of FCQ-RM policy for all T} can be applied.  
\par
Given $k \in \Z_{\geq 0}$, suppose $\|\StateofDySys(k\Period)\| > 0$ and consider the $j^{th}$ link, $j \in \rampindic$. If $\|\StateofDySys^j(k\Period)\| > 0$, then at the end of the $k^{th}$ period either $\|\StateofDySys^{j}((k+1)\Period)\| < \|\StateofDySys(k\Period)\| - \DesDecreaseconst{j}$ or the additional releasing time $\Releasetime{j}((k+1)\Period)$ is increased by a constant increment $\ReleasetimeInc{j}$. On the other hand, if $\|\StateofDySys^j(k\Period)\| = 0$ but $\|\StateofDySys^j((k+1)\Period)\| > 0$ then the constant increment $\ReleasetimeInc{j}$ is increased to $\ReleasetimeAdj{j}\ReleasetimeInc{j}$. Note that if the additional releasing time is large enough for all ramps, then no new vehicle is admitted until all vehicles on the ring road leave the system. In such a case, the ring road becomes empty after a finite time $t$ in which case $\|\StateofDySys(k\Period)\| = 0$ for some $k \in \Z_{\geq 0}$. Therefore, $\|\StateofDySys\|$ becomes zero after a finite time and the aFCQ-RM policy keeps the network under-saturated if and only if $\Avgload{} < 1$.  
}
}

\subsection{Proof of Proposition \ref{Prop: stability of distributed aFCQ-RM}}\label{Section: (Appx) Proof of distributed aFCQ-RM prop}
We show that $\StateofDySys_f(k\updateperiod) = 0$ after a finite time. This would then imply that $\Releasetime{i}(\cdot) = 0$ for all $i \in [\numramps]$ after a finite time. Thereafter, the rest of the proof follows along the lines of the proof of Theorem \ref{Prop: stability of general aFCQ-RM}. 
\par
Suppose not; then, there exists $q \in [\numramps]$ and an infinite sequence $\{k_n\}_{n \geq 1}$ such that $\StateofDySys_f^q(k_n\updateperiod) \neq 0$ for all $n \geq 1$. We prove that $\limsup_{k \ra \infty}\Releasetime{q}(k \updateperiod) = \infty$ by considering the following two cases:
\begin{enumerate}
\item[(i)] if 
\begin{equation}\label{Eq: Inequality to increase g in the dist aFCQ-RM}
    \StateofDySys_{f}^q(k\updateperiod) \leq \max\{\StateofDySys_{f}^q((k-1)\updateperiod) - \DesDecreaseconst, 0\}
\end{equation}
holds for finitely many $k$'s, then Algorithm \ref{Alg: Distributed aFCQ-RM policy} implies $\limsup_{k \ra \infty}\Releasetime{q}(k \updateperiod) = \infty$.
\item[(ii)] if \eqref{Eq: Inequality to increase g in the dist aFCQ-RM} holds for an infinite sequence, let $\{k'_n\}_{n \geq 1}$ be the sequence of all $k$'s for which \eqref{Eq: Inequality to increase g in the dist aFCQ-RM} holds. Then, there exists an infinite subsequence $\{k'_{n_\ell}\}_{\ell \geq 1}$ of $\{k'_n\}_{n \geq 1}$ for which \eqref{Eq: Inequality to increase g in the dist aFCQ-RM} does not hold at $k = k'_{n_\ell} + 1$ for all $\ell \geq 1$. If not, then there exists $M \in \N$ such that \eqref{Eq: Inequality to increase g in the dist aFCQ-RM} holds for all $k \geq k_{M}$. Since $\StateofDySys_f^q((k_{M}-1) \updateperiod)$ is bounded, this implies that $\StateofDySys_f^q(k \updateperiod) = 0$ for all $k$ sufficiently greater than $k_{M}$ -- a contradiction to $\StateofDySys_f^q$ being non-zero for the infinite sequence $\{k_n\}_{n \geq 1}$. With respect to the subsequence $\{k'_{n_\ell}\}_{\ell \geq 1}$, Algorithm \ref{Alg: Distributed aFCQ-RM policy} implies that $\ReleasetimeInc{q}((k'_{n_\ell}+1) \updateperiod) = \ReleasetimeAdj{} \ReleasetimeInc{q}(k'_{n_\ell} \updateperiod)$ for all $\ell \geq 1$. Since $\ReleasetimeInc{q}(\cdot)$ is non-decreasing and $\ReleasetimeAdj{} > 1$, this implies $\lim_{\ell \to \infty} \Releasetime{q}((k'_{n_\ell}+1) \updateperiod) \geq \lim_{\ell \to \infty} \ReleasetimeInc{q}((k'_{n_\ell}+1) \updateperiod) = \infty$. That is, $\limsup_{k \ra \infty}\Releasetime{q}(k \updateperiod) = \infty$.
\end{enumerate}

\par
Recall from the description of the $\DisDRRRM$ policy that $\sum_{j > q-1}\StateofDySys_{f}^j(\cdot) = \StateofDySys_f(\cdot)$ for the ring road configuration. Also, recall from the proof of Theorem~\ref{Prop: stability of general aFCQ-RM} that $\StateofDySys_f(\cdot)$ is bounded by $\bar{\StateofDySys}$. Thus, if $\sum_{j > q-1}\StateofDySys_{f}^j(k\updateperiod) \leq \max\{\sum_{j > q-1}\StateofDySys_{f}^j((k-1)\updateperiod) - \DesDecreaseconst, 0\}$ for $\lceil \bar{\StateofDySys}/\DesDecreaseconst \rceil$ consecutive periods, then $\StateofDySys_f(\cdot)=0$ at the end of it. In addition, if $\StateofDySys_f(\cdot)=0$ for a large enough interval of time (say of length at least $\bar{T}$), then by a similar argument to the proof of Theorem~\ref{Prop: stability of general aFCQ-RM}, one can show that the vehicles will reach the free flow state, which would imply that $\StateofDySys_f(\cdot)=0$ thereafter -- a contradiction to $\StateofDySys_f$ being non-zero for the infinite sequence $\{k_n\}_{n \geq 1}$. Hence, in an interval of length at least $\lceil \bar{\StateofDySys}/\DesDecreaseconst \rceil\updateperiod + \bar{T}$, we must have 
\begin{equation}\label{eq:Xf-increases-once}
\sum_{j > q-1}\StateofDySys_{f}^j(k\updateperiod) > \max\{\sum_{j > q-1}\StateofDySys_{f}^j((k-1)\updateperiod) - \DesDecreaseconst, 0\}
\end{equation}
for at least one $k$. From $\limsup_{k \ra \infty}\Releasetime{q}(k \updateperiod) = \infty$ and the fact that $\Releasetime{q}(\cdot)$ decreases by at most $\ReleasetimeDecConst{2}$ in every update period, it follows that there exists time intervals of arbitrarily large length during which $\Releasetime{q}(\cdot) > \Tmax$. Combining this with the argument around \eqref{eq:Xf-increases-once}, it follows that \eqref{eq:Xf-increases-once} holds for infinitely many $k$'s while $\Releasetime{q}(\cdot) > \Tmax$. Algorithm \ref{Alg: Distributed aFCQ-RM policy} implies that for any such $k$, $\ReleasetimeInc{q-1}(k\updateperiod) = \ReleasetimeAdj{}\ReleasetimeInc{q-1}((k-1)\updateperiod)$. Therefore, $\lim_{k \ra \infty}\ReleasetimeInc{q-1}(k\updateperiod) = \infty$, which then implies that $\limsup_{k \ra \infty}\Releasetime{q-1}(k\updateperiod) = \infty$. By repeating the above arguments for other on-ramps, we can conclude that $\lim_{k \ra \infty}\ReleasetimeInc{i}(k\updateperiod) = \infty$ for all $i \in [\numramps]$.
\par
We now show that there is a time interval during which all the $\Releasetime{i}$'s are simultaneously large enough so that no on-ramp releases a vehicle for at least $\Tempty$ seconds. Let $k_0$ be such that $\Releasetime{q}(k_0\updateperiod)$ and $\ReleasetimeInc{i}(k_0\updateperiod)$ for $i \in [\numramps]$ are all greater than $\max\{\Tmax, \numramps\Tempty(1 + \ReleasetimeDecConst{2}/\updateperiod)\} + \numramps\ReleasetimeDecConst{2}(\lceil\bar{X}/\DesDecreaseconst\rceil + \lceil\bar{T}/\updateperiod\rceil)$. In the  $\lceil\bar{X}/\DesDecreaseconst\rceil + \lceil\bar{T}/\updateperiod\rceil$ periods after the time $k_0\updateperiod$, $\Releasetime{q}(\cdot)$ decreases by at most $\ReleasetimeDecConst{2}(\lceil\bar{X}/\DesDecreaseconst\rceil + \lceil\bar{T}/\updateperiod\rceil)$. During this time, $\Releasetime{q}(\cdot) > \Tmax$  and \eqref{eq:Xf-increases-once} holds at least once, which, by Algorithm~\ref{Alg: Distributed aFCQ-RM policy}, would increase $\Releasetime{q-1}(\cdot)$ by at least $\ReleasetimeAdj{}\ReleasetimeInc{q-1}(k_0\updateperiod) > \ReleasetimeInc{q-1}(k_0\updateperiod)$. Similarly, in the $\lceil\bar{X}/\DesDecreaseconst\rceil + \lceil\bar{T}/\updateperiod\rceil$ periods after $\Releasetime{q-1}(\cdot)$ is increased, $\Releasetime{q-2}(\cdot)$ increases at least once, and so on. Therefore, after at most $\numramps(\lceil\bar{X}/\DesDecreaseconst\rceil + \lceil\bar{T}/\updateperiod\rceil)$ periods after the time $k_0\updateperiod$, we have $\Releasetime{i}(k_f\updateperiod) \geq \max\{T_{\max}, \numramps\Tempty(1 + \ReleasetimeDecConst{2}/\updateperiod)\}$ for all $i \in [\numramps]$, where $k_f = k_0 + \numramps(\lceil\bar{X}/\DesDecreaseconst\rceil+ \lceil\bar{T}/\updateperiod\rceil)$. Note that for all $t$ in the interval $[k_f\updateperiod, k_f\updateperiod + \numramps \Tempty]$, we have $\Releasetime{i}(t) \geq \numramps \Tempty$, $i \in [\numramps]$. Thus, each on-ramp releases at most one vehicle during this interval, which implies that there exists a subinterval of length at least $\Tempty$ seconds during which no on-ramp releases a vehicle. Condition (VC3) implies that the vehicle will reach the free flow state at the end of such a subinterval. This and (M4) again imply that $\StateofDySys_f(\cdot)=0$ after a finite time -- a contradiction to $\StateofDySys_f$ being non-zero for the infinite sequence $\{k_n\}_{n \geq 1}$. 
\mpcommentout{
\section{Proof of Proposition \ref{Prop: stability of distributed aFCQ-RM}}\label{Section: (Appx) Proof of distributed aFCQ-RM prop}
\begin{proof}
It is sufficient to show that $\|\StateofDySys(k\updateperiod)\| = 0$ for some $k \in \N$. This would then imply that $\Releasetime{}(\cdot) = 0$ after a finite time. \kscomment{Thereafter, the rest of the proof follows along the lines of the proof of Theorem \ref{Prop: stability of general aFCQ-RM}.} 
\par
Suppose that $\|\StateofDySys(k\updateperiod)\| \neq 0$ for all $k \in \Z_{+}$. Hence, there exists a link $j \in [\numramps]$ and an infinite sequence $\{k_n\}_{n \geq 1}$ such that for all $n \geq 1$ $\|\StateofDySys_j(k_n\updateperiod)\| \neq 0$. We claim that $\limsup_{k \ra \infty} \Releasetime{j}(k \updateperiod) = \infty$. To prove this, we note that if 
\begin{equation}\label{Eq: Inequality to increase g in the dist aFCQ-RM}
    \|\StateofDySys_{j}(k\updateperiod)\| \leq \max\{\|\StateofDySys_{j}((k-1)\updateperiod)\| - \DesDecreaseconst{}, 0\},
\end{equation}
for finitely many $k$'s, then the description of Algorithm \ref{Alg: Distributed aFCQ-RM policy} implies that $\limsup_{k \ra \infty}\Releasetime{j}(k \updateperiod) = \infty$. Otherwise, let $\{k'_n\}_{n \geq 1}$ be an infinite sequence for which \eqref{Eq: Inequality to increase g in the dist aFCQ-RM} holds at $k = k'_n$ for all $n \geq 1$. We claim that there exists an infinite subsequence $\{k'_{\ell_n}\}_{n \geq 1}$ of $\{k'_n\}_{n \geq 1}$ for which \eqref{Eq: Inequality to increase g in the dist aFCQ-RM} does not hold at $k = k'_{\ell_n} + 1$ for all $n \geq 1$. If not, then there exists $N \in \N$ such that for all $k \geq k_N$, \eqref{Eq: Inequality to increase g in the dist aFCQ-RM} holds. Since $\|\StateofDySys_j((k_N-1) \updateperiod)\|$ is bounded, the previous statement implies that $\|\StateofDySys_j(k \updateperiod)\| = 0$ for all $k \geq k_N + K - 1$, where $K = \lceil \|\StateofDySys_j((k_N-1) \updateperiod)\|/\alpha \rceil$. This is a contradiction to the assumption that $\|\StateofDySys_j(k\updateperiod)\| \neq 0$ for the infinite sequence $\{k_n\}_{n \geq 1}$. Therefore, such an infinite subsequence exists. The description of Algorithm \ref{Alg: Distributed aFCQ-RM policy} then implies that for all $n \geq 1$, $\ReleasetimeInc((k'_{\ell_n}+1) \updateperiod) = \beta \ReleasetimeInc(k'_{\ell_n} \updateperiod)$. Since $\ReleasetimeInc(\cdot)$ is non-decreasing and $\beta > 1$, it follows that $\limsup_{k \ra \infty}\ReleasetimeInc(k\updateperiod) = \infty$. Thus, $\limsup_{k \ra \infty}\Releasetime{j}(k \updateperiod) = \infty$ as desired.
\par
By the description of Algorithm \ref{Alg: Distributed aFCQ-RM policy}, while $\Releasetime{j}(\cdot) > \max\{\Tmax, \numramps\Tempty(1 + \theta_1/\updateperiod)\}$, it takes at most $1 + \lceil \Tmax/\ReleasetimeInc \rceil$ periods for $\Releasetime{j-1}(\cdot)$ to exceed $\Tmax$, and at most $1+\lceil\max\{\Tmax,  \numramps\Tempty(1 + \theta_1/\updateperiod)\}/\ReleasetimeInc \rceil$ periods for it to exceed $\max\{\Tmax, \numramps\Tempty(1 + \theta_1/\updateperiod)\}$. When $\Releasetime{j-1}(\cdot)$ exceeds $\Tmax$, $\Releasetime{j-2}(\cdot)$ starts to increase, and so on. Therefore, while $\Releasetime{j}(\cdot) > \max\{\Tmax, \numramps\Tempty(1 + \theta_1/\updateperiod)\}$, it takes at most $k_f := \numramps - 1 + (\numramps - 2)\lceil \Tmax/\ReleasetimeInc \rceil + \lceil\max\{\Tmax, \numramps\Tempty(1 + \theta_1/\updateperiod)\}/\ReleasetimeInc \rceil$ periods such that for all $i \in [\numramps]$, $\Releasetime{i}(\cdot) > \max\{\Tmax, \numramps\Tempty(1 + \theta_1/\updateperiod)\}$. Let $\hat{k}$ be such that
\begin{equation*}
    \Releasetime{j}(\hat{k}\updateperiod) > \max\{\Tmax, \numramps\Tempty(1 + \theta_1/\updateperiod)\} + \ReleasetimeDec k_f.
\end{equation*}
\par
Since $\Releasetime{j}(k\updateperiod) > \max\{\Tmax, \numramps\Tempty(1 + \theta_1/\updateperiod)\}$ for $k = \hat{k} + 1, \cdots, \hat{k} + k_f$, it follows for all $i \in [\numramps]$ that
\begin{equation*}
    \Releasetime{i}((\hat{k} + k_f)\updateperiod) \geq \max\{T_{max}, \numramps\Tempty(1 + \theta_1/\updateperiod)\}.
\end{equation*}
\par
Let $t_f = k_f \updateperiod$, and note that for all $t \in [t_f, t_f + \numramps \Tempty]$, $\Releasetime{i}(t) \geq \numramps \Tempty$ for all $i \in [\numramps]$. Thus, if on-ramp $i$ releases a vehicle at some time during the interval $[t_f, t_f + \numramps \Tempty]$, it does not release another vehicle for at least $\numramps \Tempty$ unit time. Hence, there exists a time interval of length at least $\Tempty$ in $[t_f, t_f + \numramps \Tempty]$ during which no on-ramp releases a vehicle. Condition (VC3) then implies that the mainline and acceleration lanes become empty after such $\Tempty$ time units, at the end of which $\|X(\cdot)\|=0$; a contradiction to the assumption that $\|\StateofDySys(k\updateperiod)\| \neq 0$ for all $k \in \Z_{\geq 0}$. 
\end{proof}
}

\subsection{Proof of Theorem \ref{Prop: stability of rFCQ-RM}}\label{Section: (Appx) Proof of rFCQ-RM prop}
\mpcommentout{
\mpmargin{Assumed $\numcells$ is an integer.}
\mpmargin{We need some continuity assumptions on the speed, acceleration.}
}
We show that if $f_i(\cdot)$, $i=1,2,3$, is large enough, then: (i) no vehicle initially present on the freeway changes mode because of a vehicle that is released thereafter; (ii) no vehicle released after the time $t=0$ ever changes mode to the $\Safetymode$ mode. (i) ensures that all the initial vehicles reach the free flow state after at most $\Tempty$ seconds. Combining with (ii), it follows that all the vehicles reach and remain in the free flow state after at most $\Tempty$ seconds. This would imply that $\StateofDySys_g(\cdot)=0$ after a finite time, which sets the additional gaps $f_i$ to zero. The rest of the proof follows along the lines of the proof of Theorem~\ref{Prop: stability of general aFCQ-RM}.
\par
We provide a proof for (ii); the proof for (i) is similar. Suppose that (ii) does not hold, and let $t \in [0, \Tempty)$ be the first time that an ego vehicle released at time $t_0 \geq 0$ changes mode to the $\Safetymode$ mode. Without loss of generality, let $t_0 = 0$. To avoid tedious algebra and without loss of generality, we assume that the ego vehicle has merged into the mainline at some time $t_1$ at the speed $\speedlim$ and moved at this speed up to time $t$. Also, it has changed mode at time $t$ because of a leading vehicle $l$ that had been in the $\Safetymode$ mode and located on the mainline at time $t_0$. Thus, it satisfies $|v_l(\eta) - \speedlim| \leq \StateofDySys_g(\eta)$ for all $\eta \in [0,t]$. Pointers for the proof of the general case is presented at the end. 
\par 
We have 
\begin{equation}\label{Eq: Rel dist ineqeuality in the cFCQ-RM policy}
    \begin{aligned}
        y_e(t) &= |y_e(t_1) + \int_{t_1}^{t}v_l(\eta) - v_e(\eta)d\eta| 
        \geq y_e(t_1) - \int_{t_1}^{t}|v_l(\eta) - \speedlim|d\eta \\
        &\geq f_{2}(\StateofDySys(0)) - \int_{0}^{t_1}|v_l(\eta)-v_l(0)|d\eta - \int_{t_1}^{t}|v_l(\eta) - \speedlim|d\eta \\
        &\geq f_{2}(\StateofDySys(0)) - |v_l(0)-\speedlim|t_1 - \int_{0}^{t}|v_l(\eta) - \speedlim|d\eta.
    \end{aligned}
\end{equation}
\par
We show that, for sufficiently large $f_2(\cdot)$, 
\begin{equation*}
f_2(\StateofDySys(0)) - |v_l(0)-\speedlim|t_1 - \int_{0}^{t}|v_l(\eta) - \speedlim|d\eta \geq S_e(t), 
\end{equation*}
which combined with \eqref{Eq: Rel dist ineqeuality in the cFCQ-RM policy} implies that the ego vehicle does not satisfy the (VC2) criterion for changing mode to the $\Safetymode$ mode -- a contradiction. 
\par
Let $0 \leq \xi_1 \leq \ldots \leq \xi_{\ell} \leq t$ be the ``jump" time instants. That is, when a vehicle that was initially present on the freeway either: (I) leaves the freeway, or (II) changes mode. In between these jump events, e.g., $\eta \in [\xi_{j},\xi_{j+1}]$, $j \in [\ell-1]$, (VC5) implies that $\StateofDySys_g(\eta) \leq ce^{-r(\eta - \xi_{j})}\StateofDySys_g(\xi_{j})$ for some $c, r > 0$. Moreover, for all $j \in [\ell]$, in jump event (I) we have $\StateofDySys_g(\xi_j) \leq \StateofDySys_g(\xi^{-}_{j})$, and in jump event (II), if vehicle $i$ changes mode, then 
\begin{equation*}
    \StateofDySys_g(\xi_j) \leq \StateofDySys_g(\xi^{-}_{j}) + |v_i(\xi^{-}_{j}) - \speedlim| + |a_i(\xi^{-}_{j})|,
\end{equation*}
where the equality holds if vehicle $i$ changes mode outside the acceleration lane of the on-ramp it has merged from. To avoid tedious algebra and without loss of generality, we assume that this is the case for all the mode changes in $[0,t]$.
We now bound $|v_l(\eta)-\speedlim|$ in terms of $\StateofDySys_g(0)$ as follows:  for all $j \in [\ell - 1]$ and $\eta \in [\xi_{j},\xi_{j+1}]$, we have $|v_l(\eta) - \speedlim| \leq \StateofDySys_g(\eta)\leq ce^{-r(\eta - \xi_{j})}\StateofDySys_g(\xi_{j}) \leq \cdots \leq c^{j+1} e^{-r\eta}\StateofDySys_g(0)$. 
Hence,
\begin{equation*}
    \begin{aligned}
        \int_{0}^{t}|v_l(\eta) - \speedlim|d\eta +  \frac{\speedlim^2 - v_l^2(t)}{2|\minaccel|} &\leq \left(\frac{1}{r}(1 - e^{-rt}) + \frac{\speedlim + \overline{V}}{2|\minaccel|} e^{-rt}\right)c^{\ell + 1}\StateofDySys_g(0) \\
        &\leq \left(\frac{1}{r} + \frac{\speedlim + \overline{V}}{2|\minaccel|} \right)c^{3n(0)}\StateofDySys_g(0),
    \end{aligned}
\end{equation*}
where $n(0)$ is the initial number of vehicles on the freeway, and in the second inequality, we have used (VC4) to bound $\ell + 1$ by $3n(0)$. Thus, by choosing
\begin{equation*}
    f_2(\StateofDySys(0)) = \left(\left(\frac{1}{r} + \frac{\speedlim + \overline{V}}{2|\minaccel|} \right)c^{3n(0)}+t_1\right)\StateofDySys_g(0),
\end{equation*}
\eqref{Eq: Rel dist ineqeuality in the cFCQ-RM policy} follows. Note that $t_1$ is the constant time between release and exiting the acceleration lane.
\par
When the simplifying assumptions do not hold, additional terms are needed in \eqref{Eq: Rel dist ineqeuality in the cFCQ-RM policy} to ensure that the ego vehicle does not change mode between release and merging. Moreover, additional terms must be added to $f_2$ to account for the cases where the leading vehicle is initially not in the $\Safetymode$ mode or on the mainline. Similarly, additional terms in $f_2$ are needed to account for mode changes inside the acceleration lanes in jump event (II). All of this would result in a different expression for $f_2$ that still satisfies $f_2(\cdot)=0$ if $\StateofDySys_g(\cdot)=0$.
\mpcommentout{
---------------

\kscomment{Let $\numvehicles(0)$ be the initial number of vehicles on the mainline. We claim that $\numvehicles(0) \leq \numcells$. For a vehicle on the mainline indexed by $i \in [\numvehicles(0)]$, we have from the safety rule that
        $y_i(0) \geq \timeheadway v_i(0) + \standstilldist = \timeheadway \speedlim + \standstilldist + \timeheadway (v_i(0) - \speedlim)$.
By summing up the previous inequality for all $i \in [\numvehicles(0)]$ we obtain
    $\Perimeter \geq \numvehicles(0)(\timeheadway \speedlim + \standstilldist + \vehiclelength) + \timeheadway \sum_{i = 1}^{\numvehicles(0)}(v_i(0) - \speedlim)$. 
Hence, $\|\StateofDySys(0)\| < (\timeheadway \speedlim + \standstilldist + \vehiclelength)/\timeheadway$ implies that
        $\numvehicles(0) \leq \frac{\Perimeter}{\timeheadway \speedlim + \standstilldist + \vehiclelength} - \frac{h}{\timeheadway \speedlim + \standstilldist + \vehiclelength}\sum_{i = 1}^{\numvehicles(0)}(v_i(0) - \speedlim) 
        < \numcells + \frac{h}{\timeheadway \speedlim + \standstilldist + \vehiclelength}\|\StateofDySys(0)\| < \numcells + 1$,
which proves that $\numvehicles(0) \leq \numcells$.} 
}

\mpcommentout{
\section{Proof of Proposition \ref{Prop: stability of rFCQ-RM}}\label{Section: (Appx) Proof of rFCQ-RM prop}
\mpcommentout{
\mpmargin{Assumed $\numcells$ is an integer.}
\mpmargin{We need some continuity assumptions on the speed, acceleration.}
}
It suffices to show that no merging vehicle causes switching to the vehicle following mode in $[0, \Tempty]$. This implies that the initial vehicles on the ring road leave the ring road by $\Tempty$. Thus, $\|\StateofDySys(\Tempty)\| = 0$. Thereafter, Theorem \ref{Prop: Stability of FCQ-RM policy for all T} applies. 
\par
We say that a ``jump" occurs on the ring road if the size and/or the number of at least one platoon on the ring road changes. Since we have assumed that each vehicle switches to the vehicle following once, three types of jump can occur in this network:
\begin{enumerate}
    \item A vehicle leaves the ring road
    \item A vehicle merges
    \item A vehicle switches to the vehicle following mode
\end{enumerate}
\par
Let $t_0 = 0$, and $t_n$, $n \in \N$, be the sequence of jump times such that for $t_{n-1} \leq t_{n}$. Note that in between jumps, the number of vehicles on the ring road is constant and no vehicle switches to the vehicle following mode. Suppose that the number of vehicles on the ring road is less than or equal to $\numcells$, and consider a platoon of $k$ vehicles, $k \in [\numcells]$, during the time interval $[t_{n-1}, t_n)$. Platoon stability implies that there exists positive constants $c, r > 0$, independent of $k$, such that for all $t \in [t_{n-1}, t_n)$,
\begin{equation*}
    \|\StateofDySys_k(t)\| \leq ce^{-r(t - t_{n-1})}\|\StateofDySys_k(t_{n-1})\|.
\end{equation*}
\par
By summing up the previous inequality for separate platoons, it follows for all $t \in [t_{n-1}, t_n)$ that
\begin{equation*}
    \|\StateofDySys(t)\| \leq ce^{-r(t - t_{n-1})}\|\StateofDySys(t_{n-1})\|.
\end{equation*}
\par
Moreover, $\|\StateofDySys(t_n)\| \leq \|\StateofDySys(t^{-}_{n})\|$ if a vehicle leaves the ring road, and $\|\StateofDySys(t_n)\| = \|\StateofDySys(t^{-}_{n})\|$ if a vehicle merges. Also, 
\begin{equation*}
\begin{aligned}
    \|\StateofDySys(t_n)\| &= \|\StateofDySys(t^{-}_{n})\| + |y_i(t^{-}_{n}) - (\timeheadway v_i(t^{-}_{n}) + \standstilldist)| \\
    &= \|\StateofDySys(t^{-}_{n})\| + \max\{0, \frac{v^{2}_i(t^{-}_{n}) - v^2_{i+1}(t^{-}_{n})}{2|\minaccel|}\} \\
    &\leq \left(1 + \frac{\overline{V}}{|\minaccel|}\right) \|\StateofDySys(t^{-}_{n})\|
\end{aligned}
\end{equation*}
if vehicle $i$ switches to the vehicle following mode, where $\overline{V} \geq \speedlim$ is the maximum possible speed accounting for possible overshoot in the speed. 
\par
Let $\numvehicles(0)$ be the initial number of vehicles on the ring road. We claim that $\numvehicles(0) \leq \numcells$. For a vehicle on the ring road indexed by $i \in [\numvehicles(0)]$, we have from the safety rule that
\begin{equation*}
        y_i(0) \geq \timeheadway v_i(0) + \standstilldist = \timeheadway \speedlim + \standstilldist + \timeheadway (v_i(0) - \speedlim).
\end{equation*}
\par
By summing up the previous inequality for all $i \in [\numvehicles(0)]$ we obtain
\begin{equation*}
    \Perimeter \geq \numvehicles(0)(\timeheadway \speedlim + \standstilldist + \vehiclelength) + \timeheadway \sum_{i = 1}^{\numvehicles(0)}(v_i(0) - \speedlim).
\end{equation*}
\par
Hence, $\|\StateofDySys(0)\| < (\timeheadway \speedlim + \standstilldist + \vehiclelength)/\timeheadway$ implies that
\begin{equation*}
\begin{aligned}
        \numvehicles(0) &\leq \frac{\Perimeter}{\timeheadway \speedlim + \standstilldist + \vehiclelength} - \frac{h}{\timeheadway \speedlim + \standstilldist + \vehiclelength}\sum_{i = 1}^{\numvehicles(0)}(v_i(0) - \speedlim) \\
        &< \numcells + \frac{h}{\timeheadway \speedlim + \standstilldist + \vehiclelength}\|\StateofDySys(0)\| < \numcells + 1,
\end{aligned}
\end{equation*}
which proves that $\numvehicles(0) \leq \numcells$. 
\par
Note that the number of vehicles on the ring road may exceed $\numcells$ at future times. However, if we show that the merging vehicles indeed incur no switching in $[0, \Tempty]$, then the equilibrium of all platoons remain at zero. We claim that if $\|\StateofDySys(0)\| < D/K$ with
\begin{equation*}
    K = \left(\timeheadway + \frac{1 - e^{-r \Tempty}}{r} + \frac{\speedlim + \overline{V}}{2|\minaccel|} \right)c^{-1}\left(c^2(1 + \frac{\overline{V}}{|\minaccel|})\right)^{\numvehicles(0)} + \timeheadway,
\end{equation*}
then the merging vehicles incur no switching to the vehicle following mode in $[0, \Tempty]$. Suppose not; let $t \in [0, \Tempty)$ be the first time that such a switching occurs where a merging vehicle indexed by $e$ switches to the vehicle following mode. Let $s \in [0, t]$ be the vehicle $e$'s merging time, $p$ be its preceding vehicle that vehicle $e$ has switched to follow at $t$, and $y_e$ be the relative distance between the two vehicles. We have
\begin{equation}\label{Eq: Rel dist ineqeuality in the cFCQ-RM policy}
    \begin{aligned}
        y_e(t) &= |y_e(s) + \int_{s}^{t}(\speedlim - v_p(\xi))d\xi| \\
        &\geq y_e(s) - \int_{s}^{t}|\speedlim - v_p(\xi)|d\xi \\
        &\geq \timeheadway \speedlim + \standstilldist + D - \int_{s}^{t}|\speedlim - v_p(\xi)|d\xi.
    \end{aligned}
\end{equation}
\par
If we show that for all $t \in [s, \Tempty]$,
\begin{equation}\label{Eq: Key inequality in the cFCQ-RM policy}
    \int_{s}^{t}|\speedlim - v_p(\xi)|d\xi < D - \max\{0, \frac{\speedlim^2 - v_p^2(t)}{2|\minaccel|}\}, 
\end{equation}
then by combining \eqref{Eq: Rel dist ineqeuality in the cFCQ-RM policy} and \eqref{Eq: Key inequality in the cFCQ-RM policy}, it follows that the switching criteria remains inactive for vehicle $e$ in $[s, \Tempty]$; a contradiction. Let $t_1 \leq t_2 \leq \cdots \leq t_{n}$ be the jump times caused by the initial vehicles on the ring road by time $t$. Note that $0 \leq n \leq 2\numvehicles(0) - 1$. Therefore,
\begin{equation*}
\begin{aligned}
        |\speedlim - v_p(t)| &\leq \|\StateofDySys(t)\| \\
        &\leq ce^{-r(t - t_n)}\|\StateofDySys(t_n)\| \leq \cdots \leq K_1 e^{-rt}\|\StateofDySys(0)\|, 
\end{aligned}
\end{equation*}
where $K_1 := c^{-1}\left(c^2(1 + \frac{\overline{V}}{|\minaccel|})\right)^{\numvehicles(0)}$. Hence,
\begin{equation*}
    \begin{aligned}
        \int_{s}^{t}|\speedlim - v_p(\xi)|d\xi + \max\{0, \frac{\speedlim^2 - v_p^2(t)}{2|\minaccel|}\} &\leq \left(\frac{K_1}{r}(e^{-rs} - e^{-rt}) + \frac{\speedlim + \overline{V}}{2|\minaccel|}K_1 e^{-rt}\right)\|\StateofDySys(0)\| \\
        &\leq \left(\frac{1 - e^{-r \Tempty}}{r} + \frac{\speedlim + \overline{V}}{2|\minaccel|} \right)K_1\|\StateofDySys(0)\| < D
    \end{aligned}
\end{equation*}
which proves \eqref{Eq: Key inequality in the cFCQ-RM policy}. Similarly, it can be shown that $\|\StateofDySys(0)\| < D/K$ implies that none of the initial vehicles on the ring road switch to follow a merging vehicle in $[0, \Tempty]$. This completes the proof.  
\mpcommentout{
Let $\numvehicles(t)$ be the number of vehicles on the ring road at time $t \geq 0$. We claim that if $\|\StateofDySys(0)\|$ is sufficiently small then $\numvehicles(0) \leq \numcells$. For a vehicle on the ring road indexed by $i \in [\numvehicles(0)]$, we have from the safety rule that
\begin{equation*}
        y_i(0) \geq \timeheadway v_i(0) + \standstilldist = \timeheadway \speedlim + \standstilldist + \timeheadway (v_i(0) - \speedlim).
\end{equation*}
\par
By summing up the previous inequality for all $i \in [\numvehicles(0)]$ we obtain
\begin{equation*}
    \Perimeter \geq \numvehicles(0)(\timeheadway \speedlim + \standstilldist + \vehiclelength) + \timeheadway \sum_{i = 1}^{\numvehicles(0)}(v_i(0) - \speedlim).
\end{equation*}
\par
Hence, $\|\StateofDySys(0)\| < c$ implies that
\begin{equation*}
\begin{aligned}
        \numvehicles(0) &\leq \frac{\Perimeter}{\timeheadway \speedlim + \standstilldist + \vehiclelength} - \frac{h}{\timeheadway \speedlim + \standstilldist + \vehiclelength}\sum_{i = 1}^{\numvehicles(0)}(v_i(0) - \speedlim) \\
        &< \numcells + \frac{h}{\timeheadway \speedlim + \standstilldist + \vehiclelength}c,
\end{aligned}
\end{equation*}
where we have used the $1$-norm in the second inequality. Since $\numvehicles(0)$ is an integer, for $c$ sufficiently small it follows that $\numvehicles(0) \leq \numcells$. 
\par
For $t \geq 0$, define $d_e(t)$ as the deviation of an ego vehicle from its initial position in a reference frame that rotates with the speed $\speedlim$ (see Figure \ref{fig: Displacement}). Formally, $d_e(t) := |\int_{0}^{t}[\speedlim - v_e(\xi)]d\xi|$. Without loss of generality, suppose that the first merging occurs at time $t_0 \in [0, \Tempty)$. We use the subscripts $e$ and $p$ to refer to the merging vehicle and its preceding vehicle. Let $t_p \geq t_0$ be the time at which vehicle $p$ leaves the ring road. We would like to show that there exists positive constants $c, c'$ such that for all $t \in [t_0, t_p]$, we have. Then, if vehicle $p$ is not initially traveling at the constant speed $\speedlim$ and $\|\StateofDySys(0)\| < D/(\frac{\speedlim}{|\minaccel|}c + c')$ we obtain
\begin{equation*}
    \begin{aligned}
         y_e(t) &= |y_e(t_0) + \int_{t_0}^{t}(\speedlim - v_p(\xi))d\xi| \\
                &\geq y_e(t_0) - d_p(t) \\
                &\geq \timeheadway \speedlim + \standstilldist + D - c'\|\StateofDySys(0)\|\\
                &> \timeheadway \speedlim + \standstilldist + \frac{\speedlim}{|\minaccel|}c\|\StateofDySys(0)\| \\
                &\geq \timeheadway \speedlim + \standstilldist + \max\{\frac{\speedlim^2 - v^{2}_p(t)}{2|\minaccel|},0\}.
    \end{aligned}
\end{equation*}
\par
Hence, vehicle $e$ does not switch to the vehicle following mode until vehicle $p$ leaves the ring road. This implies that the merging vehicles do not switch to the vehicle following mode when their preceding vehicle are not initially traveling at $\speedlim$.
Since the merging speed is $\speedlim$ we have $\|\StateofDySys(t_0)\| = \|\StateofDySys(t^{-}_0)\|$. Let $t_1 \geq t_0$ be the time at which vehicle $p$ leaves the system. Note that the number of platoons splitting/joining that occur downstream of vehicle $p$ before $t_1$ is finite. As a result, there exists some positive constants $c, c'$ such that for all $t \in [0, t_1)$ we have,
\begin{equation*}
    \begin{aligned}
            r|\speedlim^2 - v^2_l(t)| &\leq c \|\StateofDySys(0)\| \\
            d_l(t) &\leq c'\|\StateofDySys(0)\|
    \end{aligned}
\end{equation*}
\par
Let $k(t) \geq 0$ be the number of vehicles at time $t$ that are traveling at the constant free flow speed and were present on the ring road at time $t = 0$. If $k(t_0) = 0$, then the lead vehicle is traveling at a speed other than $\speedlim$ at time $t_0$. Hence, by the rFCQ-RM's description the ego vehicle's distance satisfies,
\begin{equation*}
    \begin{aligned}
        y_e(t_0) &\geq \timeheadway \speedlim + \standstilldist + \max\{0, r(\speedlim^2 - v^{2}_l(t_0))\} + d \\
        &\geq \timeheadway \speedlim + \standstilldist + d
    \end{aligned}
\end{equation*}
\par
Furthermore, the ego vehicle's switching distance is $\timeheadway \speedlim + \standstilldist + \max\{0, r(\speedlim^2 - v^{2}_l)\}$. Let $\|\StateofDySys(0)\|$ be sufficiently small such that $\|\StateofDySys(0)\| < d/(c + c')$. It follows that,

which indicates that the ego vehicle does not switch to the vehicle following mode with the lead vehicle traveling at a speed other than $\speedlim$. By similar argument for future merging vehicles, it follows that $\|X(t)\|$ becomes zero after all of the vehicles initially present on the ring road reach their destination (a finite time). \par
\mpmargin{Why $v_e - \speedlim \ra 0$?}
We now investigate the consequences of the platoon stability assumption in different scenarios for an ego vehicle indexed by $e$.  Consider the following scenarios:  
\par
\begin{figure}[t]
    \centering
    \includegraphics[width=0.7\textwidth]{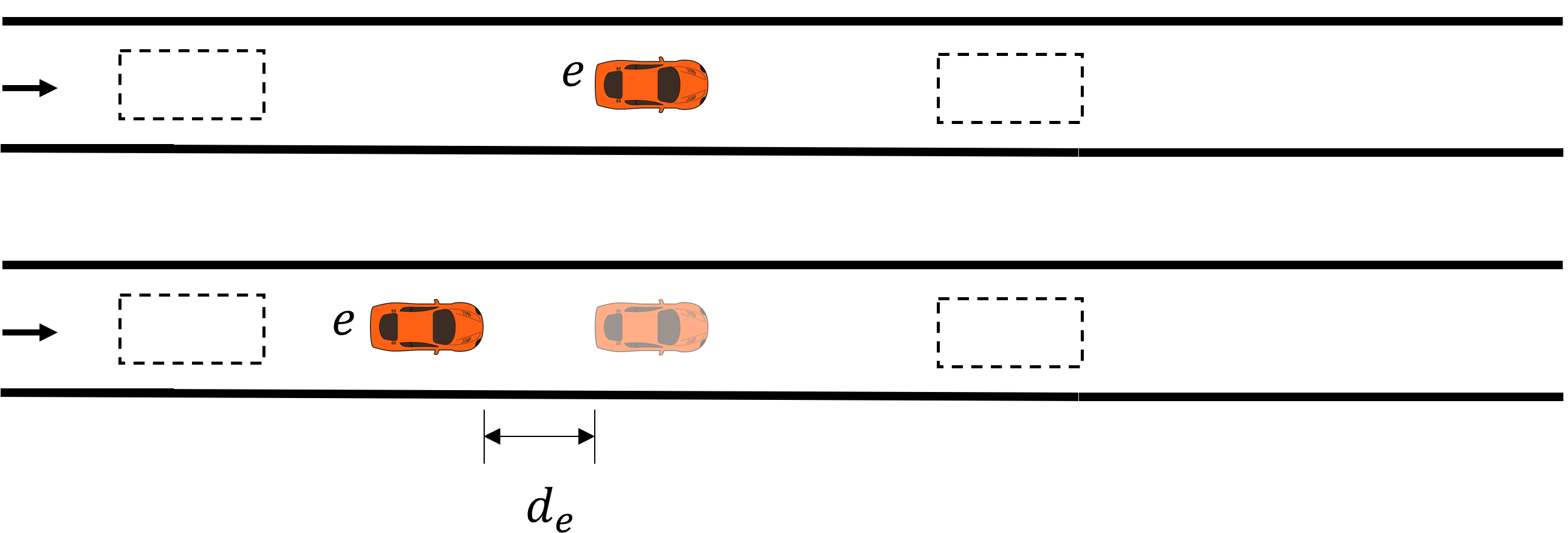}
    
    \vspace{0.2 cm}
    
    \caption{\sf Position of the ego vehicle at times zero (top) and $t > 0$ (bottom) relative to its neighbor virtual slots}
    \label{fig: Displacement}
\end{figure}

\mpcommentout{
$(i)$ Suppose that the ego vehicle is in the cruise control mode for $t \geq 0$ with its closed loop dynamics expressed as follows,
\begin{equation}\label{Eq: (prop 5) closed loop dynamics in cruise}
    \dot{\StateofDySys}_1 = A_1 \StateofDySys_1, ~~ \StateofDySys_1(0) = x_0 
\end{equation}
where $A_1$ is a constant matrix. Since $K(s)$ is stable it follows that $A_1$ is Hurwitz and thus $X_1(t) \ra 0$ exponentially fast as $t \ra \infty$. From the platoon stability assumption, it follows that
\begin{equation*}
    \begin{aligned}
            d_e(t) &\leq \int_{0}^{t}|\speedlim - v_e(\xi)|d\xi \\
            &\leq \int_{0}^{t}\stabconst{1}e^{-\stabexpconst{1} \xi}\|x_e(0)\|d\xi \leq \frac{\stabconst{1}}{\stabexpconst{1}}\|\StateofDySys(0)\|,
    \end{aligned}
\end{equation*}
where $\stabconst{1}, \stabexpconst{1}$ are positive constants.}
\par
$(i)$ Suppose that the ego vehicle is in a platoon of $1 \leq n \leq \numcells$ vehicles, with the leader either in the cruise control mode (if $n < \numcells$) or the vehicle following mode (if $n = \numcells$) for $t \geq 0$. Platoon stability implies that
\mpcommentout{
Let the closed-loop dynamics of this two-vehicle platoon be written as follows,
\begin{equation*}
    \dot{\StateofDySys}_2 = A_2 \StateofDySys_2,~~ \StateofDySys_2(0) = x_0
\end{equation*}
\par
Since the transfer functions $K(s), G(s)$ are stable, it follows that $A_2$ is Hurwitz
}
\begin{equation*}
    \begin{aligned}
            d_e(t) &\leq \int_{0}^{t}|\speedlim - v_e(\xi)|d\xi \\
            &\leq \int_{0}^{t}\stabconst{n}e^{-\stabexpconst{n} \xi}\|\StateofDySys_n(0)\|d\xi \leq \frac{\stabconst{n}}{\stabexpconst{n}}\|\StateofDySys(0)\|,
    \end{aligned}
\end{equation*}
where $\stabconst{n}, \stabexpconst{n} > 0$, and in the last inequality, we have used the fact that $\|\StateofDySys_n(0)\| \leq \|\StateofDySys(0)\|$.
\mpcommentout{
the closed loop dynamics of the platoon can be written as follows,
\begin{equation*}
    \dot{X}_n = A_nX_n,~~ X_n(0) = X_0
\end{equation*}
where,
\begin{equation*}\label{Eq: (Acceleration Control) block circulant matrix A}
\begin{aligned}
   A &= \begin{pmatrix}
                A_c & 0 & \cdots & 0 & 0 \\
                B_c & A_f & \cdots & 0 & 0 \\
                \vdots & \vdots & \vdots & \ddots & \vdots \\
                0 & 0 & \cdots & B_f & A_f \\
        \end{pmatrix}, ~~ 
        B_c = \begin{pmatrix}
            1 & 0 & 0 & 0 \\
            0 & 0 & 0 & 0 \\
            C_v & 0 & 0 & 0 \\
            C_s & 0 & 0 & 0
          \end{pmatrix}, \\
           A_f &= \begin{pmatrix}
            0 & -1 & -\timeheadway & 0 \\
            0 & 0 & 1 & 0 \\
            C_p & -C_v & K_a & 1 \\
            C_q & -C_s & 0 & 0
          \end{pmatrix}, ~~ 
          B_f = \begin{pmatrix}
            0 & 1 & 0 & 0 \\
            0 & 0 & 0 & 0 \\
            0 & C_v & 0 & 0 \\
            0 & C_s & 0 & 0
          \end{pmatrix}
\end{aligned}
\end{equation*}
\par
In this case, if $A_f$ is Hurwirz, it follows that $X_n(t) \ra 0$ exponentially fast as $t \ra \infty$. Similar to the previous case we obtain, 
}

\mpcommentout{
\begin{equation*}
    A_c = \begin{pmatrix}
            0 & 1 & 0 \\
            -C_v & K_a & 1\\
            -C_s & 0 & 0\\
          \end{pmatrix}
\end{equation*}
\begin{equation*}
    A_f = \begin{pmatrix}
            0 & -1 & -\timeheadway & 0 \\
            0 & 0 & 1 & 0 \\
            C_p & -C_v & K_a & 1 \\
            C_q & -C_s & 0 & 0
          \end{pmatrix},~
    B_c = \begin{pmatrix}
            1 & 0 & 0 \\
            0 & 0 & 0 \\
            C_v & 0 & 0\\
            C_s & 0 & 0
          \end{pmatrix}          
\end{equation*}
\begin{equation*}
        B_f = \begin{pmatrix}
            0 & 1 & 0 & 0 \\
            0 & 0 & 0 & 0 \\
            0 & C_v & 0 & 0\\
            0 & C_s & 0 & 0
          \end{pmatrix}   
\end{equation*}
}
\par
\mpcommentout{
$(iii)$ Consider a doubly-connected platoon of $\numcells$ vehicles, i.e., a platoon where all vehicles are in the vehicle following mode. Let the closed-loop dynamics of the platoon be written as follows,
\begin{equation*}
    \dot{\StateofDySys}_{\numcells} = 
    A_d\StateofDySys_{\numcells}, ~~ \StateofDySys_{\numcells}(0) = x_0
\end{equation*}
\par
In this case, since $G(s)$ is stable and $|G(j\omega)| \leq 1$ for all $\omega \geq 0$ (string stability assumption), it follows that $A_d$ is Hurwitz (\citet{pooladsanj2021vehicle}). Hence we have,
\begin{equation*}
    d_e(t) \leq \frac{\stabconst{d}}{\stabexpconst{d}}\|\StateofDySys_{\numcells}(0)\|
\end{equation*}
where $\stabconst{d}, \stabexpconst{d} > 0$.
\par
}
$(ii)$ \textit{Platoons split}: consider next a situation where the ego vehicle is in a platoon of $n$ vehicles, $1 < n \leq \numcells$, for $t \in [0, t_0)$; at $t = t_0$, one of the vehicles in the platoon departs through an off-ramp and leaves the ego vehicle in a platoon of $k$ vehicles, $1 \leq k < n$, with the leader in the cruise control mode for $t \geq t_0$. We have
\begin{equation*}
    \begin{aligned}
            d_e(t) &\leq \int_{0}^{t_0}|\speedlim - v_e(\xi)|d\xi + \int_{t_0}^{t}|\speedlim - v_e(\xi)|d\xi \\
            &\leq \frac{\stabconst{n}}{\stabexpconst{n}}\|\StateofDySys_n(0)\| + \frac{\stabconst{k}}{\stabexpconst{k}}\|\StateofDySys_k(t_0)\| \\
            &\leq \stabconst{n}\left(\frac{1}{\stabexpconst{n}} + \frac{\stabconst{k}}{\stabexpconst{k}}\right)\|\StateofDySys(0)\|,
    \end{aligned}
\end{equation*}
where in the last inequality, we have used the fact that $\|\StateofDySys_k(t_0)\| \leq \|\StateofDySys_n(t^{-}_{0})\|$ in which $t^{-}_{0}$ is the moment before the departure of one of the vehicles, and $\|\StateofDySys_n(t^{-}_{0})\| \leq \stabconst{n}e^{-\stabexpconst{n}t^{-}_{0}}\|\StateofDySys_n(0)\|$ from the platoon stability assumption.
\par
$(iii)$ \textit{Platoons join}: in the last scenario, let the ego vehicle be in a platoon of $k$ vehicles, $1 \leq k < \numcells$, for $t \in [0,t_0)$; at $t = t_0$ two platoons join and leave the ego vehicle in a platoon of $n$ vehicles, $k < n \leq \numcells$, for $t \geq t_0$. Without loss of generality, suppose that the two platoons have joined through the ego vehicle. Note that $x^T_e(t^{-}_{0}) = (v_e - \speedlim, a_e)$ and $x^T_e(t_0) = (y_e - (\timeheadway v_e + \standstilldist), v_e - \speedlim, a_e)$, while the state of all other vehicles in the two platoons remain the same after the joining occurs. Thus,  
\begin{equation*}
\begin{aligned}
        \|X_n(t_0)\| &\leq \|X_k(t^{-}_{0})\| + \|X_{n-k}(t^{-}_{0})\| + |y_e(t_0) - (\timeheadway v_e(t_0) + \standstilldist)| \\
        &\leq \|X_k(t^{-}_{0})\| + \|X_{n-k}(t^{-}_{0})\| + \max\{\frac{v^2_e(t_0) - v^2_l(t_0)}{2|\minaccel|}, 0\},
\end{aligned}
\end{equation*}
where the last inequality follows from the switching distance criteria. Moreover,
\begin{equation*} 
\begin{aligned}
        |v^2_e(t_0) - v^2_l(t_0)| &\leq 2\speedlim \left( |v_e(t_0) - \speedlim| + |v_l(t_0) - \speedlim| \right) \\
        &\leq 2\speedlim \left(\|X_k(t^{-}_{0})\| + \|X_{n-k}(t^{-}_{0})\|\right).
\end{aligned}
\end{equation*}
\par
By combining the previous two inequalities we obtain
\begin{equation*}
        \|X_n(t_0)\| \leq \left(\frac{\speedlim}{|\minaccel|} + 1\right)\left(\|X_k(t^{-}_{0})\| + \|X_{n-k}(t^{-}_{0})\|\right).
\end{equation*}
\par
As a result,
\begin{equation*}
    \begin{aligned}
            d_e(t) &\leq \frac{\stabconst{n}}{\stabexpconst{n}}\|X_n(t_0)\| + \frac{\stabconst{k}}{\stabexpconst{k}}\|X_k(0)\| \\
            &\leq \frac{\stabconst{n}}{\stabexpconst{n}}\left(1+\frac{\speedlim}{|\minaccel|}\right)\left(\stabconst{k}\|X_k(0)\| + \stabconst{n-k}\|X_{n-k}(0)\|\right) + \frac{\stabconst{k}}{\stabexpconst{k}}\|X_k(0)\| \\
            &\leq \left(\left(1+\frac{\speedlim}{|\minaccel|}\right)(\stabconst{k} + \stabconst{n-k})\frac{\stabconst{n}}{\stabexpconst{n}} + \frac{\stabconst{k}}{\stabexpconst{k}} \right) \|\StateofDySys(0)\|.
    \end{aligned}
\end{equation*}
\par
In conclusion, if the number of joining/splitting of platoons is finite, and the number of vehicles remain below $\numcells$, $d_e$ remains sufficiently small provided that $\|X(0)\|$ is sufficiently small.
\par
If $k(t_0) > 0$, then consider the situation where vehicle $l$ is traveling at the constant free flow speed at time $t_0$. Thus, $y_e(t_0) \geq \timeheadway \speedlim + \standstilldist$. Note that if the ego vehicle changes its speed at some time $t_1 > t_0$, then it must be that the lead vehicle $l$ has changed its speed. In such a case, $k(t_1) \leq k(t_0) - 1$. Moreover, $\|X(t_0)\| = \|X(t^{-}_{0})\|$, and $\|X(t^{-}_{0})\|$ is sufficiently small by assumption. Hence, $\|X(t^{-}_{1})\|$ is also sufficiently small since the number of joining/splitting events are finite in $[t_0, t_1)$. Therefore, $k(t)$ becomes zero after a finite time after which the argument for the case $k(t_0) = 0$ can be applied. \par
In conclusion, $\|\StateofDySys(t)\|$ becomes zero after a finite time. Thus, the rFCQ-RM policy becomes the same way as the FCQ-RM policy with $\|\StateofDySys(0)\| = 0$. By Theorem \ref{Prop: Stability of FCQ-RM policy for all T} it follows that the rFCQ-RM policy keeps the network under-saturated if and only if $\Avgload{} < 1$. 
\mpcommentout{
Note that $v_0 \leq \speedlim$ implies $v_r(t) \leq \speedlim$ for all $t \geq 0$. By taking Laplace transform from the first three equations in \eqref{Eq: (prop 5) closed loop dynamics in cruise}, it follows that,
\begin{equation*}
    V_e(s) = K(s) V_r(s)
\end{equation*}
where $V_e(s), V_r(s)$ are Laplace transforms of $v_e, v_r$, respectively, and,
\begin{equation}\label{Eq: Tf K(s)}
    K(s) = \frac{C_vs + C_s}{s^3 - K_a s^2 + C_v s + C_s}
\end{equation}
\par
By choosing the design constants such that $|K(j\omega)| \leq 1$ for all $\omega \geq 0$ and $k(t) \geq 0$ for all $t \geq 0$, we have $\int_{0}^{\infty}|k(t)|dt = 1$. It follows for all $t \geq 0$ that,
\begin{equation*}
    v_e(t) \leq \sup_{t \geq 0}|v_{r}(t)|\int_{0}^{\infty}|k(t)|dt  = \speedlim 
\end{equation*}
\par
$(ii)$ \textit{Vehicle following:} suppose next that the ego vehicle is in the vehicle following mode with the following closed loop dynamics,
\begin{equation}\label{Eq: (prop 5) closed loop dynamics in follow}
\begin{aligned}
    \dot{\delta}_{e} &= v_{l} - v_{e} - \timeheadway a_e \\
    \dot{v}_{e} &= a_e \\
    \dot{a}_e &= 
    K_a a_e + C_{p}\delta_e + C_{v}(v_{l} - v_e) + w_e \\
    \dot{w}_{e} &= C_{q}\delta_e + C_{s}(v_{l} - v_{e})
\end{aligned}
\end{equation}
}
}
}

\subsection{Proof of Theorem \ref{Prop: Necessary condition for stability}}\label{Section: (Appx) Proof of necessary condition for regularity}
Let $t = 0, 1, \ldots$ with time steps of size $\timestep$. Without loss of generality, let the point with the long-run crossing rate no more than one be the merging point of on-ramp $i$ for all $i \in [\numramps]$. We have $\Nodedegree{i}(t) = \Nodedegree{i}(0) + \sum_{s = 1}^{t}\Numarrivals{\ell,i}(s) - \Cumuldepartures{i}(t)$, where we have dropped the dependence on the RM policy for brevity. Note that this equation holds for any point on the $i$-th link, which justifies the previous no loss in generality. 
\par
Since the arrival processes are i.i.d. across the on-ramps, the strong law of large numbers implies that for all $i \in [\numramps]$, with probability one, 
\begin{equation*}
    \liminf_{t \ra \infty} \frac{\sum_{s = 1}^{t}\Numarrivals{\ell,i}(s)}{t} = \Avgload{i},  
\end{equation*}
and hence
\begin{equation*}\label{Eq: SLLN in necessary cond for stability}
      \liminf_{t \ra \infty} \frac{\Nodedegree{i}(t)}{t} = \Avgload{i} - \limsup_{t \ra \infty} \frac{\Cumuldepartures{i}(t)}{t}.
\end{equation*}
\par
If $\Avgload{i} > 1$ for some $i \in [\numramps]$, then, with probability one, $\liminf_{t \ra \infty} \Nodedegree{i}(t)/t$ is bounded away from zero, and hence $\liminf_{t \ra \infty}\Nodedegree{i}(t) = \infty$. Thus, $\liminf_{t \ra \infty}\Queuelength{j}(t) = \infty$ for some on-ramp $j \in [\numramps]$. Combining this with Fatou's lemma imply that the average queue size grows unbounded at on-ramp $j$. This contradicts the freeway being under-saturated. 
\mpcommentout{
Suppose that the assumption of the proposition hold. With no loss in generality, for each link let the point for which the long-run crossing rate is less than one be the ending off-ramp. For the $j^{th}$ off-ramp, we define the \textit{off-ramp degree} $\Nodedegree{j}(t)$ as the number of existing vehicles across the network at time $t$ that needs to cross off-ramp $j$ in order to reach their destination. Let $\Numarrivals{ij}(t)$ be the number of vehicles that arrive at time $t \geq 1$ to the $i^{th}$ on-ramp and wants to cross the $j^{th}$ off-ramp. Thus, $\Numarrivals{f, j}(t) := \sum_{i = 1}^{\numramps}\Numarrivals{ij}(t)$ is the total number of arrivals across the network at time $t \geq 1$ that needs to cross off-ramp $j$. Let $\Numdepartures{f, j}(t)$ be the number of vehicles that crosses off-ramp $j$ at time $t \geq 0$. For each time $t \geq 0$, we have
\begin{equation}\label{Eq: Off-ramp degree relationship}
    \Nodedegree{j}(t+1) = \Nodedegree{j}(t) + \Numarrivals{f, j}(t+1) - \Numdepartures{f, j}(t), ~~ \Nodedegree{j}(0) = x_j.
\end{equation}
\par
By telescopic summation of \eqref{Eq: Off-ramp degree relationship} from $0$ to $t \geq 1$ we obtain
\begin{equation*}
    \Nodedegree{j}(t) = \Nodedegree{j}(0) + \Cumularrivals{f, j}{t} - \Cumuldepartures{j}(t),
\end{equation*}
where $\Cumularrivals{f,j}{t} = \sum_{s = 1}^{t}\Numarrivals{f,j}(s)$ and $\Cumuldepartures{j}(t) = \sum_{s = 0}^{t-1}\Numdepartures{j}(s)$ are the cumulative number of arrivals and crossings for the $j^{th}$ off-ramp, respectively. Since the arrival processes are IID, from the strong law of large numbers it follows with probability one that
\begin{equation*}
    \liminf_{t \ra \infty} \frac{\Cumularrivals{f, j}{t}}{t} = \Avgload{j}.
\end{equation*}
\par
Thus we obtain
\begin{equation}\label{Eq: SLLN in necessary cond for stability}
\begin{aligned}
      \liminf_{t \ra \infty} \frac{\Nodedegree{j}(t)}{t} &= \liminf_{t \ra \infty} \frac{\Nodedegree{j}(0) + \Cumularrivals{f, j}{t} - \Cumuldepartures{j}(t)}{t} \\
      &= \Avgload{j} - \limsup_{t \ra \infty} \frac{\Cumuldepartures{j}(t)}{t}.
\end{aligned}
\end{equation}
\par
By the assumption of the proposition, the long-run crossing rate of the $j^{th}$ off-ramp is less than one; that is
\begin{equation*}
    \limsup_{t \ra \infty} \frac{\Cumuldepartures{j}(t)}{t} < 1.
\end{equation*}
\par
Suppose now that the network is under-saturated but $\Avgload{j} > 1$. Then for some $\delta > 0$ we have with probability one that 
\begin{equation*}
    \liminf_{t \ra \infty} \frac{\Nodedegree{j}(t)}{t} > \delta > 0,
\end{equation*}
from which it follows that $\liminf_{t \ra \infty}\Nodedegree{j}(t) = \infty$, almost surely. Hence, there exists at least one on-ramp $i$ for which $\liminf_{t \ra \infty} \Queuelength{i}(t) = \infty$, almost surely. This in turn implies that $\liminf_{t \ra \infty}\E{\Queuelength{i}(t)} = \infty$ which contradicts the assumption that the network is under-saturated. As a result, for every off-ramp $j$ we must have
\begin{equation*}
    \Avgload{j} < 1.
\end{equation*}
\par
Equivalently, $\Avgload{} = \max_{1 \leq j \leq \numramps}\Avgload{j} < 1$. 
}

\mpcommentout{
\section{Proof of Theorem \ref{Prop: Stability of FCQ-RM policy for all T}}\label{Section: (Appx) Proof of main theorem}
\begin{figure}[t]
    \centering
    \includegraphics[width=0.5\textwidth]{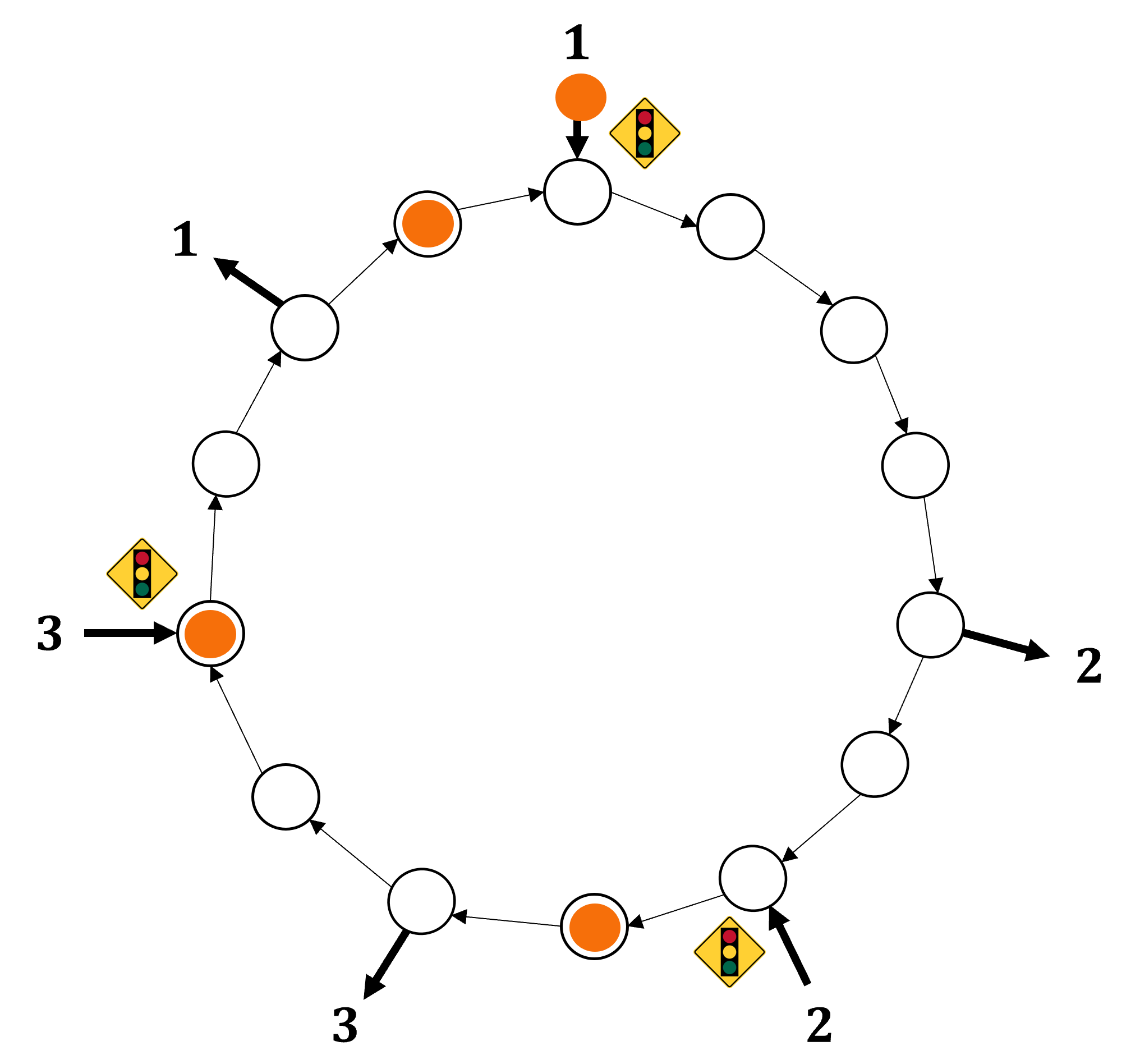}
    
    \vspace{0.2 cm}
    
    \caption{\sf Idealized graph of the problem setup with $\numramps = 3$ on/off-ramps and $\numcells = 14$ virtual slots}
    \label{fig: idealized graph of the problem setup}
\end{figure}
Suppose that all vehicles are initially traveling at the constant speed $\speedlim$. In such a scenario, a safe slot is a slot which is at least $\timeheadway \speedlim + \standstilldist$ away from any vehicles ahead/behind. Furthermore, an on-ramp vehicle is assumed to be able to merge into a safe virtual slot at an appropriate time with the free flow speed. Since the relative distance of the vehicles on the ring road remain constant with respect to any of the virtual slots, the merging vehicle remains at a safe constant distance from any vehicles ahead. Thus, it does not change its speed and remains aligned with its virtual slot while in the network. Therefore, all vehicles in the network eventually become aligned with virtual slots. Thus, we may assume that the ring road vehicles are initially aligned with the virtual slots. 
\par
We use the following queue modeling abstraction for the ease in presentation. Consider a system of $\numramps$ input and $\numramps$ output queues arranged around a ring of $\numcells$ virtual slots; see Figure \ref{fig: idealized graph of the problem setup}. In this context, the input and output queues are to be thought of as the on-ramps and off-ramps, respectively.
\mpcommentout{
We number the slots from $1$ to $\numcells$ in the clockwise direction, where the slot numbered $1$ is the slot that faces queue number $1$ at the beginning of each time step.
}
Without loss of generality, we assume that the number of slots is such that at the beginning of each time step, all on/off-ramps are aligned with a virtual slot. During a given time step, a random number of vehicles arrive to a given on-ramp where each vehicle has a random destination off-ramp. At the beginning of a given time step, the following sequence of events occurs: $(i)$ the vehicles on the ring road rotate in the clockwise direction, $(ii)$ vehicles that reach their destination off-ramp exit the network, $(iii)$ each non-empty on-ramp releases a quota (if any) if the virtual slot facing that on-ramp is safe \footnote{\mpcomment{Note that when $\numcells$ is non-integer, an unoccupied slot may be unsafe when passing an on-ramp with non-empty quota. So, it remains unoccupied. For readability, we assume hereafter that $\numcells$ is an integer so that a safe slot is precisely an empty slot. This assumption is without loss of generality as it affects the key inequalities of the proof by only a constant.}}.
\par
Let the cycle length $\Cyclelength \in \N$ be fixed. The necessity of $\Avgload{} < 1$ follows from Proposition \ref{Prop: Necessary condition for stability}. We only need to show sufficiency: if $\Avgload{} < 1$ then the FCQ-RM with the cycle length $\Cyclelength$ keeps the network under-saturated. Recall that the slots are numbered from $1$ to $\numcells$ in the clockwise direction such that the slot numbered $1$ is the one that faces on-ramp $1$ at the beginning of each time step. We let $\Nodedest{}(t)$ be the \mpcomment{origin and} destination vector of all slots at the $t^{th}$ time step. Here, $\Nodedest{i}(t) = (0,0)$ if slot $i$ is empty and $\Nodedest{i}(t) = (k, j)$, $k, j \in [\numramps]$, if slot $i$ is occupied and the origin and destination of the occupant vehicle is on-ramp $k$ and off-ramp $j$, respectively. The process $\{\StateofMC(t) := (\Queuelength{}(t),\Nodedest{}(t))\}_{t = 0}^{\infty}$ is thus a Markov chain with the state space $\StateSpace := \Z_{\rampindic}^{\infty} \times (\rampindic \cup \{0\})^2$. Moreover, Since $P\left(\Numarrivals{}(t+1) = 0\middle |\:\StateofMC(t)\right) > 0$, it follows that the state $(0,0)$ is reachable by all states and the Markov chain can stay in this state at the beginning of two consecutive time steps. Thus, the Markov chain is irreducible and aperiodic. We now define a new Markov chain $\StateofMCExp(\cdot)$ which is constructed by expanding the state space of the original Markov chain as follows:
\begin{equation*}
    \StateofMCExp(t) := (\StateofMC(t),~ \StateofMC(t- 1), ~\cdots,~ \StateofMC(t - k\Cyclelength + 1)), \:\: t \geq k\Cyclelength - 1,
\end{equation*}
where $k \in \N$ is to be determined. Since $P\left(\Numarrivals{}(t+1) = 0\middle|\:\StateofMCExp(t)\right) > 0$, it follows that $\{\StateofMCExp(t)\}_{t = k\Cyclelength - 1}^{\infty}$ is also irreducible and aperiodic. We next use the Foster-Lyapunov drift criteria for $\StateofMCExp(\cdot)$ by considering the candidate function $V: \StateSpace^{k \Cyclelength} \ra [0,\infty)$ whose value at time $t \geq k\Cyclelength - 1$ is given by:
\begin{equation*}
    \Lyap{\StateofMCExp(t)} = \sum_{s = t - k\Cyclelength + 1}^{t}\Nodedegree{}^2(s).
\end{equation*}
\par
We use the notation $\Lyap{t}$ instead of $\Lyap{\StateofMCExp(t)}$ hereafter for readability. Note that
\begin{equation*}
    \Lyap{t+1} - \Lyap{t} = \Nodedegree{}^2(t+1) - \Nodedegree{}^2(t - k\Cyclelength + 1).
\end{equation*}
\par
We claim that if $\Lyap{t}$ is large enough, then there exists at least one off-ramp such that the value of its degree decreases by one at each time step in the interval $[t - k\Cyclelength + 1, t+1)$, without considering the new arrivals. Formally, if $\Lyap{t} = \LyapValue > L$ for $L > 0$ large enough, then there exists $q \in \rampindic$ such that for every $s \in [t - k\Cyclelength + 1, t+1)$ we have
\begin{equation*}
    \Nodedegree{q}(s + 1) = \Nodedegree{q}(s) - 1 + \Numarrivals{f, q}(s+1).
\end{equation*}
\par
Let
\begin{equation*}
    L = \sum_{s = t - k\Cyclelength + 1}^{t}[(\numramps k^2 \Cyclelength + \numcells) + \numramps(s - t + k\Cyclelength - 1)]^2,
\end{equation*}
and suppose that $\Lyap{t} = \LyapValue > L$. We first show that $\Nodedegree{}(t - k\Cyclelength + 1) > \numramps k^2 \Cyclelength + \numcells$. Suppose not; that is $\Nodedegree{}(t - k\Cyclelength + 1) \leq \numramps k^2 \Cyclelength + \numcells$. Since the total number of arrivals to the network is bounded by $\numramps$ at each time step, it follows for $s \in [t - k\Cyclelength + 1, t+1)$ that
\begin{equation*}
    \Nodedegree{}(s) \leq (\numramps k^2 \Cyclelength + \numcells) + \numramps(s - t + k\Cyclelength - 1).
\end{equation*}
\par
Hence,
\begin{equation*}
    V(t) =  \sum_{s = t - k\Cyclelength + 1}^{t}\Nodedegree{}^2(s) \leq \sum_{s = t - k\Cyclelength + 1}^{t}[(\numramps k^2 \Cyclelength + \numcells) + \numramps(s - t + k\Cyclelength - 1)]^2 = L,
\end{equation*}
which is a contradiction. Thus, $\Nodedegree{}(t - k\Cyclelength + 1) > \numramps k^2 \Cyclelength + \numcells$. This implies that there exists at least one on-ramp (say on-ramp $q-1$) such that $|\Queuelength{q-1}(t - k\Cyclelength + 1)| > k^2\Cyclelength$. For if not, then the maximum node degree at time $t- k\Cyclelength + 1$ would be less than or equal to $\numramps k^2 \Cyclelength + \numcells$. Since at most $\Cyclelength$ vehicles arrive to an on-ramp during a cycle and the number of slots between on-ramp $q-1$ and the next immediate off-ramp, i.e., off-ramp $q$, is no more than $\numcells$, $k \geq \max\{\numcells, 2\}$ together with $|\Queuelength{q-1}(t - k\Cyclelength + 1)| > k^2\Cyclelength$ ensure that all such slots must be occupied during the time interval $[t - k\Cyclelength + 1, t+1)$. In other words, for every $s \in [t - k\Cyclelength + 1, t+1)$ we must have
\begin{equation}\label{Eq: (Main Thm) Equality for off-ramp q node degree at each time step}
    \Nodedegree{q}(s + 1) = \Nodedegree{q}(s) - 1 + \Numarrivals{f, q}(s+1),
\end{equation}
as desired. By telescoping sum in the previous equation for $s \in [t - k\Cyclelength + 1 ,t+1)$ we obtain
\begin{equation*}
    \Nodedegree{q}(t + 1) = \Nodedegree{q}(t - k\Cyclelength + 1) - k\Cyclelength + \Cumularrivals{f,q}{k\Cyclelength},
\end{equation*}
%
where for $j \in \rampindic$, $\Cumularrivals{f,j}{t_2 - t_1 + 1}$ is defined as follows \footnote{In this context, the beginning and end of each interval is specified whenever needed. Thus, the notation $\Cumularrivals{f,j}{t_2 - t_1 + 1}$ should not create any confusion.}:
\begin{equation*}
    \Cumularrivals{f,j}{t_2 - t_1 + 1} = \sum_{s = t_1}^{t_2}\Numarrivals{f, j}(s).
\end{equation*}
\par
Hence,
\begin{equation}\label{Eq: Inequality for off-ramp j node degree that decreases at each time}
    \Nodedegree{q}(t+1) \leq \Nodedegree{}(t - k\Cyclelength + 1) - k\Cyclelength + \Cumularrivals{f,q}{k\Cyclelength}.
\end{equation}
\par
Our goal is now to find inequalities of similar form to \eqref{Eq: Inequality for off-ramp j node degree that decreases at each time} for every off-ramp. In words, for a given off-ramp indexed by $p$, we do this by partitioning the interval $[t - k\Cyclelength + 1, t+1)$ into non-overlapping intervals $[s_{p - i} + 1, s_{p - i + 1} + 1)$, $i \in \{0, 1, \cdots, \numramps_p\}$, with $s_{p+1} = t$, $s_{p - \numramps_p} = t - k\Cyclelength$, during each of which the value of the $(p - i)^{th}$ off-ramp degree, i.e., $\Nodedegree{p-i}(\cdot)$ where $p-i$ refers to the $i^{th}$ off-ramp upstream of off-ramp $p$, reduces by one at each time step without considering the new arrivals. We then show that the value of the $(p-i)^{th}$ off-ramp degree at the beginning of its corresponding interval is upper bounded by the value of the $(p-i-1)^{st}$ off-ramp degree at the end of its corresponding interval plus a constant term. We then conclude the following inequality:
\begin{equation}\label{Eq: (Prop 4) Inequality for off-ramp p node degree that resembeles that of off-ramp j}
    \begin{aligned}
        \Nodedegree{p}(t+1) &\leq \Nodedegree{}(t - k \Cyclelength + 1) - k\Cyclelength + \sum_{i = 0}^{\numramps_p}\Cumularrivals{f,p-i}{s_{p-i+1} - s_{p-i}} + C, \\
    \end{aligned}
\end{equation}
where $C$ is a constant. The reader should notice the similarity between the right hand sides of \eqref{Eq: (Prop 4) Inequality for off-ramp p node degree that resembeles that of off-ramp j} and \eqref{Eq: Inequality for off-ramp j node degree that decreases at each time}. We now formalize our approach. Consider an off-ramp indexed by $p$ and note that if all of the slots that pass off-ramp $p$ are occupied at each time step in the interval $[t - k\Cyclelength + 1, t + 1)$, then for every $s \in [t - k\Cyclelength + 1, t)$ we have,
\begin{equation*}
    \Nodedegree{p}(s + 1) = \Nodedegree{p}(s) - 1 + \Numarrivals{f, p}(s+1)
\end{equation*}
from which and by telescoping sum over the interval $[t - k\Cyclelength + 1, t+1)$ we obtain, 
\begin{equation*}
\begin{aligned}
        \Nodedegree{p}(t+1) &= \Nodedegree{p}(t - k\Cyclelength + 1) - k\Cyclelength + \Cumularrivals{f,p}{k\Cyclelength} \\
        & \leq \Nodedegree{}(t - k\Cyclelength + 1) - k\Cyclelength + \Cumularrivals{f,p}{k\Cyclelength}
\end{aligned}
\end{equation*}
which is as desired. On the other hand, if some of the passing slots are empty in the interval $[t - k\Cyclelength + 1, t + 1)$, we let $s_p \in [t - k\Cyclelength + 1, t+1)$ be the last time at which a passing slot is empty, i.e., $\Nodedegree{p}(s_p + 1) = \Nodedegree{p}(s_p) + \Numarrivals{f, p}(s_p + 1)$. It follows from the definition of $s_p$ that for every $s \in [s_p + 1, t+1)$ we have,
\begin{equation*}
    \Nodedegree{p}(s+1) = \Nodedegree{p}(s) - 1 + \Numarrivals{f, p}(s+1)
\end{equation*}
\par
By telescopic sum over the interval $[s_p + 1, t+1)$ we obtain,
\begin{equation}\label{Eq: (Prop 4) Equality for off-ramp p node degree at time t+1}
    \Nodedegree{p}(t+1) = \Nodedegree{p}(s_p + 1) - (t - s_p) + \Cumularrivals{f,p}{t - s_p}
\end{equation}
\par
Furthermore, we claim that the queue length of the on-ramp immediately preceding off-ramp $p$ at time $s_p + 1$ cannot exceed $\Cyclelength + \numcells$, i.e., $|\Queuelength{p-1}(s_p + 1)| \leq \Cyclelength + \numcells$. To prove this, note that since the slot that passes off-ramp $p$ at time $s_p$ is empty, it must have been empty while passing on-ramp $p-1$ at time $s_p - n_p$, where $n_p < \numcells$ is the number of slots between on-ramp $p-1$ and off-ramp $p$. Hence, on-ramp $p-1$ must have emptied all of its quotas before the time $s_p - n_p$. Since the number of arrivals to each on-ramp during a cycle does not exceed $T$, it follows that $|\Queuelength{p-1}(s_p - n_p)| \leq \Cyclelength$. Thus, $|\Queuelength{p-1}(s_p + 1)| \leq \Cyclelength + n_p + 1 \leq \Cyclelength + \numcells$. It follows that,
\begin{equation}\label{Eq: (Prop 4) Inequality for off-ramp p node degree at time s_p + 1}
\begin{aligned}
        \Nodedegree{p}(s_p + 1) &\leq \Nodedegree{p-1}(s_p + 1) + |\Queuelength{p-1}(s_p + 1)| + \numcells \\
        & \leq \Nodedegree{p-1}(s_p + 1) + \Cyclelength + 2\numcells 
\end{aligned}
\end{equation}
\par
By combining \eqref{Eq: (Prop 4) Equality for off-ramp p node degree at time t+1} and \eqref{Eq: (Prop 4) Inequality for off-ramp p node degree at time s_p + 1} we derive,
\begin{equation}\label{Eq: (Prop 4) Starting inequality for off-ramp p node degree in terms of off-ramp p-1}
    \Nodedegree{p}(t+1) \leq \Nodedegree{p-1}(s_p + 1) - (t - s_p) + \Cumularrivals{f,p}{t - s_p} + \Cyclelength + 2\numcells
\end{equation}
\par
We now repeat the above steps to find similar upper bounds for $\Nodedegree{p-1}(s_p + 1)$. In particular, if for every $s \in [t - k\Cyclelength + 1, s_p + 1)$ we have $\Nodedegree{p-1}(s + 1) = \Nodedegree{p-1}(s) - 1 + \Numarrivals{f, p-1}(s+1)$, we obtain, 
\begin{equation*}
\begin{aligned}
        \Nodedegree{p-1}(s_p + 1) &= \Nodedegree{p-1}(t - k\Cyclelength + 1) - (s_p + k\Cyclelength - t) + \Cumularrivals{f,p-1}{s_p - t + k\Cyclelength} \\
        & \leq \Nodedegree{}(t - k\Cyclelength + 1) - (s_p + k\Cyclelength - t) + \Cumularrivals{f,p-1}{s_p - t + k\Cyclelength}
\end{aligned}
\end{equation*}
as desired. Otherwise, we let $s_{p-1}$ be the last time at which a passing slot is empty. Similar to the argument given for off-ramp $p$, it follows that,
\begin{equation*} 
    \Nodedegree{p-1}(s_p + 1) \leq \Nodedegree{p-2}(s_{p-1} + 1) - (s_p - s_{p-1}) + \Cumularrivals{f,p-1}{s_p - s_{p-1}} + \Cyclelength + 2\numcells
\end{equation*}
\par
We repeat the above process until we find an off-ramp indexed by $p - \numramps_p$, $0 \leq \numramps_p < \numramps$, such that for every $s \in [t - k\Cyclelength + 1, s_{p - \numramps_p + 1})$ we have,
\begin{equation}\label{Eq: (Prop 4) Equation for stopping off-ramp p inequality algorithm}
    \Nodedegree{p - \numramps_p}(s + 1) = \Nodedegree{p - \numramps_p}(s) - 1 + \Numarrivals{f, p - \numramps_p}(s + 1)
\end{equation}
\par
Indeed, such an off-ramp always exist by \eqref{Eq: (Main Thm) Equality for off-ramp q node degree at each time step}. Therefore,
\begin{equation*}
\begin{aligned}
        \Nodedegree{p - \numramps_p}(s_{p - \numramps_p + 1} + 1) &= \Nodedegree{p - \numramps_p}(t - k\Cyclelength + 1) - (s_{p - \numramps_p + 1} + k\Cyclelength - t) + \Cumularrivals{f,p-\numramps_p}{s_{p-\numramps_p+1} - t + k\Cyclelength} \\
        & \leq \Nodedegree{}(t - k\Cyclelength + 1) - (s_{p - \numramps_p + 1} + k\Cyclelength - t) + \Cumularrivals{f,p-\numramps_p}{s_{p-\numramps_p+1} - t + k\Cyclelength}
\end{aligned}
\end{equation*}
\par
Let $\rampindic_p = \{0, \cdots, \numramps_p\}$. Starting from \eqref{Eq: (Prop 4) Starting inequality for off-ramp p node degree in terms of off-ramp p-1} and by combining the previous inequalities for off-ramps $p - i$, $i \in \rampindic_p$, we arrive at \eqref{Eq: (Prop 4) Inequality for off-ramp p node degree that resembeles that of off-ramp j}; that is,
\begin{equation*}
    \begin{aligned}
        \Nodedegree{p}(t+1) &\leq \Nodedegree{}(t - k \Cyclelength + 1) - k\Cyclelength + \sum_{i = 0}^{\numramps_p}\Cumularrivals{f,p-i}{s_{p-i+1} - s_{p-i}} + C \\
    \end{aligned}
\end{equation*}
where $s_{p + 1} = t$, $s_{p - \numramps_p} = t - k\Cyclelength$, and $C = (\Cyclelength + 2\numcells)m$. Notice that \eqref{Eq: (Prop 4) Inequality for off-ramp p node degree that resembeles that of off-ramp j} holds for every off-ramp. Therefore, we find the following key inequality,
\begin{equation}\label{Eq: (Prop 4) Key inequality for maximum node degree}
        \Nodedegree{}(t+1) \leq \Nodedegree{}(t - k \Cyclelength + 1) - k\Cyclelength + C + \MaxNumArrival{f}{}{k\Cyclelength}
\end{equation}
where,
\begin{equation*}
    \MaxNumArrival{f}{}{k\Cyclelength} := \max_{p \in \rampindic} \sum_{i = 0}^{\numramps_p}\Cumularrivals{f,p-i}{s_{p-i+1} - s_{p-i}}
\end{equation*}
\par
It also follows from \eqref{Eq: (Prop 4) Key inequality for maximum node degree} that,
\begin{equation}\label{Eq: (Prop 4) Key inequality for maximum node degree squared}
\begin{aligned}
        \Nodedegree{}^2(t+1) &\leq [\Nodedegree{}(t - k \Cyclelength + 1) - k\Cyclelength + C]^2 + \MaxNumArrival{f}{2}{k\Cyclelength} \\
        & + 2[\Nodedegree{}(t - k \Cyclelength + 1) - k\Cyclelength + C]\MaxNumArrival{f}{}{k\Cyclelength}
\end{aligned}
\end{equation}
\par
In order to characterize the long-run behavior of the sum,
\begin{equation}\label{Eq: FCQ-RM sum of interest in KSLLN}
    \sum_{i = 0}^{\numramps_p}\Cumularrivals{f,p-i}{s_{p-i+1} - s_{p-i}} = \sum_{i = 0}^{\numramps_p}\sum_{s = s_{p-i} + 1}^{s_{p - i + 1}}\Numarrivals{f,p-i}(s + 1)
\end{equation}
we use Kolmogorov's strong law of large numbers for the sequence $\{\Numarrivals{f, j_s}(s + 1)\}_{s = t - k\Cyclelength + 1}^{\infty}$, where the indices $j_s \in \rampindic$ may depend on the time step $s$. Note that for a given $s \in [t - k\Cyclelength + 1, \infty)$, the term $\Numarrivals{f,j_s}(s + 1)$ is independent of the other terms in the sequence and,  
\begin{equation*}
    \begin{aligned}
            \Avgload{j_s} &= \E{\Numarrivals{f,j_s}(s + 1)} < 1 \\
            \sigma^2_{j_s} &:= \E{\Numarrivals{f,j_s}^2(s + 1) - \Avgload{j_s}^2} < M < \infty
    \end{aligned}
\end{equation*}
where $M$ is uniform for all $j_s \in \rampindic$. As a result, 
\begin{equation*}
    \lim_{k \ra \infty}\sum_{s = t - k\Cyclelength + 1}^{t}(s - t + k\Cyclelength)^{-2}\sigma^2_{j_s} < M\lim_{k \ra \infty}\sum_{s = t - k\Cyclelength + 1}^{t}(s - t + k\Cyclelength)^{-2} < \infty
\end{equation*}
\par
From the Kolmogorov's strong law of large numbers (see, for example, \citet[Theorem 10.12]{folland1999real}), we have with probability $1$,
\begin{equation*}
    \lim_{k \ra \infty}\frac{1}{k\Cyclelength}\left(\sum_{s = t - k\Cyclelength + 1}^{t}\Numarrivals{f,j_s}(s + 1) - \sum_{s = t - k\Cyclelength + 1}^{t}\Avgload{j_s}\right) = 0
\end{equation*}
\par
Since $\Avgload{} < 1$, it follows for almost all sample paths that,
\begin{equation*}
    \limsup_{k \ra \infty}\frac{1}{k\Cyclelength}\sum_{s = t - k\Cyclelength + 1}^{t}\Numarrivals{f,j_s}(s + 1) < 1
\end{equation*}
\par
By using the previous inequality for \eqref{Eq: FCQ-RM sum of interest in KSLLN} and every $p \in \rampindic$, we have with probability one,
\begin{equation*}
\begin{aligned}
    \limsup_{k \ra \infty} \frac{\MaxNumArrival{f}{}{k\Cyclelength}}{k\Cyclelength} &= \limsup_{k \ra \infty} \max_{p \in \rampindic}\frac{1}{k\Cyclelength}\sum_{i = 0}^{\numramps_p}\Cumularrivals{f,p-i}{s_{p-i+1} - s_{p-i}} \\
    &= \max_{p \in \rampindic}\limsup_{k \ra \infty}\frac{1}{k\Cyclelength}\sum_{i = 0}^{\numramps_p}\Cumularrivals{f,p-i}{s_{p-i+1} - s_{p-i}} < 1
\end{aligned}
\end{equation*}
\par
Moreover, since the function $F(x) = x^n$, $n \geq 1$, is continuous and monotonically increasing for $x \geq 0$, for almost all sample paths we obtain,
\begin{equation*}
    \begin{aligned}
      \limsup_{k \ra \infty}
      \left(\frac{\MaxNumArrival{f}{}{k\Cyclelength}}{k\Cyclelength}\right)^n &= \limsup_{k \ra \infty} F\left(\frac{\MaxNumArrival{f}{}{k\Cyclelength}}{k\Cyclelength}\right)
\\
    &= F\left(\limsup_{k \ra \infty}\frac{\MaxNumArrival{f}{}{k\Cyclelength}}{k\Cyclelength}\right) \\
    &= \left(\limsup_{k \ra \infty}\frac{\MaxNumArrival{f}{}{k\Cyclelength}}{k\Cyclelength}\right)^n < 1
    \end{aligned}
\end{equation*}
\par
Finally, if we show that the sequence $\{\left(\MaxNumArrival{f}{}{k\Cyclelength}/(k\Cyclelength)\right)^n\}_{k=1}^{\infty}$ is bounded above by an integrable function $g_n$, then it follows from the Fatou's lemma that,
\begin{equation}\label{Eq: Fatou's lemma in FCQ-RM}
     \limsup_{k \ra \infty} \E{\left(\frac{\MaxNumArrival{f}{}{k\Cyclelength}}{k\Cyclelength}\right)^n} \leq \E{\limsup_{k \ra \infty} \left(\frac{\MaxNumArrival{f}{}{k\Cyclelength}}{k\Cyclelength}\right)^n} < 1
\end{equation}
\par
Note that since the number of arrivals to the network is bounded by $m$ at each time step, for each $p \in \rampindic$ we have,
\begin{equation*}
    \frac{1}{k\Cyclelength}\sum_{i = 0}^{\numramps_p}\Cumularrivals{f,p-i}{s_{p-i+1} - s_{p-i}} \leq \numramps
\end{equation*}
\par
Furthermore, 
\begin{equation*}
\begin{aligned}
        \frac{1}{k\Cyclelength}\MaxNumArrival{f}{}{k\Cyclelength} &= \frac{1}{k\Cyclelength}\max_{p \in \rampindic} \sum_{i = 0}^{\numramps_p}\Cumularrivals{f,p-i}{s_{p-i+1} - s_{p-i}} \\
        &\leq \frac{1}{k\Cyclelength}\sum_{p \in \rampindic} \sum_{i = 0}^{\numramps_p}\Cumularrivals{f,p-i}{s_{p-i+1} - s_{p-i}} \leq \numramps^2
\end{aligned}
\end{equation*}
which confirms the existence of such a function $g_n$ for all $n \geq 1$ and that \eqref{Eq: Fatou's lemma in FCQ-RM} indeed holds.
\mpcommentout{
We now use the assumption that the number of arrivals that need to cross a given off-ramp at each time step has the same distribution for all off-ramps. Formally, this means that $\Numarrivals{f, i}(.)$ has the same distribution for every $i \in \rampindic$. Thus, $\Avgload{} = \max_{j \in \rampindic}\Avgload{j} = \Avgload{1}$. Since for a given off-ramp $p$ the intervals $[s_{p - i}, s_{p - i+1} + 1)$ are non-overlapping, $i \in S_p$, by using the strong law of large numbers it follows that with probability $1$,
\begin{equation*}
    \lim_{k \ra \infty}\frac{\sum_{i = 0}^{\numramps_p}\sum_{s = s_{p - i} + 1}^{s_{p - i + 1}}\Numarrivals{f, p - i}(s + 1)}{k\Cyclelength} = \Avgload{}
\end{equation*}
\par
By using the result in \citet{georgiadis1995scheduling}, for every $r \geq 1$ we have with probability $1$,
\begin{equation*}
     \lim_{k \ra \infty}\left(\frac{\tilde{\Numarrivals{f}}(k\Cyclelength)}{k\Cyclelength}\right)^r = \Avgload{}^r
\end{equation*}
and hence,
\begin{equation*}
    \lim_{k \ra \infty}\E{\left(\frac{\tilde{\Numarrivals{f}}(k\Cyclelength)}{k\Cyclelength}\right)^r} = \Avgload{}^r
\end{equation*}
}
By combining the previous steps, it follows that there exists $\delta, K > 0$ such that for every $k \geq K$ we have,
\begin{equation*}
    \begin{aligned}
            \E{\MaxNumArrival{f}{2}{k\Cyclelength}} &< (1-\delta)(k\Cyclelength)^2 \\
            \E{\MaxNumArrival{f}{}{k\Cyclelength}} &< (1-\delta)k\Cyclelength
    \end{aligned}
\end{equation*}
\par
By choosing $k \geq K$ and taking conditional expectation from both sides of \eqref{Eq: (Prop 4) Key inequality for maximum node degree squared} we obtain,
\begin{equation*}
\begin{aligned}
        \E{\Nodedegree{}^2(t+1) - \Nodedegree{}^2(t - k\Cyclelength +1)|\: V(t) = l > L} &\leq -2\Nodedegree{}(t - k\Cyclelength +1)(\delta k \Cyclelength - C) \\
        &+ (k \Cyclelength - C)^2 - (1 - \delta)k\Cyclelength(k\Cyclelength - 2C)
\end{aligned}
\end{equation*}
\par
Since the number of arrivals at each time step is at most one, we have
\begin{equation*}
   \max_{s \in [t - k\Cyclelength + 1, t+1)} \|\Queuelength{}(s)\|_{\infty} \leq \Nodedegree{}(t - k\Cyclelength + 1) + k \Cyclelength.
\end{equation*}
\par
Thus, by letting $k$ satisfy $\delta k \Cyclelength - C \geq 0$, it follows that
\begin{equation*}
\begin{aligned}
        \E{\Nodedegree{}^2(t+1) - \Nodedegree{}^2(t - k\Cyclelength +1)|\: V(t) = l > L} &\leq -2(\max_{s \in [t - k\Cyclelength + 1, t+1)} \|\Queuelength{}(s)\|_{\infty} - k\Cyclelength)(\delta k \Cyclelength - C) \\
        &+ (k \Cyclelength - C)^2 - (1 - \delta)k\Cyclelength(k\Cyclelength - 2C) \\
        &\leq -(1 + \max_{s \in [t - k\Cyclelength + 1, t+1)} \|\Queuelength{}(s)\|_{\infty}),
\end{aligned}
\end{equation*}
where the last inequality holds if 
\begin{equation*}
    \begin{aligned}
         \max_{s \in [t - k\Cyclelength + 1, t+1)} \|\Queuelength{}(s)\|_{\infty}(1 + 2C - 2\delta k \Cyclelength) + 3\delta (k\Cyclelength)^2 - 2C(1 + \delta)k\Cyclelength + C^2 + 1 \leq 0.
    \end{aligned}
\end{equation*}
\par
This in turn holds if 
\begin{equation}\label{Eq: Inequalities that k must satisfy}
    \begin{aligned}
            k &\geq \frac{C + \frac{1}{2}}{\delta \Cyclelength} \\
            -2&k^2 \Cyclelength(\delta k \Cyclelength - C - \frac{1}{2}) + 3\delta (k\Cyclelength)^2 - 2C(1 + \delta)k\Cyclelength + C^2 + 1 < 0
    \end{aligned}
\end{equation}
where, in the second inequality, we have used the fact that
\begin{equation*}
   \max_{s \in [t - k\Cyclelength + 1, t+1)} \|\Queuelength{}(s)\|_{\infty} \geq |\Queuelength{q-1}(t - k\Cyclelength + 1)| > k^2 \Cyclelength.
\end{equation*}
\par
Note that a $k$ that satisfies \eqref{Eq: Inequalities that k must satisfy} always exists: the first inequality in \eqref{Eq: Inequalities that k must satisfy} hold trivially for large enough $k$; the term $-2k^3\Cyclelength^2\delta$ in the second inequality dominates the other terms for large enough $k$ due to the higher power of $k$. In conclusion, for such a $k \geq K$ it follows that 
\begin{equation*}
        \E{\Lyap{t+1} - \Lyap{t}|\: \Lyap{t} = \LyapValue > L} \leq -(1 + \max_{s \in [t - k\Cyclelength + 1, t+1)} \|\Queuelength{}(s)\|_{\infty})
\end{equation*}
\par
Finally, if $\Lyap{t} = \sum_{s = t - k\Cyclelength + 1}^{t}\Nodedegree{}^2(s) \leq L$, it follows that $\Nodedegree{}^2(s) \leq L$ for every $s \in [t - k\Cyclelength + 1 , t+1)$. Therefore,
\begin{equation*}
\begin{aligned}
        1 &+ \max_{s \in [t - k\Cyclelength + 1, t+1)} \|\Queuelength{}(s)\|_{\infty} \leq 1+ \sqrt{L} \\
        \Lyap{t+1} &\leq \Lyap{t} + \Nodedegree{}^2(t+1) \leq \Lyap{t} + (\sqrt{L} + m)^2 
\end{aligned}
\end{equation*}
where, in the first inequality, we have used the fact that $\|\Queuelength{}(s)\|_{\infty} \leq \Nodedegree{}(s)$ for all $s \in [t - k\Cyclelength + 1 , t+1)$. In the second inequality, we have used the fact that the total number of arrivals at each time step does not exceed $\numramps$. Hence,
\begin{equation*}
        \E{\Lyap{t+1} - \Lyap{t}|\: \StateofMCExp(t)} \leq -(1 + \max_{s \in [t - k\Cyclelength + 1, t+1)} \|\Queuelength{}(s)\|_{\infty}) + b\mathbbm{1}_{B}
\end{equation*}
where $B = \{\StateofMCExp(t) \in \StateSpace: V(t) \leq L\}$ (a finite set), and $b = (\sqrt{L} + m)^2 + (1 + \sqrt{L})$. Therefore, the Markov chain is $f$-regular and for every $j \in \rampindic$ we have,
\begin{equation*}
 \limsup_{t \ra \infty}\E{|\Queuelength{j}(t)} < \infty   
\end{equation*}
\par
In other words, The FCQ-RM with any cycle length $\Cyclelength \in \N$ keeps the network under-saturated. This concludes the proof.
}

\bibliographystyle{apalike}
\bibliography{References_main}

\end{document}